\documentclass[oneside]{amsart}

\usepackage{amsthm}
\usepackage{amsmath}
\usepackage{amssymb}
\usepackage{enumerate}
\usepackage{graphicx}
\usepackage{booktabs}
\usepackage{color}
\usepackage[dvipsnames]{xcolor}
\usepackage{import}
\usepackage{tikz-cd}

\AtBeginDocument{%
   \def\MR#1{}
}

\usepackage{marginnote}
\long\def\@savemarbox#1#2{\global\setbox#1\vtop{\hsize\marginparwidth 
  \@parboxrestore\tiny\raggedright #2}}
\numberwithin{equation}{section}
\theoremstyle{plain}
\newtheorem{theorem}[equation]{Theorem}
\newtheorem{corollary}[equation]{Corollary}
\newtheorem{lemma}[equation]{Lemma}

\newtheorem{proposition}[equation]{Proposition}

\newtheorem*{namedtheorem}{\theoremname}
\newcommand{\theoremname}{testing}

\theoremstyle{definition}
\newtheorem{definition}[equation]{Definition}
\newtheorem{remark}[equation]{Remark}

\newtheorem{example}[equation]{Example}

\newcommand{\calT}{{\mathcal{T}}}

\newcommand{\cut}{\backslash\backslash}
\newcommand{\calH}{\mathcal{H}}
\newcommand{\calB}{\mathcal{B}}

\newcommand{\calF}{\mathcal{F}}

\makeatletter

\def\chaptermark#1{}

\def\chapter{%
  \if@openright\cleardoublepage\else\clearpage\fi
  \thispagestyle{plain}\global\@topnum\z@
  \@afterindenttrue \secdef\@chapter\@schapter}

\def\@chapter[#1]#2{\refstepcounter{chapter}%
  \ifnum\c@secnumdepth<\z@ \let\@secnumber\@empty
  \else \let\@secnumber\thechapter \fi
  \typeout{\chaptername\space\@secnumber}%
  \def\@toclevel{0}%
  \ifx\chaptername\appendixname \@tocwriteb\tocappendix{chapter}{#2}%
  \else \@tocwriteb\tocchapter{chapter}{#2}\fi
  \chaptermark{#1}%
  \addtocontents{lof}{\protect\addvspace{10\p@}}%
  \addtocontents{lot}{\protect\addvspace{10\p@}}%
  \@makechapterhead{#2}\@afterheading}
\def\@schapter#1{\typeout{#1}%
  \let\@secnumber\@empty
  \def\@toclevel{0}%
  \ifx\chaptername\appendixname \@tocwriteb\tocappendix{chapter}{#1}%
  \else \@tocwriteb\tocchapter{chapter}{#1}\fi
  \chaptermark{#1}%
  \addtocontents{lof}{\protect\addvspace{10\p@}}%
  \addtocontents{lot}{\protect\addvspace{10\p@}}%
  \@makeschapterhead{#1}\@afterheading}
\newcommand\chaptername{Chapter}

\def\@makechapterhead#1{\global\topskip 7.5pc\relax
  \begingroup
  \fontsize{\@xivpt}{18}\bfseries\centering
    \ifnum\c@secnumdepth>\m@ne
      \leavevmode \hskip-\leftskip
      \rlap{\vbox to\z@{\vss
          \centerline{\normalsize\mdseries
              \uppercase\@xp{\chaptername}\enspace\thechapter}
          \vskip 3pc}}\hskip\leftskip\fi
     #1\par \endgroup
  \skip@34\p@ \advance\skip@-\normalbaselineskip
  \vskip\skip@ }
\def\@makeschapterhead#1{\global\topskip 7.5pc\relax
  \begingroup
  \fontsize{\@xivpt}{18}\bfseries\centering
  #1\par \endgroup
  \skip@34\p@ \advance\skip@-\normalbaselineskip
  \vskip\skip@ }
\def\appendix{\par
  \c@chapter\z@ \c@section\z@
  \let\chaptername\appendixname
  \def\thechapter{\@Alph\c@chapter}}

\newcounter{chapter}

\newif\if@openright

\makeatother

\title[Incompressible surfaces, hierarchies and unknot recognition]{Incompressible surfaces, hierarchies \\ and unknot recognition}
\author{Marc Lackenby}
\address{Mathematical Institute, University of Oxford, \newline Woodstock Road, Oxford OX2 6GG, United Kingdom}

\begin{document}

\begin{abstract} \hspace{0.2cm}
We present a new algorithm to determine whether a compact orientable surface properly embedded in
a compact orientable 3-manifold is incompressible.  As a special case, this provides a new
algorithm to detect the unknot. The central technique is the use of hierarchies. Unlike previous algorithms, no challenging invariants need
to be computed and no lengthy search procedures are required. This algorithm leads to a new proof
that determining whether a compact orientable 3-manifold has incompressible boundary 
is in NP.
\end{abstract}
\maketitle
\tableofcontents

\section{Introduction}\label{Sec:Intro}
Incompressible orientable surfaces occupy a central position in 3-manifold theory. It is therefore very natural to ask whether a given orientable surface properly embedded in a 3-manifold is incompressible. The following result provides an algorithm to determine this.

\begin{theorem}
\label{Thm:AlgorithmIncompressible}
Let $\mathcal{T}$ be a triangulation of a compact orientable irreducible 3-manifold $M$, possibly with boundary. Let $S$ be a compact orientable normal surface properly embedded in $M$. Then there is an algorithm to determine whether $S$ is incompressible.
\end{theorem}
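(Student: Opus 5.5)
We will decide incompressibility of $S$ by cutting $M$ along $S$ and reducing to a question about the boundary of the resulting manifold, which we then settle with a hierarchy. If $S$ is a sphere or a disc, incompressibility is decided directly: a sphere is incompressible precisely when it does not bound a ball, which is decidable by $3$-sphere recognition applied to the pieces of $M \cut S$ (using irreducibility of $M$). A disc is always incompressible in our convention, up to the boundary-parallel case. So assume $S$ has negative or zero Euler characteristic.

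Let $M' = M \cut S$, with copies $S_+, S_-$ of $S$ in $\partial M'$. Since $S$ is normal, $M'$ inherits a cell structure from $\mathcal{T}$, which we subdivide to a triangulation. By the loop theorem, $S$ is compressible in $M$ if and only if $S_+$ or $S_-$ is compressible in $M'$. Thus it suffices to decide, for a triangulated irreducible $M'$ and a subsurface $F \subset \partial M'$, whether $F$ is compressible in $M'$. Here $F$ is a union of boundary components, or more generally a subsurface after we also handle $\partial S$. We treat $\partial M' - F$ as a pattern on the boundary and work with the sutured, or boundary-pattern, manifold $(M', \partial M' - F)$.

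The core step runs a hierarchy. First we use normal surface theory to search for a compression disc of $F$ directly among vertex normal surfaces: if one exists, a normal or almost-normal compressing disc exists among fundamental surfaces, and there are finitely many of these to check. This covers the ``compressible'' answer. For the ``incompressible'' answer we need a certificate. If no disc is found at this stage, we show that $F$ is incompressible by building a hierarchy of $M'$: we repeatedly cut along fundamental essential surfaces with boundary meeting the pattern suitably, for instance non-separating surfaces obtained from $H_2(M',\partial M')$ when $F$ is not a union of spheres. At each stage we verify that the new boundary pattern remains ``$F$-incompressible'', and we end at a union of balls where incompressibility is checked combinatorially by examining curves on the boundary of each ball. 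Haken's finiteness results guarantee that the hierarchy terminates. Each surface can be chosen normal with respect to a triangulation of the current piece, so each step is effective.

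The main obstacle is the inductive step. We must show that if $F$ is incompressible in $M'$ and $G$ is a well-chosen essential surface, then the image of $F$, together with copies of $G$, is incompressible in $M' \cut G$ relative to the new pattern. Conversely, we must show that incompressibility at the bottom of the hierarchy lifts back up to $M'$. The plan is to prove the lifting direction by an innermost-disc and outermost-arc argument. Take a compression disc $D$ for $F$ in $M'$, put it in general position with respect to $G$, and remove closed curves using irreducibility and incompressibility of $G$. Then remove outermost arcs, which requires the boundary of $G$ to be essential in the pattern; this is exactly the condition we must impose when choosing $G$. Existence of such a $G$ at each stage, with no essential annuli or discs obstructing it, is where the most care is needed. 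We expect to use Haken's theorem that irreducible manifolds with nonempty incompressible boundary are Haken. When an admissible $G$ cannot be found, we fall back on the direct normal disc search.
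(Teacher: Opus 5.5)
\textbf{Gap.} Your hierarchy half is circular. Your plan requires three things:
\begin{enumerate}
\item choosing ``essential'' surfaces $G$;
\item verifying ``at each stage'' that the new pattern is still $F$-incompressible;
\item lifting compression discs by removing curves and outermost arcs of $D \cap G$, which uses incompressibility of $G$ and essentiality of $\partial G$.
\end{enumerate}
Each of these assumes you can already decide incompressibility and boundary-incompressibility in the intermediate manifolds, which is the problem being solved. Falling back on ``the direct search when no admissible $G$ exists'' presupposes you can detect that no such $G$ exists.

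\textbf{The missing idea.} The paper verifies nothing along the way. It cuts along arbitrary connected non-separating normal surfaces, inspects only the final patterns on $3$-balls, and pulls essentiality back with Lemma~\ref{Lem:EssentialPullsBack}. That lemma derives incompressibility and pattern-incompressibility of every decomposing surface from essentiality of the pattern below it, with no hypothesis on the surface. This works only because the invariant is the full essential-pattern condition: discs meeting the pattern at most three times must bound arcs or tripods. If a disc meets a decomposing surface in $n$ arcs, its $n+1$ subdiscs carry at most $4n+3$ pattern points, so one of them meets the pattern at most three times. An invariant that records only compressions of $F$ is not closed under this step.

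\textbf{No procedure when the bottom check fails.} The disc found at the bottom need not compress $F$ in $M'$; it only witnesses a bad choice of some $G$. The paper treats it as a compression or pattern-compression disc for the last surface $S_j$ it meets. It replaces $S_j$ by a surface of smaller $\chi_p$ and discards all later surfaces. Termination then comes from the base-$(g+1)$ count of Section~\ref{Sec:NumberSteps} together with the q-complexity bound on hierarchy length (Propositions~\ref{Prop:ComplexityDecreases} and~\ref{Prop:RefinedAlgorithmTerminates}). Haken--Kneser finiteness bounds hierarchies of surfaces already known to be essential, so it does not give termination for a process that must back-track.

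\textbf{Your first step already suffices, but it is a different route.} Run exhaustively, it decides the question on its own. Since a compressible surface has a fundamental normal compression disc, finding none among the finitely many fundamental surfaces proves incompressibility, so no hierarchy ``certificate'' is needed. That is Haken's classical algorithm, exactly the large search the paper is built to avoid. Even on that route there is a loose end. Theorem~\ref{Thm:CompressionDiscFund} only gives a fundamental compression disc for $\partial M' \cut P$, and it may lie in the part of $\partial M'$ coming from $\partial M$. You then need the endgame of the paper's proof: either compress $\partial M$ and iterate via Lemma~\ref{Lem:CompressibilityPreserved}, or take an innermost curve of $\partial S$ in the disc that $\partial D$ bounds in $\partial M$.

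\textbf{The paper's proof.} It cuts along $S$ and doubles each inherited pattern curve to get $P'$. It then runs the algorithm of Theorem~\ref{Thm:AlgorithmEssentialPattern} on $(M \cut S, P')$ and interprets any violating disc in exactly the way just described.
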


This theorem is not new, but the algorithm is. The first algorithmic solution to the problem in Theorem \ref{Thm:AlgorithmIncompressible} was given by Haken  \cite{Haken:Normal}. His procedure was to cut the manifold $M$ along $S$ and then triangulate the new 3-manifold $M \cut S$. Then he searched for possible normal compression discs for the two copies of $S$ in $M \cut S$. In order to make this a finite procedure, he showed that it was possible to restrict to `fundamental' normal discs. A more recent algorithm, due to the author \cite{Lackenby:EfficientCertification}, is more complicated, and uses sutured manifold hierarchies. Both these algorithms are useful and have interesting theoretical consequences. But they both have the downside that they require a search through a possibly very extensive search space, for example, the set of all fundamental normal surfaces.
The algorithm presented here has the significant advantage that it does not require examination of a large search space.
An interesting application of Theorem \ref{Thm:AlgorithmIncompressible} is the following.

\begin{corollary}
\label{Cor:UnknotRecognition}
There is an algorithm to determine whether a given knot in the 3-sphere is trivial.
\end{corollary}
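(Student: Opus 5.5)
The plan is to reduce unknot recognition to Theorem \ref{Thm:AlgorithmIncompressible}, applied to a Seifert surface in the knot exterior. The knot $K$ is given, say, by a diagram. From it I will build a triangulation $\mathcal{T}$ of the exterior $M = S^3 \setminus N(K)$, where $N(K)$ is an open regular neighbourhood. This is standard and algorithmic: triangulate $S^3$ so that $K$ is a subcomplex, take a second barycentric subdivision, and remove the open simplicial neighbourhood of $K$. The manifold $M$ is compact, orientable and has torus boundary. It is irreducible by Alexander's theorem: any embedded 2-sphere in $S^3$ bounds balls on both sides, and the one not containing $K$ lies in $M$. So $M$ meets the hypotheses of Theorem \ref{Thm:AlgorithmIncompressible}.

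Next I need a compact orientable surface $S$ properly embedded in $M$ that is normal with respect to $\mathcal{T}$ and whose boundary is a longitude. I would first produce a Seifert surface from the diagram by Seifert's algorithm, then normalise it. Normalisation uses isotopies and compressions, and it may discard sphere components. Compressions can change genus but keep the boundary slope. I will therefore take $S$ to be any normal surface with a single longitudinal boundary curve. Such a surface can be found algorithmically, for instance by solving the matching equations with the boundary constraint, or simply by normalising and keeping the component meeting $\partial M$. Running the algorithm of Theorem \ref{Thm:AlgorithmIncompressible} on $S$ then decides whether $S$ is incompressible.

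The decision rule needs some care, because the unknot does have an incompressible Seifert surface, namely the disc. I will handle this by also computing the genus of $S$.
\begin{itemize}
\item If $S$ is a disc, then $K$ bounds a disc and is trivial.
\item If $S$ is incompressible and has positive genus, then $K$ is nontrivial. For the unknot, $M$ is a solid torus. Any incompressible surface in it with boundary a longitude is a meridian disc, since it is $\partial$-parallel or a disc, and a longitude on the boundary of the exterior bounds a disc.
\item If $S$ is compressible, I compress it along a compressing disc and repeat. This terminates because the genus strictly decreases.
\end{itemize}
To make the compression step effective, I would rather not find the disc explicitly. Instead I would iterate the decision over normal surfaces of smaller genus with the same boundary slope, enumerated among the finitely many fundamental ones. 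Alternatively, I can use the fact that the algorithm in the body of the paper produces a compressing disc or a smaller surface when $S$ is compressible.

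I expect the main obstacle to be this compressible case: reducing the genus while staying within normal surfaces that Theorem \ref{Thm:AlgorithmIncompressible} can handle. The cleanest route I would try first is this. A compressible Seifert surface of minimal genus cannot exist, so it suffices to find a least-genus Seifert surface among normal surfaces. Haken's theory shows that such a surface can be taken fundamental, and there are finitely many fundamental surfaces. I would then apply the incompressibility test to each candidate with one longitudinal boundary. $K$ is the unknot exactly when some candidate is a disc, which happens exactly when a minimal-genus one is a disc.
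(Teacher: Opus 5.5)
Your argument is correct, but it takes a genuinely different and considerably longer route than the paper.

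The paper never uses a Seifert surface. It applies the incompressibility test to the boundary torus of the exterior; equivalently, it runs the algorithm of Theorem~\ref{Thm:AlgorithmEssentialPattern} on $(M,\emptyset)$, since the empty pattern is essential exactly when $\partial M$ is incompressible. It combines this with the classical fact that $\partial M$ is compressible if and only if $M$ is a solid torus, i.e.\ if and only if $K$ is trivial. That is a single yes/no call. The difficulty you flag, that the unknot's spanning disc is itself incompressible, does not arise, because the torus is compressible precisely for the unknot.

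Your iterative route is sound. The decision rule is right: the fundamental group of an incompressible surface with one boundary curve in a solid torus injects into $\mathbb{Z}$, which forces a disc; this is a cleaner justification than ``$\partial$-parallel or a disc''. Termination follows because the genus drops at each compression. But the route needs more machinery:
\begin{itemize}
\item the version of the algorithm that outputs a compressing disc, namely the last theorem of Section~\ref{Sec:AlgorithmIncompressible};
\item a re-normalisation of the compressed surface at every stage.
\end{itemize}

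For the re-normalisation, your claim that normalisation needs only isotopies and compressions requires an argument. It is true here for the following reason. Suppose a disc $D$ meets $S$ in an arc $\alpha$ and $\partial M$ in an arc $\beta$. Since $\partial S$ is a single essential curve on the torus, $\beta$ returns to the same side of $\partial S$. So $\beta$ cobounds a disc in $\partial M$ with an arc of $\partial S$. Adding that disc to $D$ gives one of three outcomes: a compressing disc for $S$ (if $\alpha$ is essential), a weight-reducing isotopy, or a compressing disc for $\partial M$, in which case $K$ is trivial.

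Also, imposing the longitudinal boundary constraint on the matching equations can produce non-orientable surfaces. So normalising a Seifert surface is the right way to start.

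What your route buys is an explicit incompressible Seifert surface when $K$ is knotted; what the paper's buys is simplicity. Finally, your ``cleanest route'' is really Haken's original fundamental-surface search. It is valid given the imported theorem that a minimal-genus Seifert surface can be taken fundamental. But then the incompressibility test is redundant, since one only checks whether some fundamental surface is a disc. This is exactly the search the paper's algorithm is designed to avoid.
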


This follows  from Theorem \ref{Thm:AlgorithmIncompressible} because the exterior of a knot
in the 3-sphere has incompressible boundary if and only if the knot is non-trivial.
Again, this result is not new. There are now many known
algorithms to detect the unknot, the first of which was due to Haken \cite{Haken:Normal}. Some require the computation
of knot invariants, such as Khovanov homology \cite{KronheimerMrowka} or Heegaard Floer homology \cite{OzsvathSzabo, SarkarWang}. Others require a possibly extensive search: a search for a spanning disc \cite{HassLagariasPippenger, Lackenby:EfficientCertification}, a homomorphism from
the fundamental group of the knot complement to a finite group with non-abelian image \cite{Kuperberg:Knottedness}, sequences of Reidemeister moves \cite{Lackenby:Reidemeister} or sequences of grid diagrams \cite{Dynnikov}.
As a result, none of these algorithms
are likely to run in polynomial time. Indeed, in the case of Khovanov homology,
there is an exponential lower bound on its worst-case running time \cite{JaegerVertiganWelsh}, subject to some standard conjectures in complexity theory.
The algorithm that we give seems amenable to possible speed-up, which we will explore
in future work. Furthermore, in a companion paper \cite{Lackenby:LinkHyperbolicity}, we will show how this algorithm can be extended
to determine whether a given link in the 3-sphere is hyperbolic.

The algorithm generalises to 3-manifolds with a boundary pattern. Recall that
a \emph{boundary pattern} for a compact orientable 3-manifold $M$ is a collection $P$ of disjoint simple closed curves
and trivalent graphs embedded in $\partial M$. A boundary pattern $P$ is \emph{essential}  if, for every disc
$D$ properly embedded in $M$ that intersects $P$ transversely in at most three points away from the vertices of $P$,
there is a disc $D'$ in $\partial M$ that intersects $P$ in the empty set, a single arc, or a graph with a single 3-valent vertex
in the interior of $D'$ and three 1-valent vertices on $\partial D'$. (See Figure~\ref{Fig:EssentialBoundPatt}.)
If there is a disc $D$ properly embedded in $M$ intersecting $P$ transversely at most three times away from the vertices, but 
there is no corresponding disc $D'$, then $D$ is termed a \emph{violating disc}. In the final case, 
when $D'$ intersects $P$ in a graph with a single 3-valent vertex
in the interior of $D'$ and three 1-valent vertices on $\partial D'$, this graph is known as a \emph{tripod}.
Boundary patterns will be the focus of this paper. At this stage, we observe that when $P$ is the empty pattern,
then $P$ is essential if and only if $\partial M$ is incompressible.

\begin{figure}[h]
\centering
\includegraphics[width=0.6\textwidth]{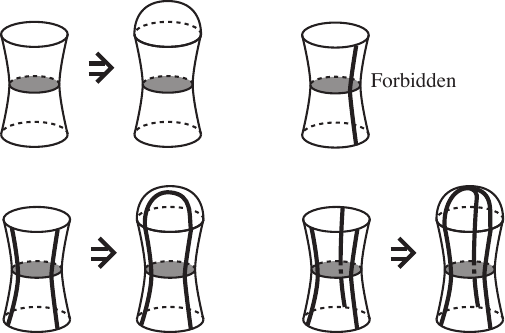}
\caption{Essential boundary pattern} \label{Fig:EssentialBoundPatt}
\end{figure}

\begin{theorem}
\label{Thm:AlgorithmEssentialPattern}
Let $(M,P)$ be a compact orientable irreducible 3-manifold with boundary pattern. Let $\mathcal{T}$ be a triangulation of $M$ in
which $P$ is simplicial. Then there is an algorithm to determine whether $P$ is essential. Moreover, if $P$ is not
essential, the algorithm provides a violating disc.
\end{theorem}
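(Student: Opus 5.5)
The plan is to decide essentiality of $P$ by building a hierarchy for $(M,P)$, rather than by searching directly for violating discs. In outline: repeatedly cut $M$ along well-chosen normal surfaces, with the boundary pattern updated after each cut, until the pieces are balls. Then check essentiality on the balls, where it is combinatorially trivial, and propagate the answer back up the hierarchy one step at a time.

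\textbf{Step 1: reduction to a single inductive step.} The key lemma I would aim for concerns a surface $S$ properly embedded in $M$, normal with respect to $\mathcal{T}$. Let $M' = M \cut S$, and give it the boundary pattern $P'$ made of the preimage of $P$ together with the curves $\partial S$, so that $P'$ is again simplicial in a suitably subdivided triangulation. The lemma should say: if $S$ is incompressible and boundary-incompressible relative to $P$, meets $P$ suitably, and $P'$ is essential in $M'$, then $P$ is essential in $M$. Conversely, a violating disc for $P'$ either pulls back to a violating disc for $P$, or exhibits a compression or boundary compression for $S$. The standard innermost-disc and outermost-arc arguments on $D \cap S$ should prove this. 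The delicate part is the case analysis of how $\partial D$ meets $P$ in at most three points, including the tripod case, which is exactly why trivalent graphs are allowed in patterns.

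\textbf{Step 2: choosing the cutting surface algorithmically.} If $\partial M$ has a component $F$ where $P$ is not yet a graph cutting $F$ into discs, I would take $S$ to be a normal surface of controlled complexity, for instance a vertex or fundamental surface that is nonzero in $H_2(M,\partial M)$ or separates parts of $P$. I would then normalise it relative to $P$. The point is that we never test $S$ for incompressibility by search. Instead we cut anyway and let the recursion detect failure: any violating disc found later either lives in $M$ or yields a compression of $S$. In the latter case we compress $S$, which strictly decreases a complexity such as $-\chi$ or the weight, and we restart. Irreducibility of $M$ lets us discard sphere components and keeps every piece irreducible.

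\textbf{Step 3: base case and termination.} When every piece is a ball, $P'$ is a graph on $S^2$, and essentiality reduces to checking that no simple closed curve meets $P'$ in at most three points except around the trivial configurations. This is a finite check on the planar graph, for example via face degrees and connectivity of the dual graph. For termination I would use a Haken-type finiteness: in a suitable hierarchy complexity, such as the lexicographic pair (number of tetrahedra after retriangulation, $\sum -\chi(\partial)$), each cut or compression strictly decreases the complexity.

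\textbf{Expected main obstacle.} The main obstacle is twofold: making Step 1 work for boundary patterns with vertices, and keeping the retriangulation after each cut under control so that $P'$ stays simplicial and the normal surface theory still applies. I would first try to handle both by working with handle structures rather than triangulations, since cutting a handle structure along a normal surface again yields a handle structure without subdivision. I would then prove the lifting of violating discs by a direct outermost-arc argument on $D\cap S$.
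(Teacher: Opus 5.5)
\textbf{The gap is termination of the hierarchy construction.} Your architecture matches the paper's: cut along normal surfaces without testing incompressibility, work in handle structures, check the pattern on the final balls, and push a violating disc back to compress the last surface it meets. But the proposal fails exactly where the paper does its real work. The pair you propose does not decrease:
\begin{itemize}
\item Cutting gives $\chi(\partial(M\cut S)) = \chi(\partial M) + 2\chi(S)$, so $\sum -\chi(\partial)$ increases whenever $\chi(S)<0$.
\item The number of tetrahedra needed to retriangulate $M\cut S$ grows with the number of normal pieces of $S$.
\item Kneser--Haken finiteness does not help, because it bounds families of disjoint \emph{incompressible} surfaces, and your surfaces are cut before anyone knows they are incompressible.
\end{itemize}
You need a quantity that decreases under cutting along an \emph{arbitrary} connected non-separating normal surface. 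The paper uses the q-complexity $c_q(\calH)=\sum_H(\max\{I(H),0\})^2$ of an \emph{admissible} handle structure:
\begin{itemize}
\item Index is additive under cutting (Lemma \ref{Lem:DecompIndex}).
\item Admissibility plus the pattern-adapted normality conditions, including the ad hoc condition (7), force every new 0-handle of $\calF$ to have non-negative index (Lemma \ref{Lem:AdmissibleDecomposition}).
\item Hence, by convexity, $c_q$ does not increase. It strictly drops unless $S$ is homologous to a vertical surface in the interior parallelity bundle; that case is handled by the secondary count $m(\calH)$ (Proposition \ref{Prop:ComplexityDecreases}, Lemma \ref{Lem:CutAlongVertical}).
\item Admissibility is restored after each cut by {\sc Create admissible handle structure}, which may itself cut along non-trivial squares or output a violating disc.
\end{itemize}
Without this well-founded pair $(c_q,m)$, your outer loop has no termination argument either. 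After compressing some $S_j$ and discarding later surfaces, the paper argues that $S_1$ changes only finitely often because its $\chi_p$ drops, then $S_2$, and so on. The resulting infinite partial hierarchy is ruled out precisely by $(c_q,m)$ (proof of Theorem \ref{Thm:UnknotAlgorithm1}).

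\textbf{Smaller points.}
\begin{itemize}
\item Your trigger for cutting is wrong. A handlebody whose pattern already cuts its boundary into discs is not a union of balls, yet your Step 2 gives no instruction for it and your base case does not apply. The paper cuts whenever $b_1(M_\ell)>0$, which guarantees a connected non-separating surface; once $b_1(M_\ell)=0$ the boundary consists of 2-spheres and the pieces are balls.
\item The compression measure should be $\chi_p(S)=-4\chi(S)+|S\cap P|$ rather than $-\chi$. Pattern-compressing a disc that meets $P$ at least four times leaves $-\chi$ of the surviving component unchanged. Lemma \ref{Lem:CompressionAndPatternComplexity} is what guarantees either a non-separating component of strictly smaller $\chi_p$ or a violating disc.
\item The hypotheses on $S$ in your Step 1 lemma are unnecessary, and they would break your own algorithm: since you never check $S$, you need ``$P'$ essential implies $P$ essential'' for an \emph{arbitrary} decomposing surface. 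Lemma \ref{Lem:EssentialPullsBack} gives exactly this, with incompressibility and pattern-incompressibility of $S$ as a consequence rather than a hypothesis.
\end{itemize}
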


When $P$ is essential and $M$ irreducible, the algorithm outputs an essential hierarchy for $(M,P)$. Such hierarchies, which are defined in 
Section \ref{Sec:HierarchiesBPs}, will play a central role in this paper. They are a finite sequence of decompositions along properly embedded orientable surfaces where the final
manifold is a collection of 3-balls with an essential boundary pattern. 

In the later parts of the paper, we will establish that the algorithms given above can be used to show
that the decision problems are in NP. Moreover, we will drop the hypothesis that the 3-manifold is irreducible.
Specifically, we will consider the following problems.

The decision problem {\sc Incompressible boundary} takes, as its input a triangulation of a compact
orientable 3-manifold $M$ and it asks whether $\partial M$ is incompressible.

\begin{theorem} The decision problem {\sc Incompressible boundary} is in NP.
\end{theorem}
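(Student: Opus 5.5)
The certificate will be an essential hierarchy, in the sense of Section~\ref{Sec:HierarchiesBPs}, together with the data needed to verify it. First we must drop irreducibility. Given a triangulation $\mathcal{T}$ of $M$ with $\partial M$ incompressible, we may assume $M$ has no $S^2$ boundary components, since such components are compressible only in the trivial sense and are detected directly. The certificate then begins with a maximal collection of disjoint normal 2-spheres $\Sigma$ in $\mathcal{T}$, given by normal coordinates. By standard normal surface theory (Kneser--Haken finiteness, plus the Jaco--Oertel/Jaco--Tollefson bounds on vertex or fundamental surfaces) these coordinates can be taken to have bit-size polynomial in the number of tetrahedra $t$.

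The verifier cuts $M$ along $\Sigma$ and caps off the resulting sphere components with balls. It must also check that the pieces meeting $\partial M$ are irreducible. For this I would include certificates of $3$-sphere recognition, using the known fact that $S^3$ recognition is in NP (Ivanov, Schleimer). Those certificates show that each capped piece disjoint from $\partial M$ is allowed to be anything, while the boundary-meeting pieces need no further decomposition. The key observation is that $\partial M$ is incompressible in $M$ if and only if each boundary component is incompressible in the irreducible summand containing it. A compression disc can be isotoped off the reducing spheres by an innermost disc argument.

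For each irreducible piece $N$ meeting $\partial M$, the certificate supplies a hierarchy $N = N_0 \to N_1 \to \dots \to N_n$. Here $N_n$ is a union of balls with a boundary pattern, each surface is a normal surface given by normal coordinates in a triangulation of $N_{i-1}$ in which the pattern is simplicial, and the initial pattern is $P=\emptyset$. The verifier checks three things. First, each decomposing surface is normal. Second, the pattern on each final ball is essential, which is a purely combinatorial check on a graph in $S^2$: the pattern must be a circle meeting vertices appropriately, with no curve or arc of the pattern bounding violating configurations. Third, the local conditions from the paper's definition of an essential hierarchy hold at each stage. Once the certificate is accepted, the main structural theorem of the paper for Theorem~\ref{Thm:AlgorithmEssentialPattern} (essentiality propagates back up an essential hierarchy) gives that $P=\emptyset$ is essential in $N$, that is, $\partial N$ is incompressible. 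Conversely, if $\partial M$ is incompressible, the algorithm of Theorem~\ref{Thm:AlgorithmEssentialPattern} outputs such a hierarchy, so a certificate exists.

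The main obstacle is size. Both the number $n$ of decompositions and the complexity of the successive triangulations must be polynomial in $t$. Naively, cutting along a normal surface with exponentially large coordinates produces an exponentially large triangulation, and iterated cutting compounds this. My plan is to avoid ever writing the cut triangulations explicitly. Instead I would represent every surface of the hierarchy simultaneously as normal-type data in a single fixed triangulation of $M$, for instance via the paper's handle-structure or parallelity-bundle machinery. I would then use polynomial-time algorithms for normal surfaces with exponential coordinates (Agol--Hass--Thurston type orbit counting) to check normality, count components, and verify the pattern on the final balls. I would also aim to show that the hierarchy can be chosen of length $O(t)$, using a complexity that strictly drops at each stage and is bounded linearly in $t$, so that the whole certificate has polynomial size and polynomial-time verification.
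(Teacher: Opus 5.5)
Your overall architecture is the paper's: the paper deduces this theorem from Theorem~\ref{Thm:EssentialNPcoNP} with $P=\emptyset$, by cutting along a sphere system to reach irreducible pieces, certifying an essential hierarchy, and pulling essentiality back with Lemma~\ref{Lem:EssentialPullsBack}. But the step you flag as the main obstacle, polynomial size, is left as an intention, and the route you suggest does not work as stated. For $i \geq 2$, the surface $S_i$ is normal only in the handle structure of the cut-open manifold. Moreover $S_1 \cup \dots \cup S_i$ is a branched 2-complex in $M$, not a normal surface in $\calT$. So there is no single fixed triangulation in which to record these surfaces, and nothing in your plan bounds the size of their coordinates. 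Appealing to the algorithm of Theorem~\ref{Thm:AlgorithmEssentialPattern} also does not give a short certificate. In its basic form, Step (6) allows any non-separating surface, and termination comes from the lexicographic pair $(c_q, m)$, which gives no length bound. Your ``$O(t)$ length via some complexity'' is therefore asserted rather than proved.

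The missing ideas are as follows. First, after each cut the paper re-encodes the handle structure rather than tracking everything in $\calT$. It makes the structure admissible, replaces the components of the interior parallelity bundle that are $I$-bundles over discs by 2-handles, and cuts along the vertical boundary of the remaining components. Proposition~\ref{Prop:CompressAnnuli} handles the case where those annuli are compressible or pattern-compressible, which is why discs $D_i$ appear in the certificate. Theorem~\ref{Thm:RemainUniformType} then shows that the new structure $\mathcal{H}_{i+1}$ has uniform type and only $O(|\calT|)$ handles, so it can be written down explicitly. Theorem~\ref{Thm:CutHS} and Proposition~\ref{Prop:DecomposePolyTime} compute it from $\mathcal{H}_i$ and $S_i$ using Agol--Hass--Thurston, in time polynomial in $|\mathcal{H}_i|$ and the logarithm of the extended weight.

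Second, each $S_i$ is chosen to have least pattern-complexity. By the Tollefson--Wang analogue (Theorem~\ref{Thm:FundamentalTaut}) it can then be taken fundamental, so its extended weight is at most $k^{|\mathcal{H}_i|}$ (Proposition~\ref{Prop:ExpBoundExtendedWeight}). The discs $D_i$ are bounded in the same way via Theorem~\ref{Thm:ViolatingFundamental}.

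Third, once the non-disc parallelity bundles are removed, no non-separating vertical surface remains. Hence $c_q$ drops strictly at least every four steps (Proposition~\ref{Prop:ComplexityDecreases}), which gives length at most $4c_q(\mathcal{H}_1) = O(t)$ (Proposition~\ref{Prop:RefinedAlgorithmTerminates}).

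Separately, your irreducibility check via $S^3$-recognition certificates cannot certify irreducibility of the bounded pieces, and it is also unnecessary. For soundness, the verifier only needs to check that the final boundary components are spheres whose patterns satisfy Proposition~\ref{Prop:EssentialBall}; your description of that criterion is garbled. Irreducibility is used only to guarantee that a certificate exists. It comes from a maximal system of essential spheres of bounded weight, and that maximality need not be verified. Finally, the definition of an essential hierarchy imposes no intermediate ``local conditions''; it only constrains the final stage.
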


This theorem was proved by the author in \cite{Lackenby:EfficientCertification}, using sutured
manifold hierarchies, based on argument of Agol. The problem was also shown to be in co-NP by Ivanov \cite{Ivanov}.

This has applications for the famous problem {\sc Knottedness}. This takes, as its input, either 
a diagram of a knot $K$ or a triangulation of its exterior, and it asks whether $K$ is a non-trivial knot.

\begin{corollary} 
The decision problem {\sc Knottedness} is in NP.
\end{corollary}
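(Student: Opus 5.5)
We deduce the corollary from the preceding theorem that {\sc Incompressible boundary} is in NP. The plan is to give a polynomial-time reduction from {\sc Knottedness} to {\sc Incompressible boundary} and then compose it with the NP certificate for the latter.

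\textbf{Input as a triangulation.} Suppose first that the input is a triangulation $\mathcal{T}$ of the exterior $M$ of $K$. Then $M$ is a compact orientable 3-manifold with torus boundary. By the standard fact recalled after Corollary~\ref{Cor:UnknotRecognition}, $\partial M$ is incompressible if and only if $K$ is non-trivial: a compression disc for $\partial M$ forces $M$ to be a solid torus, which happens exactly for the unknot. So $\mathcal{T}$ itself is a yes-instance of {\sc Incompressible boundary} precisely when $K$ is knotted. The certificate provided by the preceding theorem, which has size polynomial in $|\mathcal{T}|$ and is verifiable in polynomial time, therefore certifies knottedness.

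\textbf{Input as a diagram.} When the input is a diagram $D$ of $K$ with $c$ crossings, the step needing care is to build, in time polynomial in $c$, a triangulation of the exterior of $K$. I would use the standard construction. Place the diagram in a subdivided slab $\mathbb{R}^2 \times [-1,1]$, with a bounded number of cubes per crossing and per arc of the diagram, and route the knot along edges of this cell structure using over/under levels at the crossings. Cap off to obtain $S^3$ and triangulate each cube. Subdivide so that $K$ is a subcomplex with a regular neighbourhood that is a union of simplices, for example by two barycentric subdivisions. Finally remove the open simplicial neighbourhood of $K$. Each of these steps multiplies the number of tetrahedra by at most a constant, so the resulting triangulation has $O(c)$ tetrahedra (at worst polynomially many) and can be produced in polynomial time. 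Note that a diagram with $c = 0$ is the unknot, so a diagram with no crossings is immediately a no-instance.

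\textbf{Conclusion and main obstacle.} Composing this construction with the verifier from the preceding theorem gives a polynomial-size certificate that is checkable in polynomial time for non-triviality, so {\sc Knottedness} is in NP. The only real obstacle is the bookkeeping in the diagram-to-triangulation step: one must check that the neighbourhood removal yields a genuine triangulation of the exterior, not just a cell complex, with polynomially bounded size. Everything else follows immediately from the theorem.
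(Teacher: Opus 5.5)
Your proposal is correct and follows the paper's route: the corollary is deduced from {\sc Incompressible boundary} being in NP, using the fact, noted after Corollary~\ref{Cor:UnknotRecognition}, that the exterior of $K$ has incompressible boundary exactly when $K$ is knotted. Your sketch of a polynomial-time triangulation of the exterior from a diagram supplies the standard step that the paper leaves implicit.
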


This was one of the main theorems in \cite{Lackenby:EfficientCertification}. Previously, Kuperberg \cite{Kuperberg:Knottedness}
showed that {\sc Knottedness} is in NP, conditionally on the Generalised Riemann Hypothesis.

The decision problem {\sc Essential boundary pattern} takes, as its input, a triangulation of compact orientable
3-manifold $M$ with boundary pattern $P$ that is simplicial, and it asks whether $P$ is essential.
The following is a new result.

\begin{theorem} 
\label{Thm:EssentialNPcoNP}
The decision problem {\sc Essential boundary pattern} is in NP and co-NP.
\end{theorem}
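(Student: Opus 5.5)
We prove the two halves separately: co-NP via a polynomial-size certificate of a violating disc, and NP via a polynomial-size certificate of an essential hierarchy as supplied by Theorem~\ref{Thm:AlgorithmEssentialPattern}. Let $t$ be the number of tetrahedra of $\mathcal{T}$.

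\emph{Membership in co-NP.} If $P$ is not essential, there is a violating disc $D$. We first show that $D$ may be taken to be normal with respect to $\mathcal{T}$, or a vertex-type surface in a suitable normal surface theory relative to $P$. Since $P$ is simplicial, we can work with normal surfaces in $\mathcal{T}$ while tracking the intersection with $P$ through the boundary edges. A standard Jaco--Oertel/Jaco--Tollefson argument, minimising weight among violating discs, shows that some violating disc is a fundamental surface, and in fact a vertex surface, possibly after adding the condition $|\partial D\cap P|\le 3$ as a linear constraint on the boundary normal arcs. Vertex surfaces have normal coordinates bounded by $2^{O(t)}$, so $D$ has a certificate of polynomial size.

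The verifier then has to check three things in polynomial time.
\begin{enumerate}[(i)]
\item The vector satisfies the matching equations and determines a disc. Connectedness and Euler characteristic can be checked in polynomial time, for example via Agol--Hass--Thurston orbit counting.
\item The disc meets $P$ in at most three points. This is read off from the coordinates.
\item There is no disc $D'$ in $\partial M$ of the required form. The curve $\partial D$ is given by normal arcs on the triangulated surface $\partial M$ with exponentially large multiplicities. We must decide whether it bounds a disc in $\partial M$ whose intersection with $P$ is empty, a single arc, or a tripod. This reduces to deciding whether a normal curve on a triangulated surface is trivial, or cobounds such a disc with a small piece of $P$. The check can be done in polynomial time with the same orbit-counting machinery, applied component by component to $\partial M - P$.
\end{enumerate}
One subtlety is reducibility of $M$. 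The violating-disc condition does not require $M$ to be irreducible, so irreducibility is not needed for this direction.

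\emph{Membership in NP.} Suppose $P$ is essential. The certificate is the following data.
\begin{enumerate}[(a)]
\item A decomposition of $M$ along essential spheres into irreducible pieces, certified by normal spheres, together with certificates (e.g.\ Schleimer-type or via $3$-sphere recognition, which lies in NP) that the pieces are irreducible. Essentiality of $P$ passes to the pieces appropriately.
\item For each irreducible piece, the essential hierarchy output by the algorithm of Theorem~\ref{Thm:AlgorithmEssentialPattern}.
\end{enumerate}
The verifier checks each decomposing surface in the hierarchy. The key lemma, to be established in Section~\ref{Sec:HierarchiesBPs}, is that if the final pieces are balls with essential boundary pattern, and each decomposition satisfies local conditions checkable from the surface, then essentiality propagates back up the hierarchy to $P$. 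Essentiality of the pattern on a ball is checkable directly, since it amounts to a combinatorial condition on a planar graph on $S^2$.

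\emph{Main obstacle.} The hard part is ensuring the whole hierarchy has polynomial size. The surfaces must be normal with exponentially bounded coordinates, and there must be polynomially many of them. After each cut we cannot retriangulate naively, because the complexity would compound exponentially. Instead we must represent each successive surface normally relative to the original $\mathcal{T}$ together with the previous surfaces, for example using a handle structure and normal coordinates that remain $2^{\mathrm{poly}(t)}$ throughout. Proving this bound, presumably by showing that the algorithm's surfaces can be chosen as vertex-type surfaces in handle structures of controlled complexity, is the crux of the proof.
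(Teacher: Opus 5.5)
The overall architecture matches the paper's. For co-NP you use a normal fundamental violating disc checked via Agol--Hass--Thurston. For NP you use a sphere system followed by an essential hierarchy whose final balls pass the test of Proposition~\ref{Prop:EssentialBall}, with essentiality pulled back by Lemma~\ref{Lem:EssentialPullsBack}. However, the NP half has a genuine gap: you explicitly leave its crux unproven, and that crux is most of the paper. The missing ingredients are these.
\begin{enumerate}
\item[(i)] A linear bound $\ell \le 4c_q(\calH_1)$ on the hierarchy length. This needs the refined loop of Section~\ref{Sec:AlgorithmParallelity}: replace disc-type components of the interior parallelity bundle by 2-handles and cut along the vertical boundary of the rest. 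Without it, decompositions along vertical surfaces leave $c_q$ unchanged, and termination follows only from a lexicographic order that gives no useful bound.
\item[(ii)] Theorem~\ref{Thm:RemainUniformType}, which is exactly what stops the compounding you worry about. Every intermediate handle structure has uniform type. Once it is admissible with parallelity bundle reduced to 2-handles, it has at most a universal constant times $|\calT|$ 0-handles. So each $\calH_i$ is simply written down afresh, rather than encoded relative to $\calT$ and all earlier surfaces.
\item[(iii)] Proposition~\ref{Prop:ExpBoundExtendedWeight}: an extended-weight bound $k^{|\calH_i|}$ for a connected non-separating surface of least pattern-complexity. It comes from a p-taut analogue of Tollefson--Wang showing that such a surface can be taken fundamental. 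Your ``vertex-type'' suggestion is unsupported.
\item[(iv)] Proposition~\ref{Prop:DecomposePolyTime} together with Theorem~\ref{Thm:CutHS}. These produce $\calH_{i+1}$ in time polynomial in the logarithm of the weight, using Agol--Hass--Thurston to identify and absorb the exponentially large parallelity bundle of $S_i$.
\item[(v)] Discs $D_i$ in the certificate for (pattern-)compressible vertical boundary annuli, processed via Proposition~\ref{Prop:CompressAnnuli}, since the refined algorithm may simplify handle structures this way.
\end{enumerate}

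Two further points.

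First, in (a), you cannot get certificates that the pieces are irreducible from 3-sphere recognition lying in NP. That certifies that a given sphere bounds a ball, not that every sphere does. These certificates are also unnecessary. Lemma~\ref{Lem:EssentialPullsBack} needs neither irreducibility nor any condition on the decomposing surfaces. Moreover, a violating disc in $M$ can be surgered off any sphere system without changing its boundary. So the verifier need only check a normal sphere system of weight at most $2^{740t^2}$ (King), retriangulate $M \cut S$ with balls capped on, and check the hierarchy. Irreducibility of the capped-off manifold is used only to guarantee that the hierarchy exists.

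Second, in the co-NP half you assert, but do not prove, that a fundamental (let alone vertex) violating disc exists. Your remark that irreducibility is not needed is also unjustified. The paper's exchange argument (Theorem~\ref{Thm:ViolatingFundamental}, resting on Proposition~\ref{Prop:NoProduct} and Matveev's results) uses irreducibility. The paper handles reducible $M$ only by first cutting along the same King-bounded sphere system.
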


When $P$ is inessential, the certificate is a violating disc in normal form. When $P$ is essential and $M$ is irreducible,
the certificate is an essential hierarchy for $(M,P)$. The fact that essential hierarchies can be encoded efficiently
and can be certified as essential will be extremely useful in future work \cite{Lackenby:LinkHyperbolicity}.

Much of the theory of hierarchies as explained in Section \ref{Sec:HierarchiesBPs} is due to Johannson \cite{Johannson}. The presentation here
follows that of a graduate course  \cite{Lackenby:Notes} given by the author in 1999. One of the goals of that course was to provide a new proof of the loop theorem using hierarchies. The fact that the theory also has algorithmic applications was observed in a conversation with Ian Agol in 2013. The author developed and expanded the theory over the intervening years, resulting in this paper and its sequel \cite{Lackenby:LinkHyperbolicity}.

The structure of the paper is as follows. In Section \ref{Sec:HierarchiesBPs}, we recall some basic material on hierarchies and boundary patterns. In Section \ref{Sec:EssHierachies}, we explore essential hierarchies and prove one direction of Theorem \ref{Thm:EssentialHierarchy}, which asserts that when a 3-manifold admits an essential hierarchy, then it is irreducible and has essential boundary pattern. In Section \ref{Sec:HandleNormal}, we introduce normal surfaces, in the context of handle structures with a boundary pattern. In Section \ref{Sec:DecomposingHS}, we examine how handle structures behave when decomposed along a normal surface. The goal here is to establish an upper bound on the number of decompositions in a hierarchy. This gives the other direction of Theorem \ref{Thm:EssentialHierarchy}, which gives a sufficient condition for a 3-manifold with boundary pattern to admits an essential hierarchy. In Section \ref{Sec:AlgorithmCompressible}, we present the algorithm required by Theorem \ref{Thm:AlgorithmEssentialPattern}, and in Section \ref{Sec:Example}, an implementation of the algorithm in a specific example is given. The algorithm to determine whether a surface is incompressible, as required by Theorem \ref{Thm:AlgorithmIncompressible}, is given in Section \ref{Sec:AlgorithmIncompressible}. A crude analysis of the number of steps in the algorithm is given in Section \ref{Sec:NumberSteps}. The crucial quantity that controls an upper bound on the running time is the number of decompositions in the hierarchy. Therefore, in Section \ref{Sec:AlgorithmParallelity}, we give a more complicated algorithm for constructing a hierarchy, that has the advantage that the number of decompositions is explicitly controlled. In Section \ref{Sec:UniformType}, we show that the handle structures created in the hierarchy have uniform type, which means that each handle meets the pattern and its neighbouring handles in one of finitely many types, independent of the manifold. This is important for the later parts of the paper, in Sections \ref{Sec:Fundamental}, \ref{Sec:CertifyingInessential} and \ref{Sec:CertifyingEssentialHierarchy}, where we show that the decision problem in Theorem \ref{Thm:EssentialNPcoNP}, deciding whether a pattern is essential, is in NP and co-NP.

\begin{remark} The author was partially supported by EPSRC grant EP/Y004256/1. For the purpose of open access, the author has applied a CC BY public copyright licence to any author accepted manuscript arising from this submission.
\end{remark}

\section{Hierarchies and boundary patterns}
\label{Sec:HierarchiesBPs}

We will use the following terminology, which has become standard.

\begin{definition}
\label{Def:MCutS}
Let $M$ be a manifold and let $S$ be either a codimension one submanifold properly embedded in $M$ or a compact codimension zero submanifold.
Then the manifold $M \cut S$ \emph{obtained by cutting $M$ along $S$} is defined as follows. Pick a Riemannian metric on $M$. Then $M - S$ inherits a Riemannian metric and $M \cut S$ is the defined to be metric completion of $M - S$. We also permit $S$ not to be a subset of $M$ as long as $S$ and $M$ are both subsets of some larger space and $S \cap M$ satisfies the above conditions, in which case $M \cut S$ is defined to be $M \cut (S \cap M)$. When $M$ is compact and $S$ is a codimension one submanifold, then the inclusion $M - S \rightarrow M$ induces a quotient map $M \cut S \rightarrow M$.
\end{definition}

Our algorithm makes use of hierarchies. Here, one starts with a compact orientable 3-manifold, and then one cuts along a properly embedded orientable surface, forming a new 3-manifold. In this manifold, one finds another properly embedded orientable surface, and one cuts along this. The process continues until the final manifold is a collection of 3-balls. The manifolds in the hierarchy will inherit a boundary pattern, as follows.

\begin{definition}
\label{Def:Decomposition}
Let $(M,P)$ be a compact orientable 3-manifold with boundary pattern. Then a compact orientable surface $S$ properly embedded in $M$ is \emph{transverse} to $P$ if it is disjoint from the vertices and intersects the edges transversely. We then say that $S$ is a \emph{decomposing surface}. The manifold $M \cut S$ inherits a boundary pattern $P'$, where $P'$ is the union of $P \cap (M \cut S)$ and the copies of $\partial S$ in $\partial (M \cut S)$. We write
$$(M, P) \xrightarrow{S} (M \cut S, P')$$
for this decomposition. We also denote $(M \cut S, P')$ by $(M,P) \cut S$. (See Figure \ref{Fig:DecomposeSurface}.)
\end{definition}

\begin{figure}[h]
\centering
\includegraphics[width=0.8\textwidth]{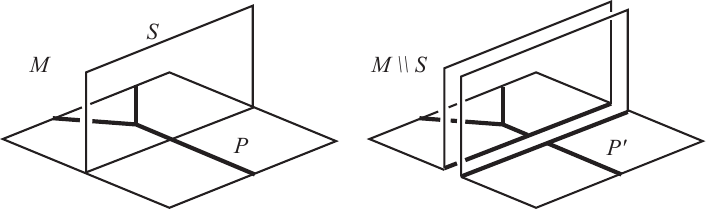}
\caption{Decomposing $(M,P)$ along a surface $S$} \label{Fig:DecomposeSurface}
\end{figure}

\begin{definition}
\label{Def:Hierarchy}
Let $(M,P)$ be a compact orientable 3-manifold with boundary pattern. Then a \emph{partial hierarchy} for $(M,P)$ is a sequence of decompositions
$$(M, P)  = (M_1,P_1) \xrightarrow{S_1} (M_2, P_2) \xrightarrow{S_2} \dots \xrightarrow{S_\ell} (M_{\ell+1}, P_{\ell+1}).$$
It is a \emph{hierarchy} if $M_{\ell+1}$ is a disjoint union of 3-balls. 
\end{definition}

This is a modification of the standard terminology, since we do not require the surfaces to be incompressible (although they will end up being so in the cases
of interest to us).

Recall, from Section \ref{Sec:Intro}, the definition of an essential boundary pattern.
At first sight, this is a strange definition, because it is not at all clear why one should be interested only in simple closed curves
that intersect the pattern in at most three points. We will give more motivation for it in the results that follow. 

\begin{definition}
\label{Def:EssentialHierarchy} 
An \emph{essential hierarchy} for a compact orientable 3-manifold $M$ with boundary pattern $P$ is a 
sequence of decompositions
$$(M, P)  = (M_1,P_1) \xrightarrow{S_1} (M_2, P_2) \xrightarrow{S_2} \dots \xrightarrow{S_\ell} (M_{\ell+1}, P_{\ell+1}).$$
where $(M_{\ell+1}, P_{\ell+1})$ is a collection of balls with essential boundary pattern.
\end{definition}

Again, it is not immediately clear why one should only be concerned with whether the final pattern is essential. But this is
explained by the following result.

\begin{theorem}[Essential hierarchy iff essential pattern and irreducible]
\label{Thm:EssentialHierarchy}
Let $M$ be a compact orientable 3-manifold with a boundary pattern $P$.
Then $(M,P)$ admits an essential hierarchy if and only if the following all hold:
\begin{enumerate}
\item either $M$ contains a properly embedded orientable incompressible surface with no 2-sphere components, or $\partial M$ is non-empty;
\item $P$ is essential;
\item $M$ is irreducible.
\end{enumerate}
\end{theorem}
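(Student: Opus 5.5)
The two directions are of very different character, and I would prove them separately.

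For the forward direction, I would argue by downward induction along an essential hierarchy
$$(M_1,P_1) \xrightarrow{S_1} \dots \xrightarrow{S_\ell} (M_{\ell+1},P_{\ell+1}).$$
The key step is an inductive lemma: if $(M\cut S, P')$ is irreducible with essential pattern, then so is $(M,P)$. Irreducibility lifts by the standard innermost-disc argument, applied to a 2-sphere $\Sigma$ in $M$.
\begin{enumerate}
\item Make $\Sigma$ transverse to $S$ and consider an innermost curve of $\Sigma \cap S$.
\item It bounds a disc $D\subset\Sigma$ lying in $M\cut S$, with $\partial D \subset S$. So $\partial D$ is disjoint from $P'$, since $\partial S$ lies in the pattern and $D$ meets the copy of $S$ in its interior.
\item Because $P'$ is essential, $\partial D$ bounds a disc $D'$ in $\partial(M\cut S)$ disjoint from $P'$. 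Hence $D'$ lies in a copy of $S$ and misses $\partial S$.
\item By irreducibility of $M\cut S$, the sphere $D\cup D'$ bounds a ball, and an isotopy reduces $|\Sigma\cap S|$.
\end{enumerate}
For essentiality, take a disc $D$ in $M$ meeting $P$ at most three times and make it transverse to $S$. I would first remove closed curves of $D\cap S$ by the same argument, and then arcs by an outermost-arc argument. An outermost arc cuts off a disc $E\subset D$ lying in $M\cut S$. Its boundary meets $P'$ in at most three points of $P$ plus the points where it crosses $\partial S$, and the count must be tracked carefully. This is where the bound "three" matters. If $E$ meets $P$ in at most one point, then $\partial E$ meets $P'$ at most three times, and essentiality of $P'$ gives a disc $E'$ along which we isotope. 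Base case: a ball with essential pattern is irreducible, and its boundary is trivially compressible-free in the required sense.

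Condition (1) holds as well. Either $\partial M\neq\emptyset$, or $M$ is closed, in which case $S_1$ is closed. Then I would show $S_1$ is incompressible with no sphere components. A sphere component would be a surface whose complement pieces carry an essential pattern, which is impossible by the same pattern argument: a sphere boundary component with empty pattern contains a violating curve. So the later essential pattern forces incompressibility of $S_1$, via the innermost disc argument above applied to a compressing disc.

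The reverse direction I expect to be the main obstacle. Given (1)--(3), I would build a hierarchy by repeatedly cutting along surfaces chosen to keep the pattern essential and the manifold irreducible. In the closed case, begin with the incompressible surface from (1). With nonempty boundary, follow Haken's approach: use a nonseparating or boundary-incompressible surface chosen relative to $P$, for instance a normal surface of least weight, or an incompressible and boundary-incompressible surface meeting $P$ minimally. The tasks are then:
\begin{enumerate}
\item prove a lemma that cutting along such a surface preserves irreducibility and essentiality of the pattern, this being the converse-style check on discs meeting $P'$ at most three times;
\item establish termination, via a complexity argument (Haken finiteness, or the normal-surface bound on the number of decompositions promised in Section~\ref{Sec:DecomposingHS});
\item show that when no further decomposition is possible, the pieces are balls with essential pattern.
\end{enumerate}
The hardest step is choosing surfaces so that discs meeting $P'$ three times, where up to two of the intersections come from $\partial S$, cannot become violating. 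I would try taking $S$ to minimise the number of intersections with $P$ among essential surfaces and argue that a violating disc would allow a reduction of this minimal intersection.
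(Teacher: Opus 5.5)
\textbf{Forward direction.} This is the paper's argument: Lemmas~\ref{Lem:EssentialPullsBack} and~\ref{Lem:EssentialIrredPullsBack}, applied inductively from the final balls. It needs two corrections.

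\emph{Outermost-arc step.} You need an outermost disc whose arc of $\partial D$ meets $P$ at most once. Such a disc exists, because there are at least two outermost discs, their arcs of $\partial D$ are disjoint, and together they carry at most three points of $P$. The paper instead counts $4n+3$ points of $P'$ over $n+1$ pieces. Either way, this has to be said.

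\emph{Condition (1).} Your argument is wrong. A sphere boundary component with empty pattern contains no violating curve, since every curve on it bounds a pattern-free disc. In fact $S^3 \xrightarrow{S^2} B^3 \sqcup B^3$ is an essential hierarchy, because the empty pattern on a ball is essential (Proposition~\ref{Prop:EssentialBall}). Yet $S^3$ contains no non-empty incompressible surface without sphere components. So (1) cannot be derived in this generality, and the paper's proof of this direction delivers only (2) and (3). For closed $M \neq S^3$, (1) does follow. Lemma~\ref{Lem:EssentialPullsBack} makes $S_1$ incompressible. And $S_1$ cannot consist solely of spheres, since $M_2$ would then have a component homeomorphic to $M$ minus open balls, which is reducible, whereas $M_2$ is irreducible.

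\textbf{Reverse direction.} This is where the genuine gap lies. You list tasks but supply neither of the two ideas that make it work.

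\emph{Choice of surface.} Your concrete suggestion is to minimise $|S\cap P|$ and use a violating disc to reduce it; as stated, this fails. Suppose $D\cap S$ is an essential arc of $S$ and the rest of $\partial D$ crosses $P$ once. Then $D$ is violating in $M\cut S$, but pattern-compressing along it \emph{increases} $|S\cap P|$ by two. For example, an annulus disjoint from $P$, which already attains $|S\cap P|=0$, becomes a disc meeting $P$ twice.

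You need a complexity that trades Euler characteristic against pattern points. The paper minimises $\chi_p(S)=-4\chi(S)+|S\cap P|$ over all connected orientable non-separating surfaces. These exist because $b_1(M)\geq b_1(\partial M)/2>0$ once $\partial M$ is not a union of spheres; otherwise each component is a ball by irreducibility. By Lemma~\ref{Lem:CompressionAndPatternComplexity}, compressing or pattern-compressing yields a non-separating component of strictly smaller $\chi_p$. The delicate case is when a separating disc meeting $P$ twice or three times splits off. Hence the minimiser is incompressible and pattern-incompressible (Corollary~\ref{Cor:MinimalComplexityImpliesEssential}), and your ``hardest step'' is then exactly Lemma~\ref{Lem:EssentialPushesForward}.

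\emph{Termination.} You only name this, but in the paper it is the content of Sections~\ref{Sec:HandleNormal}--\ref{Sec:DecomposingHS}. The steps are as follows.
\begin{enumerate}
\item $S$ is made normal in an admissible handle structure.
\item Index is additive under decomposition (Lemma~\ref{Lem:DecompIndex}), and normality plus admissibility forbid negative-index pieces (Lemma~\ref{Lem:AdmissibleDecomposition}). So $c_q(\calH)=\sum_H \max\{I(H),0\}^2$ does not increase.
\item It strictly decreases unless $S$ is homologous to a vertical surface in the interior parallelity bundle (Proposition~\ref{Prop:ComplexityDecreases}). In that case one cuts along a vertical annulus instead, which lowers $m(\calH)$ (Lemma~\ref{Lem:CutAlongVertical}).
\item Admissibility is restored by cutting along non-trivial squares, which are pattern-incompressible by Lemma~\ref{Lem:PatternComp4Disc}. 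This does not increase $(c_q,m)$ (Lemma~\ref{Lem:ComplexitiesDecreaseInAdmissibleAlgorithm}).
\end{enumerate}
The pair $(c_q,m)$ therefore decreases lexicographically.

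A Kneser--Haken finiteness argument can replace this; the paper notes at the end of Section~\ref{Sec:EssHierachies} that this was the earlier route. But you would have to explain how it bounds a sequence of decompositions of successively cut-open manifolds, and you have not.
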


It is relatively straightforward to determine quickly whether a boundary pattern for a 3-ball is essential.
This can be rephrased in standard graph-theoretic terms as follows. Recall that a graph is \emph{simple}
if it has no edge loops and does not have multiple edges between pairs of vertices. A simple graph is \emph{$n$-connected} for some
positive integer $n$ if it has at least $n+1$ vertices and there is no set of $n-1$ vertices whose removal disconnects
the graph.

\begin{proposition}[Essential pattern for a ball]
\label{Prop:EssentialBall}
Let $M$ be a 3-ball, and let $P$ be a boundary pattern for $M$.
Then $P$ is essential if and only if $P$ is empty, a simple closed curve or a connected
graph, and in the latter case, the planar graph dual to $P$ is either a simple 4-connected graph or the complete graph on 3 or 4 vertices.
\end{proposition}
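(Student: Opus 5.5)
We reduce the statement to a question about simple closed curves on the 2-sphere $\partial M$. Every simple closed curve in $\partial M$ bounds a properly embedded disc in the 3-ball $M$. Conversely, every properly embedded disc $D$ has boundary a curve $C$ cutting $\partial M$ into two discs $E_1, E_2$. So $P$ is essential if and only if every curve $C$ in $\partial M$ meeting $P$ transversely in at most three points (away from vertices) has a side $E_i$ with $E_i \cap P$ empty, a single arc, or a tripod. We treat each intersection number $|C \cap P| \in \{0,1,2,3\}$ in turn and dualise: when $P$ is a connected graph, $C$ corresponds to a closed walk in the dual graph $P^*$ of length $|C\cap P|$, and the two sides of $C$ correspond to the partition of the vertices of $P^*$ (faces of $P$) that it induces.

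First we handle the non-graph cases.
\begin{itemize}
\item If $P$ is disconnected, with components not all closed curves forming a single curve, then some curve $C$ disjoint from $P$ separates two components. Neither side is then empty or an arc or tripod, unless a component is itself a single closed curve, which is impossible since $P$ has no boundary.
\item If $P$ has a component containing a vertex, then $C$ can be chosen to separate that component from the rest, so neither side meets $P$ trivially.
\item The same argument excludes two disjoint closed curves.
\end{itemize}
Hence $P$ is empty, a single simple closed curve, or a connected trivalent graph. In the first two cases essentiality is immediate: a curve $C$ meets a simple closed curve in an even number of points, namely $0$ or $2$, and one side then contains nothing or a single arc.

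Now suppose $P$ is a connected trivalent graph, so $P^*$ is a triangulation-type planar graph with every face a triangle. We match intersection numbers with conditions on $P^*$.
\begin{itemize}
\item $|C\cap P|=0$: $C$ lies in one face, and the side inside that face is empty, so this is automatically fine.
\item $|C\cap P|=1$: $C$ crosses one edge $e$ and returns within a face, which happens exactly when $e$ has the same face on both sides, i.e.\ when $P^*$ has a loop. A side then contains a piece of $P$ through $e$ that is not a single arc, as $P$ is trivalent and connected with vertices on both sides.
\item $|C\cap P|=2$: $C$ is a 2-cycle of $P^*$, which is either a loop or a pair of parallel edges. Having a side meeting $P$ in a single arc means the cycle is the trivial one around a single edge. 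So essentiality here is equivalent to $P^*$ being simple.
\item $|C\cap P|=3$: $C$ is a closed walk of length $3$ in $P^*$, i.e.\ a triangle in a simple graph. A side meeting $P$ in a tripod means that side contains exactly one vertex of $P$, i.e.\ the triangle is a face of $P^*$. So essentiality here is equivalent to every 3-cycle of $P^*$ being facial.
\end{itemize}
We also check that the pieces $E_i\cap P$ are recognised correctly. A side meeting $P$ in a single arc contains no vertex. A side meeting $P$ in a tripod contains exactly one vertex, and by trivalence that vertex is joined directly to the three crossing points.

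The main obstacle is the standard graph-theoretic equivalence: a simple planar triangulation has all 3-cycles facial if and only if it is 4-connected or is $K_3$ or $K_4$. For one direction, a separating set of at most three vertices in a triangulation yields, via the faces around it, a separating cycle of length at most three. That cycle is non-facial unless the graph is small, which gives $K_3$ and $K_4$ as the exceptions; $K_4$ has fewer than five vertices, so it is not 4-connected by the definition. Conversely, a non-facial triangle separates the plane with vertices on both sides, and its three vertices form a separating set.

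We also need two small points. Small degenerate cases must be checked by hand: $P^*$ with two vertices corresponds to $P$ being a theta-like graph, which is not simple. And since $P^*$ is the dual of a trivalent graph, every face of $P^*$ is a triangle, so $P^*$ is indeed a triangulation and the argument above applies.
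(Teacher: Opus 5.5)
Your proof is correct and follows essentially the paper's argument: the same case analysis on $|C \cap P|$ read off in the dual graph. The one difference is that you pass through the intermediate condition that every triangle of $P^*$ is facial and then quote the standard fact that a simple triangulation has no separating triangle iff it is 4-connected, $K_3$ or $K_4$, whereas the paper argues directly with separating sets of at most three regions of $\partial M - P$.

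One side remark needs correcting. The theta graph is not a non-simple degenerate case: its dual is $K_3$, which is simple and is exactly the $K_3$ exception in the statement. Moreover, by Euler characteristic ($F = 2 + V/2$ for a connected trivalent $P$), the dual always has at least three vertices, so no two-vertex case arises.
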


\begin{proof}
We first show that the given conditions on $P$ imply that $P$ is essential.
If $P$ is empty or a simple closed curve, then clearly $P$ is essential. So suppose that $P$ is a connected graph and that the graph $G$ dual to $P$ is simple and 4-connected or the complete graph on 4 vertices. Consider a simple closed curve $C$ in $\partial M$ that intersects $P$ transversely in at most 3 points. If $C$ is disjoint from $P$, then it must bound a disc in $\partial M$ disjoint from $P$, since $P$ is connected and $\partial M$ is a sphere. If $C$ intersects $P$ at one point, this specifies an edge loop in $G$, contrary to our assumption that $G$ is simple. If $C$ intersects $P$ at two points, then these two points must lie on the same edge of $P$, as otherwise we obtain two edges between the same pair of vertices of $G$. Hence, $C$ bounds a disc in $\partial M$ intersecting $P$ in an arc. If $C$ intersects $P$ at three points, then this specifies a closed walk of length 3 in $G$. This is a cycle because otherwise $G$ is not simple. Removing the vertices in this cycle does not separate $G$, because $G$ is 4-connected or the complete graph on 3 or 4 vertices. Hence, $C$ must bound a disc intersecting no regions of $\partial M - P$ other than the three that $C$ visits. This implies that $P$ intersects this disc in a tripod. Thus we have shown that $P$ is essential.

Conversely, suppose that $P$ is essential. We may suppose that $P$ is non-empty and not a simple closed curve, as the implication is immediate in those cases. Then $P$ is clearly connected as otherwise there is a simple closed curve in $\partial M - P$ that does not bound a disc in $\partial M$ disjoint from $P$.  Let $G$ be its dual planar graph. An edge loop in $G$ would give a simple closed curve intersecting the pattern once, contradicting the assumption that $P$ is essential. More than one edge between a pair of vertices would give a simple closed curve in $\partial M$ intersecting $P$ in exactly two points that lie in different edges of $P$. Again this contradicts the assumption that $P$ is essential. Consider a collection of at most three vertices in $G$. We will show that these do not separate $G$. Each vertex corresponds to a region of $\partial M - P$, and for these vertices to disconnect $G$, the union of these regions must disconnect $\partial M$. If they do then there must be a simple closed curve lying in the union of these regions that separates at least two other regions of $\partial M - P$. Since $P$ is essential, this curve bounds a disc in $\partial M$ intersecting $P$ in an arc or a tripod. This implies that these three vertices did not in fact separate $G$. Note that removing these three vertices of $G$ may create the empty graph, but in this case, $G$ is the complete graph on 3 vertices. Hence, we have shown that $G$ is 4-connected or the complete graph on 3 or 4 vertices.
\end{proof}

Note that it can be determined in polynomial time (as a function of the number of vertices and edges) whether a graph is simple and 4-connected or the complete graph on 4 vertices.

Theorem \ref{Thm:EssentialHierarchy} is an extremely powerful tool for showing that a pattern is essential, as the following example demonstrates.

\begin{example}
The knot $5_2$ is shown in Figure \ref{Fig:KnotEssentialHierarchy}. Let $M$ be its exterior and let $P$ be the empty pattern. A hierarchy for $(M,P)$ is given. The final manifold consists of two 3-balls, and the boundary pattern of one of them is shown. It is easy to check that this pattern is essential, and hence by Theorem \ref{Thm:EssentialHierarchy}, $(M,P)$ is essential and so the knot $5_2$ is not the unknot.
\end{example}

\begin{figure}[h]
\centering
\includegraphics[width=4in]{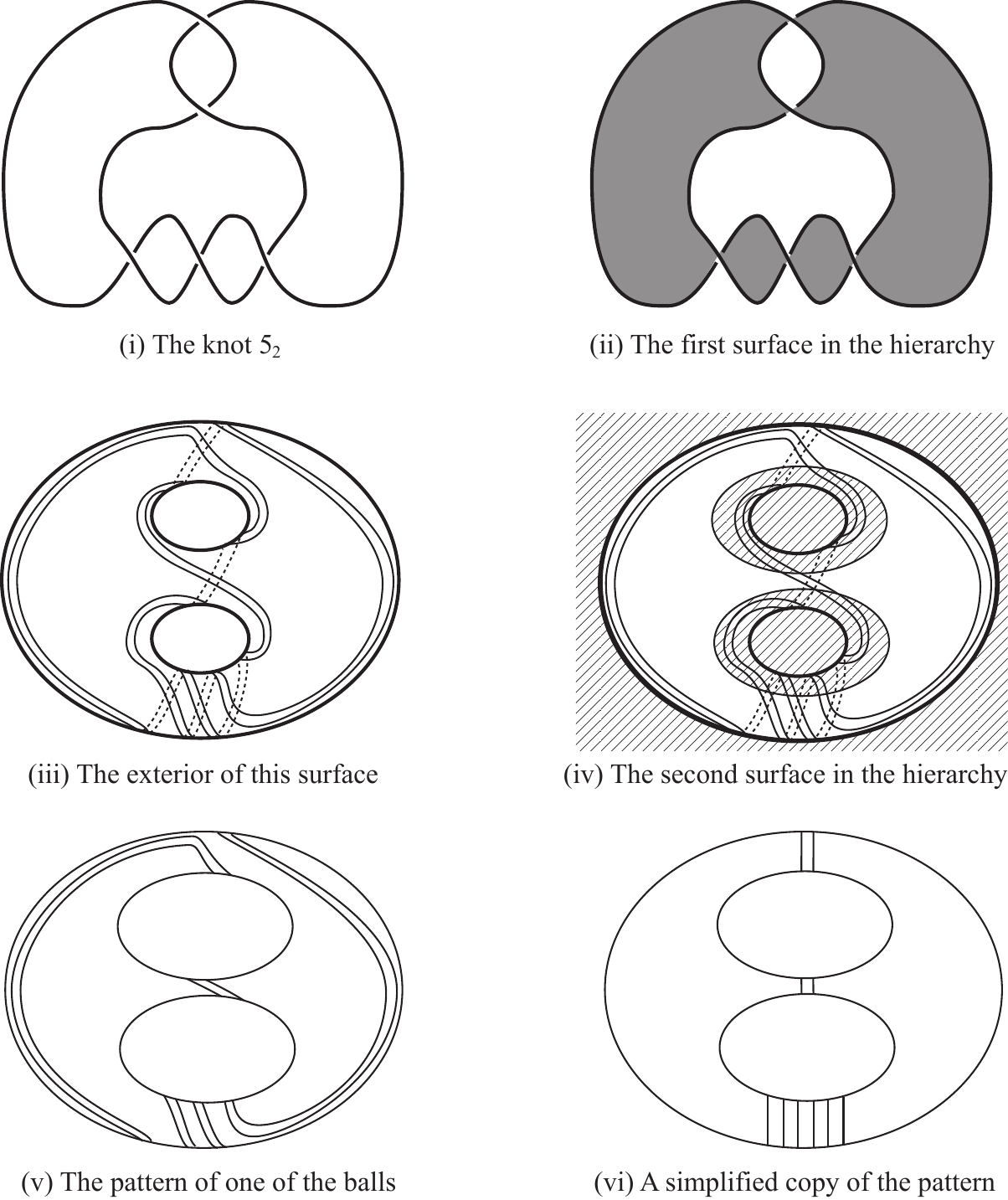}
\caption{An essential hierarchy} \label{Fig:KnotEssentialHierarchy}
\end{figure}

Theorem \ref{Thm:EssentialHierarchy} is not new. It is implicit in work of Johannson \cite{Johannson} and a version of it was used by author in lecture notes in 1999 \cite{Lackenby:Notes}. However, because it is so central to our work and because we will need details of the argument later in the paper, we give a full proof. 

\section{Essential hierarchies}\label{Sec:EssHierachies}

Our aim is to prove 
Theorem \ref{Thm:EssentialHierarchy}.
We first show that if $(M,P)$ admits an essential hierarchy, then $P$ is essential and $M$ is irreducible. We will also establish that
the decomposing surfaces have the following important property.

\begin{definition}\label{Def:PatternCompDisc}
Let $M$ be a compact orientable 3-manifold and let $P$ be a boundary pattern. Let $S$ be a compact surface properly
embedded in $M$. Then a \emph{pattern-compression disc} for $S$ is a disc $D$ embedded in $M$ such that
\begin{enumerate}
\item $D \cap S$ is an arc $\alpha$ in $\partial D$;
\item $\partial D \cut \alpha  = D \cap \partial M$ intersects $P$ at most once; and
\item $\alpha$ does not separate off a disc from $S$ intersecting $P$ at most once.
\end{enumerate}
If no such pattern-compression disc exists, then $S$ is \emph{pattern-incompressible}. (See Figure \ref{Fig:PatternIncomp}.) If $D$ is a pattern-compression disc for $S$, 
then the surface obtained from $S$ by cutting along $S \cap D$ and then attaching two parallel copies of $D$ is the surface
obtained by \emph{pattern-compressing} along $D$. 
\end{definition}

\begin{figure}[h]
\centering
\includegraphics[width=0.8\textwidth]{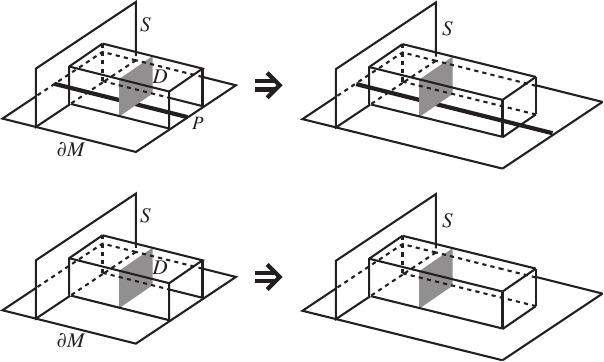}
\caption{A pattern-incompressible surface. (For the sake of clarity, we have assumed here that $M$ is irreducible and $P$ is essential.)} \label{Fig:PatternIncomp}
\end{figure}

A key step in the proof of Theorem \ref{Thm:EssentialHierarchy} is the following.

\begin{lemma}[Essential pattern pulls back]
\label{Lem:EssentialPullsBack}
Let 
$$(M, P) \xrightarrow{S} (M', P')$$
be a decomposition and suppose that $P'$ is essential. Then $P$ is essential and $S$ is incompressible and pattern-incompressible.
\end{lemma}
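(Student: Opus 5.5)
We prove the three conclusions separately, in each case assuming a violating object in $(M,P)$ and producing a violating disc for $P'$ in $(M',P')$. The basic tool is a cut-and-paste reduction: given a properly embedded disc $D$ in $M$ transverse to $P$, make $D$ transverse to $S$ and reduce $|D \cap S|$ as far as possible. Closed curves of $D \cap S$ are removed first, using innermost curves together with the essentiality of $P'$. Then arcs are removed using outermost arcs. At the end, $D$ (or a subdisc of it) lies in $M'$, and we read off its intersection with $P'$.

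\textbf{Incompressibility of $S$.} Suppose $D$ is a compression disc for $S$. Its interior may meet $S$; take a curve of $D\cap S$ innermost in $D$, bounding a subdisc $E$ with interior disjoint from $S$. Then $E$ is a properly embedded disc in $M'$ whose boundary lies in a copy of $S$. Hence $\partial E$ is disjoint from $P'$, apart from possibly meeting $P\cap S$; we may assume $D$ avoids $\partial S$, so $\partial E \cap P' = \emptyset$. Since $P'$ is essential, $\partial E$ bounds a disc $E'$ in $\partial M'$ disjoint from $P'$. Because $E'$ misses the copies of $\partial S$, it lies entirely in a copy of $S$. If $\partial E$ is $\partial D$ itself, this contradicts $\partial D$ being essential in $S$. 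Otherwise $\partial E$ bounds a disc in $S$, and we replace the part of $D$ by this disc and push it off, reducing $|D\cap S|$ without changing $\partial D$. We induct on $|D \cap S|$.

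\textbf{Pattern-incompressibility.} Let $D$ be a pattern-compression disc with $D\cap S=\alpha$. First remove closed curves of $D\cap S$ in the interior of $D$ as above, using incompressibility of $S$. A disc in $S$ bounded by such a curve is disjoint from $P'$, so the surgery is harmless. Now $D$ lies in $M'$, and $\partial D$ meets $P'$ in the two endpoints of $\alpha$ (points of the copy of $\partial S$) plus at most one point of $P$, so at most three points. By essentiality, $\partial D$ bounds a disc $D'\subset\partial M'$ meeting $P'$ in an arc or a tripod. In the arc case, the two endpoints of $\alpha$ lie on the same edge, a copy of a component of $\partial S$, so the $\partial M$-side of $\partial D$ cannot cross $P$. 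Then $D'$ minus that edge gives a disc in the copy of $S$ cut off by $\alpha$, meeting $P$ at most once (in fact not at all), contradicting condition (3). In the tripod case, the three legs are the two endpoints of $\alpha$ and the single point of $P$. The tripod vertex lies on $\partial S\cap P$, and the part of $D'$ on the $S$ side is a disc in $S$ cut off by $\alpha$ that meets $P$ in a single arc, hence once on $\partial S$. This again contradicts (3). The case where $\partial D$ meets $P'$ in fewer points is similar.

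\textbf{$P$ is essential.} Let $D$ be a disc in $M$ meeting $P$ transversely in at most three points. Isotope $D$ to be transverse to $S$ and minimise $|D\cap S|$. Closed curves are removed as before. For arcs, take an outermost arc of $D\cap S$ in $D$, cutting off a subdisc $E$ with $E\cap\partial M$ meeting $P$ in at most one point. Call this the key step. Either $E$ is a pattern-compression disc, contradicting what was shown above, or the arc cuts off a disc in $S$ meeting $P$ at most once. In the latter case we isotope $D$ across it and reduce $|D \cap S|$. The count of intersections with $P$ must not go up, and I expect this bookkeeping, when the outermost region contains the single point of $P$, to be the main obstacle; it may be necessary to choose the outermost arc whose region contains the fewest points of $P$. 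Once $D\cap S=\emptyset$, $D$ is a disc in $M'$ meeting $P'$ exactly as it meets $P$. The disc $D'\subset\partial M'$ provided by essentiality of $P'$ then lies in $\partial M$, because it contains no copy of $\partial S$ in its interior: a component of $\partial S$ would add more pattern points or a closed pattern component to $D'$, and an arc or tripod contains no closed curve. Thus $D'$ is the required disc for $P$.
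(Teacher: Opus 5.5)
Your arguments for incompressibility and pattern-incompressibility are correct and are essentially the paper's. The handling of interior curves is unnecessary, since a compression or pattern-compression disc meets $S$ only in $\partial D$ or in $\alpha$. The gap is in the proof that $P$ is essential, at exactly the step you flag as the main obstacle.

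Using pattern-incompressibility as a black box only gives you \emph{some} disc $F\subset S$ cut off by $\alpha$ with $|F\cap P|\le 1$. This loses two facts you need.
\begin{enumerate}
\item It does not give $|F\cap P|\le|\beta\cap P|$. If $S$ is a disc, $\alpha$ cuts off two discs, and the one you are handed may meet $P$ once while $\beta$ misses $P$. The new disc could then meet $P$ four times. Choosing the outermost region with the fewest points of $P$ does not address this, because the problem lies in which disc of $S$ you use, not which outermost arc.
\item It does not show that the modified disc is violating whenever $D$ is. You call the move an isotopy, but $M$ is not assumed irreducible, so $E\cup F$ need not bound a ball. More importantly, violating-ness depends only on the boundary curve. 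You therefore need $\beta$ and the arc $\gamma=\partial F\cap \partial S$ to cobound a disc in $\partial M$ meeting $P$ in at most one transverse arc. Nothing in your argument provides this, so nothing relates $\partial D$ to the new boundary.
\end{enumerate}

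The fix is to apply the essentiality of $P'$ directly to the outermost piece $E$, as you already do in your pattern-incompressibility step, rather than using its weaker consequence.
\begin{itemize}
\item Since $|\partial E\cap P'|=2+|\beta\cap P|\le 3$, the curve $\partial E$ bounds a disc $D_2\subset\partial M'$ meeting $P'$ in an arc or a tripod.
\item $D_2$ splits along a copy of an arc $\gamma\subset\partial S$ into two pieces. One is $F=D_2\cap S$, with $|F\cap P|=|\gamma\cap P|=|\beta\cap P|$. The other is a disc $G\subset\partial M$ with $\partial G=\beta\cup\gamma$, meeting $P$ either not at all or in the single tripod leg running from $\beta$ to $\gamma$.
\item Replacing $E$ by a push-off of $F$ keeps $|\partial D\cap P|$ unchanged. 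It changes $\partial D$ by an isotopy across $G$ through curves transverse to $P$, so the new disc is violating if and only if $D$ was.
\end{itemize}
This is the paper's argument. It proves that $P$ is essential first, without using pattern-incompressibility. It finds $E$ by noting that the $n+1$ pieces of $D$ meet $P'$ at most $4n+3$ times in total. Hence some piece meets $P'$ at most three times, and such a piece is necessarily outermost with $|\beta\cap P|\le 1$.
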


\begin{proof}
We first show that $P$ is essential.
Let $D$ be a disc properly embedded in $M$ that is transverse to $P$ and that intersects $P$ at most three times. We wish to show that $D$ is not a violating disc.

Perform a small isotopy to $S$, leaving $P$ invariant, so that afterwards $S \cap D$ is a collection of simple closed curves and arcs properly embedded in $D$. 

If there is a simple closed curve of $S \cap D$, there is one that is innermost in $D$. It bounds a disc $D'$ in $M'$ that is disjoint from $P'$. Since we are assuming that $P'$ is essential, $\partial D'$ bounds a disc $D''$ in $\partial M'$ that is disjoint  from the pattern. This disc is a subset of $S$. We can remove $D'$ from $D$ and replace it by a parallel copy of $D''$. The result is violating for $P$ if and only if $D$ was violating. However, its intersection with $S$ contains fewer simple closed curves.

So, we may assume that $D \cap S$ is a collection of arcs properly embedded in $D$. Say there are $n>0$ of them. Then at the endpoints of these arcs, we see $4n$ points of intersection with $P'$. We also have at most $3$ points of intersection between $\partial D$ and $P$. So, in the $n+1$ discs $D \cut (D \cap S)$, there are at most $4n+3$ points of intersection with $P'$. Hence, one these discs, $D_1$ say, has at most three points of intersection with $P'$ in its boundary. Necessarily, it has at least two points of intersection with $P'$. We are assuming that $P'$ is essential. Hence, $\partial D_1$ bounds a disc $D_2$ in $\partial M'$. This disc $D_2$ intersects $P'$ either in a single arc or a tripod. Therefore, $D_2\cap S$ is a disc. We may remove $D_1$ from $D$ and replace it by $D_2 \cap S$, and then perform a further small isotopy to give a disc $D_3$ properly embedded in $M$ that is transverse to $P$ and that has fewer components of intersection with $S$. This is a violating disc for $M$ if and only if $D$ was.

So we may assume that $D$ is disjoint from $S$ and hence lies in $M'$. Since $P'$ is essential, $\partial D$ bounds a disc $D'$ in $\partial M'$ intersecting $P'$ in the empty set, an arc or a tripod. Since $\partial D'$ is disjoint from $S$, all of $D'$ must be disjoint from $S$. So, $D'$ lies in $\partial M$. So, $D$ was not a violating disc, and so we have proved that $P$ is essential. 

We now show that $S$ is incompressible. Consider a disc $D$ embedded in the interior of $M$ such that $D \cap S = \partial D$. Then $D$ is properly embedded in $M'$ and is disjoint from the pattern $P'$. Since $P'$ is essential, $\partial D$ bounds a disc in $\partial M'$ disjoint from $P'$. This is a disc in $S$. So $D$ is not a compression disc for $S$.

We now show that $S$ is pattern-incompressible. Let $D$ be a disc embedded in $M$ such that $D \cap S$ is an arc in $\partial D$, and the remainder of $\partial D$ is an arc in $\partial M$ intersecting $P$ at most once. Then $D$ lies in $M'$ and intersects $P'$ at least twice and at most three times. Since $P'$ is essential, $\partial D$ bounds a disc $D'$ in $\partial M'$ intersecting $P'$ in an arc or a tripod.
The intersection between $D'$ and $S$ is a disc separated off by $D \cap S$ that intersects $P$ at most once. So $D$ is not a pattern-compression disc.
\end{proof}

\begin{lemma}[Irreducible and essential pattern pulls back]
\label{Lem:EssentialIrredPullsBack}
Let 
$$(M, P) \xrightarrow{S} (M', P')$$
be a decomposition and suppose that $M'$ is irreducible and $P'$ is essential. Then $M$ is irreducible and $P$ is essential.
\end{lemma}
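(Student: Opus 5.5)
The essentiality of $P$ follows at once from Lemma \ref{Lem:EssentialPullsBack}, since $P'$ is essential. That lemma also tells us that $S$ is incompressible. So the only real content is irreducibility of $M$, and I would prove it by the standard innermost-curve argument.

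Let $\Sigma$ be a 2-sphere embedded in the interior of $M$. Isotope $\Sigma$ so that it is transverse to $S$. Then $\Sigma \cap S$ is a finite collection of simple closed curves. I would argue by induction on the number of these curves, showing that $\Sigma$ bounds a ball in $M$.

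\emph{Base case.} If $\Sigma \cap S = \emptyset$, then $\Sigma$ lies in $M'$. Since $M'$ is irreducible, $\Sigma$ bounds a ball $B$ in $M'$. The image of $B$ under the quotient map $M' \to M$ is a ball in $M$, because $B$ is disjoint from $\partial M'$ apart from nothing at all: $\Sigma$ lies in the interior and $B$ is the ball it bounds, so $B$ lies in the interior of $M'$. Hence $\Sigma$ bounds a ball in $M$.

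\emph{Inductive step.} Otherwise, choose a curve $C$ of $\Sigma \cap S$ that is innermost on $\Sigma$. It bounds a disc $E \subset \Sigma$ whose interior is disjoint from $S$. By incompressibility of $S$, the curve $C$ bounds a disc $E' \subset S$. Choose $C$, among the innermost curves, so that $E'$ is innermost in $S$ too. Concretely, I would take the curve of $\Sigma \cap S$ whose disc in $S$ is minimal: such a curve bounds a disc $E'$ in $S$ whose interior misses $\Sigma$. The sphere $E \cup E'$ does not meet $S$ except in $E'$ after a small push, so it can be pushed slightly into $M'$. Then it bounds a ball there by irreducibility of $M'$. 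Surgering $\Sigma$ along $E'$ produces two spheres $\Sigma_1$ and $\Sigma_2$, each meeting $S$ in fewer curves. By induction, each bounds a ball in $M$. A standard argument then shows that $\Sigma$ bounds a ball: $\Sigma$ is obtained from $\Sigma_1$ and $\Sigma_2$ by tubing along a disc, and the balls are either nested or disjoint.

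The main obstacle is that last reassembly step, together with the choice of $C$. Using the innermost disc in $S$ (with interior disjoint from $\Sigma$) avoids the problem of $E'$ meeting $\Sigma$ elsewhere. For the reassembly, I would instead argue directly: the sphere $E \cup E'$ bounds a ball $B$ in $M'$. If $E \cup E'$ is innermost in the right sense, $B$ meets $S$ only in $E'$, so I can isotope $\Sigma$ across $B$ to remove $C$ (and possibly other curves), reducing $|\Sigma \cap S|$. This avoids surgery altogether.

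For this to work I need $B$ not to contain parts of $\partial M'$ other than $E'$. This holds because $B$ lies in $M'$ with $\partial B \cap \partial M' \subseteq E'$, and $\partial M'$ meets $B$ only through its boundary. I also need any other curves of $\Sigma \cap S$ inside $B$ to only be removed, never created, which the isotopy guarantees.
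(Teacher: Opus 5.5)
Your overall route is the paper's: $P$ is essential and $S$ is incompressible by Lemma~\ref{Lem:EssentialPullsBack}, then you run an innermost-curve argument on the sphere, and finish with irreducibility of $M'$ once $\Sigma\cap S=\emptyset$. But the step you lean on to make the isotopy legitimate is false. You cannot in general pick $C$ so that its disc $E\subset\Sigma$ has interior disjoint from $S$ and, at the same time, its disc $E'\subset S$ has interior disjoint from $\Sigma$.

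Here is a counterexample. Take $M$ a ball and $S$ a properly embedded disc. Then $M'$ is two balls, each with a circle as pattern, so the hypotheses hold. Let $\Sigma$ meet $S$ in concentric circles $b,c,d,a$ of increasing radius, occurring along $\Sigma$ in the order $a,b,c,d$:
\begin{itemize}
\item above $S$: a high cap on $a$, a lower cap on $d$, and beneath that an annulus from $b$ to $c$;
\item below $S$: a shallow annulus from $c$ to $d$, and a deep U-shaped annulus from $b$ to $a$.
\end{itemize}
The curves innermost on $\Sigma$ are $a$ and $d$, but the only curve innermost in $S$ is $b$.
\begin{itemize}
\item Your concrete choice gives $C=b$. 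Both discs of $\Sigma$ bounded by $b$ meet $S$ in their interiors, so $E\cup E'$ cannot be pushed into $M'$.
\item With $C=d$ instead, $\mathrm{int}(E')$ contains $b$ and $c$. Sliding the disc $E$ of $\Sigma$ across $B$ onto a parallel copy of $E'$ would then run into the sheets of $\Sigma$ through $b$ and $c$.
\end{itemize}

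The repair is to drop the condition on $E'$ and move $S$ rather than $\Sigma$, which is what the paper does. Take $C$ innermost on $\Sigma$ only. Then $E\cap S=\partial E$, so $E\cup E'$ is an embedded sphere that pushes off $S$ into $M'$. It bounds a ball $B$ there, however $\Sigma$ meets $\mathrm{int}(E')$.

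Now isotope $S$ by pushing $E'$ through $B$ to just beyond $E$. Equivalently, apply the inverse ambient isotopy to $\Sigma$; this drags all of $\Sigma\cap B$ across $S$, not just $E$. This eliminates $C$ together with every curve of $\Sigma\cap\mathrm{int}(E')$, and creates no new curves. The moved surface is isotopic to $S$, so it is still incompressible and cutting along it still gives a copy of $M'$. Your induction on $|\Sigma\cap S|$ therefore goes through, and the surgery-and-reassembly detour can be dropped.
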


\begin{proof}
The fact that $P$ is essential is a result of Lemma \ref{Lem:EssentialPullsBack}. We need to show that $M$ is irreducible. 
Consider a 2-sphere $S^2$ properly embedded in $M$. This intersects $S$ in a collection of simple closed curves. An innermost one bounds a disc $D'$ in $M'$ that is disjoint from $P'$. Since $P'$ is essential, $\partial D'$ bounds a disc $D''$ in $S$ and because $M'$ is irreducible, we may isotope $D''$ across to $D'$ and then a little further, to remove this curve of $S^2 \cap S$. So we may assume that $S$ is disjoint from $S^2$, and hence $S^2$ lies in $M'$. As $M'$ is irreducible, $S^2$ bounds a ball in $M'$ and this is also a ball in $M$.
\end{proof}

In many circumstances, it is also the case that when $P$ is essential and $M$ is irreducible, then $P'$ is also essential and $M'$ is also irreducible. However, in order to guarantee this, we require that the decomposing surface $S$ is incompressible and pattern-incompressible. We now investigate how to arrange this.

When we pattern-compress a surface, we would like to say that the new surface is, in some sense, simpler than the
original one. We formalise this as follows, using the following measure of complexity.

\begin{definition}
The \emph{pattern-complexity} $\chi_p(S)$ of a properly embedded surface $S$ is defined to be $-4\chi(S) + |S \cap P|$.
\end{definition}

A useful feature of pattern-complexity is the following immediate result.

\begin{lemma}
If $S$ is a compact connected orientable surface satisfying $\chi_p(S) < 0$, then $S$ is a sphere or disc intersecting the pattern in at most three points.
\end{lemma}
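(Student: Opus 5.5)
The plan is to use the classification of compact connected orientable surfaces, which gives $\chi(S) = 2 - 2g - b$, where $g$ is the genus and $b$ the number of boundary components. Since $|S \cap P| \geq 0$, the hypothesis $\chi_p(S) = -4\chi(S) + |S \cap P| < 0$ forces $4\chi(S) > |S\cap P| \geq 0$. In particular $\chi(S) > 0$, so $\chi(S) \in \{1,2\}$ because $\chi(S) \le 2$.

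The first step is to read off the topology. If $\chi(S) = 2$, then $2 - 2g - b = 2$ forces $g = b = 0$, so $S$ is a sphere. If $\chi(S) = 1$, then $2g + b = 1$ forces $g = 0$ and $b = 1$, so $S$ is a disc.

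The second step is to bound the number of intersection points. A sphere is closed and properly embedded, so it lies in the interior of $M$. Hence it is disjoint from $\partial M \supseteq P$, and $|S \cap P| = 0 \le 3$. For a disc, the inequality $|S\cap P| < 4\chi(S) = 4$ gives $|S \cap P| \le 3$ directly.

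There is no real obstacle here. The only point needing care is the implicit convention that $S$ is a decomposing surface, that is, transverse to $P$, so that $|S \cap P|$ is a finite count of points of $\partial S \cap P$. With that convention the statement is immediate from the inequality above.
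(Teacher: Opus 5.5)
Your proposal is correct. It is exactly the routine verification the paper leaves implicit when it calls this an ``immediate result'': from $-4\chi(S) + |S \cap P| < 0$ you get $\chi(S) > 0$, hence a sphere or a disc, with $|S \cap P| < 4$ in the disc case and $S \cap P = \emptyset$ for a properly embedded sphere.
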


\begin{lemma}[Pattern-compression simplifies]
\label{Lem:CompressionAndPatternComplexity}
Let $S$ be a properly embedded connected orientable surface that is non-separating. Let $S'$ be obtained from $S$ by pattern-compressing or compressing. Then at least one component of $S'$ is also non-separating and has smaller pattern-complexity than $S$.
\end{lemma}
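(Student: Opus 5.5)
Two things have to be checked for the surface $S'$: that some component of it is non-separating, and that this component has smaller pattern-complexity than $S$. I will prove these separately. In both the compression and the pattern-compression case, $S'$ is obtained from $S$ by surgery along a disc $D$. So $S'$ can be isotoped to lie in a small regular neighbourhood of $S \cup D$, disjoint from $S$.

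\textbf{Non-separation.} Since $S$ is non-separating, there is a closed curve $\gamma$ in $M$ meeting $S$ transversely in a single point. I will arrange $\gamma$ to meet $S$ away from $N(D \cap S)$ and to be disjoint from $D$ elsewhere. The second condition can be achieved by pushing $\gamma$ off $D$ along $S$ and away from the surgery region. Each time $\gamma$ crosses $D$, I reroute it around $\partial D$ near $S$; the sum of the algebraic intersection numbers with $S'$ is what matters, and it equals $\pm 1$ mod 2. Concretely, the mod 2 intersection of $\gamma$ with $S'$ equals that with $S$, because $S'$ is homologous to $S$ mod 2 relative to $\partial M$: $S \cup S'$ cobounds the regular neighbourhood of $D$ minus a product region. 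Hence $\gamma \cdot S' = 1$ mod 2. Then some component $S'_0$ of $S'$ has odd intersection with $\gamma$, and so $S'_0$ is non-separating.

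\textbf{Complexity.} For a compression, $\chi(S') = \chi(S) + 2$ and $|S' \cap P| = |S \cap P|$. For a pattern-compression, $\chi(S') = \chi(S)+1$ and $|S'\cap P| \le |S \cap P| + 2$, since the arc $D \cap \partial M$ meets $P$ at most once and two copies of it are added. Thus $\chi_p(S') \le \chi_p(S) - 8$ or $\chi_p(S') \le \chi_p(S) - 2$ respectively. If $S'$ is connected, we are done.

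If $S'$ has two components $S'_0$ and $S'_1$, then $\chi_p(S'_0) = \chi_p(S') - \chi_p(S'_1)$. It therefore suffices to show that $\chi_p(S'_1) \geq 0$, or at least that $\chi_p(S'_1)$ exceeds $\chi_p(S') - \chi_p(S)$, which is at most $-2$. Suppose instead that $\chi_p(S'_1) < 0$. Then, by the preceding lemma, $S'_1$ is a sphere or a disc meeting $P$ at most three times.

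\textbf{Main obstacle.} The remaining work is to rule out $\chi_p(S'_1) \le -2$, that is, $\chi_p(S'_1) = -4$ with $S'_1$ a disc meeting $P$ at most 0 times, or a sphere. I will use the defining conditions on $D$.

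In the compression case, $S'_1$ a sphere (or a disc, in the closed-curve case) would mean that $\partial D$ bounds a disc in $S$. A compression disc is not allowed to have this, assuming the usual definition in which $\partial D$ is essential in $S$.

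In the pattern-compression case, suppose $S'_1$ is a disc meeting $P$ in at most one point. Then $S'_1$ consists of a copy of $D$ together with the part of $S$ cut off by $\alpha$. Hence $\alpha$ separates off a disc from $S$ meeting $P$ at most $|S'_1 \cap P|$ times, after subtracting the at most one point contributed by the copy of $D$. This contradicts condition (3) of the definition.

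Finally, the non-separating component cannot be $S'_1$ in any surviving case with $\chi_p(S'_1)$ small. So the non-separating component $S'_0$ satisfies $\chi_p(S'_0) < \chi_p(S)$. The borderline bookkeeping of the cases $\chi_p(S'_1) \in \{-1,-2,-3\}$ against the bound on $\chi_p(S')$ is the step requiring the most care.
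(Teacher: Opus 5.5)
Your strategy matches the paper's. You track the change in $\chi$ and in $|S\cap P|$, and you use condition (3) of a pattern-compression (or essentiality of $\partial D$ for a compression) to stop a small component of $S'$ from absorbing the drop. Your mod~$2$ homology argument for non-separation is fine and supplies what the paper only asserts.

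The gap is in the disconnected pattern-compression case, which you explicitly leave as ``borderline bookkeeping''. Your reduction ``rule out $\chi_p(S'_1)\le -2$, that is, $\chi_p(S'_1)=-4$ with $S'_1$ a disc missing $P$, or a sphere'' is not an equivalence. The condition $\chi_p(S'_1)\le -2$ also includes discs meeting $P$ once or twice, and your argument only excludes discs meeting $P$ at most once. The case that actually threatens the lemma is when $\beta=\partial D\cap\partial M$ meets $P$, so that $\chi_p(S')=\chi_p(S)-2$, and $S'_1$ is a disc meeting $P$ exactly twice. Then $\chi_p(S'_0)=\chi_p(S')-\chi_p(S'_1)=\chi_p(S)$, and nothing you wrote excludes this.

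The missing observation is that each copy of $D$ carries the point of $\beta\cap P$. Put $\epsilon=|\beta\cap P|\in\{0,1\}$, and let $S_1$ be the piece of $S\cut\alpha$ contained in $S'_1$. Then $\chi_p(S)-\chi_p(S')=4-2\epsilon$ and $|S'_1\cap P|=|S_1\cap P|+\epsilon$. If $S'_1$ is a disc then so is $S_1$, so (3) gives $|S_1\cap P|\ge 2$. Hence $\chi_p(S'_1)\ge -2+\epsilon>-(4-2\epsilon)$, which is exactly what is needed for $\chi_p(S'_0)<\chi_p(S)$. This is the paper's remark that a disc component meeting $P$ twice must have both points in $S$, so that $\beta$ misses $P$ and the drop is $4$.

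Because this bound applies to either component, both components of $S'$ have smaller pattern-complexity. So you also do not need your unjustified claim that the non-separating component ``cannot be $S'_1$''.

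Finally, your parenthetical in the compression case is false. $S'_1$ is a disc whenever $\partial D$ is parallel in $S$ to a component of $\partial S$, and then $\partial D$ does not bound a disc in $S$. This is harmless, since for a compression the drop is $8$ and only a sphere component needs to be excluded.
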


\begin{proof}
The lemma is straightforward and well known in the case of a compression. So suppose that we pattern-compress $S$.
We decrease $-4\chi(S)$ by $4$ and increase $|S \cap P|$ by at most $2$. So $\chi_p$ certainly goes down by at least $2$. Note that at least one component of $S'$ is non-separating. Thus, the lemma is proved in the case where $S'$ is connected. However, we need to consider the possibility that we have  created a separating component $S'_1$ with negative pattern-complexity. Suppose that this occurs. Let $S'_2$ be the other component of $S'$ which is necessarily non-separating. Note that $S'_1$ is a disc and it must intersect $P$ at least twice by the definition of a pattern-compression. If $S'_1$ has three points of intersection with $P$, then $|S'_2 \cap P| < |S \cap P|$ and hence $\chi_p(S'_2) < \chi_p(S)$. If $S'_1$ has two points of intersection with $P$, then both of these must lie in $S$ by the definition of a pattern-compression. Hence, again $\chi_p(S'_2) < \chi_p(S)$.
\end{proof}

\begin{corollary}[Simplest surface pattern-incompressible]
\label{Cor:MinimalComplexityImpliesEssential}
Let $(M,P)$ be a compact orientable 3-manifold with a boundary pattern.
Let $S$ be a properly embedded connected orientable surface that is non-separating and that has smallest pattern-complexity among all such surfaces. Then $S$ is pattern-incompressible and incompressible.
\end{corollary}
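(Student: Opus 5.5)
The plan is to argue by contradiction, using Lemma \ref{Lem:CompressionAndPatternComplexity} as the main input. Suppose $S$ is connected, orientable, non-separating and of minimal pattern-complexity among all such surfaces, but that $S$ admits either a compression disc or a pattern-compression disc $D$. Let $S'$ be the surface obtained from $S$ by compressing or pattern-compressing along $D$. Before comparing complexities, I will check that $S'$ is again a properly embedded orientable surface that is transverse to $P$.

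\begin{itemize}
\item $S'$ is properly embedded because the two parallel copies of $D$ can be chosen disjoint from each other and meeting $S$ only along $\partial D$.
\item $S'$ is orientable because $S$ is orientable and $D$ has a product neighbourhood meeting $S$ in an annulus (for a compression) or in a strip $\alpha \times I$ (for a pattern-compression).
\item $S'$ is transverse to $P$. For a pattern-compression this needs a small isotopy, so that the arc $D \cap \partial M$, which meets $P$ at most once and not at a vertex, has its two parallel copies crossing edges of $P$ transversely.
\end{itemize}

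By Lemma \ref{Lem:CompressionAndPatternComplexity}, some component $S'_0$ of $S'$ is non-separating and satisfies $\chi_p(S'_0) < \chi_p(S)$. This component is connected, orientable, properly embedded and non-separating, so it belongs to the class over which $S$ was chosen to minimise pattern-complexity. That contradicts the minimality of $S$. Hence no compression disc and no pattern-compression disc exists, so $S$ is incompressible and pattern-incompressible.

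The main point requiring care is that the minimum is well defined. Pattern-complexity is bounded below on connected surfaces: $-4\chi(S) \geq -8$ and $|S \cap P| \geq 0$. It also takes integer values, so the minimum is attained whenever the class is non-empty, which is the hypothesis of the corollary.

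I also have to make sure the definition of compression being used really does decrease complexity. In Lemma \ref{Lem:CompressionAndPatternComplexity} a compression disc is one whose boundary is essential in $S$, and this is why the compression case is "well known": compressing raises $\chi$ by $2$ without changing $|S \cap P|$. If $\partial D$ separates $S$, the non-separating component has strictly smaller $-\chi$ unless the other piece is a disc, and that case is excluded because $\partial D$ is essential.

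Beyond these points, the argument is a direct application of the preceding lemma, and I expect no serious obstacle.
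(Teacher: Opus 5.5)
Your argument is correct and matches the paper's intent: the paper states this corollary without a separate proof, as an immediate consequence of Lemma~\ref{Lem:CompressionAndPatternComplexity}. A compression or pattern-compression would produce a connected, orientable, non-separating component of strictly smaller pattern-complexity, which contradicts minimality. Your extra checks (orientability and transversality of the new surface, attainment of the minimum, and essentiality of $\partial D$ in the compression case) are sound details that the paper leaves implicit.
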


\begin{lemma}[Essential pushes forward]
\label{Lem:EssentialPushesForward}
Let $M$ be a compact orientable 3-manifold with essential boundary pattern $P$. Let $S$ be a connected pattern-incompressible incompressible surface properly embedded in $M$, which is not a disc intersecting $P$ in at most $3$ points. Then $M \cut S$ inherits an essential boundary pattern $P'$.
\end{lemma}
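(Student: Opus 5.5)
Take a disc $D$ properly embedded in $M' = M \cut S$, transverse to $P'$ and meeting $P'$ at most three times. The goal is to show that $\partial D$ bounds a disc $D'$ in $\partial M'$ meeting $P'$ in the empty set, an arc or a tripod. Write $S_+$ and $S_-$ for the two copies of $S$ in $\partial M'$. Their boundaries are now part of $P'$.

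First I would isotope $D$, keeping it transverse to $P'$, so that $|\partial D \cap \partial S_\pm|$ is minimal. The case split is then by how $\partial D$ meets $S_+ \cup S_-$; the points of $\partial D \cap \partial S_\pm$ count towards the at most three points of $\partial D \cap P'$.

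\textbf{Case 1: $\partial D$ is disjoint from $\partial S_\pm$.} Then $\partial D$ lies either in $\partial M \cut \partial S$ or in the interior of one copy, say $S_+$.

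If $\partial D \subset \partial M$, view $D$ as a disc properly embedded in $M$ meeting $P$ at most three times. Since $P$ is essential, $\partial D$ bounds a disc $E \subset \partial M$ meeting $P$ in the empty set, an arc or a tripod. If $E$ avoids $\partial S$ we are done, since $E$ lies in $\partial M'$ with the same pattern. Otherwise $\partial S \cap E$ consists of closed curves in $E$, because $\partial D$ avoids $\partial S$. An innermost such curve meets $P$ in at most a small number of points; I need it to meet $P$ at most three times. That curve is a boundary component of the connected surface $S$, lying in a disc in $\partial M$. Pushing that disc slightly into $M$ gives a disc meeting $S$ in a boundary curve. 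I would try to use this together with incompressibility of $S$ and the hypothesis that $S$ is not a disc meeting $P$ at most three times to reach a contradiction. This step needs care; I would first try to show that a boundary curve of $S$ bounding a disc in $\partial M$ that meets $P$ in an arc, a tripod or the empty set forces $S$ to be such a disc, using irreducibility-type arguments. If that fails, I would use $P$-essentiality on the pushed-in disc.

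If $\partial D \subset S_+$, then $\partial D$ meets $P'$ only in points of $S \cap P$, at most three of them. Pushing $D$ into $M$ gives a disc meeting $S$ in $\partial D$. When $\partial D \cap P = \emptyset$, incompressibility gives a disc in $S$ that meets $P$ in some points a priori, so I need an extra step. The surface $S$ bounds no violating disc: push the disc bounded in $S$ together with $D$ to form a sphere, or better, apply essentiality of $P$ to a disc obtained by banding $D$ to $\partial M$. The cleaner route is to take an arc from $\partial D$ to $\partial S$ in $S$ and use it to build a pattern-compression disc, showing that the disc in $S$ must meet $P$ appropriately.

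\textbf{Case 2: $\partial D$ crosses $\partial S_+$, say.} Then $\partial D \cap S_+$ consists of arcs. Each such arc contributes two points of $\partial D \cap P'$, so with at most three points in total there is exactly one arc $\alpha \subset S_+$. The complementary arc $\beta \subset \partial M$ meets $P$ at most once. Now $D$, pushed back into $M$, is a disc with $D \cap S = \alpha$ and with $\partial D \cut \alpha$ meeting $P$ at most once. Pattern-incompressibility says $\alpha$ cuts off a disc $E \subset S$ meeting $P$ at most once.

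Next form $D \cup E$ and push it off $S$ to get a properly embedded disc in $M$ meeting $P$ at most twice. Essentiality of $P$ gives a disc $F \subset \partial M$ meeting $P$ in the empty set or an arc. The disc $E \cup F$ then gives $D'$: it meets $P'$ in an arc of $\partial S$ together with at most one further arc of $P$. I must check that $D'$ meets $P'$ exactly in an arc or a tripod, and this is where the case $|\partial D \cap P'| = 3$ gives a tripod. I must also check that $F$ does not contain other components of $\partial S$; the argument from Case 1 rules this out.

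The main obstacle is the repeated issue of excluding extra components of $\partial S$, or extra pattern, inside the discs $F$ supplied by essentiality of $P$. I would handle it uniformly with a preliminary claim: no component of $\partial S$ bounds a disc in $\partial M$ meeting $P$ at most three times, in the trivial pattern ways, unless $S$ is such a disc. I would prove this claim using incompressibility and pattern-incompressibility.
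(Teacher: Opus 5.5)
Your case division and your treatment of the one-arc case follow the paper's proof in outline, and you rightly note that $F$ must avoid $\partial S$. The gap is that the proposal never proves the step where the hypothesis ``$S$ is not a disc meeting $P$ at most three times'' is actually used. That step is showing that the disc $E \subset \partial M$ supplied by essentiality of $P$ is disjoint from $\partial S$. Your Case 2 defers the same step for $F$ to Case 1, so the gap affects both cases.

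Your sketch for this step points the wrong way. $M$ is not assumed irreducible in this lemma, so irreducibility arguments are unavailable. Also, an innermost curve of $\partial S \cap E$ need not meet $P$ at most three times. It can cross the arc or tripod $P \cap E$ arbitrarily often, and that is exactly what produces the contradiction. The missing argument runs as follows.
\begin{enumerate}
\item Let $C$ be a component of $\partial S \cap E$ innermost in $E$, bounding $E_C \subset E$. Push $E_C$ slightly into $M$. This gives a disc meeting $S$ exactly in a curve parallel to the boundary component $C$.
\item Incompressibility says this curve bounds a disc in $S$. Since $S$ is connected, $S$ is a disc with $\partial S = C$.
\item By hypothesis $|\partial S \cap P| \geq 4$. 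Since $P \cap E$ is an arc or a tripod, some component $\tau$ of $P \cap E_C$ is an arc with both endpoints on $C$.
\item Take an arc $\beta \subset E_C$ parallel to $\tau$ and disjoint from $P$, and an arc $\alpha \subset S$ with the same endpoints. Pushing the appropriate half of the sphere $S \cup E_C$ slightly into $M$ gives a disc with boundary $\alpha \cup \beta$.
\item $\alpha$ splits the points of $\partial S \cap P$ into two sets, each of size at least two. So this disc is a pattern-compression disc, contradicting pattern-incompressibility.
\end{enumerate}
In your Case 2, apply the same argument to the push-off of $D \cup E$, whose boundary misses $\partial S$ entirely. This shows that $F$ contains no component of $\partial S$ and lies on the correct side of the arc of $\partial S$ in its boundary. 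Hence $E \cup F$ really is a disc in $\partial(M \cut S)$.

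Separately, your subcase $\partial D \subset S_+$ rests on a misconception. $P \subset \partial M$ and $S$ is properly embedded, so $S \cap P \subset \partial S$. If $\partial D$ misses $\partial S_\pm$, it lies in the interior of $S_+$ and meets $P'$ nowhere. The disc in $S$ given by incompressibility then lies in the interior of $S_+$, so it is automatically disjoint from $P'$. No banding or pattern-compression step is needed there.
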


\begin{proof}
Let $D$ be a disc properly embedded in $M \cut S$ with $\partial D \cap P'$ at most three points. The curve $\partial D$ may run through parts of $\partial (M \cut S)$ coming from $\partial M$ and parts coming from $S$. Note however the points where it swaps must be points of $\partial D \cap P'$, and that at most one side of any point of $\partial D \cap P'$ lies in $S$. Hence, at most one arc or simple closed curve of $\partial D \cut P'$ lies in $S$.

\medskip
\emph{Case 1.} $\partial D$ is disjoint from $S$.
\medskip

Then $\partial D \subset \partial M$. Since $P$ is essential, $\partial D$ bounds a disc $D'$ in $\partial M$ intersecting $P$ in the empty set, an arc or a tripod.
If $S$ intersects $D'$, then pick a simple closed curve of $S \cap D'$ innermost in $D'$. The disc this bounds cannot be a compression disc for $S$. Hence, $S$ must be a disc. By assumption, $S$ intersects $P$ in at least 4 points. Hence, it intersects some edge of $P \cap D'$ more than once. We can therefore find a pattern-compression disc for $S$, which is contrary to assumption. Hence, $D'$ is disjoint from $S$, and therefore lies in $\partial (M \cut S)$. This verifies that $D$ is not a violating disc for $P'$.

\medskip
\emph{Case 2.} $\partial D$ lies in $S$.
\medskip

By the incompressibility of $S$, $\partial D$ bounds a disc $D'$ in $S$. This is a disc in $\partial (M \cut S)$ disjoint from $P'$. So $D$ is not violating.

\medskip
\emph{Case 3.} $\partial D$ intersects $S$ in an arc.
\medskip

Then $\partial D \cut S$ intersects $P$ at most once. Since $D$ is not a pattern-compression disc for $S$, $D \cap S$ separates off a disc $D_1$ of $S$ intersecting $P$ in at most one point. Then, $D \cup D_1$ is a disc properly embedded in $M$, intersecting $P$ in at most two points. Since $P$ is essential, there is therefore a disc $D_2$ in $\partial M$ with $\partial D_2 = \partial (D \cup D_1)$ and with $D_2 \cap P$ either empty or an arc. Therefore, $D_1 \cup D_2$ is a disc in $\partial (M \cut S)$ intersecting $P'$ in an arc or a tripod. 

This gives that $P'$ is essential.
\end{proof}

We also have following well known result.

\begin{lemma}[Irreducible pushes forward]
\label{Lem:IrreduciblePushesForward}
Let $M$ be a compact orientable irreducible 3-manifold. Let $S$ be a properly embedded incompressible surface with no 2-sphere components. Then $M \cut S$ is irreducible.
\end{lemma}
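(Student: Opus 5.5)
The plan is the standard innermost-disc argument. Let $\Sigma$ be a 2-sphere embedded in the interior of $M \cut S$; we must show that $\Sigma$ bounds a ball in $M \cut S$. Via the quotient map $M \cut S \rightarrow M$, we regard $\Sigma$ as a 2-sphere in $M - S$. Since $M$ is irreducible, $\Sigma$ bounds a 3-ball $B$ in $M$. The aim is to show that $B$ is disjoint from $S$, for then $B$ lies in $M - S$ and hence is a ball in $M \cut S$ bounded by $\Sigma$.

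Suppose instead that $B$ meets $S$. Because $\Sigma = \partial B$ is disjoint from $S$ and $S$ is properly embedded, every component of $S$ that meets $B$ lies entirely in the interior of $B$. Such a component $F$ is then a closed orientable surface contained in a ball, so it is not a 2-sphere by hypothesis, and therefore has genus at least one. The key step is to show that a closed orientable surface of positive genus in a 3-ball is compressible; this contradicts the incompressibility of $S$, which I read as applying componentwise.

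For this step I would argue via homology, using the loop theorem (which this paper treats as available). Let $F \subset \mathrm{int}(B)$ be closed, orientable and of genus $g \geq 1$. The surface $F$ separates $B$, since $H_1(B)=0$. The standard half-lives-half-dies argument then shows that the kernel of $H_1(F;\mathbb{Q}) \rightarrow H_1(B;\mathbb{Q}) = 0$ is everything. A cleaner route is that $\pi_1(F)$ does not inject into $\pi_1(B) = 1$ when $g \geq 1$. By the loop theorem or Dehn's lemma applied on the side of $F$ where the map fails to inject, we obtain an embedded compression disc for $F$ in $M$. Concretely, $F$ splits $B$ into two pieces $X$ and $Y$, and by Mayer--Vietoris $\pi_1(F) \rightarrow \pi_1(X)$ or $\pi_1(F) \rightarrow \pi_1(Y)$ is non-injective, since otherwise the amalgamated product would contain $\pi_1(F) \neq 1$. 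The loop theorem then gives an essential embedded disc in $X$ or in $Y$.

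That disc may intersect other components of $S$ lying in $B$. To handle this, I would run the argument on a component of $S \cap B$ that is innermost, meaning one such that one of its complementary sides in $B$ contains no other component of $S$. Alternatively, I would choose the compression disc so that it meets $S$ minimally and use innermost curves of intersection on it, capping them off via the incompressibility of the other components, so as to produce a compression disc for some component lying in $M - S$. This gives the contradiction. Hence $S \cap B = \emptyset$, and $M \cut S$ is irreducible.

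The main obstacle is the compressibility step. It needs the loop theorem, or equivalently the fact that $B$ is a handlebody so that any incompressible surface in it is a disc. I would first try to cite the standard fact that every closed incompressible surface in a 3-ball is a sphere, and to arrange innermost choices so that the disc avoids the rest of $S$.
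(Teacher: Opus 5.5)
The paper gives no proof of this lemma; it quotes it as a well-known result, so your argument has to stand on its own. Your skeleton is the standard one and is sound. $\Sigma$ bounds a ball $B\subset \mathrm{int}\, M$, and any component $F$ of $S$ meeting $B$ is a closed surface of positive genus in $\mathrm{int}\, B$. Since $\pi_1(B)=1$, van Kampen (not Mayer--Vietoris) shows that $\pi_1(F)$ fails to inject into one of the two sides of $F$ in $B$. The loop theorem then gives an embedded disc $D$ on that side with $D\cap F=\partial D$ essential in $F$.

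The weak point is how you deal with the other components of $S\cap B$, and your first suggestion does not work. Choosing $F$ so that one side $X$ contains no other component of $S$ gives no control over the side on which the loop theorem acts, because $\pi_1(F)\to\pi_1(X)$ may well be injective. For example, let $K$ be a non-trivial knot in $S^3$ and let $B$ be the complement of a small open ball in $\mathrm{int}\, N(K)$. Then $F=\partial N(K)$ has inner side the knot exterior, into which $\pi_1(F)$ injects. So compressions exist only on the outer side, and other components of $S$ may lie there.

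You therefore need your second route, done carefully. Read the hypothesis as the paper does: a compression disc $D$ for $S$ satisfies $D\cap S=\partial D$. Under a componentwise reading the loop-theorem disc would already give the contradiction, but that reading is exactly what this step justifies.

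Choose $D$, a disc with $D\cap F=\partial D$ essential in $F$, so that $|D\cap S|$ is minimal. A curve of $D\cap S$ innermost in $D$ bounds a subdisc meeting $S$ only in its boundary. By incompressibility, this curve bounds a disc $E$ in some component $F'\neq F$. Do not paste $E$ in directly, since its interior may meet $D$. Instead, take a curve $\gamma'$ of $D\cap E$ that is innermost in $E$; it bounds $E'\subset E$ with interior disjoint from $D$. Replace the subdisc of $D$ bounded by $\gamma'$ by a copy of $E'$ pushed slightly off $F'$. This keeps $\partial D\subset F$ and strictly reduces $|D\cap S|$. By minimality, $D\cap S=\partial D$, so $D$ is a genuine compression disc for $S$, which is the required contradiction. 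With this step spelled out, your proof is complete.
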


We will occasionally want to cut along surfaces in the expectation that they are pattern-incompressible. An example of this is as follows.

\begin{lemma}[Decomposition along square]
\label{Lem:FourPuncDisc}
Let $(M,P)$ be a compact orientable 3-manifold with an essential boundary pattern. Let $S$ be a properly embedded disc that intersects $P$ four times. Then $S$ is pattern-incompressible unless $\partial S$ bounds a disc $D$ in $\partial M$ that has one of the following forms:
\begin{enumerate}
\item $D \cap P$ consists of two arcs; or
\item $D \cap P$ is a graph with two vertices joined by an edge, along with four edges running to the boundary of $D$.
\end{enumerate}
\end{lemma}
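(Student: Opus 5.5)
Suppose $S$ is not pattern-incompressible, and let $E$ be a pattern-compression disc for $S$. Then $E \cap S = \alpha$ is an arc in $\partial E$, and the arc $\beta = \partial E \cut \alpha$ lies in $\partial M$ and meets $P$ at most once. The plan is to use essentiality of $P$ to turn $E$ into a disc in $\partial M$ bounded by $\partial S$, and then read off the two listed forms.

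The arc $\alpha$ cuts $S$ into two discs $S_1$ and $S_2$. Their intersection numbers with $P$ sum to $4$, and by condition (3) of Definition~\ref{Def:PatternCompDisc} each meets $P$ at least twice. So $|S_1 \cap P| = |S_2 \cap P| = 2$. The disc $E \cup S_i$, after a small isotopy to make it properly embedded, meets $P$ in $2 + |\beta \cap P| \le 3$ points.

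Since $P$ is essential, $\partial(E \cup S_i)$ bounds a disc $D_i \subset \partial M$ with $D_i \cap P$ empty, an arc, or a tripod. The intersection number is at least $2$, so $D_i \cap P$ is an arc or a tripod. Next, $\partial S \subset \partial M$ is the union of the arcs $\partial S_1 \cap \partial M$ and $\partial S_2 \cap \partial M$, which meet along $\partial\beta$. The two discs $D_1$ and $D_2$ lie on opposite sides of $\beta$, meeting along $\beta$. I expect this to be the main obstacle. One must argue that $D_1 \cap D_2 = \beta$, rather than one containing the other, and so conclude that $D = D_1 \cup D_2$ is a disc in $\partial M$ bounded by $\partial S$. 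I would handle this as follows: if $D_1 \supset D_2$, then $\beta$ would have both sides in $D_1$. That is impossible, because $\partial D_1$ contains $\beta$, so $\beta$ locally separates $D_1$ from its complement. Hence $D_1$ and $D_2$ lie on the two sides of $\beta$ near $\beta$, and their interiors are disjoint, since the boundary of each is disjoint from the interior of the other. If $\partial M$ is a sphere, one also checks that the union is the correct complementary disc.

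Finally, classify $D \cap P$ by gluing $D_1 \cap P$ to $D_2 \cap P$ along $\beta$.
\begin{itemize}
\item If $\beta \cap P = \emptyset$, each $D_i$ meets $P$ in exactly two boundary points, so each $D_i \cap P$ is a single arc. This gives form (1): two arcs.
\item If $\beta$ meets $P$ once, each $D_i$ has three boundary points of $P$, so $D_i \cap P$ is an arc plus ... impossible by parity in a disc with an arc, hence a tripod. The two tripods share the edge through the point of $\beta \cap P$. The union is a graph with two trivalent vertices joined by an edge, with four further edges running to $\partial D$. This is form (2).
\end{itemize}
This contradicts the hypothesis, so $S$ is pattern-incompressible.
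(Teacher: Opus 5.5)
Your setup is the same as the paper's: cut $S$ along $\alpha$ into two halves that each meet $P$ twice, push $E\cup S_i$ off $S$, and apply essentiality. The gap is in the gluing step, which you correctly flag as the main obstacle. Your argument does not prove it, and the statement it aims at is false in general.

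Your reason why $D_2\subset D_1$ is impossible is that $\beta$ would then have both sides in $D_1$. That already assumes $D_2$ lies on the opposite side of $\beta$ from $D_1$, which is exactly what is to be shown. If both discs lie on the same side of $\beta$, then $D_2\subset D_1$ is perfectly compatible with $\beta\subset\partial D_1$.

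This configuration does occur. Suppose $\partial S$ bounds a disc $D\subset\partial M$ and $E$ lies outside the region between $S$ and $D$. Suppose also that $\beta$ runs outside $D$, so that $\beta\cup(\partial S_2\cap\partial M)$ bounds a disc $D_2$ adjacent to $D$. Then $\beta\cup(\partial S_1\cap\partial M)$ bounds $D_1=D\cup D_2$, and this is the only disc it bounds if that boundary component is not a sphere. So $D_1\cup D_2=D_1$ is not bounded by $\partial S$.

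For a concrete instance, take a solid torus whose pattern $P$ is four parallel longitudes, one of them given a U-shaped finger. Let $\partial S$ be a small curve crossing both arms of the U above the bend, and let $\beta$ pass below the bend. In such a nested configuration the disc bounded by $\partial S$ is the closure of $D_1-D_2$. It meets $P$ in two arcs even when $|\beta\cap P|=1$, so your claim that three intersection points yield form (2) is also wrong there.

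The missing case is easy to handle. $\partial D_i'$ is a push-off of $(\partial S_i\cap\partial M)\cup\beta$ into the corner between these two arcs, away from $\partial S_{3-i}\cap\partial M$. So the disc $D_i$ supplied by essentiality either lies on that corner side, or it contains $\partial S_i\cap\partial M$ and $\beta$. In the second case it also contains $\partial S_{3-i}\cap\partial M$, so $\partial S\subset\mathrm{int}\,D_i$.

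If both $D_i$ lie on their corner sides, your gluing goes through. Each $\partial S_{3-i}\cap\partial M$ is then genuinely outside $D_i$, which gives disjoint interiors.

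Otherwise $\partial S$ bounds a subdisc $D\subset D_i$. Then $D\cap P$ is a subforest of the arc or tripod $D_i\cap P$, and its leaves are exactly the four points of $\partial S\cap P$, since the leaves of $D_i\cap P$ lie on $\partial D_i$, away from $D$. A component through the trivalent vertex would use three of these leaves and leave a single one over, which is impossible. So $D\cap P$ is two arcs, giving form (1).

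This is why the paper's proof concludes ``(1) or (2)'', not just (2), when the discs meet $P$ three times.
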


\begin{proof} 
Suppose that, on the contrary, $S$ has a pattern-compression disc $D'$. Then $S \cut D'$ has two components $S_1$ and $S_2$, each of which is a disc
intersecting $P$ twice. For $i = 1$ and $2$, let $D'_i$ be $S_i \cup D'$ isotoped a little so that it is disjoint from $S$. Then $D'_i$ is properly embedded in $M$ and intersects $P$ either two or three times. If each of these discs intersects $P$ twice, then using the fact that $P$ is essential, we deduce that $\partial S$ bounds a disc $D$ in $\partial M$ as in (1) above. If both $D_1'$ and $D'_2$ intersect $P$ three times, then we deduce that $\partial S$ bounds a disc $D$ in $\partial M$ as in (1) or (2) above.
\end{proof}

\begin{figure}[h]
\centering
\includegraphics[width=0.8\textwidth]{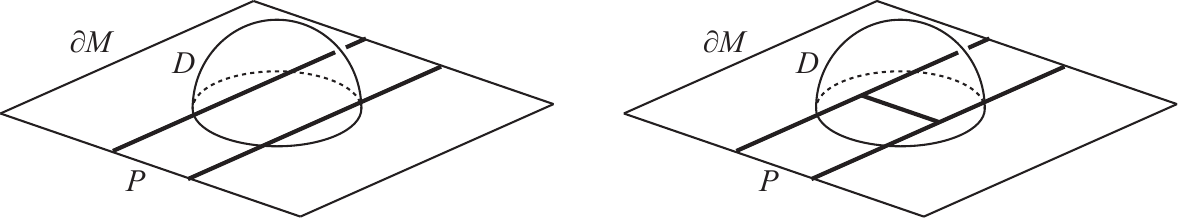}
\caption{Trivial squares} \label{Fig:Trivial4Discs}
\end{figure}

\begin{definition}
\label{Def:FourDisc}
We say that a disc $S$ properly embedded in $M$ intersecting the pattern four times is a \emph{square}. It is \emph{trivial} if $\partial S$ bounds a disc $D$ in $\partial M$ satisfying (1) or (2) of Lemma \ref{Lem:FourPuncDisc}. (See Figure \ref{Fig:Trivial4Discs}.) Otherwise it is \emph{non-trivial}.
\end{definition}

We rephrase and extend Lemma \ref{Lem:FourPuncDisc} as follows.

\begin{lemma}
\label{Lem:PatternComp4Disc}
Let $(M,P)$ be a compact orientable 3-manifold with a boundary pattern. Let $S$ be a non-trivial square. Then a pattern-compression disc for $S$ yields a violating disc for $(M,P)$. This violating disc is one of the components of the surface obtained by pattern-compressing $S$.
\end{lemma}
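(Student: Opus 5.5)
Let $D'$ be a pattern-compression disc for the non-trivial square $S$. The plan is to follow the proof of Lemma \ref{Lem:FourPuncDisc}, but without assuming that $P$ is essential. Instead, we run its argument contrapositively.

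First we count intersections. The arc $\alpha = D' \cap S$ cuts $S$ into two discs $S_1$ and $S_2$. The arc $\partial D' \cut \alpha$ meets $P$ at most once. By condition (3) of Definition \ref{Def:PatternCompDisc}, each $S_i$ meets $P$ at least twice; since $|S \cap P| = 4$, each $S_i$ meets $P$ exactly twice. Pattern-compressing $S$ along $D'$ gives two discs $D'_i = S_i \cup D'$, each pushed slightly off $S$. These are properly embedded in $M$, transverse to $P$, and each meets $P$ in two or three points: the two from $S_i$ plus the at most one from $\partial D' \cut \alpha$. The two discs meet $P$ in the same number of points, since they share the arc of $D'$ on $\partial M$. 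These $D'_i$ are exactly the components of the pattern-compressed surface, so it suffices to show that at least one of them is violating.

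We argue by contradiction. Suppose neither $D'_1$ nor $D'_2$ is violating. Then $\partial D'_i$ bounds a disc $E_i \subset \partial M$ with $E_i \cap P$ empty, an arc, or a tripod. Since $|\partial E_i \cap P| \geq 2$, it is not empty. Hence $E_i \cap P$ is an arc when the intersection number is two, and a tripod when it is three.

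The main step is to glue $E_1$ and $E_2$ to produce a disc $E$ in $\partial M$ bounded by $\partial S$, with $E \cap P$ as in (1) or (2) of Lemma \ref{Lem:FourPuncDisc}. That contradicts non-triviality. The arcs $\partial D'_i \cap \partial M$ consist of the arc $\partial S_i \cap \partial M$ and two parallel copies of $\beta = \partial D' \cut \alpha$. Two configurations are possible.

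In the first, $E_1$ and $E_2$ lie on opposite sides of the band neighbourhood of $\beta$. Then $E = E_1 \cup (\text{band}) \cup E_2$ is a disc with $\partial E = \partial S$, up to isotopy.
\begin{itemize}
\item If $\beta$ misses $P$, then $E \cap P$ is two arcs, which is case (1).
\item If $\beta$ meets $P$ once, the two tripods join through the band along the edge crossing $\beta$. This gives two vertices joined by an edge, with four edges running to $\partial E$, which is case (2).
\end{itemize}

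In the second, one disc contains the other, say $E_2 \supset E_1 \cup \text{band}$. I would rule this out by an orientation or nesting argument: $\partial S_1$ and $\partial S_2$ lie on opposite sides of $\beta$ in $\partial M$. Alternatively, one can replace $E_2$ by the closure of $E_2 - (E_1 \cup \text{band})$, which is a disc bounded by $\partial S$. Its intersection with $P$ would then be obtained by removing a tripod from a tripod, or an arc from an arc, which forces a trivial configuration or too few points.

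I expect this bookkeeping of the nested case to be the main obstacle. I would handle it by a careful planar picture in the sphere-with-holes $\partial M$ near $\beta$.
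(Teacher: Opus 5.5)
Your route is the paper's: Lemma~\ref{Lem:PatternComp4Disc} is just the contrapositive of the proof of Lemma~\ref{Lem:FourPuncDisc}, and the paper states the final gluing step there without detail. Your count is correct: each $S_i$ meets $P$ exactly twice, and each $D'_i$ meets $P$ in $2+b$ points, where $b=|\beta\cap P|$. Your non-nested configuration is also handled correctly.

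\emph{The gap: the nested configuration.} You leave this case open, and the first remedy you suggest, ruling it out by an orientation argument, cannot work. Since $D'$ meets $S$ only in $\alpha$, it lies on one side of $S$. So $\beta$ leaves $\partial S$ from the same side at both ends, and a regular neighbourhood $Q$ of $\partial S\cup\beta$ in $\partial M$ is a pair of pants. Its boundary curves are $\sigma\simeq\partial S$, $\gamma_1=\partial D'_1$ and $\gamma_2=\partial D'_2$.

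Suppose $\partial S\cup\beta$ lies in a disc $\Delta\subset\partial M$, with $\beta$ outside the disc that $\partial S$ bounds in $\Delta$. This happens, for instance, for a boundary-parallel square in a solid torus with $D'$ on the side away from the ball it cuts off. Then one of $\gamma_1,\gamma_2$ is outermost in $\Delta$, and when the boundary component is not a sphere, the only disc it bounds contains $Q$ and the other $E_j$. So nesting is forced by the topology; it is excluded only through non-triviality of $S$. Your dichotomy also omits the case, on a sphere component, where both $E_1$ and $E_2$ contain $Q$. That case is neither ``opposite sides'' nor ``one contains the other''. Your second remedy, cutting $E_2$ down to a disc bounded by $\partial S$, is the right idea. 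But ``removing a tripod from a tripod \dots\ forces a trivial configuration or too few points'' is not an argument, and in fact there is no ``too few points'' outcome.

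\emph{The fix.} Since $Q-\gamma_i$ is connected, each $E_i$ either meets $Q$ only in $\gamma_i$ or contains $Q$.
\begin{itemize}
\item If neither contains $Q$, the same connectivity shows $E_1\cap E_2=\emptyset$. Then $E_1\cup Q\cup E_2$ is your glued disc, giving (1) or (2).
\item If, say, $E_1\supset Q$, then $E_1-\mathrm{int}(Q)$ consists of two discs: $F_0$ bounded by $\sigma$ and $F_2$ bounded by $\gamma_2$. The set $P\cap Q$ contains no vertices of $P$, and $P\cap E_1$ is a tree with $b$ trivalent vertices.
\end{itemize}
In the second case, $P\cap F_2$ has $2+b$ endpoints on $\gamma_2$. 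Since $(2+b)+3v_3$ must be even, when $b=1$ the unique trivalent vertex lies in $F_2$. Hence $P\cap F_0$ has four endpoints on $\sigma$, no vertices, and no closed components (it lies in a tree). So it is two arcs, and $\partial S$ bounds a disc of type (1). Thus the nested case always yields (1), and with this addition your argument is complete.
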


We now have many of the elements required for the proof of Theorem \ref{Thm:EssentialHierarchy}. 
Let $(M,P)$ be a compact orientable irreducible 3-manifold with an essential boundary pattern. 
Our aim is to build an essential hierarchy for $(M,P)$.
If $M$ is closed, then by assumption, it contains a properly embedded incompressible 
orientable surface with no 2-sphere components. Decomposing along this surface gives an irreducible 3-manifold with non-empty boundary and essential (empty) boundary pattern. So, we may assume that $\partial M$ is non-empty.
If $\partial M$ is a union of spheres, then these bound 3-balls by the irreducibility of $M$, and so in this case there is nothing to prove. Hence, we may assume that $b_1(\partial M) > 0$. Since $b_1(M) \geq b_1(\partial M)/2$, we deduce that $b_1(M)>0$. Therefore, $M$ contains a properly embedded connected orientable non-separating surface $S$. By picking $S$ to have smallest pattern-complexity, we may assume that $S$ is incompressible and pattern-incompressible by Corollary \ref{Cor:MinimalComplexityImpliesEssential}. When we decompose along such a surface, Lemmas \ref{Lem:EssentialPushesForward} and \ref{Lem:IrreduciblePushesForward} imply that the resulting manifold is irreducible and has essential boundary pattern. We may therefore repeat with this manifold. However, we must show that this process can be arranged to terminate. In \cite{Lackenby:Notes}, a proof was given using Kneser's theorem bounding the number of disjoint non-parallel properly embedded incompressible boundary-incompressible surfaces in a compact 3-manifold. In the following section, we give an alternative proof. Although longer than the proof in \cite{Lackenby:Notes}, it will be more useful because it will lead to a good bound on the length of the resulting hierarchy.

\section{Handle structures and normal surfaces} 
\label{Sec:HandleNormal}

Our algorithm deals with partial hierarchies. The way that the surfaces in these partial hierarchies are encoded is as normal surfaces. Normal surface theory takes place in a 3-manifold equipped either with a triangulation or with a handle structure. We mostly use handle structures, as they behave better than triangulations when the 3-manifold is decomposed along a normal surface. Normal surfaces in handle structures have been used in the theory of hierarchies since the work of Haken \cite{Haken:Homeomorphism} and Waldhausen \cite{Waldhausen}. One of their particular uses is as a method for controlling the length of hierarchies, in other words the number of decompositions until the manifold has been cut up into balls.

\begin{definition}
\label{Def:HandleStructure}
In this paper, a \emph{handle structure} on a $3$-manifold or a $2$-manifold
will satisfy the following conditions:
\begin{enumerate}
\item each $i$-handle $D^i \times D^{k-i}$ ($k = 2$ or $3$) intersects the handles of lower index in $\partial D^i \times D^{k-i}$;
\item any two $i$-handles are disjoint;
\item in the case of a 3-manifold, the intersection of any 1-handle $D^1 \times D^2$ with any 2-handle $D^2 \times D^1$ is of the form $D^1 \times \alpha$ in $D^1 \times D^2$ where $\alpha$ is a collection of arcs in $\partial D^2$, and of the form $\beta \times D^1$ in $D^2 \times D^1$ where $\beta$ is a collection of arcs in $\partial D^2$;
\item any 2-handle runs over at least one 1-handle.
\end{enumerate}
\end{definition}

\begin{definition}
Let $\mathcal{H}$ be a handle structure for a 3-manifold and for $i \in \{ 0,1,2,3 \}$, let $\mathcal{H}^i$ denote the
union of the $i$-handles.
The \emph{characteristic surface} for $\mathcal{ H}$ is
$\mathcal{ H}^0 \cap (\mathcal{ H}^1 \cup \mathcal{ H}^2)$. We denote it by $\mathcal{ F}(\mathcal{ H})$, or simply by $\mathcal{F}$ when the context is clear.
It has a handle structure consisting of just 0-handles and 1-handles, where the 0-handles are
$\mathcal{ F}^0(\mathcal{ H})  = \mathcal{ H}^0 \cap \mathcal{ H}^1$ and the 1-handles are $\mathcal{ F}^1(\mathcal{ H}) = 
\mathcal{ H}^0 \cap \mathcal{ H}^2$. We will view the characteristic surface as a subsurface of $\partial \mathcal{ H}^0$.
\end{definition}

We denote the number of 0-handles in the handle structure $\calH$ by $|\calH|$.

\begin{definition}
A \emph{handle structure} for a 3-manifold $M$ with boundary pattern $P$
is a handle structure for $M$ satisfying Definition \ref{Def:HandleStructure} such that $P$ is disjoint from the 2-handles
and, for each 1-handle $H_1 = D^1 \times D^2$, the intersection $H_1 \cap P$ is $D^1 \times X$
where $X$ is a finite set of points in $\partial D^2$.
\end{definition}

We will be decomposing handle structures along normal surfaces. But first we deal with 
the following weaker notion.

\begin{definition}
Let $\calH$ be a handle structure of a 3-manifold $M$ with boundary pattern $P$. We say that a surface $S$ properly embedded in $M$ is \emph{standard} with respect to $\calH$ if the following hold:
\begin{enumerate}
\item it intersects each 0-handle in a collection of disjoint properly embedded discs;
\item it intersects each 1-handle $D^1 \times D^2$ in a surface of the form $D^1 \times C$ for some arcs $C$ properly embedded in $D^2$;
\item it intersects each 2-handle $D^2 \times D^1$ in a surface of the form $D^2 \times X$ for some finite collection of points $X$ in the interior of $D^1$;
\item it is disjoint from the 3-handles;
\item it intersects $P$ transversely.
\end{enumerate}
\end{definition}

We have analogous definitions for 1-manifolds in surfaces.

\begin{definition}
Let $\calH$ be a handle structure of a surface $F$. We say that a 1-manifold $C$ properly embedded in $F$ is \emph{standard} with respect to $\calH$ if the following hold:
\begin{enumerate}
\item it intersects each 0-handle in a union of disjoint properly embedded arcs;
\item it intersects each 1-handle $D^1 \times D^1$ in a union of arcs parallel to the core of the handle, that is, arcs of the form $D^1 \times X$ for some finite collection of points $X$ in the interior of $D^1$;
\item it is disjoint from the 2-handles.
\end{enumerate}
\end{definition}

\begin{definition}
\label{Def:Normal}
Let $\mathcal{ H}$ be a handle structure for a 3-manifold $M$ with boundary pattern $P$.
A curve $C$ in $\partial \mathcal{ H}^0$ is \emph{normal} if it intersects $P$ and $\partial \calF$ transversely, its intersection with $\calF$ is standard and the following all hold:
\begin{enumerate}
\item $C$ does not lie entirely in $\partial M - P$;
\item $C$ does not bound a disc in $\partial \calH^0 - \calF$ that intersects $P$ in a single arc or a tripod;
\item $C$ intersects each 1-handle of $\mathcal{ F}$ in at most one arc;
\item $C$ intersects each component of $\partial \mathcal{ H}^0 - (\mathcal{ F} \cup P)$ in at most one arc;
\item $C$ does not intersect components of $\mathcal{F}^1$ and $\partial \mathcal{ H}^0 - (\mathcal{ F} \cup P)$ that are adjacent;
\item if $C$ intersects two components of $\partial \mathcal{ H}^0 - (\mathcal{ F} \cup P)$ that are incident along an arc of $P \cap \calH^0$, then $C$ also intersects this arc;
\item there is no disc in $\partial \mathcal{H}^0$ that has boundary equal to the union of a sub-arc of $C$ and a sub-arc of $\partial \mathcal{F}^0$ is otherwise disjoint from $C$ and $\mathcal{F}$, and that intersects $P$ in at most two arcs, each of which runs from $\partial \mathcal{F}^0$ to $C$.
\end{enumerate}
See Figure \ref{Fig:NormalConditions}.
\end{definition}

\begin{figure}[h]
\centering
\includegraphics[width=0.8\textwidth]{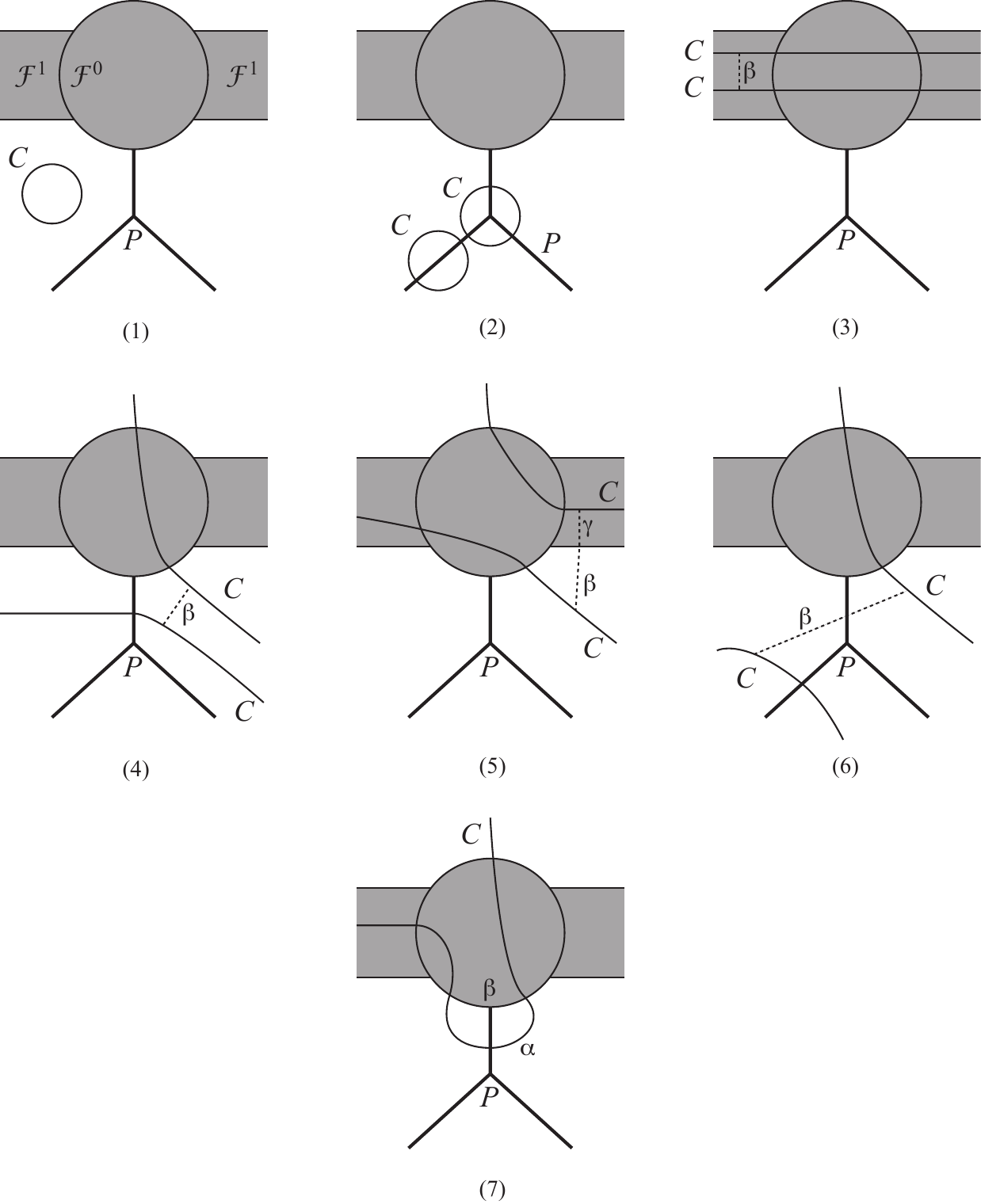}
\caption{Configurations of the curve $C$ in $\partial \calH^0$ that violate normality. In cases (3)-(7), arcs $\alpha$, $\beta$ or $\gamma$ are shown that are part of the boundary of a simplifying disc.} \label{Fig:NormalConditions}
\end{figure}

\begin{definition}
\label{Def:NormalSurface}
Let $\mathcal{ H}$ be a handle structure for a 3-manifold $M$ with boundary pattern $P$.
A surface $S$ properly embedded in $M$ is \emph{normal} with respect to $\mathcal{ H}$ if it is standard and
each component of $S \cap \partial \mathcal{ H}^0$ is normal.
\end{definition}

In the case where the pattern $P$ is empty, Definition \ref{Def:Normal} becomes the usual definition of a normal surface in
a handle structure, for example as in \cite[Definition 3.4.1]{Matveev}. Most of the conditions in Definition \ref{Def:Normal} are natural
generalisations to the case of general pattern. The possible exception is (7), which seems somewhat ad hoc. However,
it will be used to establish that manifolds appearing
in a hierarchy embed in the original manifold in a `simple' way. (See Theorem \ref{Thm:RemainUniformType}.)

There is a way of understanding the definition of normality in terms of simplifying discs, as follows.

\begin{definition}
Let $\calH$ be a handle structure for a 3-manifold $M$ with boundary pattern $P$. Let $S$ be a standard surface properly embedded in $M$. Then a \emph{simplifying disc} for $S$ is 
a disc $D$ lying in a 0-handle $H_0$ of $\calH$ satisfying one of the following:
\begin{enumerate}
\item $\partial D$ is the concatenation of two arcs $\alpha \subset S$ and $\beta \subset \partial H_0$, where $\alpha = D \cap S$, and $\beta = D \cap \partial H_0$ is an arc on the boundary of a 2-handle of $\calH$ that is vertical in its product structure (see Figure \ref{Fig:SimplifyingDisc} and Figure \ref{Fig:NormalConditions} (3));
\item $\partial D$ is disjoint from $P$ and is the concatenation of three arcs $\alpha \subset S$, $\beta \subset \partial H_0 \cap \partial M$ and $\gamma \subset \partial H_0 \cap \calH^2$, where $\alpha = D \cap S$, $\beta = D \cap \partial M$ and $\gamma$ is an arc on the boundary of a 2-handle of $\calH$ that is vertical in its product structure (see Figure \ref{Fig:NormalConditions} (5));
\item $\partial D$ intersects $P$ at most once and is the concatenation of two arcs $\alpha \subset S$ and $\beta \subset \partial H_0$, where $\alpha = D \cap S$, and $\beta = D \cap \partial H_0$ is an arc in $\partial M$ that is not parallel to an arc of $S \cap \partial H_0 \cap \partial M$ intersecting $P$ at most once (Figure \ref{Fig:NormalConditions} (4) and (6));
\item $D$ is a subset of $\partial \calH^0$ and $\partial D$ is the concatenation of two arcs $\alpha = D \cap S$ and $\beta = D \cap \calF^0$, and with the properties that $D$ is otherwise disjoint from $\calF$ and that $D$ intersects $P$ in at most two arcs, each of which runs from $\partial \mathcal{F}^0$ to $C$ (see Figure \ref{Fig:NormalConditions} (7)).
\end{enumerate}
\end{definition}

\begin{figure}[h]
\centering
\includegraphics[width=0.5\textwidth]{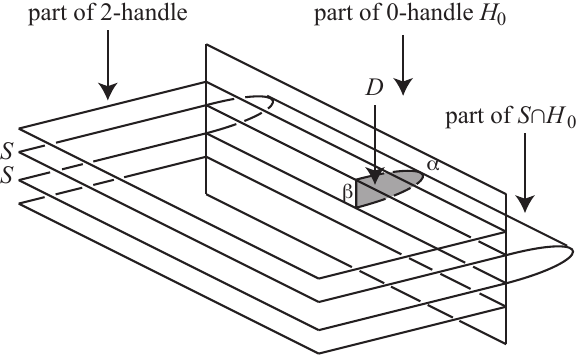}
\caption{A simplifying disc $D$} \label{Fig:SimplifyingDisc}
\end{figure}

The following is immediate from the definition of normality.

\begin{lemma}
\label{Lem:NormalIffNoSimplifying}
Let $\calH$ be a handle structure for a 3-manifold $M$ with boundary pattern $P$. Then a surface $S$ properly embedded in $M$ is normal if and only if the following all hold:
\begin{enumerate}
\item $S$ is standard;
\item $S$ has no simplifying disc;
\item no component of $S$ is a disc lying in a 0-handle that is parallel to a disc in $\partial M \cap \partial \calH^0$ intersecting $P$ in the empty set, an arc or tripod.
\end{enumerate}
\end{lemma}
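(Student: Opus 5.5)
The lemma is a translation between the seven conditions of Definition \ref{Def:Normal} and the existence of simplifying discs, so the plan is to match them one by one. Both sides contain ``$S$ is standard,'' so we assume throughout that $S$ is standard. Then $S$ is normal if and only if every curve $C$ of $S \cap \partial \calH^0$ satisfies (1)--(7). We aim to show: some curve violates one of (2)--(7) if and only if $S$ has a simplifying disc; and, given this, some curve violates (1) or (2) if and only if condition (3) of the lemma fails.

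\textbf{Simplifying disc gives a violation.} Let $D$ be a simplifying disc in a $0$-handle $H_0$, and let $C$ be the curve of $S \cap \partial H_0$ containing $\partial\alpha$. Consider the four types in turn.
Type (1): $\beta$ is a vertical arc on a $2$-handle, so it lies in a component of $\calF^1$ and joins two arcs of $C \cap \calF^1$ in that band. Since $S$ is standard, these are two distinct arcs of $C$ in one $1$-handle of $\calF$, violating (3).
Type (2): $\beta$ and $\gamma$ show that $C$ meets a component of $\calF^1$ and an adjacent component of $\partial\calH^0 - (\calF \cup P)$, violating (5).
Type (3): if $\beta$ misses $P$, then $C$ meets one component of $\partial \calH^0 - (\calF\cup P)$ in two arcs, violating (4). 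The non-parallelism clause is exactly what rules out the degenerate case where these are the same arc. If $\beta$ meets $P$ once, then $C$ meets two regions adjacent along an arc of $P$ without crossing that arc there, violating (6).
Type (4): this is literally the configuration forbidden by (7).

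\textbf{Violation gives a simplifying disc.} Suppose a curve $C$ violates one of (3)--(7). Then there is a short arc in $\partial H_0$ of the form $\beta$ (or $\beta\cup\gamma$) joining two points of $C$ and otherwise disjoint from $C$. These are the arcs $\alpha,\beta,\gamma$ drawn in Figure \ref{Fig:NormalConditions}. The component of $S \cap H_0$ bounded by $C$ is a properly embedded disc $E$ in the ball $H_0$. The arc $\beta$ (with $\gamma$) and an arc $\alpha \subset E$ joining its endpoints cobound a disc $D$ with $D\cap S=\alpha$.

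The one point needing care is that $D$ may meet other discs of $S \cap H_0$. To handle this, first choose $\beta$ outermost, i.e.\ with interior disjoint from all of $S\cap\partial H_0$. For a violation of (4) this choice can be made, possibly after switching to a different curve $C$ that is nested inside the original one. Once $\beta$ is outermost, standard innermost arguments inside the ball $H_0$ let us take $D$ disjoint from the other discs of $S\cap H_0$.

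\textbf{The remaining conditions.} Violations of (1) and (2) correspond to a component of $S$ lying in a single $0$-handle. Specifically, the disc bounded by $C$ in $\partial\calH^0-\calF$ lies in $\partial M$, so $S$ has a disc component parallel into $\partial M\cap\partial\calH^0$, meeting $P$ in the empty set, an arc or a tripod. This is precisely condition (3) of the lemma. The converse direction is immediate.

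The main obstacle is the outermost-choice step in the converse direction, particularly for conditions (4) and (6), where the same curve $C$ may re-enter a region in a nested fashion. I would resolve it by choosing the violating pair of arcs so that they cobound a region of $\partial H_0$ containing no other arc of $S$ between them. Since the author calls the lemma immediate, this level of detail should suffice.
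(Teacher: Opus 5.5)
Your plan of matching conditions (1)--(7) of Definition~\ref{Def:Normal} against the four types of simplifying disc and the boundary-parallel clause is what the paper means by ``immediate from the definition''; the paper gives no further argument. The gap is that two of your steps silently assume two things: every component of $\partial \calH^0 \cut (\calF \cup P)$ is a disc, and no such region lies on both sides of an arc of $P$. Nothing in the hypotheses provides this, and without it those steps fail. So does the equivalence as literally stated.

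First, for a violation of (1) you speak of ``the disc bounded by $C$ in $\partial\calH^0-\calF$''. But (1) only says $C\subset\partial M-P$; it supplies no disc. Suppose $\calF\cap\partial H_0$ consists of two discs, so that $\partial H_0-\calF$ is an annulus $A\subset\partial M$, and suppose $P\cap A=\emptyset$. Let $E\subset H_0$ be the disc bounded by the core of $A$. Then $E$ violates (1). Yet both discs that $\partial E$ bounds in $\partial H_0$ contain parts of $\calF$, so condition (3) of the lemma holds. Moreover $E$ has no simplifying disc. Types (1), (2) and (4) need $\partial E$ to meet $\calF$. Any type (3) arc lies in one half of $A$, and is therefore parallel to a sub-arc of $\partial E$.

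Second, under Type (3), the non-parallelism clause rules out both endpoints of $\beta$ lying on the same arc of $C$ only when the region is a disc. Take $\calF\cap\partial H_0$ to be a disc $F_2$ together with a disc $F_1$ made of two $\calF^0$-discs joined by a band. Let $C$ be an arc across one of these $\calF^0$-discs together with an arc $r$ in the annulus $\partial H_0-\calF$. Route $r$ the long way round $F_1$, so that the disc it cuts off from the annulus has the two sides of the band in its boundary. Then $C$ satisfies (1)--(7). However, an arc $\beta$ that leaves $r$, encircles $F_2$ and returns to $r$ is parallel to no arc of $S\cap\partial H_0\cap\partial M$, so it gives a type (3) simplifying disc. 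Similarly, if an arc of $P$ has the same region on both sides, your appeal to (6), which concerns two distinct regions, has nothing to apply to.

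So you must add this hypothesis explicitly; the lemma needs it too. It is exactly what condition (2) of admissibility guarantees (compare the proof of Proposition~\ref{Prop:CreateAdmissibleHS}). With it, your matching is correct.

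The outermost step you flag can then be made rigorous by a parity argument. Consider an arc in $\partial H_0$ with endpoints on $C$ and interior disjoint from $C$. It meets every other curve of $S\cap\partial H_0$ an even number of times. Hence any other curve it crosses either violates one of (3)--(6) via a sub-arc with fewer crossings, or can be removed by rerouting, and you induct on the number of crossings. Finally, for (7) the simplifying disc comes from an outermost arc of $S$ inside the given disc, not from an arc $\beta$ as in your generic description.
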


We will want to place properly embedded surfaces into normal form. In general, we may need to make modifications
to the surface in order to do this. These modifications will not increase the following quantities.

\begin{definition}
\label{Def:Weight}
Let $\calH$ be a handle structure of a compact orientable 3-manifold $M$, and let $S$ be a standard surface properly embedded in $M$. The \emph{weight} of $S$ is the number of components of $S \cap \calH^2$. The \emph{extended weight} of $S$ is $|S \cap \calH^2| + |S \cap \calH^1|$.
\end{definition}

\begin{lemma}[Normalising a surface]
\label{Lem:NormalForm}
Let $M$ be a compact orientable $3$-manifold with boundary pattern $P$. Let $S$ be a compact orientable surface properly embedded in $M$ 
transverse to $P$ that is non-trivial in $H_2(M, \partial M)$. Then, after applying a sequence of isotopies preserving $P$, compressions and pattern-compressions, some component of the resulting surface is normal and either non-separating or a violating disc. Moreover, if $S$ is standard in $\mathcal{H}$, then the weight and extended weight of the resulting normal surface are at most the weight and extended weight of $S$.
\end{lemma}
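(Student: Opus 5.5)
We prove Lemma~\ref{Lem:NormalForm} by the standard normalisation procedure, tracking a complexity that strictly decreases at each step. At each stage we keep a single component that is non-separating, or else stop at a violating disc.

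\emph{Making $S$ standard and connected.} Since $S$ is non-trivial in $H_2(M,\partial M)$, some component of $S$ is non-separating; we discard the others and assume $S$ is connected. If $S$ is not already standard, we make it so in the usual way. First isotope $S$ off the 3-handles and make it meet the 2-handles in horizontal discs $D^2\times X$. Then compress along the innermost curves of intersection with the co-cores of the 1-handles, so that $S$ meets each 1-handle in $D^1\times C$. Finally compress within the 0-handles until $S$ meets them in discs. When $S$ is already standard these steps are not needed.

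\emph{Removing simplifying discs.} While $S$ is standard but not normal, Lemma~\ref{Lem:NormalIffNoSimplifying} gives one of three situations.
\begin{enumerate}
\item A type (1) simplifying disc: we isotope $S$ across it and through the adjacent 2-handle. This removes at least one component of $S\cap\calH^2$.
\item A type (2) or (4) simplifying disc: isotoping across it removes an intersection with a 1-handle or a 2-handle, lowering the extended weight. The weight does not go up, because all these moves take place inside $\calH^0$ or push $S$ off a handle.
\item A type (3) disc $D$: either $\beta$ is disjoint from $P$, in which case $D$ is a boundary-compression that is also a pattern-compression; or $\beta$ meets $P$ once. In both cases $D$ satisfies conditions (1)--(2) of Definition~\ref{Def:PatternCompDisc}. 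The failure of parallelism in Definition~(3) gives condition (3) of Definition~\ref{Def:PatternCompDisc}, so pattern-compressing along $D$ is legitimate. If condition (3) fails, $\alpha$ cuts off a disc of $S$ meeting $P$ at most once, and we isotope that disc across $D$ instead.
\end{enumerate}
A component of the kind in Lemma~\ref{Lem:NormalIffNoSimplifying}(3) is a boundary-parallel disc meeting $P$ in at most three points. It is separating, so we discard it.

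After each compression or pattern-compression we apply Lemma~\ref{Lem:CompressionAndPatternComplexity}: some component is non-separating and has smaller pattern-complexity, and we keep it. Compressions and pattern-compressions are performed by surgery within the handles and never add intersections with $\calH^1\cup\calH^2$. Isotopies across simplifying discs strictly reduce extended weight. Hence the pair (extended weight, $\chi_p$), ordered lexicographically, strictly decreases at every step. Since both are bounded below, the process terminates, and the final surface is normal with weight and extended weight at most those of the original $S$.

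\emph{Main obstacle: the violating-disc alternative.} The delicate point is Lemma~\ref{Lem:CompressionAndPatternComplexity}. It requires that the pieces produced by a pattern-compression not be "trivial" discs. When $P$ is inessential, a pattern-compression can produce a disc component meeting $P$ at most three times whose boundary bounds no arc or tripod disc in $\partial M$, that is, a violating disc. Our plan is as follows. Whenever a compression or pattern-compression produces a disc component meeting $P$ at most three times that is not trivial in this sense, we stop and take it as our component; a violating disc can then be normalised by the same moves. Otherwise all such pieces are trivial, the counting in Lemma~\ref{Lem:CompressionAndPatternComplexity} goes through, and the non-separating component survives. Checking that the type (3) and type (4) moves fit exactly into this dichotomy is the case analysis I expect to take the most care.
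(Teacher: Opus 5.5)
Your overall scheme matches the paper's: standardise, remove simplifying discs type by type, keep a non-separating component via Lemma~\ref{Lem:CompressionAndPatternComplexity}, and use a lexicographic complexity. Ordering the pair as (extended weight, $\chi_p$) rather than the paper's ($\chi_p$, extended weight) is harmless for the steps you list.

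\textbf{The gap.} It is in the type (3) case when condition (3) of Definition~\ref{Def:PatternCompDisc} fails, i.e.\ when $\alpha$ cuts off a disc $D_1\subset S$ with $|D_1\cap P|\le 1$.
\begin{enumerate}
\item Your sentence claiming that the non-parallelism clause yields condition (3) is false. That clause only excludes $D_1$ lying in $H_0$ and meeting $\partial H_0$ in an arc of $\partial M$. In general $D_1$ can leave $H_0$ through other handles, which is why the paper says the disc satisfies (1) and (2) `but possibly not (3)'.
\item More seriously, your remedy of isotoping $D_1$ across $D$ needs the disc $D\cup D_1$, which meets $P$ at most twice, to cobound a ball with a disc in $\partial M$ meeting $P$ in $\emptyset$ or an arc. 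This fails whenever $D\cup D_1$ is a violating disc. For instance, if $\beta$ meets $P$ once and $D_1$ misses $P$, then $D\cup D_1$ meets $P$ exactly once, and such a disc is always violating.
\end{enumerate}
In that situation there is no isotopy and your procedure stalls. This is precisely the situation that forces the violating-disc alternative in the conclusion.

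\textbf{The missing idea.} The paper pattern-compresses along $D$ regardless of whether (3) holds. The pieces are $D\cup D_1$ and $S'=(S-D_1)\cup D$.
\begin{enumerate}
\item If $D\cup D_1$ is violating, one continues with it.
\item If not, it meets $P$ in $0$ or $2$ points, so $|\beta\cap P|=|D_1\cap P|$. It is discarded as a boundary-parallel disc, and $S'$ has the same $\chi_p$ as $S$ and smaller extended weight. No ball or isotopy is needed.
\end{enumerate}

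\textbf{Where the violating alternative really enters.} Your `main obstacle' paragraph points at the wrong place. When (3) holds, the proof of Lemma~\ref{Lem:CompressionAndPatternComplexity} uses nothing beyond (3). It gives a non-separating component of smaller $\chi_p$ whether or not some other piece is violating. The violating-disc alternative enters only through the failure of (3).

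\textbf{A second, smaller gap.} Once you switch to following a violating disc, `normalise it by the same moves' needs its own invariant. Specifically, you must show that pattern-compressing a violating disc along a simplifying disc leaves a violating component. Lemma~\ref{Lem:CompressionAndPatternComplexity} tracks only non-separating components, and a violating disc may well be separating.
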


\begin{proof}
We may readily ensure that $S$ is standard. In particular, if $S$ intersects some 0-handle $H_0$ in anything other than discs, then we may modify it, keeping $S \cap \partial H_0$ unchanged but ensuring that each of these curves bounds a disc in $H_0$. This process is achieved by a sequence of isotopies and compressions.

We may further assume that $S$ has no component that is a boundary-parallel disc, as we could remove this component and the surface would still be homologically non-trivial. 

Suppose that $S$ is not normal. Then by Lemma \ref{Lem:NormalIffNoSimplifying}, $S$ has a simplifying disc. The first and second types of simplifying disc can be used to pattern-isotope $S$ to reduce its weight without increasing $|S \cap \mathcal{H}^1|$. The third type of simplifying disc is a potential pattern-compression disc for $S$. It satisfies (1) and (2) in Definition \ref{Def:PatternCompDisc}, but possibly not (3).
We can pattern-compress $S$ along this disc. If $S$ was a violating disc, then some component of the resulting surface is again a violating disc, and we work with this instead. If $S$ was non-separating, but the pattern-compression produces a violating disc, then we work with this violating disc instead. However, if the pattern-compression creates a component that is a boundary parallel disc, but not a violating disc, then discard it. The resulting surface has smaller extended weight and no greater weight than $S$. On the other hand, if the pattern-compression does not create a violating disc or a boundary-parallel disc, then some component of the resulting surface is non-separating, has smaller $\chi_p$ and no greater weight or extended weight. The fourth type of simplifying disc specifies an isotopy of $S$, which reduces the number of components of $S \cap \calH^1$ and that leaves the weight of $S$ unchanged. At each stage, we are either reducing $\chi_p$ or leaving it unchanged and reducing extended weight. So this process terminates. The resulting surface is normal, connected, and orientable. Moreover, it is either non-separating or a violating disc.
\end{proof}

\section{Decomposing a handle structure along a normal surface}
\label{Sec:DecomposingHS}

We now wish to show that when $(M,P)$ is decomposed along a normal surface, the resulting manifold is, in some sense, simpler than the original one. The notion of complexity that goes down is defined in terms of handle structures. Its definition needs to take account of the boundary pattern, as follows.

\begin{definition}
\label{Def:Index}
Let $\mathcal{H}$ be a handle structure for $(M,P)$, and let $\mathcal{F}(\mathcal{H})$ be its characteristic surface.
For a 0-handle $H$ of $\mathcal{H}$, its \emph{index} $I(H)$ is defined to be the sum of $\chi_p(F)$, over all components $F$ of $H \cap \mathcal{F}(\mathcal{H})$.
\end{definition}

Note that $\chi_p(F)$ can be computed from the 0-handles $D$ of $F$, as follows. 
Define the \emph{index} of $D$ to be 
$$I(D) = \sum_D \bigl( -4 + 2|D \cap \mathcal{F}^1| + |D \cap P| \bigr ).$$
Then $\chi_p(F)$ is $\sum_D I(D)$, as $D$ ranges over each 0-handle of $F$.

\begin{lemma}[Behaviour of index under decomposition]
\label{Lem:DecompIndex}
Let $\mathcal{H}'$ be obtained from $\mathcal{ H}$ by decomposing along 
a standard surface $S$. Let $D$ be a 0-handle of $\mathcal{ F}$. Then the sum, over all components
$D'$ of $D \cap \mathcal{ H}'$, of $I(D')$ is equal to $I(D)$. Hence, for any component $F$ of $\mathcal{F}(\mathcal{H})$,
the sum, over all components $F'$ of $F \cap \mathcal{ H}'$, of $\chi_p(F')$ is equal to $\chi_p(F)$.
\end{lemma}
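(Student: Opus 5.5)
The plan is to do a purely local count inside a single 0-handle $D$ of $\mathcal{F}$, and then sum. Since $S$ is standard, $S \cap \partial\mathcal{H}^0$ meets $\mathcal{F}$ in a standard 1-manifold. In particular, $S \cap D$ is a finite collection of disjoint properly embedded arcs in the disc $D$, each running between two arcs of $\partial D$. When we decompose along $S$, the new characteristic surface restricted to $D$ is $D \cut (S \cap D)$, which is a collection of discs $D'$. Each such $D'$ is a 0-handle of $\mathcal{F}(\mathcal{H}')$. Its 1-handle attaching arcs are the pieces of $D \cap \mathcal{F}^1$, and its pattern points are the old points of $D \cap P$ together with the new pattern coming from the copies of $\partial S$.

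Write $I(D) = -4 + 2|D\cap\mathcal{F}^1| + |D\cap P|$, counting components of $\partial D \cap \mathcal{F}^1$ (attaching arcs) and points of $\partial D \cap P$. I will show that cutting along one arc $\alpha$ of $S \cap D$ leaves the sum of the indices unchanged; induction on the number of arcs then gives the first claim.

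Cutting $D$ along $\alpha$ produces two discs, so the $-4$ terms contribute an extra $-4$. The arc $\alpha$ becomes two copies, one on each new disc. In $\mathcal{H}'$ these copies lie in $\partial M'$ and carry the new pattern $\partial S$. The key point is to check how the endpoints of $\alpha$ interact with $\mathcal{F}^1$. Because $S$ meets each 1-handle $D^1\times D^2$ in $D^1\times C$, each endpoint of $\alpha$ lies in the interior of an attaching arc of $\mathcal{F}^1$ on $\partial D$. Cutting there splits that attaching arc into two, one on each side, so each endpoint adds one to $|D\cap\mathcal{F}^1|$, for a total of $+2 \cdot 2 = +4$ in the index. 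This exactly cancels the $-4$.

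The new pattern arcs (the copies of $\alpha$) lie in the interior of the boundary of the new 0-handles of $\mathcal{F}'$, away from the $\mathcal{F}^1$ arcs. They therefore do not contribute to the point count $|D'\cap P'|$, which counts transverse points of the pattern on $\partial D'$, where $\partial D'$ meets $\partial M'$ in arcs. I need to fix this convention carefully; it is the main obstacle. One has to confirm that in the definition of $I$, the term $|D\cap P|$ counts the points where pattern crosses the arcs of $\partial D$ between 1-handles, namely the points of $P\cap\mathcal{H}^1$ lying in $\partial D$. Under that reading, the new pattern along the copies of $\alpha$ runs through $\mathcal{F}^1$ and parallel to the 1-handle core. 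Its intersection points with $\partial D'$ are exactly the points where $\partial S$ crosses the 1-handles, which is consistent with the definition of a handle structure with pattern, whose pattern in a 1-handle is $D^1\times X$. The intersection with each 0-handle of $\mathcal{F}'$ is then counted once at each endpoint.

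If the convention makes each endpoint contribute a pattern point rather than an extra attaching arc, I will redo the bookkeeping in the same way. I will check that each endpoint of $\alpha$ contributes a total of $+2$ across the two sides, whether through one extra $\mathcal{F}^1$ arc or through two new pattern points. This gives the same cancellation, $-4 + 2 + 2 = 0$.

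The second statement then follows by summing over the 0-handles $D$ of $F$. We use $\chi_p(F) = \sum_D I(D)$, and note that the 0-handles of the components $F'$ of $F\cap\mathcal{H}'$ are exactly the pieces $D'$.
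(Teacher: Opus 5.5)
Your strategy is the same as the paper's: cut $D$ along one arc $\alpha$ of $S \cap D$ at a time, charge $-4$ for the extra disc, and recover $+2$ at each endpoint of $\alpha$. But the endpoint analysis has a genuine error.

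You assert that, because $S$ meets each 1-handle in $D^1 \times C$, every endpoint of $\alpha$ lies in the interior of an arc of $D \cap \mathcal{F}^1$. This does not follow, and it is false whenever $\partial S \neq \emptyset$. Write $D = \{\pm 1\} \times D^2$, so that $S \cap D = \{\pm 1\} \times C$. The endpoints of $C$ can lie on the arcs of $\partial D^2$ where 2-handles attach; these are the points of $\partial D \cap \mathcal{F}^1$, where $S$ continues into a 2-handle. They can also lie on the complementary arcs of $\partial D$, which lie in $\partial M$; these are exactly where $\partial S$ runs through the 1-handle. Both kinds occur in general.

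There is no ``convention'' to settle. Since $P \cap \mathcal{H}^1 = D^1 \times X$, the term $|D \cap P|$ simply counts the points $\{\pm 1\} \times X$ on $\partial D \cap \partial M$. Your description of the new pattern is also off. The copies of $\alpha$ lie in the copies of $S$ in $\partial M'$ and are not pattern at all. The new pattern consists of the copies of $\partial S$, which never meet $\mathcal{F}^1$, since $\mathcal{F}^1$ lies in the 2-handles.

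What you need is a per-endpoint dichotomy, and this is precisely the paper's argument.
\begin{itemize}
\item If an endpoint of $\alpha$ lies in the interior of an arc of $D \cap \mathcal{F}^1$, that arc is split into one arc on each side. This contributes $+2$, as you say.
\item If instead it lies in $\partial D \cap \partial M$, no arc of $\mathcal{F}^1$ is split. The endpoint also avoids $P$, because $S$ is transverse to $P$ and both meet the 1-handle in core-parallel arcs. But it is a point of $\partial S$, so after cutting it yields one point of $P'$ on each of the two new discs. This again contributes $+2$.
\end{itemize}
Each endpoint is of exactly one type, so each arc contributes $-4+2+2=0$.

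Your closing hedge names both mechanisms, but it presents them as alternative readings of the definition rather than as two geometric cases to be distinguished endpoint by endpoint. It also gives no justification for the pattern case. Once the dichotomy is stated and justified, your induction over arcs and the summation giving the statement for $\chi_p(F)$ go through.
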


\begin{proof}
Let $D$ be 0-handle of $\calF$. Its intersection with $S$ is a collection of properly embedded arcs. Say that there are $n$ of them. (See Figure \ref{Fig:DecomposeF}.)
Then decomposing $D$ along these arcs gives $n+1$ discs and so this changes the index by $-4n$. The endpoint of each arc either lies in $\mathcal{F}^1(\mathcal{H})$ or in $\partial M$. In the former case, this component of $\mathcal{F}^1(\mathcal{H})$ is decomposed along the extension of the arc into $\mathcal{F}^1(\mathcal{H})$. Hence this gives an increase of $2$ to the index. If the endpoint of the arc lies in $\partial M$, then we get two new points of intersection between $D \cap \mathcal{H}'$ and the boundary pattern. Hence, again this increases the index by $2$. So each arc leads to an increase in index by $4$. Therefore, in total, the index is unchanged.
\end{proof}

\begin{figure}[h]
\centering
\includegraphics[width=0.8\textwidth]{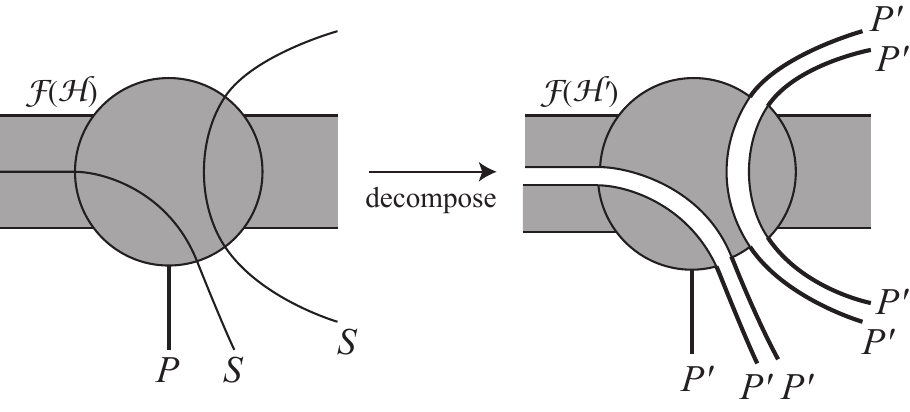}
\caption{Decomposition of $\mathcal{F}(\mathcal{H})$ along $S$} \label{Fig:DecomposeF}
\end{figure}

We will mostly want to deal with handle structures with the following property.

\begin{definition}
A handle structure $\mathcal{H}$ for $(M,P)$ is \emph{admissible} if the following both hold:
\begin{enumerate}
\item each 0-handle of $\mathcal{F}$ either has positive index or has zero index and is disjoint from $P$;
\item for any simple closed curve $C$ lying in a 0-handle of $\calH$, that is disjoint from $\calF$, and that intersects $P$
at most 3 times, $C$ bounds a disc in $\partial \calH^0$ intersecting $P$ in the empty set, an arc or a tripod.
\end{enumerate}
\end{definition}

\begin{figure}[h]
\centering
\includegraphics[width=0.8\textwidth]{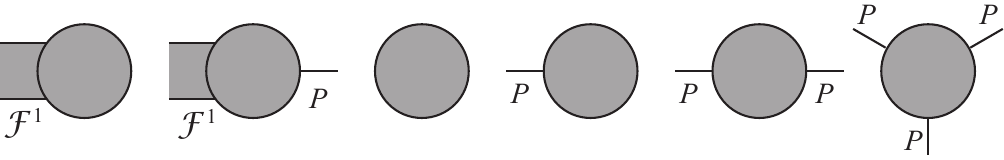}
\caption{The possible 0-handles of $\mathcal{F}$ with negative index} \label{Fig:NegIndex}
\end{figure}

\begin{figure}[h]
\centering
\includegraphics[width=0.6\textwidth]{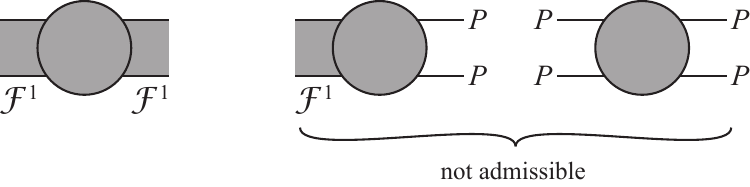}
\caption{The possible 0-handles of $\mathcal{F}$ with zero index} \label{Fig:ZeroIndex}
\end{figure}

\begin{proposition}[{\sc Create admissible handle structure}]
\label{Prop:CreateAdmissibleHS}
Let $(M,P)$ be a compact orientable irreducible 3-manifold with a boundary pattern, and let $\mathcal{H}$ be a handle structure for $(M,P)$. Then there is an algorithm that either creates an admissible handle structure for a manifold $(M',P')$ that is obtained from $(M,P)$ by decomposing along disjoint non-trivial squares or provides a violating disc for $(M,P)$.
\end{proposition}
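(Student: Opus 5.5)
The plan is an iterative procedure that repeatedly removes the failures of the two admissibility conditions, guarded by a complexity that strictly decreases. Whenever a failure is found, I either locate a violating disc or cut along a non-trivial square. I will use the lexicographic complexity (number of 0-handles of $\mathcal{F}$ with negative index, number of 0-handles of $\mathcal{F}$ with zero index meeting $P$, total number of handles of $\mathcal{H}$), or some similar well-founded quantity. The first step is to classify the bad 0-handles $D$ of $\mathcal{F}$. Since $I(D) = -4 + 2|D\cap\mathcal{F}^1| + |D \cap P|$, negative index forces either $|D \cap \mathcal{F}^1| \leq 1$ with few pattern points, or $|D\cap \mathcal{F}^1| = 0$ with $|D \cap P| \leq 3$ (Figure \ref{Fig:NegIndex}). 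Zero index meeting $P$ forces $|D \cap \mathcal{F}^1| = 1$ and $|D\cap P| = 2$, or $|D \cap \mathcal{F}^1|=0$ and $|D \cap P| = 4$ (Figure \ref{Fig:ZeroIndex}).

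For each such $D$, lying in the boundary of a 1-handle $H_1 = D^1\times D^2$ (so $D = \{\pm 1\}\times D^2$ or a side of it), I will take the cocore disc $E = \{0\}\times D^2$ of the 1-handle, possibly extended across an attached 2-handle when $|D\cap\mathcal{F}^1|=1$. This gives a properly embedded disc in $M$ whose boundary meets $P$ in $|D \cap P|$ points, roughly $|D\cap P|$ plus at most $2$ after sliding off the 2-handle.
\begin{enumerate}
\item If $E$ meets $P$ in at most three points, I check, by inspecting the finite pattern on $\partial M$, whether $\partial E$ bounds a disc in $\partial M$ meeting $P$ in the empty set, an arc or a tripod. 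If it does not, $E$ is a violating disc and I output it.
\item If $\partial E$ does bound such a disc $D'$, then by irreducibility $E \cup D'$ bounds a ball. I remove the handle (cancel the 1-handle with a 0-handle, or absorb the 2-handle), which reduces the number of handles while keeping a handle structure for $(M,P)$.
\item If $E$ meets $P$ in four points, $E$ is a square. I check whether it is trivial in the sense of Definition \ref{Def:FourDisc}. If it is trivial, I simplify as in (2). If it is non-trivial, I decompose along it; after the cut, the resulting copies of $D$ meet no handle of $\mathcal{F}$ in an inessential way, and by Lemma \ref{Lem:DecompIndex} the indices redistribute without creating new bad 0-handles.
\end{enumerate}

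Condition (2) of admissibility is handled the same way. A curve $C$ in $\partial H_0 - \mathcal{F}$ meeting $P$ at most three times bounds a disc in $H_0$ that is properly embedded in $M$. If this disc is not parallel to one as required, it is a violating disc. There are only finitely many such curves up to isotopy in the finite pattern on $\partial H_0$, so this can be checked algorithmically.

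The main obstacle is showing that the process terminates and that the squares we cut along remain disjoint and non-trivial in the original $(M,P)$. For non-triviality I will use Lemma \ref{Lem:PatternComp4Disc}. If a square cut earlier becomes compressible, or a later violating disc in the cut manifold appears, Lemma \ref{Lem:EssentialPullsBack} and Lemma \ref{Lem:PatternComp4Disc} convert it into a violating disc for $(M,P)$. Otherwise the pattern stays essential and the cut pieces are pattern-incompressible by Lemma \ref{Lem:FourPuncDisc}. For termination I expect to argue that each cut along a cocore square replaces a zero-index 0-handle of $\mathcal{F}$ meeting $P$ by pieces of index zero disjoint from $P$, or removes it altogether. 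The first coordinate of the complexity therefore never increases and the later coordinates drop. Verifying this carefully for every configuration in Figures \ref{Fig:NegIndex} and \ref{Fig:ZeroIndex} is where I expect most of the work.
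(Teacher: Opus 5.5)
Your overall scheme is the paper's. A bad 0-handle $D$ of $\mathcal{F}$ is an end of a 1-handle $H_1$ carrying at most one strand of $\mathcal{F}^1$. When it carries none, the cocore of $H_1$ is a properly embedded disc meeting $P$ in $|D\cap P|\le 4$ points, and it is either violating, cuts off a ball to discard, is a trivial square, or is a non-trivial square to cut along.

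\textbf{Termination.} The argument you propose for this fails. When $H_1$ carries exactly one strand, from a 2-handle $H_2$, getting rid of $H_1$ (your step (2), ``absorb the 2-handle'') deletes $H_2$. That removes a strand from every other 1-handle that $H_2$ runs over. Take an end disc of such a 1-handle with two strands and no pattern: it has index $0$ and is admissible. After the deletion it has index $-2$. If $H_2$ runs over several such 1-handles, the number of negative-index 0-handles of $\mathcal{F}$ goes up. So the first coordinate of your lexicographic complexity can increase, and your claim that it never does is false. The quantity that decreases at every step, and the one the paper uses, is the number of 1-handles: every move deletes or cuts at least one 1-handle and none creates one. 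What is true is that no 0-handle of $\mathcal{F}$ has its \emph{index} increased, since a deleted strand is replaced by at most two pattern arcs. That fact, not a count of bad 0-handles, is what Lemma \ref{Lem:ComplexitiesDecreaseInAdmissibleAlgorithm} later needs.

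\textbf{The one-strand case.} This case needs no disc and no test. $H_1\cup H_2$ is a 3-ball meeting the rest of $M$ in a single disc: the two ends of $H_1$ joined by the remaining band of the attaching annulus of $H_2$. So it can always be removed outright, with each strand of $H_2$ replaced by the one or two pattern arcs that ran along $H_1$. Your ``cocore extended across the 2-handle'' is not a properly embedded disc as described. The natural candidate, the frontier of $H_1\cup H_2$, always lands in your trivial branch, so those checks are vacuous.

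\textbf{Disjointness and violating discs.} The disjointness of the squares, and the fact that a violating disc found later is one for $(M,P)$, come from a direct observation rather than from Lemmas \ref{Lem:EssentialPullsBack} and \ref{Lem:PatternComp4Disc}. After cutting along the cocore of a 1-handle, the copies of the square lie in $\partial\mathcal{H}^0$, away from all handles of higher index. Every later square is the cocore of a 1-handle of the current structure, so its boundary lies on the sides of that 1-handle and misses the earlier copies. The same applies to the violating discs the procedure produces. Note also that Lemma \ref{Lem:EssentialPullsBack} runs the wrong way for your purpose: it says $P'$ essential implies $P$ essential. Converting a violating disc of the cut manifold into one for $(M,P)$ would instead need the contrapositive of Lemma \ref{Lem:EssentialPushesForward} together with Lemma \ref{Lem:PatternComp4Disc}.
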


\begin{proof}
The procedure that we will follow will not increase the index of any 0-handle of $\mathcal{F}$. It will strictly decrease the number of 1-handles of $\mathcal{H}$ at each stage, and so it is guaranteed to terminate. Suppose that there is a 0-handle $D$ of $\mathcal{F}$ that either has negative index or has zero index and intersects $P$. Then $D$ has at most one arc of intersection with $\mathcal{F}^1$. (See Figures \ref{Fig:NegIndex} and \ref{Fig:ZeroIndex}.)

Suppose that it has exactly one such arc. Then $D$ lies in a 1-handle $H_1$ of $\mathcal{H}$ and the arc lies in a 2-handle $H_2$ of $\mathcal{H}$, and we may remove $H_1 \cup H_2$ from the handle structure. There may be one or two arcs of $P$ running along $H_1$. If so, then we need to replace each component of $\mathcal{F}^1 \cap H_2$ with one or two arcs of $P$.

Suppose now that $D$ is disjoint from $\mathcal{F}^1$. Then it is properly embedded in $M$. If $D$ has negative index, it intersects $P$ in at most $3$ points. We can then determine whether $\partial D$ bounds a disc $D'$ in $\partial M$ intersecting $P$ in either the empty set, an arc or a tripod. If so, $D \cup D'$ bounds a 3-ball in $M$, by irreducibility. We remove this submanifold bounded by $D \cup D'$, and we insert into $D$ the boundary pattern that was in $D'$. On the other hand, if $\partial D$ does not bound such a disc $D'$, then $D$ is a violating disc. It might not be normal, but in this case, we can find another properly embedded disc in $\calH^0$, disjoint from $\calF$, that also forms a violating disc and intersecting $P$ fewer times. Hence, we may assume that $D$ is normal, and the algorithm terminates with this disc. Suppose now that
$D$ has zero index, and hence is a square. If it is a trivial square, it bounds a disc $D'$ in $\partial M$ intersecting $P$ as in (1) or (2) of Lemma \ref{Lem:FourPuncDisc}. We can then remove the submanifold bounded by $D \cup D'$, and insert into $D$ the boundary pattern that was in $D'$. If $D$ is a non-trivial square, then we decompose along it.

Now suppose that there is a simple closed curve $C$ lying in a 0-handle of $\calH$, that is disjoint from $\calF$, and that intersects $P$
at most 3 times, but that $C$ does not bound a disc in $\partial \calH^0$ intersecting $P$ in the empty set, an arc or a tripod. There is
always such a curve $C$ when $\partial \calH^0 \cut (\calF \cup P)$ is not a union of discs. In that case, we pick $C$ to be boundary parallel
in a component of $\partial \calH^0 \cut (\calF \cup P)$ that is not a disc. We can determine
whether $C$ bounds a disc $D'$ in $\partial M$ intersecting $P$ in the empty set, an arc or a tripod. If there is no such disc $D'$ in $\partial M$,
then a disc properly embedded in $\calH^0$ bounded by $C$ is a violating disc. On the other hand, if there is such a disc $D'$,
then we can remove the submanifold bounded by $D \cup D'$ and insert into $D$ the boundary pattern that was in $D'$. 

We have performed a sequence of decompositions along squares. We need to explain why we can take these squares all to be disjoint. We also need to explain why a violating disc that is produced in the above process actually forms a violating disc for $(M,P)$. We note that when we decompose along a square, the copies of this disc in the resulting 3-manifold lie in 0-handles and are disjoint from handles of higher index. In particular, they are disjoint from the later decomposing squares. Moreover, they are disjoint from any later violating discs that are produced. Thus, the squares are all disjoint from each other, and any violating disc that is produced is actually a violating disc for $(M,P)$.

\end{proof}

One of the reasons why we work with admissible handle structures is that they behave well when they are decomposed along a normal surface, as follows.

\begin{lemma}[Decomposing an admissible handle structure]
\label{Lem:AdmissibleDecomposition}
Let $(M,P)$ be a compact orientable 3-manifold with a boundary pattern, and let $\mathcal{H}$ be an admissible handle structure for $(M,P)$. Let $S$ be a normal surface properly embedded in $M$, and let $\mathcal{H}'$ be the handle structure obtained by cutting along $S$. Then each 0-handle of $\mathcal{F}(\mathcal{H}')$ has non-negative index.
\end{lemma}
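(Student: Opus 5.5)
We argue by contradiction. Suppose $D'$ is a 0-handle of $\mathcal{F}(\mathcal{H}')$ with negative index, lying in a 0-handle $D$ of $\mathcal{F}(\mathcal{H})$.

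\textbf{Form of $D'$.} By the definition of index, $-4 + 2|D' \cap \mathcal{F}^1(\mathcal{H}')| + |D' \cap P'| < 0$. So $D'$ is of one of the forms in Figure \ref{Fig:NegIndex}:
\begin{enumerate}
\item it meets $\mathcal{F}^1(\mathcal{H}')$ in at most one arc and $P'$ in at most one point; or
\item it is disjoint from $\mathcal{F}^1(\mathcal{H}')$ and meets $P'$ in at most three points.
\end{enumerate}
The boundary $\partial D'$ is a union of arcs of three kinds: arcs of $\partial D$ running into $\mathcal{F}^1(\mathcal{H})$, arcs of $\partial D$ lying in $\partial M$, and arcs of $S \cap D$. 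Each arc of $S \cap D$ contributes two points to $|\partial D' \cap (\mathcal{F}^1(\mathcal{H}') \cup P')|$, counted appropriately. This is the accounting already used in Lemma \ref{Lem:DecompIndex}.

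\textbf{Case $S \cap D' = \emptyset$.} Then $D' = D$, which has non-negative index by condition (1) of admissibility, a contradiction.

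\textbf{Case $S$ meets $\partial D'$ in exactly one arc $\alpha$.} The rest of $\partial D'$ is an arc $\beta$ of $\partial D$, and $D'$ itself lies in $\calF^0(\calH)$. By the index count, $\beta$ meets $\mathcal{F}^1(\mathcal{H})$ in at most one arc, or is disjoint from it, with a correspondingly bounded number of points of $P$. We split according to where $\beta$ goes.
\begin{itemize}
\item If $\beta$ lies in the boundary of a single 2-handle, i.e.\ inside one arc of $\calF^1$, then $C = $ the normal curve containing $\alpha$ meets a 1-handle of $\calF$ twice or turns back. This is a simplifying disc of type (1), or violates (3) of Definition \ref{Def:Normal}.
\item If $\beta$ is disjoint from $\mathcal{F}^1$ and lies in $\partial M$, meeting $P$ at most once, then we get the configurations excluded by conditions (4) and (6).
\item If $\beta$ runs from $\mathcal{F}^1$ into $\partial M$ with no pattern point, we get condition (5).
\item The remaining configurations, where $D'$ is a disc in $\calF^0$ cut off by $\alpha$ meeting $P$ in at most two arcs from $\partial \calF^0$ to $C$, are exactly what condition (7) forbids.
\end{itemize}
So in every subcase, normality of $S$ gives a contradiction.

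\textbf{Case $\partial D'$ contains two or more arcs of $S$.} Then there are at least four contributions from the endpoints of those arcs. The index is then already at least $-4 + 4 \geq 0$, unless some endpoints lie on $P'$ with only one count. I will check against the convention in Lemma \ref{Lem:DecompIndex}: an endpoint in $\calF^1$ gives $+2$ via a new $\calF^1$ arc, and an endpoint in $\partial M$ gives $+2$ via two pattern points of $P'$, one on each side. Each arc therefore contributes at least $2$ to $D'$'s index computation, so two arcs give $\geq 0$. Hence this case cannot occur.

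\textbf{Case $D'$ is disjoint from $\partial D$.} This cannot happen, since $D'$ is a component of $D$ cut along arcs.

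\textbf{Main obstacle.} The hard part is the one-arc case. There I have to carefully match each negative-index picture in Figure \ref{Fig:NegIndex} with a specific normality condition (3)--(7). Condition (2) of admissibility will be needed to handle closed curves of $S$ lying in $\partial \calH^0 - \calF$. Via condition (2) of Definition \ref{Def:Normal}, together with admissibility (2), any such curve meeting $P$ at most three times would bound a disc with an arc or tripod, which is excluded. This rules out such components, since they would give discs $D'$ of $\mathcal{F}(\mathcal{H}')$ whose index needs separate consideration.
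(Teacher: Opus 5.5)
Your overall structure matches the paper's. The index count forces $S \cap \partial D'$ to be a single arc $\alpha$: two or more arcs give index $\geq 0$, and none gives $D' = D$, which contradicts admissibility (1). The subcases where $\beta$ lies inside one arc of $\calF^1$, or runs from $\calF^1$ into $\partial M$ with no pattern point, are correctly dispatched by normality conditions (3) and (5).

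\textbf{The gap.} It lies in the remaining case, where $D'$ is disjoint from $\calF^1(\calH')$. Here $\beta \subset \partial D \cap \partial M$ meets $P$ at most once, giving index $-2$ or $-1$. Conditions (4) and (6) do \emph{not} exclude this configuration. What they give is the following. The normal curve $C$ through $\alpha$ leaves $D$ at the two endpoints of $\alpha$ into either a single component of $\partial \calH^0 \cut (\calF \cup P)$ or two components adjacent along an arc of $P$. So $C$ closes up immediately: $C = \alpha \cup \gamma$, where $\gamma$ lies in $\partial \calH^0 - \calF$ and crosses $P$ at most once. Such a curve (an outermost arc in a 0-handle of $\calF$ plus one arc in the adjacent region) satisfies (3)--(6) perfectly well. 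Your fourth bullet then invokes (7) with $D'$ as the forbidden disc. But $D'$ lies inside $\calF^0$ and meets $P$ only in boundary points, whereas the disc in (7) must be otherwise disjoint from $\calF$.

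\textbf{The missing step.} Consider the simple closed curve $\gamma \cup \beta$, pushed slightly off $\calF$. It lies in a 0-handle, is disjoint from $\calF$, and meets $P$ at most twice: at $\beta \cap P$ and where $\gamma$ crosses $P$. By condition (2) of admissibility, it bounds a disc in $\partial \calH^0$ meeting $P$ in the empty set or a single arc. This disc is bounded by the subarc $\gamma$ of $C$ and the subarc $\beta$ of $\partial \calF^0$, and any pattern arc in it runs from $\partial \calF^0$ to $C$. That is exactly what (7) forbids, giving the contradiction.

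This is where admissibility (2) is actually needed. Your proposed use of it, for closed curves of $S$ lying in $\partial \calH^0 - \calF$, is beside the point: such curves never meet a 0-handle of $\calF$, so they cannot produce a component $D'$ of $\calF^0(\calH')$ at all.
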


\begin{proof}
Suppose that $D'$ is a 0-handle of $\mathcal{F}(\mathcal{H}')$ with negative index. The possibilities for $D'$ are shown in Figure \ref{Fig:NegIndex}. 

Suppose first that $D'$ intersects $\mathcal{F}^1(\mathcal{H}')$ in a single arc and is disjoint from $P'$. If $\partial D'$ is disjoint from $S$, then $D'$ forms a 0-handle of $\calF(\calH)$ that violates the assumption that $\calH$ is admissible. Therefore, $\partial D'$ must intersect $S$ and this intersection must be an arc, because $\partial D'$ is disjoint from $P'$. But this implies that $S$ violated (3) in the definition of normality. 

Suppose now that $D'$ intersects $\mathcal{F}^1(\mathcal{H}')$ in a single arc and intersects $P'$ in a point. Again, $\partial D'$ cannot be disjoint from $S$. It must therefore intersect $S$ in an arc. But this implies that $S$ violated (5) in the definition of normality.

Suppose that $D'$ misses $\mathcal{F}^1(\mathcal{H}')$. Now $D'$ is a subset of a 0-handle $D$ of $\mathcal{F}^0(\mathcal{H})$, and its boundary alternates between arcs of $S \cap D$ and arcs lying in $\partial D$. Since $D'$ intersects the pattern at most three times, there can be at most three such arcs. But in fact, there must be an even number of them. There must be at least one arc of $S \cap D$ lying in $\partial D'$, as otherwise $D' = D$, contradicting the assumption that $\mathcal{H}$ is admissible. Hence, $D' \cap S$ is an outermost arc in $D$. The remainder of $\partial D'$ is an arc $A$ in $\partial D$ intersecting $P$ at most once. The arcs of $S \cap (\partial \mathcal{H}^0 \cut (\mathcal{F} \cup P))$ incident to $\partial D'$ lie in the same component of $\partial \mathcal{H}^0 \cut (\mathcal{F} \cup P)$ or in adjacent components of $\partial \mathcal{H}^0 \cut (\mathcal{F} \cup P)$. Hence, by (4) and (6) in the definition of normality, they are either equal or they meet at a point of $P$. So the endpoints of $D' \cap S$ are joined by an arc of $S \cap (\partial \calH^0 \cut \calF)$. The union of this latter arc and $A$ is a simple closed curve $C$ that intersects $P$ in at most two points. So by admissibility, $C$ bounds a disc in $\partial \calH^0$ intersecting $P$ in an arc or the empty set. But then (7) in the definition of normality is contradicted.
\end{proof}

In an admissible handle structure, we permit some 0-handles of $\mathcal{F}$ to have zero index. These give rise to the following types of handle of $\mathcal{H}$.

\begin{definition}
Let $\mathcal{H}$ be a handle structure for $(M,P)$. A handle $H$ of $\mathcal{H}$ is an \emph{interior parallelity handle} if one of the following holds:
\begin{enumerate} 
\item it is a 0-handle disjoint from $P$, each 0-handle of $\mathcal{F} \cap H$ has index zero and $\mathcal{F} \cap H$ is connected;
\item it is a 1-handle disjoint from $P$ and each 0-handle of $\mathcal{F} \cap H$ has index zero;
\item it is a 2-handle.
\end{enumerate}
Each interior parallelity handle can be given a product structure $D^2 \times I$ so that the intersection with any other 0-handle, 1-handle or 2-handle is of the form $\alpha \times I$ where $\alpha$ is a union of arcs in $\partial D^2$. These product structures are viewed as forming an $I$-bundle structure and this $I$-bundle structure can be chosen compatibly over all the parallelity handles. The union of the parallelity handles is the \emph{interior parallelity bundle} for $\mathcal{H}$.
\end{definition}

We will define more general versions of this structure in Section \ref{Sec:CertifyingEssentialHierarchy}, which will give rise to `parallelity bundles'. These important subsets of a handle structure have had numerous applications, for example \cite{Lackenby:Composite}. We recall the following terminology.

\begin{definition} 
Let $\mathcal{B}$ be an $I$-bundle over a surface $F$. Then its \emph{vertical boundary} $\partial_v \mathcal{B}$ is the $I$-bundle over $\partial F$, and its \emph{horizontal boundary} $\partial_h \mathcal{B}$ is the $(\partial I)$-bundle over $F$.
\end{definition}

When we decompose a handle structure $\mathcal{H}$ along a normal surface, we would like the resulting handle structure $\mathcal{H}'$ to be simpler than $\mathcal{H}$, in some sense. The measure of complexity that we will use is as follows.

\begin{definition}
Define the \emph{q-complexity} $c_q(\mathcal{H})$ to be 
$$\sum_H (\max \{I(H), 0\})^2,$$ where the sum is taken over 0-handles $H$ of $\mathcal{H}$. Here, `q' stands for `quadratic'.
\end{definition}

\begin{proposition}[Decomposition and q-complexity]
\label{Prop:ComplexityDecreases}
Let $\mathcal{H}$ be an admissible handle structure for $(M,P)$. Let $S$ be a compact orientable connected non-separating properly embedded normal surface. 
Let $\mathcal{H}'$ be the handle structure obtained from $\mathcal{H}$ by cutting along $S$. Then $c_q(\mathcal{H}') \leq c_q(\mathcal{H})$. Moreover, if this is an equality, then $S$ is homologous in $H_2(M, \partial M)$ to a vertical surface in the interior parallelity bundle for $\mathcal{H}$.
\end{proposition}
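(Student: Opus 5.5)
The plan is to compare indices $0$-handle by $0$-handle. Fix a $0$-handle $H$ of $\mathcal{H}$. By admissibility every $0$-handle of $\mathcal{F}(\mathcal{H})$ has non-negative index, so $I(H) \geq 0$ and $\max\{I(H),0\} = I(H)$. The surface $S$ is standard, so it meets $H$ in discs, and these cut $H$ into $0$-handles $H'_1, \dots, H'_k$ of $\mathcal{H}'$.

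By Lemma \ref{Lem:DecompIndex}, applied to each $0$-handle of $\mathcal{F} \cap H$ and summed, we get $\sum_i I(H'_i) = I(H)$. By Lemma \ref{Lem:AdmissibleDecomposition}, every $0$-handle of $\mathcal{F}(\mathcal{H}')$ has non-negative index, so $I(H'_i) \geq 0$ for each $i$. For non-negative reals summing to $I(H)$ we have $\sum_i I(H'_i)^2 \leq (\sum_i I(H'_i))^2 = I(H)^2$. Summing over all $0$-handles $H$ gives $c_q(\mathcal{H}') \leq c_q(\mathcal{H})$.

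Equality holds precisely when, in every $0$-handle $H$ of $\mathcal{H}$, at most one piece $H'_i$ has positive index. Since every piece has non-negative index and its $\mathcal{F}$-discs do too, all other pieces have index zero, and every $0$-handle of $\mathcal{F}$ in them has index zero.

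The work is in the equality case, and I expect it to be the main obstacle. I would first classify the index-zero $0$-handles of $\mathcal{F}(\mathcal{H}')$ using Figure \ref{Fig:ZeroIndex}. The boundary arcs of such a disc lie in $\mathcal{F}^1$, in $P$, or in copies of $S$. Using conditions (3)--(7) of normality together with admissibility, I would rule out configurations where such a disc touches $S$ in only one arc. This argument should mirror the proof of Lemma \ref{Lem:AdmissibleDecomposition}, one index level higher. The conclusion should be that an index-zero piece $H'_i$ lying inside a positive-index $0$-handle $H$ is a product region between two parallel normal discs of $S \cap H$. In a $0$-handle of index zero, the same analysis should show that $H$ is an interior parallelity handle and that $S \cap H$ consists of vertical discs in its product structure.

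With this classification, decompose $S$ as follows. In each positive-index $0$-handle, the discs of $S$ come in families, each bounding a chain of index-zero product pieces, with at most one positive-index complementary piece. In parallelity handles, $S$ is vertical. The index-zero pieces together with the adjacent $1$- and $2$-handles then assemble into an $I$-bundle in $M \cut S$ whose horizontal boundary lies on the copies of $S$.

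To finish, I would use that $S$ is connected and non-separating. I would cancel, in $H_2(M,\partial M)$, the contributions of parallel pairs of normal discs inside positive-index handles, by sliding across the product regions between them. What should remain is a vertical surface in the interior parallelity bundle that is homologous to $S$. If this cancellation gets messy, a fallback is to show directly that $S$ is disjoint from all positive-index $0$-handles up to normal isotopy through these product regions, and hence lies vertically in the interior parallelity bundle.
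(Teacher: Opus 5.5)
The first half of your argument matches the paper. Your proof of $c_q(\mathcal{H}')\le c_q(\mathcal{H})$ via Lemmas \ref{Lem:DecompIndex} and \ref{Lem:AdmissibleDecomposition} is the paper's, and so is your description of when equality holds.

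\textbf{The gap is in the equality case: the classification you aim for is false.}
\begin{itemize}
\item Inside a positive-index $0$-handle $H$, the pieces of $H \cut S$ and the discs of $S\cap H$ form a tree. If $S$ meets $H$ at all, this tree has a leaf other than the unique positive-index piece. That leaf is a zero-index piece bounded by a single disc of $S$, so it is not a product region between two parallel discs. Your classification therefore already forces $S$ to miss every positive-index $0$-handle; it is your fallback in disguise.
\item Such leaf pieces genuinely occur. A normal disc encircling a disc component of $\partial H-\mathcal{F}$ disjoint from $P$ cuts one off; a normal triangle around a vertex region in the dual of a triangulation is an example. The cut-off piece meets $\mathcal{F}(\mathcal{H}')$ in an annulus of corner slivers, each bounded by one arc of $S$ and two half-bands, hence of index $-4+2\cdot 2=0$. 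These slivers are exactly the index-zero discs meeting $S$ in one arc that you hoped to exclude. Neither conditions (3)--(7) of normality nor admissibility exclude them.
\item $S$ also need not be vertical in interior parallelity $0$-handles. A horizontal disc $D^2\times\{t\}$ whose boundary runs once around the $\mathcal{F}$-annulus is normal.
\end{itemize}
Consequently your step of cancelling parallel pairs has nothing to act on for the leaf discs. In any case, parallel discs cancel in homology only when their transverse orientations are opposite. Your fallback, normally isotoping $S$ off every positive-index $0$-handle, is an unproved isotopy statement and much stronger than the homological conclusion required.

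\textbf{The missing idea is the paper's 3-chain argument, which treats every disc of $S$ outside $\mathcal{B}$ uniformly.}
\begin{itemize}
\item For a component $S'$ of $S\cut\mathcal{B}$, transversely orient each disc of $S'\cap\mathcal{H}^0$ away from the unique positive-index piece of its $0$-handle.
\item This extends consistently over $S'\cap\mathcal{H}^1$. Both ends of a non-parallelity $1$-handle are $0$-handles of $\mathcal{F}$ of positive index, and their positive pieces correspond under the product structure. Also, $S'$ misses the $2$-handles, since these lie in $\mathcal{B}$.
\item The balls on the zero-index side of each disc sum to a 3-chain whose boundary is $S'$ plus a 2-chain in $\partial M\cup\mathcal{B}$. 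The leaf pieces are absorbed automatically: the rest of their boundary lies in $\partial M$, lies in $\mathcal{B}$, or cancels against the corresponding balls in adjacent $1$-handles.
\item Since $S'$ is connected, this transverse orientation agrees with that of $S$ up to sign. So $[S]$ is represented by a 2-chain in $\mathcal{B}$.
\item Finally, represent this class by a properly embedded incompressible surface in $\mathcal{B}$ admitting no boundary-compression disjoint from $\partial_v\mathcal{B}$. Its components are vertical or horizontal, and the orientable horizontal ones are homologically trivial.
\end{itemize}
Your plan also lacks this last vertical/horizontal reduction. It is needed precisely because $S\cap\mathcal{B}$ may contain horizontal pieces.
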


\begin{proof}
Consider any 0-handle $H$ of $\mathcal{H}$. By Lemma \ref{Lem:DecompIndex}, the sum, over all 0-handles $H'$ of $H \cap \mathcal{ H}'$, of $I(H')$ is equal to $I(H)$. By Lemma \ref{Lem:AdmissibleDecomposition}, each 0-handle $H'$ has non-negative index. Hence, 
$$\sum_{H'} I(H')^2 \leq I(H)^2,$$
with equality if and only at most one 0-handle of $H \cap \mathcal{H}'$ has positive index and the remaining components have zero index. We deduce that $c_q(\mathcal{H}') \leq c_q(\mathcal{H})$. 

Suppose that this is an equality. We will show that $S$ is homologous to a surface in $\calB$. Consider a component $S'$ of $S \cut \calB$. We will
define a transverse orientation on $S'$ and will show that $S'$ with this transverse orientation is homologous to a 2-chain in $\partial M \cup \calB$. Since $S'$ is connected, this transverse orientation is either the same as or opposite to the one it inherits from $S$. Hence, $S'$ is homologous in $H_2(M \cut \calB, \partial (M \cut \calB))$ to a 2-chain in $\calB$. Since this is true for each component of $S \cut B$, we deduce that $S$ is homologous to a 2-chain in $\calB$.

We first define the transverse orientation on the components of $S' \cap \mathcal{H}^0$. Each such component lies within a 0-handle $H$ of $\mathcal{H}$ not lying in $\calB$. This 0-handle cannot be disjoint from the 1-handles, since it would then form a 3-ball component of $M$, which cannot contain a properly embedded non-separating surface. It therefore has at least one 0-handle of $\mathcal{F}(\mathcal{H})$. It cannot be the case that all 0-handles of $H \cap \mathcal{F}(\mathcal{H})$ have zero index since $H$ would then be part of the interior parallelity bundle, contrary to assumption. So $H \cap \mathcal{F}(\mathcal{H})$ has positive index. This is the same index as a unique 0-handle $H'$ of $H \cap \mathcal{H}'$.
The remaining 0-handles of $H \cap \mathcal{H}'$ have zero index. We orient this component of $S' \cap \mathcal{H}^0$ away from $H'$. This transverse orientation extends over the components of $S' \cap \mathcal{H}^1$ as follows. Each such component lies within a 1-handle $H_1$ that has two components of intersection with $\mathcal{F}^0(\mathcal{H})$. Since $\mathcal{H}$ is admissible and $H_1$ does not lie in the interior parallelity bundle, these two components of $\mathcal{F}^0(\mathcal{H})$ have positive index. They are divided up by $S'$ into components of $\mathcal{F}^0(\mathcal{H}')$. In each of these two components of $\mathcal{F}^0(\mathcal{H})$, exactly one of these components of $\mathcal{F}^0(\mathcal{H}')$ has positive index. The adjacent discs of $S' \cap \mathcal{H}^0$ point away from this. Hence, we may extend the transverse orientation consistently over the components of $S' \cap \mathcal{H}^1$. Note that $S'$ is actually disjoint from the 2-handles, since these are part of the interior parallelity bundle.

We now show that $S'$ with this transverse orientation is homologous to a 2-chain in $\calB$. Each component of $S' \cap \mathcal{H}^i$ is a disc. It lies within a handle $H$ of $\mathcal{H}$. If we were to cut $H$ along this component of $S' \cap \mathcal{H}^i$, we would obtain two 3-balls. We say that the 3-ball component into which this component points is \emph{associated} with it. The union of these balls forms a 3-chain, the boundary of which is the union of $S'$ and a 2-chain in $\partial M \cup \calB$. Thus, $S'$ is equal in $H_2(M \cut \calB, \partial (M \cut \calB))$ to a 2-chain in $\calB$, and so
 $S$ is equal in $H_2(M, \partial M)$ to a 2-chain in $\calB$ with boundary in $\partial M$. By replacing this 2-chain with a 2-chain that is equal in $H_2(M, \partial M)$, we may assume that it is a surface properly embedded in $\calB$ disjoint from $\partial_v \calB$. Furthermore, we may assume that it is incompressible and also admits no boundary compression discs disjoint from $\partial_v \calB$. Each component of such a surface is either vertical or horizontal in $\calB$. But an orientable horizontal surface is homologically trivial. Hence, we have shown that $S$ is homologous to a vertical surface in $\calB$.
\end{proof}

\begin{lemma}[Cutting along a vertical surface]
\label{Lem:CutAlongVertical}
Let $(M,P)$ be a compact orientable 3-manifold with a boundary pattern. Let $\mathcal{H}$ be an admissible handle structure for $(M,P)$ and let $\mathcal{B}$ be its interior parallelity bundle. Let $S$ be a vertical surface in $\mathcal{B}$ that is properly embedded and non-separating in $M$. Then $S$ may be placed in normal form and when $\mathcal{H}$ is cut along $S$, the resulting handle structure $\mathcal{H}'$ satisfies $c_q(\mathcal{H}') = c_q(\mathcal{H})$. Moreover, the following quantities do not increase and at least one strictly decreases: the total number of components of $\mathcal{F}^1$ in the 0-handles of $\mathcal{H}$ with positive index, and the number of interior parallelity 0-handles.
\end{lemma}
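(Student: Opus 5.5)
We begin by making $S$ standard. Since $S$ is vertical in $\calB$, it is a union of $I$-fibres over a properly embedded 1-manifold $\gamma$ in the base surface of $\calB$. Using the product structures $D^2 \times I$ on the parallelity handles, we isotope $\gamma$ within the base so that it meets each handle of the base in standard position. Concretely, in a parallelity 0-handle $\gamma$ meets the base disc in properly embedded arcs, and in a parallelity 1-handle or 2-handle it meets it in arcs parallel to the core. With these choices $S$ meets each parallelity 0-handle $D^2 \times I$ in discs $\alpha \times I$, each parallelity 1-handle in $D^1 \times C$, and each 2-handle in $D^2 \times X$. The surface is disjoint from all handles outside $\calB$.

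Next we check normality via Lemma \ref{Lem:NormalIffNoSimplifying}. Each curve of $S \cap \partial \calH^0$ has the form $\partial(\alpha \times I)$. It consists of two vertical arcs on the vertical boundary of the 0-handle, which lie in $\calF$, and two horizontal copies $\alpha \times \{0\}$ and $\alpha \times \{1\}$ lying in 0-handles of $\calF$ of zero index, which are disjoint from $P$. We choose $\gamma$ to be minimal, for instance by minimising its intersection with the cocores of the base 1-handles within its isotopy class. This rules out arcs of $\gamma$ that enter and leave a base 0-handle through the same 1-handle, and also rules out inessential pieces, so no simplifying disc of type (1) or (4) exists. Conditions (2), (4), (5) and (6) of Definition \ref{Def:Normal} hold automatically, because the zero-index discs are squares of $\calF^0$ meeting $\calF^1$ in two opposite arcs (Figure \ref{Fig:ZeroIndex}), and each such arc crosses them once. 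Since $S$ is non-separating, no component is a boundary-parallel disc. If a given vertical surface cannot be made minimal in this sense, we first replace $S$ by a vertical surface in the same isotopy class within $\calB$.

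For the complexity statements, we cut $\calH$ along $S$. The only 0-handles of $\calH$ that meet $S$ are parallelity 0-handles, whose index is zero and so contributes nothing to $c_q$. By Lemma \ref{Lem:DecompIndex} the pieces again have total index zero, and by Lemma \ref{Lem:AdmissibleDecomposition} each piece has non-negative index, so each piece has index zero. It follows that $c_q(\calH') = c_q(\calH)$. Positive-index 0-handles are untouched by $S$. However, some of them may gain new $\calF^1$-components or lose them. The new boundary of $\calH'$ consists of copies of $S$, and the handles adjacent to $S$ are reorganised: pieces of parallelity handles that end on $S$ are now incident to $\partial M'$ and carry new pattern arcs coming from $\partial S$.

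The main obstacle is the final monotonicity claim. Parallelity 0-handles cut by $S$ split into pieces, and a piece may fail to remain a parallelity 0-handle in $\calH'$ because it now meets $P'$. This is not yet a decrease. The plan is to follow the admissibility-restoring step of Proposition \ref{Prop:CreateAdmissibleHS}: a zero-index 0-handle of $\calF(\calH')$ meeting $P'$ is a disc with one $\calF^1$-arc and two pattern points, and we cancel the corresponding 1-handle/2-handle pair, or absorb a square. These cancellations remove $\calF^1$-components. Such a removal either deletes a parallelity 0-handle, or reduces the count of $\calF^1$-arcs in a positive-index 0-handle that was adjacent along the cancelled 1-handle, and it never adds any. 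Since $S$ is non-empty and vertical, at least one parallelity handle is genuinely cut, so at least one of the two quantities strictly decreases. The fiddly part is the case analysis showing that no cancellation increases the $\calF^1$-count of a positive-index handle; I would carry this out by examining the two endpoints of each cut $I$-fibre arc.
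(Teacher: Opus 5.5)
\textbf{There is a genuine gap at the normalisation step.} You assume the base curve $\gamma$ can be isotoped inside the base surface $F$ so that $S$ stays in $\calB$ and becomes standard. But the handles of $\calB$ do not project to a genuine handle structure on $F$. A parallelity 1-handle may have one or both ends on positive-index 0-handles, and a 2-handle's attaching circle may run partly over positive-index 0-handles. So in $F$ these handles are only partially attached, with the rest of their attaching region on $\partial F$, and $\gamma$ need not have any standard representative inside $F$.

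For instance, suppose a component of $\calB$ is a product over an annulus built only from 1-handles and 2-handles, with each 1-handle running from one component of $\partial_v\calB$ to the other. Then every vertical annulus over the core meets each of these 1-handles in a cocore disc. No isotopy keeping it vertical in $\calB$, and no other vertical surface in that class, changes this.

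The paper deals with this by adding \emph{boundary handles} to $F$: a 0-handle for each component of $\partial_v\calB\cap\calF^0$ and a 1-handle for each component of $\partial_v\calB\cap\calF^1$. These form a collar of $\partial F$ and give a genuine handle structure, in which the curve $C$ is then normalised. Where $C$ runs through boundary handles, $S$ leaves $\calB$ and its normal discs lie in positive-index 0-handles. Each such disc cuts off only a line of index-zero $\calF^0$ discs and $\calF^1$ bands.

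Consequently your claims that only parallelity 0-handles meet $S$, and that positive-index 0-handles are untouched, are false in general. This case is exactly where the first quantity in the statement, the number of $\calF^1$-components in positive-index 0-handles, strictly decreases. The equality $c_q(\calH')=c_q(\calH)$ then needs the observation that the cut replaces such a line of index-zero pieces by two parallel pattern arcs. The positive-index piece therefore keeps the full index of the original 0-handle.

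Your final paragraph is also misdirected. When $S$ cuts an interior parallelity 0-handle, every resulting piece meets $P'$, because the copies of $\partial S$ lie on its horizontal boundary. So no piece is an interior parallelity 0-handle, and that already \emph{is} the strict decrease. There is no need to invoke the cancellations of Proposition~\ref{Prop:CreateAdmissibleHS}; they are not part of this lemma, and their effect is treated separately in Lemma~\ref{Lem:ComplexitiesDecreaseInAdmissibleAlgorithm}. As written, you also leave that case analysis undone.

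Two smaller slips:
\begin{itemize}
\item In $\partial(\alpha\times I)$, the horizontal arcs $\alpha\times\{0,1\}$ lie in $\partial M$, not in $\calF$; it is the vertical arcs that cross zero-index squares of $\calF^0$.
\item A standard vertical surface misses the 2-handles entirely, rather than meeting them in $D^2\times X$.
\end{itemize}
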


\begin{proof}
The interior parallelity bundle is an $I$-bundle over a surface $F$. We give $F$ a handle structure as follows. Each $i$-handle of $\mathcal{B}$ vertically projects onto an $i$-handle of $F$. In addition, we create a 0-handle of $F$ for each component of $\partial_v \mathcal{B} \cap \mathcal{F}^0$ and a 1-handle of $F$ for each component of $\partial_v \mathcal{B} \cap \mathcal{F}^1$. We call the latter handles \emph{boundary handles}, since they form a regular neighbourhood of $\partial F$. They are added to ensure that we really do have a handle structure for $F$. Without them, we would have 1-handles or 2-handles not completely attached to lower index handles.

The vertical surface $S$ projects to a simple closed curve $C$ in $F$. We isotope $C$ so that it is normal in the handle structure of $F$. A corresponding isotopy of $S$ makes it normal. When $C$ runs over any boundary 0-handles of $F$, then the corresponding normal discs of $S$ lie in 0-handles of $\mathcal{H}$ that are not parallelity 0-handles and hence that have positive index. When we decompose along these discs, then this reduces the number of components of $\mathcal{F}^1$ in these handles. When $C$ runs through 0-handles in the interior of $F$, the corresponding normal discs of $S$ decompose interior parallelity 0-handles of $\mathcal{H}$. This reduces the number of such 0-handles.

By Proposition \ref{Prop:ComplexityDecreases}, $c_q(\mathcal{H}') \leq c_q(\mathcal{H})$. In fact, this is an equality because the effect on $\mathcal{F}$ of the decomposition is to remove some lines of index zero 0-handles and 1-handles and replace them by two parallel arcs of pattern. Hence for any 0-handle $H$ of $\mathcal{H}$, its index is unchanged by the decomposition.
\end{proof}

The following is easily verified.

\begin{lemma}
\label{Lem:ComplexitiesDecreaseInAdmissibleAlgorithm}
The algorithm {\sc Create admissible handle structure} in Proposition \ref{Prop:CreateAdmissibleHS} does not increase any of the following quantities: the q-complexity of $\mathcal{H}$,  the total number of components of $\mathcal{F}^1$ in the 0-handles of $\mathcal{H}$ with positive index, and the number of interior parallelity 0-handles.
\end{lemma}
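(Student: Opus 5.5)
We check each step of the algorithm {\sc Create admissible handle structure} in Proposition \ref{Prop:CreateAdmissibleHS} separately. There are four kinds of step. (a) Removing a 1-handle $H_1$ and a 2-handle $H_2$ when a 0-handle $D$ of $\calF$ has negative index, or zero index and meets $P$, and meets $\calF^1$ in one arc. (b) Removing a ball bounded by $D \cup D'$ and transferring the pattern from $D'$ onto $D$. (c) Cutting along a non-trivial square. (d) Removing a ball when a curve $C$ in $\partial \calH^0 - \calF$ bounds a pattern-trivial disc. In each case the effect on $\calF$ and on the 0-handles of $\calH$ is local, so we compute the change in the index $I(H)$ of each 0-handle $H$.

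For (a), the 0-handles $H$ of $\calH$ incident to $H_1$ lose the component $D$ of $\calF^0$. Each component of $\calF^1 \cap H_2$ is replaced by at most two arcs of pattern. Write $a$ for the number of pattern arcs running along $H_1$, so $a \le 2$. Replacing a 1-handle of $\calF$ (contributing $2$ to each end) by $a$ crossings of pattern (contributing $1$ each) changes each index by $a-2 \le 0$. Deleting $D$ removes a term $-4 + 2 + |D\cap P| \le 0$, which raises $I(H)$ by $2 - |D \cap P| \geq 0$. Hence I expect the net change to cancel to at most zero.

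The cleanest way to do this is to merge $D$ with its neighbouring 0-handles along the $\calF^1$ arcs and use Lemma \ref{Lem:DecompIndex} in reverse. The identity that merging across a 1-handle of $\calF$ adds $-4$ from the lost disc and $+4$ from the removed $\calF^1$ ends then bookkeeps the change. This computation is the main obstacle, since the zero-index case with $|D\cap P|=2$ must come out exactly $0$ rather than positive. Since $\max\{\cdot,0\}^2$ is monotone, the q-complexity does not increase. The number of components of $\calF^1$ in positive-index 0-handles drops, because arcs of $\calF^1$ become pattern and no new $\calF^1$ is created. No 0-handle can become an interior parallelity handle, because the new pattern arcs meet it.

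For (b) and (d), a 0-handle $D$ of $\calF$ disjoint from $\calF^1$, or a region of $\partial\calH^0 - \calF$, simply receives new pattern lying in a disc. Hence $\calF$ changes only by deleting components, each of which is a properly embedded disc of index at most zero. In case (d), $\calF$ is unchanged, and the pattern inserted lies outside $\calF$. So no index increases and the counts are unaffected. For (c), cutting along a square that is itself a component of $\calF^0$ disjoint from $\calF^1$ splits off two copies of a zero-index disc, by Lemma \ref{Lem:DecompIndex}. These copies lie in 0-handles meeting $P$, so no new interior parallelity handles arise. Indices of positive 0-handles are unchanged, and no $\calF^1$ is added.

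Combining these cases gives the lemma.
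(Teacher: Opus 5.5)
Your case split is the natural one, and it mirrors how the paper treats these moves in the proof of Theorem~\ref{Thm:RemainUniformType}. But there is a genuine gap. The index bookkeeping, which is the whole content of the lemma, is left as an expectation in (a) and is wrong in (b) and (d).

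\emph{Case (a).} Both ends $D$ and $D^*$ of $H_1$ are deleted; you only mention $D$. Write $a=|D\cap P|$. Each deleted end raises the index of its 0-handle by $2-a$, and each surviving end of another 1-handle crossed by $H_2$ changes by $a-2$. Pair these along the strips of $H_2\cap\mathcal{H}^0$. Everything cancels or is negative, except when a single strip joins $D$ directly to $D^*$, i.e.\ $H_2$ runs over $H_1$ and straight back through one 0-handle $H$. In that case $D\cup(H_2\cap H)\cup D^*$ is a disc component of $H\cap\mathcal{F}$ with $\chi_p=2a-4$. Removing $H_1\cup H_2$ then raises $I(H)$ by $4-2a$, which is positive when $a\le 1$. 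So the dangerous case is the negative-index one. The zero-index case you single out is trivial, since for $a=2$ every change is $\pm(a-2)=0$.

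\emph{Case (b).} You have the sign backwards: deleting a component of negative index \emph{raises} $I(H)$. For example, take a 1-handle with no 2-handles and no pattern whose far end is a 0-handle with nothing else attached. Its ends have index $-4$, and removing that ball raises $I(H)$ by $4$.

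\emph{Case (d).} $\mathcal{F}$ is not unchanged. The ball bounded by $D\cup D'$ contains the components of $H\cap\mathcal{F}$ on one side of $C$, and these are deleted.

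In fact the bump in (a) gives a counterexample to the literal statement. Glue it onto a 0-handle of positive index in an admissible structure and run the algorithm: $c_q$ strictly increases. So no case check can succeed for arbitrary input.

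What is missing is the hypothesis that every 0-handle of $\mathcal{F}$ has non-negative index. This holds wherever the lemma is invoked: Lemma~\ref{Lem:AdmissibleDecomposition} provides it for a structure obtained by cutting an admissible one along a normal surface. Under this hypothesis:
\begin{enumerate}
\item (a) only occurs with $a=2$. The deleted ends have index $0$, surviving ends change by $a-2=0$, so no 0-handle of $\mathcal{H}$ changes index. Every 0-handle that $H_2$ passes through acquires two new pattern arcs.
\item (b) only occurs for trivial squares, which have index $0$.
\item (c) deletes an index-zero disc.
\item (d) deletes components with $\chi_p\ge 0$.
\end{enumerate}
Surviving 0-handles of $\mathcal{F}$ never change index and none are created, so the hypothesis persists through the algorithm. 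Hence $I(H)$ never increases, $c_q$ does not increase, no 0-handle of $\mathcal{H}$ acquires positive index, and $\mathcal{F}^1$ is only ever deleted. This settles the first two quantities.

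For the parallelity count there are two further points. Your argument in (a) relies on the new pattern arcs, which do not exist when $a=0$. And (d) needs its own argument, since a 0-handle losing components there acquires no pattern when $D'\cap P=\emptyset$.
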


We are now in a position to complete the proof of Theorem \ref{Thm:EssentialHierarchy}. 

\begin{proof} 
We have proved one direction of the theorem in Lemma \ref{Lem:EssentialIrredPullsBack}. Therefore,
let $(M,P)$ be a compact orientable irreducible 3-manifold with an essential boundary pattern. If $M$ is closed, then we are assuming that it contains a properly embedded orientable incompressible surface with no 2-sphere components. We must show that $(M,P)$ admits an essential hierarchy. 

If $M$ is closed, we can decompose along the hypothesised surface, and the resulting manifold is irreducible by Lemma \ref{Lem:IrreduciblePushesForward} and has non-empty boundary and essential (empty) pattern. So we may assume that $\partial M$ is non-empty.

Let $\mathcal{H}$ be a handle structure for $(M,P)$. By Proposition \ref{Prop:CreateAdmissibleHS}, there is a decomposition along non-trivial squares, after which the manifold (which we continue to denote by $(M,P)$) has an admissible handle structure. By Lemma \ref{Lem:PatternComp4Disc}, these squares are pattern-incompressible and they are automatically incompressible because they are discs. Hence, by Lemma \ref{Lem:EssentialPushesForward}, the resulting pattern is essential and by Lemma \ref{Lem:IrreduciblePushesForward}, the manifold is irreducible.

If $\partial M$ has any sphere components, these bound 3-balls. Thus, we may assume that $\partial M$ is not a union of spheres. Therefore, $b_1(M) > 0$, and so $M$ contains a connected properly embedded non-separating orientable surface. Let $S$ be such a surface with smallest pattern-complexity. By Corollary \ref{Cor:MinimalComplexityImpliesEssential}, $S$ is pattern-incompressible and incompressible. By Lemma \ref{Lem:NormalForm}, it may be placed into normal form. 

By Lemma \ref{Lem:EssentialPushesForward}, the manifold $M'$ obtained by decomposing along $S$ inherits an essential boundary pattern and by Lemma \ref{Lem:IrreduciblePushesForward}, it is irreducible. By Proposition \ref{Prop:ComplexityDecreases}, the handle structure $\mathcal{H}'$ that it inherits has q-complexity at most that of $\mathcal{H}$. Moreover, its q-complexity is strictly less unless $S$ is homologous to a vertical surface in the interior parallelity bundle. 

In the case where $S$ is homologous to a vertical surface in the interior parallelity bundle, we instead use Lemma \ref{Lem:CutAlongVertical} to decompose along such a surface. This leaves $c_q(\mathcal{H})$ unchanged. But it decreases the quantity $m(\mathcal{H})$ which is the sum of the total number of components of $\mathcal{F}^1$ in the 0-handles with positive index, and the number of interior parallelity 0-handles.

Thus, if we consider the ordered pair $(c_q(\mathcal{H}), m(\mathcal{H}))$
and we compare such pairs using lexicographical ordering, then we have reduced this pair. We now apply Proposition \ref{Prop:CreateAdmissibleHS} to turn $\mathcal{H}'$ into an admissible handle structure. By Lemma \ref{Lem:ComplexitiesDecreaseInAdmissibleAlgorithm}, this does not increase $c_q(\mathcal{H}')$ or $m(\mathcal{H}')$. Thus, we can repeat with this new handle structure. Since the complexity pairs are being reduced at each stage, this process must terminate, and so creates an essential hierarchy for $(M,P)$.
\end{proof}

\section{The algorithm to decide whether a pattern is essential}
\label{Sec:AlgorithmCompressible}

In this section, we present our algorithm required by Theorem \ref{Thm:AlgorithmEssentialPattern}. Thus, we are given a triangulation for a compact orientable irreducible 3-manifold $M$ in which $P$ is simplicial, and we must decide whether $P$ is essential. We may assume that $\partial M$ is non-empty, as otherwise, the pattern is trivially essential. The first step is to build a handle structure $\mathcal{H}$ for $(M,P)$. In our algorithm, we will store several lists:
\begin{enumerate}
\item handle structures $\mathcal{ H}_1, \dots, \mathcal{ H}_\ell$ for 3-manifolds $M_1, \dots, M_\ell$ with boundary patterns $P_1 ,\dots, P_\ell$;
\item normal surfaces $S_1, \dots, S_{\ell-1}$, where each $S_i$ is a normal surface in $\mathcal{ H}_i$ and where $(M_{i+1}, P_{i+1}) = 
(M_i, P_i) \cut S_i$ with $\mathcal{ H}_{i+1}$ its induced handle structure.
\end{enumerate}

The algorithm is as follows:
\begin{enumerate}
\item Set $\mathcal{ H}_1$ to be the initial handle structure $\mathcal{ H}$ and set $\ell$ to be 0.
\item Apply the algorithm {\sc Create admissible handle structure} in Proposition \ref{Prop:CreateAdmissibleHS} to $\mathcal{H}_{\ell + 1}$. If this produces a violating disc $D$, go to Step (12).
\item If Proposition \ref{Prop:CreateAdmissibleHS} creates an admissible handle structure, then add any decomposing squares that have been used to the hierarchy.
Increase $\ell$ by $1$ (if no squares were used) or $2$ (if squares were used).
\item Using linear algebra compute $b_1(M_\ell)$. 
\item If $b_1(M_\ell) = 0$, then go to Step (9).
\item By reaching this step, $b_1(M_\ell)$ must be positive. If there is a vertical connected surface in the interior parallelity bundle for $M_\ell$ that is non-separating in $M_\ell$, let $S_\ell$ be such a surface, isotoped so that is normal as in Lemma \ref{Lem:CutAlongVertical}. Otherwise, let $S_\ell$ be any connected, non-separating, properly embedded, oriented  normal surface $S_\ell$ in $M_\ell$. (See Theorem \ref{Thm:BuildSurface} below for a simple and fast construction of such a surface.)
\item Is some component of $S_\ell$ a violating disc for $(M_\ell, P_\ell)$? If yes, then set $D$ to be this component and go to Step (12).
\item Set $(M_{\ell+1}, P_{\ell+1})$ to be $(M_\ell, P_\ell) \cut S_\ell$ and set $\mathcal{ H}_{\ell+1}$ to be its induced handle structure. Go to Step (2).
\item By reaching this step, $b_1(M_\ell)$ must be $0$, and so $\partial M_\ell$ must be a union of spheres. Examine whether the intersection of $P_\ell$ with each such sphere is connected. If not, then we obtain a violating disc. Then examine all isotopy classes of simple closed loops in $\partial M_\ell$ that intersect $P_\ell$ transversely in at most 3 points. (Equivalently, consider whether the intersection between $P_\ell$ and each component of $\partial M_\ell$ is connected. If it is, examine all closed loops of length at most 3 in the dual graph.) For each such simple closed curve, determine whether it bounds a violating disc.
\item If no violating disc is found, then $(M_\ell, P_\ell)$ is essential, and therefore by Theorem \ref{Thm:EssentialHierarchy}, 
so is $(M,P)$. In this situation, the algorithm terminates with a declaration that $P$ is essential and it outputs the hierarchy that has been constructed.
\item By reaching this step, $\partial M_\ell$ is a union of spheres and the algorithm has found a closed loop in $\partial M_\ell$ that bounds a violating disc $D$.
\item Is $D$ a violating disc for the initial manifold $(M,P)$? If so, then the algorithm terminates by declaring that $P$ is inessential and it outputs the violating disc $D$.
\item By reaching this step, $D$ must run over at least one surface in the hierarchy.
Let $S_j$ be the last surface in the hierarchy that intersects $D$. 
\item Is $D$ a compression disc or pattern-compression disc for $S_j$? If not, then $\partial D \cap S_j$ must in fact be a boundary-parallel arc in $S_j$, separating off a disc $D'$ that intersects the pattern $P_j$ at most once. Moreover, $D \cup D'$ must be a violating disc for $(M_j, P_j)$. In this case, replace $D$ by $D \cup D'$ and go to Step (12).
\item By reaching this step, $D$ forms a compression disc or pattern-compression disc for $S_j$ in $M_j$. Compress or pattern-compress $S_j$ along this disc to form a properly embedded surface. Perform isotopies and possibly compressions and pattern-compressions to this surface, and possibly discard components, to obtain a compact orientable connected properly embedded normal surface $\overline S_j$ that is non-separating in $M_{j}$ or that is a violating disc for $P_j$. Replace
$S_j$ by $\overline S_j$. Discard all later surfaces in the hierarchy. Set $\ell$ to be equal to $j$. Go to Step (7).
\end{enumerate}

\begin{figure}[h]
\centering
\includegraphics[width=1.0\textwidth]{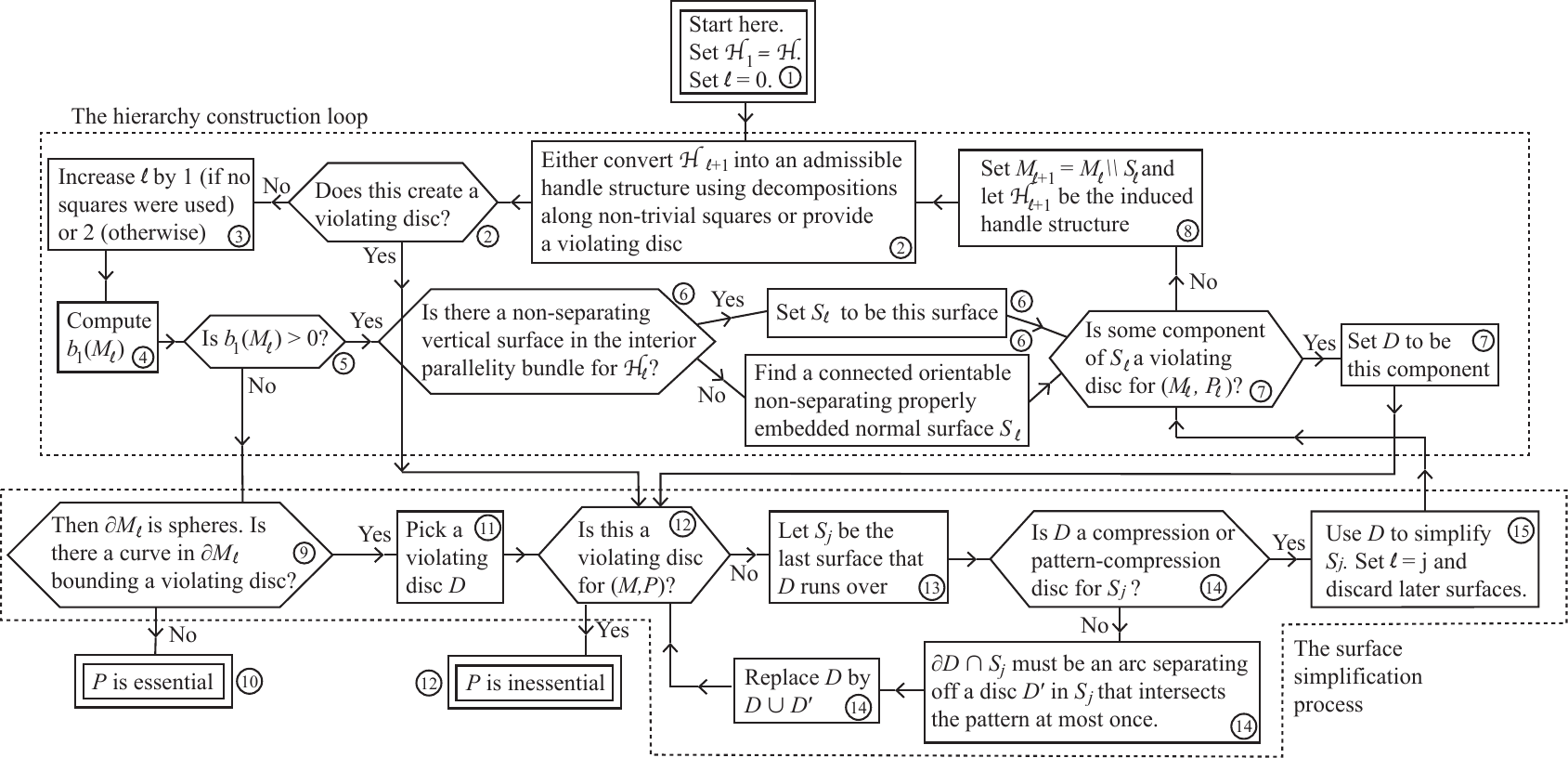}
\caption{The algorithm to determine whether a pattern is essential} \label{Fig:UnknotAlgorithm}
\end{figure}

\begin{theorem}[The algorithm works]
\label{Thm:UnknotAlgorithm1}
This algorithm terminates with the correct declaration either that $P$ is essential or that $P$ is inessential. Moreover when $P$ is essential, the algorithm outputs an essential hierarchy for $(M,P)$. When $P$ is inessential, it outputs a violating disc.
\end{theorem}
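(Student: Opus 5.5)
Two things have to be established: the algorithm's outputs are correct whenever it stops, and it always stops. I would handle correctness first, since it is mostly bookkeeping with results already in hand.

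\emph{Correctness.} Suppose the algorithm halts at Step (10). Then every $M_\ell$ is a union of balls whose boundary pattern has been checked to be essential. We may assume $M$ is irreducible with nonempty boundary, so the recorded sequence of decompositions is an essential hierarchy by definition. Applying Lemma \ref{Lem:EssentialPullsBack} backwards along the hierarchy shows that $P$ is essential, and the same follows directly from one direction of Theorem \ref{Thm:EssentialHierarchy}.

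Now suppose the algorithm halts at Step (12). The disc $D$ has been explicitly checked to be a violating disc for $(M,P)$, so the declaration that $P$ is inessential is correct.

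For the intermediate steps, I would keep the following invariant: at each stage, any disc $D$ being handled is a violating disc for the current $(M_j,P_j)$. This is maintained as follows.
\begin{itemize}
\item In Step (14), if $D$ is neither a compression disc nor a pattern-compression disc for $S_j$, then $D\cap S_j$ cuts off a disc $D'$ meeting the pattern at most once. The disc $D\cup D'$ then meets $P_j$ at most three times, and it is violating for $(M_j,P_j)$, because any disc in $\partial M_j$ bounded by $\partial(D\cup D')$ would give a disc in $\partial M_{j+1}$ exhibiting $D$ as non-violating.
\item Violating discs produced inside Proposition \ref{Prop:CreateAdmissibleHS} are violating for the manifold at that level, by the proof of that proposition.
\end{itemize}
Step (13) is well defined because, after the replacement in Step (14), $D$ meets strictly fewer hierarchy surfaces. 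Step (15) is justified by Lemma \ref{Lem:NormalForm}: after compressing or pattern-compressing, the modified surface is still non-trivial in homology, so it yields either a normal non-separating surface or a violating disc.

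\emph{Termination.} This is the main obstacle, because Steps (13)--(15) backtrack. I would assign to each state a complexity that lives in a well-ordered set and show it strictly decreases with every loop.

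Forward progress, meaning Steps (2)--(8), decreases the pair $(c_q,m)$ of the latest handle structure. The argument is the one in the proof of Theorem \ref{Thm:EssentialHierarchy}: use Proposition \ref{Prop:ComplexityDecreases} and Lemma \ref{Lem:CutAlongVertical}, together with Lemma \ref{Lem:ComplexitiesDecreaseInAdmissibleAlgorithm} for the passage to an admissible structure. There is one subtlety. Equality in Proposition \ref{Prop:ComplexityDecreases} for a non-vertical $S_\ell$ forces $S_\ell$ to be homologous to a vertical surface. I would handle this by having Step (6) always prefer a vertical surface when one exists, and by bounding, by $m$, the number of times such surfaces can be cut. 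Consequently the length of the hierarchy between backtracks is bounded.

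A backtrack at level $j$ replaces $S_j$ by $\overline S_j$. Here the pattern-complexity $\chi_p$ decreases, or $\chi_p$ is unchanged and the extended weight decreases. This follows from Lemma \ref{Lem:CompressionAndPatternComplexity} and the proof of Lemma \ref{Lem:NormalForm}, which never increases weight or extended weight. Alternatively the backtrack produces a violating disc, and then the algorithm exits after finitely many further Steps (12)--(14).

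So I would order states lexicographically by the sequence
\[
\bigl((c_q,m)(\mathcal H_1),\ (\chi_p,\text{ext.\ weight})(S_1),\ (c_q,m)(\mathcal H_2),\ \dots\bigr).
\]
Each move changes the state as follows:
\begin{itemize}
\item A backtrack at level $j$ leaves the entries before level $j$ fixed and decreases the entry for $S_j$.
\item A forward move appends a new term whose $(c_q,m)$ is smaller than the previous one.
\end{itemize}
Since $\chi_p\ge -4$ for connected surfaces and all other entries are non-negative integers, I would check that these sequences, truncated by the bounded hierarchy length, form a well-order of the kind used for nested induction. The delicate point, which I expect to be the main work, is that after a backtrack at level $j$ the manifold $M_{j+1}$ changes. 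Later entries can therefore grow, which is why the ordering must put earlier levels first. One must also ensure that the $(c_q,m)$ bound on hierarchy length holds uniformly, so that the tuples have bounded length and the lexicographic order is genuinely a well-order.
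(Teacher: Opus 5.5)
Your termination scheme is essentially the paper's nested-stabilisation argument, but your justification of Step (14) has a genuine gap. You argue that $D\cup D'$ is violating for $(M_j,P_j)$ because a good disc in $\partial M_j$ bounded by $\partial(D\cup D')$ would give a disc in $\partial M_{j+1}$ showing $D$ to be non-violating. That presupposes that $D$ is violating in $(M_{j+1},P_{j+1})$.

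What you actually know is weaker: $D$ is violating in the last manifold $M_\ell$, or, after an earlier replacement, in some deeper $M_{j'}$. Suppose $E\subset\partial M_{j+1}$ is a disc with $\partial E=\partial D$ meeting $P_{j+1}$ in $\emptyset$, an arc or a tripod. This only shows $D$ is non-violating in $M_\ell$ if the boundaries of the intervening surfaces $S_{j+1},\dots,S_{\ell-1}$ miss $E$. Likewise, your argument needs $\partial S_j$ itself to miss the interior of the disc in $\partial M_j$. Nothing in your argument ensures either of these.

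The same problem affects a case you dropped: $\partial D$ a closed curve in a copy of $S_j$ bounding a disc $D'\subset S_j$. Such a $D$ is neither a compression disc nor of your arc type. It must be ruled out for the dichotomy in Step (14) to be exhaustive, and being violating in $M_\ell$ does not rule it out by itself.

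The missing ingredient is the paper's ball argument, which is exactly where the irreducibility of $M$ enters; your proposal never uses it at this point. In the closed-curve case the sphere $D\cup D'$, and in the arc case the sphere $D\cup D'\cup D''$ with $D''\subset\partial M_j$, bounds a ball in $M$. Every surface in the partial hierarchy is either connected and non-separating or a union of non-trivial squares, so none of $S_1,\dots,S_{\ell-1}$ can enter this ball. Hence the boundary disc ($D'$, respectively $D'\cup D''$) survives into $\partial M_\ell$ and exhibits $D$ as non-violating there, a contradiction. This guarantees that each exit from the construction loop either terminates or yields a genuine compression or pattern-compression disc for some $S_j$, which lowers $\chi_p(S_j)$.

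A smaller point on termination: you do not need a uniform bound on hierarchy length. None is evident for this version of the algorithm, since Step (6) allows surfaces of arbitrary weight, so $m$ of the next handle structure is not controlled. Well-foundedness follows just from $(c_q,m)$ decreasing strictly along each hierarchy that actually arises. The paper shows this by arguing that $S_1^i$ eventually stabilises, then $S_2^i$, and so on.
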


\begin{proof}
It is clear that if the algorithm terminates, then its output is correct. For if it terminates by declaring that $P$ is inessential, then it does so because it has found a violating disc. If it terminates by declaring that $P$ is essential, then it does so because it has found an essential hierarchy for $(M, P)$.

We must show that the algorithm terminates. We first note that it cannot remain in the hierarchy construction loop (Steps (2) to (8)) forever. Suppose that, on the contrary, it does. Each time we go through Step (6), we have an admissible handle structure $\mathcal{H}_\ell$. If there is no non-separating vertical surface in the interior parallelity bundle, then any connected normal orientable non-separating surface is chosen or a violating disc is found. If a violating disc is found, then we leave the hierarchy construction loop. If a connected normal orientable non-separating surface is chosen, then
by Proposition \ref{Prop:ComplexityDecreases}, cutting along this surface strictly reduces the q-complexity $c_q(\mathcal{H}_\ell)$. When there is a vertical surface, then by Lemma \ref{Lem:CutAlongVertical}, cutting along it leaves $c_q(\mathcal{H}_\ell)$ unchanged, but it reduces $m(\mathcal{H}_\ell)$. Then when the handle structure is made admissible in Step (2) using Proposition \ref{Prop:CreateAdmissibleHS}, none of these quantities increases. Hence, the ordered pair $(c_q(\mathcal{H}_\ell), m(\mathcal{H}_\ell))$ decreases at each stage. But this contradicts the fact that these pairs are well-ordered.

So, if the algorithm does not terminate, it must leave the hierarchy construction loop infinitely often. Index these occurrences by the natural numbers $i$. At the $i$th stage, let
$$(S_1^i, \dots, S_{\ell(i)}^i)$$ 
be the partial hierarchy that is currently stored. When the algorithm leaves the hierarchy construction loop and does not immediately then terminate, a violating disc $D$ is found. Furthermore, $D$ runs over at least one surface in the partial hierarchy, unless the algorithm immediately terminates. Let $S_j^i$ be the last surface that it runs over. Suppose first that $D$ is not a compression disc or pattern-compression disc for $S_j^i$. Then $D \cap S_j^i$ is either a simple closed curve that bounds a disc in $S_j^i$ or an arc in $S_j^i$ that separates off a disc in $S_j^i$ intersecting $P_j$ at most once. If $D \cap S_j^i$ is a simple closed curve bounding a disc $D'$ in $S_j$, then $D \cup D'$ is a sphere. By the irreducibility of $M$, this bounds a ball in $M$. The boundary of this ball is disjoint from the surfaces $S_1^i, \dots, S_{j-1}^i$. Hence, each of these surfaces must be disjoint from the ball, by our assumption that each surface is either connected and non-separating or a disjoint union of non-trivial squares. Thus, $D \cup D'$ bounds a ball in $M_j$. None of the later surfaces $S_{j+1}^i, \dots, S_{\ell(i)}^i$ can enter this ball, again by our assumption that each is either connected and non-separating or a disjoint union non-trivial squares. Therefore, $D'$ lies in $M_{\ell(i)}$ and its interior is disjoint from $P_{\ell(i)}$. This contradicts the assumption that $D$ is a violating disc for $(M_{\ell(i)}, P_{\ell(i)})$. Suppose now that $D \cap S_j^i$ is an arc in $S_j^i$ that separates off a disc $D'$ in $S_j^i$ intersecting $P_j$ at most once. Then $D \cup D'$ is a disc properly embedded in $M_j$ intersecting the pattern at most twice. Suppose that this is not a violating disc for $(M_{j}, P_{j})$. Then $\partial (D \cup D')$ bounds a disc $D''$ in $\partial M_{j}$ that intersects $P_j$ either in a single arc of the empty set. The union of these three discs bounds a ball that none of the earlier surfaces or later surfaces can enter. Hence, $D' \cup D''$ contains an arc or tripod of $P_{\ell(i)}$. This contradicts the fact that $D$ is a violating disc for $(M_{\ell(i)}, P_{\ell(i)})$. Hence, the algorithm does produce a violating disc for $(M_j, P_j)$, as claimed in Step (14). 

Hence, the algorithm eventually produces a compression or pattern-compression disc for some $S_j^i$ and the later surfaces are discarded. This reduces $\chi_p(S_j^i)$. 
Now, $\chi_p(S_1^i)$ can decrease only finitely many times. Hence, there is some point after which $S_1^i$ remains unchanged. Let $S_1$ be this final surface. After this point, the manifold obtained by cutting along $S_1$ remains unchanged. We therefore consider the surfaces $S_2^i$ after this point. Again, these can change only finitely many times, and so there is some final surface $S_2$. Repeating this process, we obtain an infinite sequence of surfaces $S_1, S_2, \dots$. We must show that this cannot occur. Let $\calH_j$ be the handle structure obtained from $\calH$ by cutting along $S_1, \dots S_j$.

Now we note that if some surface in one of these hierarchies is a disc that intersects the pattern four times, and at some stage we decrease its pattern-complexity, then this creates a violating disc. This is then used to simplify an earlier surface in the hierarchy. Thus, none of the surfaces $S_j$ in our infinite list is obtained from a square by pattern-compressing.

Thus, the surfaces $S_1, S_2, \dots$ and handle structures $\mathcal{H}_1, \mathcal{H}_2, \dots$ have the following property. Suppose that some $\mathcal{H}_j$ is admissible. If there is no non-separating vertical surface in the interior parallelity bundle, then $S_j$ is some connected normal orientable non-separating surface. On the other hand, if there is such a vertical surface, then $S_j$ is as in Lemma \ref{Lem:CutAlongVertical}. Then the next surfaces after $S_{j}$ are possible squares. The result is an admissible handle structure, and so on. As argued in the proof of Theorem \ref{Thm:EssentialHierarchy}, this is impossible, because the ordered pair $(c_q(\mathcal{H}_j), m(\mathcal{H}_j))$ decreases as we pass from one admissible handle structure to the next one.
\end{proof}

One reason why the above algorithm works so well is that, in a compact orientable 3-manifold $M$ with $b_1(M)>0$, it is a very easy to construct
a connected, non-separating, properly embedded, oriented surface. We make this precise in the following theorem. This seems to be well known, but
it is so central to the efficiency of our algorithm that we give a proof.

\begin{theorem}[{\sc Build non-separating surface}]
\label{Thm:BuildSurface}
There is an algorithm that takes, as its input, a triangulation $\calT$ or handle structure $\calH$ for a compact orientable 3-manifold $M$ with boundary pattern $P$, and with $b_1(M) > 0$. In the case where $\partial M$ is non-empty, we also assume that $\calH$ has no 3-handles. The algorithm builds a connected, non-separating, properly embedded, oriented, normal surface $S$ with weight at most $c^{|\calT|}$ or extended weight at most $c^{|\calH|}$. Here, $c$ is a constant that is universal in the case of a triangulation. In the case of a handle structure, $c$ depends on the number $N$, which is the maximum, over each 0-handle of $\calH$, of the number of components of intersection between the 0-handle and each of the following: the 1-handles, the 2-handles, the vertices of $P$ and the edges of $P$. The running time is bounded above by a polynomial function of $|\calT|$ or $|\calH|$, where the polynomial is universal for triangulations and depends on $N$ in the case of handle structures.
\end{theorem}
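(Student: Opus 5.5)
We will build the surface as the dual of a simplicial $1$-cocycle. Since $b_1(M)>0$, we have $H^1(M;\mathbb{Z}) \cong \mathrm{Hom}(H_1(M),\mathbb{Z}) \neq 0$. We work with the cellular cochain complex coming from the triangulation: vertices, edges, faces. For a handle structure we use the dual complex, whose vertices are the 0-handles, whose edges are the 1-handles and whose faces are the 2-handles.

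\emph{Step 1: a small primitive cocycle.} By linear algebra we compute an integer $1$-cocycle $z$ that is non-zero in $H^1(M;\mathbb{Z})$. The coboundary matrix has entries in $\{0,\pm 1\}$, each row has at most $N$ non-zero entries (or $3$ for a triangulation), and its size is linear in $|\calT|$ or $|\calH|$. The cocycles form the kernel of $\delta^1$ together with the requirement that $z$ is not a coboundary. By Cramer's rule and Hadamard's bound, we can choose a lattice basis vector of $\ker \delta^1$ that is not in $\mathrm{im}\,\delta^0$ with entries at most $c^{|\calT|}$. To do this we take a maximal tree $T$ in the $1$-skeleton and normalise $z$ to vanish on $T$. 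Then $z$ is determined by its values on the edges outside $T$, subject to the face relations. Solving over $\mathbb{Q}$ and clearing denominators gives the exponential bound, and Smith/Hermite normal form computation runs in polynomial time. We may also divide by the gcd so that $z$ is primitive.

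\emph{Step 2: realise $z$ as a normal surface.} The cocycle gives a map $M \to S^1$, or more concretely a simplicial map to $\mathbb{R}$ on the universal abelian-type cover. In each tetrahedron (respectively 0-handle), we place normal discs separating the vertices according to the values of a lift $f$ of $z$ on the vertices: $f(v)\in\mathbb{Z}$ with $f(w)-f(v)=z(vw)$. On an edge $vw$ we put $|z(vw)|$ points. These match across faces because the cocycle condition makes the three values consistent, and triangles and quads fill them in uniquely. This is the standard preimage of a regular value, taken to be a half-integer, under the PL map. The resulting surface $S$ is properly embedded, transversely oriented (hence orientable), and dual to $z$. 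Its weight equals $\sum |z(e)| \le c^{|\calT|}$.

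For handle structures with $\partial M \neq \emptyset$ and no 3-handles, we assign values to 0-handles and place $|z(H_1)|$ parallel discs in each 1-handle. By the cocycle condition, the boundary curves in each 0-handle assemble into closed curves, which we cap off with discs. Each 2-handle then contributes no intersection. The pattern $P$ is handled by general position.

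\emph{Step 3: connected, non-separating, normal.} Since $z$ is primitive and non-zero, $[S]$ is non-zero in $H_2(M,\partial M)$, so we may apply Lemma \ref{Lem:NormalForm}. The normalising moves are isotopies, compressions and pattern-compressions, and they do not increase weight or extended weight. They yield a connected normal component that is either non-separating or a violating disc. A violating disc is also acceptable for the algorithm, or we note that it is itself non-separating in the relevant situations. Alternatively, we can extract the component directly: some component of $S$ has non-zero intersection number with a loop $\gamma$ on which $z(\gamma)\neq 0$, and that component is non-separating. Each normalising step reduces extended weight or $\chi_p$, both of which are polynomially related to the weight. The difficulty is that the weight is exponential, so each move must be performed on the vector of normal coordinates rather than disc by disc. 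The main obstacle is therefore the running-time claim. We would handle it by working only with normal coordinates and, for connectedness, by using an Agol--Hass--Thurston-style orbit counting procedure to find components in polynomial time. Failing that, we choose $z$ so that $S$ is already normal and the chosen component is identifiable via its intersection with the tree $T$.
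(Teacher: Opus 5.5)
Your triangulation case is sound and is essentially the paper's argument: a cocycle normalised to vanish on a maximal tree, Hadamard's bound on the adjugate, and the level-set realisation by triangles and squares. Your loop argument for extracting a non-separating component is also correct. In fact, with the tree normalisation \emph{every} component is non-separating, which justifies the paper's ``any component'' remark. The surface misses the tree edges, so each component meets some non-tree edge $e$. The loop formed by $e$ and the tree then meets that component only along $e$, always with the same sign.

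The first gap is in the handle-structure case, which is where the real work lies. The complex you use there, with 0-, 1- and 2-handles as vertices, edges and faces, cannot produce a \emph{standard} surface. A surface dual to a cocycle on it must cross the cores of the 1-handles, so it meets them in cocore discs $\{t\} \times D^2$. A standard surface instead meets each 1-handle in $D^1 \times C$, parallel to the core, and meets 0-handles in discs and 2-handles in $D^2 \times X$. Your sketch, in which $|z(H_1)|$ discs in a 1-handle have boundary curves closing up in the 0-handles, mixes these two pictures. Moreover, this coarse complex has no cells in $\partial M$, which is exactly where a surface disjoint from $\calH^2$ has to cross. The paper instead builds a fine cell structure from the intersections of handles with each other, with $\partial M \cut P$ and with $P$. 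It also adds a vertical 1-cell in each 1-handle that misses $\calH^2 \cup P$. The maximal tree is then chosen to contain all of the following:
\begin{enumerate}
\item maximal trees of the 1-skeleta of all 3-handles (closed case), or of all 2-handles together with $P \cap \calH^1$ (bounded case);
\item the vertical 1-cells in the 1-handles.
\end{enumerate}
This forces the cocycle to vanish on those cells, so the dual surface misses $\calH^3$ (respectively $\calH^2$) and is vertical in $\calH^1$. That is, it is standard.

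The second gap is normality relative to $P$, which is not a general-position matter. Appealing to Lemma~\ref{Lem:NormalForm} fails for two reasons. It may terminate in a violating disc, which need not be non-separating. And the number of moves can be as large as the extended weight, which is exponential, so the polynomial running time is lost. You notice this obstacle but do not resolve it. The paper resolves it as follows.
\begin{itemize}
\item Coherence of the transverse orientation gives condition (3) of Definition~\ref{Def:Normal} in the closed case, and condition (7) in the bounded case, for free.
\item In the bounded case the surface misses $\calH^2$, which gives (3) and (5).
\item Violations of (4) and (6) are removed by pattern-compressing \emph{all} discs of a given type at once.
\item The quantity $\sum (|D \cap P|^2 + |D \cap \calF^1|^2)$ is summed over disc types, with one representative per type in each 0-handle. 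It is at most a constant (depending on $N$) times $|\calH^0|$, and it drops at every round.
\item Pattern-compressions preserve $[S] \in H_2(M, \partial M)$.
\end{itemize}
Counting over disc types rather than over individual discs is the idea your proposal is missing.
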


\begin{proof}
Let us initially assume that $M$ is given via a triangulation $\calT$, since this case is more straightforward.

We are assuming that $H^1(M)$ is non-zero. Any non-zero element of this group can be represented by a simplicial 1-cocycle, and indeed such a 1-cocycle is unique if we require that its evaluation on each edge in some given maximal tree in the 1-skeleton is zero. So, one can set up a system of linear equations, with a variable for each oriented edge not in this tree, and an equation specifying that the total evaluation around each face is zero. Then any non-zero real solution to these equations represents a non-trivial element of $H^1(M; \mathbb{R})$. In fact, one can ensure that the coefficients of this element are rational, by imposing enough extra constraints specifying that some edges have zero evaluation, until the solution space is 1-dimensional, and then requiring that the sum of the coefficients is 1. Then we can multiply through by  a suitable integer to obtain a non-zero solution with integer coefficients, which is a 1-cocycle representing a non-trivial element of $H^1(M)$. 

The size of the co-ordinates of this solution are bounded above by $2^{12|\calT|}$ for the following reason. The system of equations has at most $n = E - V + 1$ variables, where $E$ and $V$ are the number of edges of $\calT$. This is because a maximal tree has $V - 1$ edges in it. Note that $E - V + 1 \leq E \leq 6|\calT|$. There is an equation for each face, as well as an equation that requires that the sum of the coefficients is $1$. We also have extra conditions that require that some edges evaluate to zero. These equations are combined into a single matrix equation $Av = w$, where $A$ represents the above equations, $v$ is the solution, and $w$ is the vector with all zeroes except the final entry, which is $1$. We are assuming that the kernel of $A$ is zero, and hence the rank of $A$ is equal to the number of variables. There is a square submatrix $B$ of $A$ with the same number of columns of $A$ having the same rank. Thus $B$ is invertible. We have $Bv = w'$, where $w'$ consists of some of the entries of $w$, and so all entries are zero apart from one. Then $v = B^{-1}w'$. Note that $B^{-1} = (\mathrm{det}(B))^{-1} \mathrm{adj}(B)$, where $\mathrm{det}(B)$ is the determinant of $B$ and $\mathrm{adj}(B)$ is the adjugate matrix. We can obtain an integral solution by multiplying through by $\mathrm{det}(B)$. Each co-ordinate of this integral solution is an entry of the adjugate matrix, which is the determinant of a minor of $B$. The size of this is at most the product of the $l^2$ norms of the rows, which is at most $3^n \sqrt{n} \leq 2^{2n} \leq 2^{12|\calT|}$, as required. Note that the procedure used to produce this solution runs in polynomial time, since it involves computing $(n-1)^2$ determinants.

From this, one can build a properly embedded surface representing an element of $H_2(M, \partial M)$ dual to this 1-cocycle $c$, as follows. First, for each edge $e$ of the triangulation, place a collection of $|c(e)|$ points along the interior of the edge. These will be the intersection points between $e$ and the surface. Pick a circular ordering on the edges of each face. Since the evaluation of $c$ around the face is zero, we may join the points in the edges in pairs by a collection of disjoint normal arcs, so that for each arc, one endpoint lies on an edge with positive evaluation and one endpoint lies on an edge with negative evaluation. These arcs form the intersection between the surface and the face. In the boundary of each tetrahedron, these arcs combine to form a collection of normal curves, each with length three or four, and we may fill these in with normal triangles and squares. The result is a compact oriented properly embedded surface dual to $c$. Any component of this surface can be used as the required surface $S$. Such a component can be found using the algorithm of Agol, Hass and Thurston \cite[Corollary 17]{AgolHassThurston}.

We now consider the case where $M$ comes equipped with a handle structure $\calH$. When $\partial M$ is non-empty, we are assuming that $\calH$ has no 3-handles. This determines a cell structure on $M$, as follows. Each handle becomes a 3-cell. The 2-cells arise from the components of intersection between handles, as well as the components of intersection between a handle and $\partial M \cut P$. Some of these surfaces might not be discs, in which case we decompose them along arcs forming 1-cells, until they become discs. We will explain how to do this below. The remaining 1-cells arise from the components of intersection between any three handles, as well as the components of intersection between pairs of handles and $\partial M \cut P$, and also the components of intersection between a handle and an edge of $P$. Finally, the 0-cells arise as follows: the components of intersection between four handles; the components of intersection between any three handles and $\partial M \cut P$; the components of intersection between any two handles and $P$; the vertices of $P$. One arrangement when non-disc components of intersection between handles arises is when a 1-handle of $\calH$ is disjoint from $\calH^2$ and $P$. In that case, we insert a single vertical 1-cell running along the 1-handle. The other scenario where non-disc components arise is in the boundary of a 0-handle, since the intersection between the 0-handle and $\partial M \cut P$ or $\calH^3$ may have components other than discs. In that case, we insert properly embedded 1-cells into the planar surface, but without adding any new 0-cells. 

We now pick a maximal tree in the 1-skeleton of this cell structure. Our approach will depend on whether $M$ is closed or has non-empty boundary. When $M$ is closed, we pick a maximal tree in the 1-skeleton of each 3-handle. The union of these trees forms a forest in $M$. In each 1-handle disjoint from the 2-handles, we also add the new vertical 1-cell to this forest. We then extend this to a maximal tree in all of $M$. The effect of this is that, for each 1-cocycle $c$ that is zero on the edges in the maximal tree, the evaluation of each 1-cell in each 3-handle is zero. This is automatic for 1-cells in the maximal tree. But for any other 1-cell in a 3-handle, the evaluation of $c$ on that edge is equal to the evaluation of the loop that runs along the edge, then through the maximal tree back to the start of the edge. This loop lies entirely in the 3-handle, and so is homotopically trivial, and hence its evaluation is zero, as claimed. When $M$ has non-empty boundary, we pick a maximal tree in the 1-skeleton of each 2-handle and again any new vertical edges in a 1-handle are included in this forest. We also add each component of $P \cap \calH^1$ to this forest. Furthermore, for each disc component of $\calH^1 \cap \partial M$, we pick one of its arcs of intersection with $\calH^0$ and we declare that all but one of the 1-cells in that arc lies in the forest.
We again extend this to a maximal tree in $M$. This has the effect that the evaluation of each 1-cell in each 2-handle is actually zero.

As above, such cocycles $c$ are solutions to a system of linear equations. The variables correspond to the oriented 1-cells not in the maximal tree, and there is an equation for each 2-cell. The argument above also gives that there is a non-zero integral cocycle with the property that its value on each 1-cell has modulus that is bounded above by an exponential function of $|\calH|$, where the exponent only depends on the uniform type of $\calH$.

We now construct a surface $S$ that is dual to this 1-cocycle $c$. For each 1-cell $e$, we insert $|c(e)|$ points into its interior. These will be the points of intersection between $S$ and $e$. Each 2-cell is a polygon, and we have specified the intersection between $S$ and the boundary of this polygon. Since the total evaluation of the oriented edges in this boundary is zero, we can insert transversely oriented arcs into the interior of the polygon joining these points. Note that the 1-cells that are vertical in the 1-handles have zero evaluation, for the following reason. Each such 1-cell is either in the boundary of a 3-handle, or in the intersection between a 2-handle and $\partial M$, or is an inserted 1-cell in a 1-handle disjoint from the 2-handles. For each such 1-cell, its evaluation is zero. Each 2-cell in the vertical boundary of a 1-handle has four sides, and we have just shown that the two vertical sides have zero evaluation. Hence, the remaining two sides have the same evaluation. So the arcs that we insert into such a 2-cell can be isotoped to be vertical. We can also ensure that the arcs in the two polygons at the ends of this 1-handle correspond via the product structure. In the boundary of each handle, these arcs patch together to form closed curves. We can insert discs into the handle bounded by these curves. The result is a transversely oriented, properly embedded surface dual to $c$. It is standard because it is disjoint from the 3-handles and respects the product structure of the other handles of $\calH$. For 1-handles, we have already arranged this. For 2-handles in a closed 3-manifold $M$, this follows from the fact that the surface misses the 3-handles and intersects each 1-cell in points of the same sign. When $M$ has non-empty boundary, the surface misses the 2-handles. Note also that all the points of intersection between the surface and any arc component of $\calH^0 \cap \calH^1 \cap \partial M$ have the same sign.

We need to arrange that this surface is in fact normal. We observe that when $M$ is closed, we have in fact already arranged this. When $M$ is closed, the only normality condition that is not automatic is (3) of Definition \ref{Def:Normal}. However, for each disc that we have inserted into a 0-handle, it runs over each component of $\calF^1$ at most once. For if it ran over a component of $\calF^1$ more than once, then there would be two arcs of intersection in that component of $\calF^1$ with opposite transverse orientations. However, all the arcs of intersection between $S$ and any component of $\calF^1$ are compatibly oriented.

We now deal with the case where $M$ has non-empty boundary. Each component of $S \cap \calH^0$ intersects each component of $\partial \calF^0 \cut (P \cup \calF^1)$ and each component of intersection between the edges of $P$ and $\calH^0$ at most once, since $S$ intersects each such component in points of the same sign. (This latter property is convenient at this stage, but we will not maintain it in the modifications that we make below.) So this results in finitely many types of disc in $S \cap \calH^0$. For each such disc type $D$, we define its \emph{complexity} to be $|D \cap P|^2 + |D \cap \calF^1|^2$. This quantity is bounded above by a constant depending only on the type of $\calH$. The \emph{complexity} of $S$ is equal to the sum of the complexities of its disc types. Note that in this sum, we only consider one disc of $S \cap \calH^0$ of each type in each 0-handle. Hence, the complexity is bounded above by a constant times $|\calH^0|$, where the constant only depends on the uniform type of $\calH$.

We now examine the normality conditions (1)-(7) from Definition \ref{Def:Normal}.  Conditions (1) and (2) are automatically satisfied. In our setting, $S$ is disjoint from the 2-handles, and so (3) and (5) are satisfied. If a curve of $S \cap \partial \calH^0$ intersects a component of $\partial \calH^0 - (\calF \cup P)$ in more than one arc, contradicting (4), then we can pattern compress such a disc. We can pattern compress all discs of this type. For each such disc, this creates two discs $D_1$ and $D_2$. Discs of these types may already exist in $S$, but this does not affect our argument. 
The points of intersection between $D$ and $P$ are shared out between $D_1$ and $D_2$. The same is true for $D \cap \calF^1$. Hence, the total complexity of $S$ decreases. Note that when a pattern-compression is performed, then the class that $S$ represents in $H_2(M, \partial M)$ is unchanged. A similar argument works when (6) in the definition of normality is violated. We pattern compress the disc $D$ to discs $D_1$ and $D_2$. This introduces two points of intersection with $P$, but the complexity of these discs still decreases. Finally note that condition (7) must be satisfied, since if it were not, then we obtain two points of intersection between $S$ and a component of $\calH^1 \cap \calH^0 \cap \partial M$ that are oriented towards or away from each other, contrary to our construction. Thus we have constructed a normal surface. Any component of this is our required normal surface.
\end{proof}

\begin{remark} While this gives a simple construction of some surface $S$, mostly using only elementary linear algebra, it is not necessary to follow this construction in step (6) of the above algorithm to determine whether a boundary pattern is essential. Indeed, any connected, non-separating, properly embedded, oriented, normal surface can be used.
\end{remark}

\begin{remark} When Theorem \ref{Thm:BuildSurface} is applied to a handle structure $\calH$, both the upper bound on the extended weight of the surface and the running time of the algorithm depend on the quantity $N$. In most applications, $N$ is bounded above by a universal constant. For example, this is true when $\calH$ is dual to a triangulation of $M$ in which $P$ is simplicial. More generally, we will see in Section \ref{Sec:UniformType} that typically $\calH$ can be arranged to be of `uniform type' as one proceeds down a hierarchy. This term is defined in Definition \ref{Def:UniformType}, but it implies that $N$ is bounded above by a universal constant.
\end{remark}

\section{An example}
\label{Sec:Example}

We give an example of this algorithm in practice. We use it to show that the tripus 3-manifold has incompressible boundary. This manifold is shown in (i) of Figure \ref{Fig:TripusHierarchy}. There, a 3-ball (represented by the outer circle) is shown, containing a properly embedded graph. The tripus manifold is the exterior of this graph.
It is a famous example of a 3-manifold admitting a hyperbolic structure with totally geodesic boundary. Totally geodesic boundary components of a hyperbolic 3-manifold are always incompressible. However, the application of our algorithm gives a straightforward alternative proof of this.

\begin{figure}[h]
\centering
\includegraphics[width=4in]{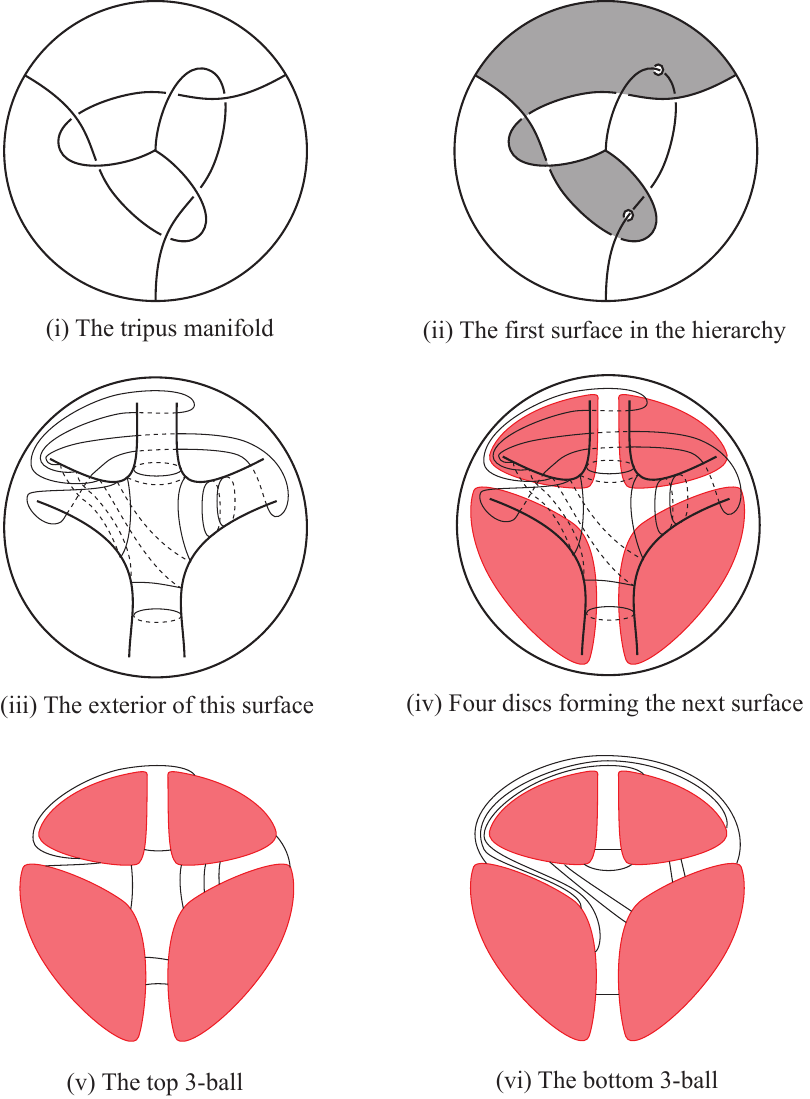}
\caption{An essential hierarchy for the tripus manifold} \label{Fig:TripusHierarchy}
\end{figure}

The first decomposing surface is a thrice-punctured sphere as shown in (ii) of Figure \ref{Fig:TripusHierarchy}. The exterior of this surface, with its boundary pattern, is shown in (iii). The next surface is the four discs shown in red in (iv). The exterior of these discs is two 3-balls. Their boundary patterns are shown in (v) and (vi). It is easily checked that they are essential. Hence, the empty boundary pattern for the tripus manifold is essential. In other words, the boundary is incompressible. Furthermore, the manifold is irreducible.

A skeptical reader may object that we have not quite followed the algorithm from Section \ref{Sec:AlgorithmCompressible}, in several ways. We have not specified an initial handle structure $\mathcal{H}$ for the manifold. Of course we could have done so, and we could have realised the first surface as normal, and we could have decomposed $\mathcal{H}$ along it, thereby forming a new handle structure $\mathcal{H}_2$, and so on. Although a computer implementation would have used handle structures, we have chosen to avoid them in this example, for ease of exposition.

We deviated from the algorithm in another way, because the second surface is a union of discs which is separating, whereas the surfaces in Step (6) are required to be connected and non-separating. However, we could instead have chosen three of the four discs, and decomposed along of each of these in turn. These are connected and non-separating. The resulting manifold would have been a single 3-ball. Its boundary pattern would have been essential, by an application of Lemma \ref{Lem:EssentialPullsBack} and because decomposition along the fourth disc gives an essential pattern. We chose to decompose along four discs rather than three, again for ease of visualisation and exposition.

\begin{figure}[h]
\centering
\includegraphics[width=3.5in]{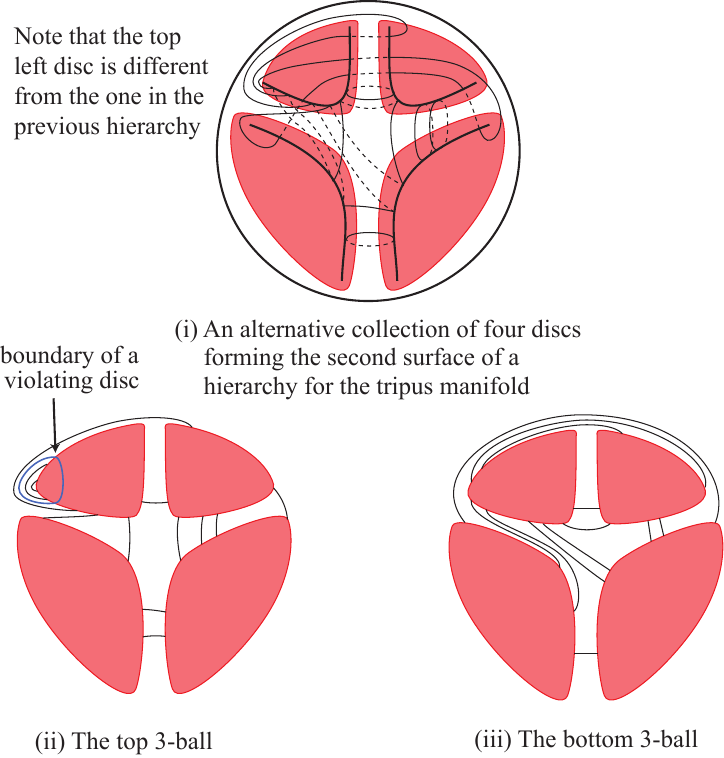}
\caption{An alternative decomposing surface. A curve bounding a violating disc is shown.} \label{Fig:TripusHierarchyAlternative}
\end{figure}

A reader may also object that we have been very fortuitous that we did not need to leave the hierarchy construction loop. So suppose instead that we had initially used a slightly different set of decomposing surfaces. Suppose that we used the same first surface, but that the second surface was a slightly different collection of discs $S_2$, as shown in (i) of Figure \ref{Fig:TripusHierarchyAlternative}. Then when we decompose along these discs, we get two three balls, with boundary patterns as shown in (ii) and (iii). The pattern in (ii) is inessential; a curve bounding a violating disc $D$ is shown. This would have been discovered in step (9) of the algorithm. It runs over one of the discs in $S_2$. When we pattern-compress $S_2$ along $D$, the result is five discs, one of which is boundary-parallel. Removing this from the collection, we obtain the four discs described in Figure \ref{Fig:TripusHierarchy} (iv). Hence, the algorithm would again terminate with the correct declaration that the boundary of the tripus manifold is incompressible.

\section{The algorithm to determine incompressibility}
\label{Sec:AlgorithmIncompressible}

In this section, we complete the proof of Theorem \ref{Thm:AlgorithmIncompressible}.

\begin{lemma}
\label{Lem:CompressibilityPreserved}
Let $S$ be a compact orientable surface properly embedded in a compact orientable 3-manifold $M$. Let $D$ be a compression disc for $\partial M$ that is disjoint from $S$. Then $S$ is incompressible in $M$ if and only if it is incompressible in $M \cut D$. Also, $S$ is boundary-incompressible in $M$ if and only if it is boundary-incompressible in $M \cut D$.
\end{lemma}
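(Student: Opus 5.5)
The plan is to compare compression discs (and boundary-compression discs) for $S$ in $M$ with those in $M \cut D$, using the two copies $D_+, D_-$ of $D$ in $\partial(M \cut D)$. One direction is immediate in both statements. Since $M \cut D$ embeds in $M$ with $S$ in it, any compression disc for $S$ in $M \cut D$ is a disc in $M$ with the same intersection with $S$. Its boundary is an essential curve in $S$, and essentiality is a property of the curve in $S$ alone, so it is a compression disc in $M$. Likewise, a boundary-compression disc $E$ in $M \cut D$ has $E \cap \partial(M \cut D)$ an arc in $\partial(M \cut D)$. This arc may run over $D_\pm$; then I push it off $D_\pm$ into $\partial M$ by an isotopy fixing $S$. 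This is possible because $D_\pm$ are discs disjoint from $S$, and I push into a collar of $\partial M$ near $\partial D_\pm$. The result is a boundary-compression disc in $M$, since the arc $E \cap S$ is unchanged and so remains essential in $S$. Hence if $S$ is incompressible (resp. boundary-incompressible) in $M$, the same holds in $M \cut D$.

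For the converse, suppose $E$ is a compression disc for $S$ in $M$. I make $E$ transverse to $D$ and minimise $|E \cap D|$ over all compression discs with the same boundary, or more simply with boundary essential in $S$. Since $\partial E \subset S$ and $S \cap D = \emptyset$, the intersection $E \cap D$ consists only of simple closed curves. Take a curve $\gamma$ innermost in $D$, bounding a subdisc $D_0 \subset D$ with interior disjoint from $E$. Surgering $E$ along $D_0$ produces a disc $E'$ with the same boundary and a sphere. The disc $E'$ has fewer intersection curves with $D$, and it is still a compression disc for $S$ because its boundary is unchanged. This needs no irreducibility, as we only keep $E'$. Iterating, $E$ becomes disjoint from $D$ and so lies in $M \cut D$, giving a compression disc there.

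For boundary compressions, $E$ is a disc with $\partial E = \alpha \cup \beta$, where $\alpha \subset S$ is essential and $\beta \subset \partial M$. Now $E \cap D$ may contain arcs as well as closed curves. Closed curves are removed as above. For arcs, take an arc $\delta$ of $E \cap D$ that is outermost in $D$, cutting off a subdisc $D_0 \subset D$ whose boundary is $\delta$ together with an arc of $\partial D \subset \partial M$. Both endpoints of $\delta$ lie on $\beta$, because $D \cap S = \emptyset$. Surgering $E$ along $D_0$ splits $E$ into two discs $E_1$ and $E_2$. Exactly one of them, say $E_1$, contains $\alpha$ in its boundary, and its remaining boundary lies in $\partial M$. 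So $E_1$ is again a boundary-compression disc with the same essential arc $\alpha$, and it meets $D$ in fewer components. By induction we reach a boundary-compression disc disjoint from $D$, which lies in $M \cut D$.

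The main subtlety I expect is bookkeeping in the boundary case. I must check that after the surgery the arc of $\partial E_1$ off $S$ really lies in $\partial M$, or in $\partial(M \cut D)$ including $D_\pm$, and that the intersections remain transverse after a small push. I also need to fix the precise meaning of boundary-incompressibility: that $\alpha$ is essential in $S$, i.e.\ does not cut off a disc from $S$ together with an arc of $\partial S$. This condition depends only on $\alpha$ and $S$, and the surgery never alters $\alpha$.
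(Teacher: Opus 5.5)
Your proposal is correct and follows essentially the same route as the paper. One direction slides a compression or boundary-compression disc in $M \cut D$ off the copies of $D$; the converse removes closed curves of $E \cap D$ by innermost-disc surgery, then removes arcs by outermost-arc surgery, keeping the piece that contains the arc in $S$, with your extra bookkeeping (outermost arcs have endpoints on $\beta$, and the surviving disc keeps the same essential arc $\alpha$) only making explicit what the paper leaves implicit.
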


\begin{proof}
One direction is clear. A compression or boundary-compression disc for $S$ in $M \cut D$ can be slid off the copies of $D$ in $\partial (M \cut D)$. It then becomes
a compression or boundary-compression disc for $S$ in $M$.

Suppose that $D'$ is a compression or boundary-compression disc for $S$ in $M$. 
The intersection between this and $D$ is a collection of simple closed curves and properly embedded arcs.
Consider a simple closed curve that is innermost in $D$. It bounds a disc in $D$. We may surger $D'$ along this disc, and remove the resulting 2-sphere component. The result is a new compression or boundary-compression disc for $S$ that intersects $D$ a fewer number of times. Hence, we may assume that $D \cap D'$ contains no simple closed curves. When $D'$ is a compression disc, this implies that $D \cap D' = \emptyset$, since if there were an arc of intersection, its boundary would lie in $S$, whereas $D$ is assumed to be disjoint from $S$. So suppose that $D'$ is a boundary-compression disc for $S$. Now consider an arc of $D' \cap D$ that is outermost in $D$. It separates off a sub-disc in $D$ with interior disjoint from $D'$. We may surger $D'$ along this disc, giving two discs $D'_1$ and $D'_2$. One of these is a boundary-compression disc for $S$. Hence, again by minimising $|D \cap D'|$, we may assume that $D \cap D' = \emptyset$. But then $D'$ lies in $M \cut D$, and it forms a compression or boundary-compression disc for $S$ in $M \cut D$.
\end{proof}

\begin{theorem}[{\sc Determining incompressibility and boundary-incompressibility}] 
\label{Thm:DeterminingBoundaryIncompressible}
There is an algorithm that takes, as its input a compact orientable irreducible 3-manifold $M$ and a compact orientable properly embedded surface $S$, 
and determines whether $S$ is incompressible and boundary-incompressible. Furthermore, when $S$ is compressible or boundary-compressible, the algorithm outputs a compression disc or a boundary-compression disc.
\end{theorem}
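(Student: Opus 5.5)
The plan is to reduce the question to Theorem \ref{Thm:AlgorithmEssentialPattern} applied to $M \cut S$ with a suitable boundary pattern. Let $N = M \cut S$, and let $P$ be the boundary pattern on $N$ consisting of the copies of $\partial S$ in $\partial N$. These are the curves separating the two copies $S_\pm$ of $S$ from $\partial M \cut \partial S$. Before doing this, I would first check whether $S$ has sphere components or disc components that are boundary-parallel. Such components can be found directly by normal surface methods, or discarded in advance, since by irreducibility a sphere bounds a ball. After that, I would triangulate $N$ so that $P$ is simplicial; this is possible starting from a triangulation of $M$ in which $S$ is normal. $N$ is irreducible by Lemma \ref{Lem:IrreduciblePushesForward} provided $S$ is incompressible. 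To avoid that circularity, I would instead run the algorithm of Section \ref{Sec:AlgorithmCompressible} and use the irreducibility of $M$ directly, as in the proof of Theorem \ref{Thm:UnknotAlgorithm1}: spheres in $N$ bound balls in $M$, and a non-separating or square decomposing surface cannot enter such a ball.

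The key correspondence is between violating discs for $(N,P)$ and compressing data for $S$. A disc $D$ in $N$ with $\partial D \cap P = \emptyset$ either has $\partial D$ in $S_\pm$, giving a potential compression disc for $S$, or has $\partial D$ in $\partial M \cut \partial S$, giving a potential compression disc for $\partial M$ disjoint from $S$. A disc meeting $P$ twice gives a potential boundary-compression disc for $S$. Discs meeting $P$ once or three times cannot occur, by a parity argument: $P$ separates $\partial N$ into pieces coming from $S$ and pieces coming from $\partial M$, so $\partial D$ crosses $P$ an even number of times.

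Conversely, a compression or boundary-compression disc for $S$ is a violating disc for $P$. In the boundary-compression case, the definition of a boundary-compression disc says that the arc $D \cap S$ does not cut off a disc of $S$ parallel to a disc in $\partial M$. This is exactly the statement that $\partial D$ does not bound a disc in $\partial N$ meeting $P$ in a single arc. Incompressibility of $\partial N$ within each region contributes nothing extra, because $P$ has no vertices, so tripods never arise.

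The algorithm is then as follows. Run the algorithm of Theorem \ref{Thm:AlgorithmEssentialPattern} on $(N,P)$. If $P$ is essential, declare $S$ incompressible and boundary-incompressible. If the algorithm returns a violating disc $D$ whose boundary lies in $S_\pm$ or meets $P$ twice, output $D$ as a compression or boundary-compression disc.

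The main obstacle is the remaining case, where $\partial D$ lies in $\partial M - \partial S$, so that $D$ compresses $\partial M$ while being disjoint from $S$. Here I would invoke Lemma \ref{Lem:CompressibilityPreserved}: cutting $M$ along $D$ preserves both the incompressibility and the boundary-incompressibility of $S$. Then I would replace $M$ by $M \cut D$, removing any resulting ball components that are disjoint from $S$. The result is again irreducible, so I iterate.

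Termination follows from a complexity argument. I would argue that the complexity $\sum(-\chi)$ over the boundary components of $M$, together with a count of the sphere components, decreases at each step, just as in the standard proof that compressing the boundary terminates. Alternatively, I would use the fact that the number of tetrahedra in the induced triangulation is controlled. The resulting disc for the final manifold is then transported back to $M$ by sliding it off the copies of $D$, following the proof of Lemma \ref{Lem:CompressibilityPreserved}, which gives an explicit compression or boundary-compression disc in $M$.
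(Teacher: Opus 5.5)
Your overall reduction is the same as the paper's: decompose along $S$, run the essential-pattern algorithm on $(M\cut S,P)$, use parity, and cut along $D$ via Lemma~\ref{Lem:CompressibilityPreserved} when $D$ compresses $\partial M$. However, your case analysis of the violating disc $D$ has a genuine hole.

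When $\partial D$ lies in the part of $\partial(M\cut S)$ coming from $\partial M$, you conclude that $D$ compresses $\partial M$. Being violating only says that $\partial D$ bounds no disc in $\partial(M\cut S)$ disjoint from $P$. It may well bound a disc $D'\subset\partial M$ that contains curves of $\partial S$. For example, take $M=B^3$ and let $S$ be two once-punctured tori in opposite hemispheres, with boundaries near the poles. The equatorial disc is then violating, yet $\partial B^3$ is incompressible. In this situation the hypothesis of Lemma~\ref{Lem:CompressibilityPreserved} fails. Moreover, cutting along $D$ merely splits off a ball and adds a $2$-sphere to $\partial M$, so your termination measure (the standard one for compressions along essential discs) gives no progress.

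The missing idea, which the paper uses, is that in this case you can stop at once. An innermost curve of $\partial S\cap D'$ in $D'$ bounds a subdisc of $D'$ which, pushed into $M$, is a compression disc for $S$, provided $S$ has no disc components. That is why the paper discards \emph{all} disc components at the outset, which is harmless since discs are always incompressible and boundary-incompressible. Discarding only spheres and boundary-parallel discs, as you do, is not the right preprocessing for this step.

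There is a related slip when $\partial D$ meets $P$ twice. Use the usual definition, which is Definition~\ref{Def:PatternCompDisc} with empty pattern: the arc $\alpha=D\cap S$ must not cut off a disc from $S$. Under this definition, a violating disc meeting $P$ twice need not be a boundary-compression disc. The arc $\alpha$ can cut off a disc $E\subset S$ while $\beta\cup(E\cap\partial S)$, where $\beta=D\cap\partial M$, is for instance essential in $\partial M$. Your reformulation, which makes ``violating'' and ``boundary-compressing'' equivalent, is not the standard notion, so your algorithm could output a disc that is not a boundary-compression disc. The paper instead replaces $D$ by $D\cup E$ pushed off $S$. This is a violating disc disjoint from $P$, which puts you back in the zero-intersection case above, where the innermost-curve argument is needed.

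Separately, your workaround for the possible reducibility of $M\cut S$ does not work as written. The reason is that $S$ is not one of the algorithm's own decomposing surfaces, and its closed components may lie inside a ball in $M$. The paper's proof simply applies Theorem~\ref{Thm:AlgorithmEssentialPattern} to $M\cut S$ without discussing this.
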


\begin{proof}
We may remove the disc components of $S$ without affecting whether $S$ is compressible or boundary-compressible. So suppose that $S$ has no disc components.
Give $M$ the empty boundary pattern, and let $P$ be the pattern that $M \cut S$ inherits. Running the algorithm
provided by Theorem \ref{Thm:AlgorithmEssentialPattern} on $(M \cut S, P)$ establishes whether $P$ is essential. Furthermore, when it is inessential, the algorithm provides a violating disc.

First suppose that $P$ is essential. Then $S$ is incompressible and boundary-incompressible, by Lemma \ref{Lem:EssentialPullsBack}. Hence, in this case, our algorithm terminates with the declaration that $S$ is incompressible and boundary-incompressible.

Now suppose that $P$ has a violating disc $D$. Then $\partial D$ intersects $P$ an even number of times, since $P$ is where the copies of $S$ in $\partial (M \cut S)$ meet the parts of the boundary coming from $\partial M$. So, $\partial D$ intersects $P$ zero times or twice. If $\partial D$ intersects $P$ twice, then $D$ forms a potential boundary-compression disc for $S$. If this is an actual boundary-compression disc, then the algorithm terminates and provides this disc. On the other hand, if $D \cap S$ separates off a subdisc $D'$ from $S$, then $D \cup D'$ yields a violating disc for $P$ that is disjoint from $P$. So we may suppose that $\partial D$ is disjoint from $P$. It then lies in a copy of $S$ or in $\partial M \cut S$. In the former case, $D$ is a compression disc for $S$. Suppose that $D$ lies in $\partial M \cut S$. If it forms a compression disc for $\partial M$, then we cut $M$ along $D$ and by Lemma \ref{Lem:CompressibilityPreserved}, $S$ is compressible or boundary-compressible in $M \cut D$ if and only if it was beforehand. Thus, we can work with this new manifold, which has boundary with higher Euler characteristic. Suppose that $D$ is not a compression disc for $\partial M$. Then $\partial D$ bounds a disc $D'$ in $\partial M$. The boundary of $S$ must intersect this disc, since $D$ is a violating disc. Pick an innermost curve of $\partial S \cap D'$ in $D'$. This bounds a disc in $D'$ that forms a compression disc for $S$, since we are assuming that $S$ has no disc components.
\end{proof}

\begin{theorem}[{\sc Determining incompressibility}] 
There is an algorithm that takes, as its input, a compact orientable irreducible 3-manifold $M$ and a compact orientable properly embedded surface $S$, 
and determines whether $S$ is incompressible. Furthermore, when $S$ is compressible, the algorithm outputs a compression disc.
\end{theorem}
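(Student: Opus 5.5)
We deduce this from Theorem \ref{Thm:DeterminingBoundaryIncompressible} by a loop that removes boundary-compressions which do not lead to compressions. Discard disc components of $S$ first, since they do not affect compressibility, and run the algorithm of Theorem \ref{Thm:DeterminingBoundaryIncompressible} on $(M,S)$. If it declares $S$ incompressible and boundary-incompressible, we declare $S$ incompressible. If it outputs a compression disc, we output that disc. The remaining case is when it outputs a boundary-compression disc $E$ for $S$, with $E \cap S = \alpha$ an arc and $E \cap \partial M = \beta$ an arc.

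\textbf{Case 1.} Suppose $\alpha$ and $\beta$ have endpoints on the same component $C$ of $\partial S$. Take a regular neighbourhood $N$ of $C \cup \beta$ in $\partial M$. Its boundary contains a curve $\gamma$ which, together with $E$, gives a disc $E'$ properly embedded in $M \cut S$; concretely, $E'$ is the frontier of a neighbourhood of $E \cup N$ banded along $\alpha$. The curve $\partial E'$ is obtained from $\partial E$ by a band move along $C$. Either $\partial E'$ lies in $S$, or it can be isotoped into $S$ and is essential there, in which case $E'$ is a compression disc. Or $\partial E'$ bounds a disc in $S$, in which case $\alpha$ cuts off an annulus of $S$, and a standard argument shows that $S$ is then $\partial$-parallel there.

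The cleaner route, which I would actually take, avoids this case analysis. Let $M' = M \cut N(S)$ with $\partial$-pattern $\partial S$. Choose a regular neighbourhood $V$ of $\partial M$, set $M^\circ = M - V$, and let $S^\circ = S \cap M^\circ$, a surface whose boundary lies in the surface $\partial M^\circ$, parallel to $\partial M$. Then $S$ is incompressible in $M$ if and only if $S^\circ$ is incompressible in $M^\circ$. The latter is decided by Theorem \ref{Thm:DeterminingBoundaryIncompressible} applied with pattern only on the closed part, but this still produces boundary-compressions.

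So the final plan is the standard iteration. When a boundary-compression disc $E$ is produced, boundary-compress $S$ along $E$ to obtain a surface $S_1$ with $\chi(S_1) = \chi(S)+1$. One checks the following lemma: if $S$ is incompressible in the irreducible manifold $M$ and is not a disc, then $S_1$ is incompressible. Conversely, a compression disc for $S_1$ yields one for $S$ after isotoping it off the two copies of $E$ by an innermost-arc argument, just as in Lemma \ref{Lem:CompressibilityPreserved}. Hence $S$ is incompressible if and only if $S_1$ is, and a compression disc for $S_1$ can be converted into one for $S$. Iterate, discarding disc components each time. Since $-\chi$ of the non-disc components strictly decreases, this terminates.

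The main obstacle is the converse lemma, that a compression disc for $S_1$ gives one for $S$. A disc $D$ with $\partial D \subset S_1$ may have $\partial D$ essential in $S_1$ yet bound a disc in $S$, namely when $\partial D$ encircles a copy of $E$. To handle this, make $D$ disjoint from the copies of $E$ by cut-and-paste, using irreducibility to remove closed curves of intersection. If $\partial D$ then bounds a disc in $S$, that disc must contain a copy of $E$'s arc; in that case the band structure shows that $\partial D$ is parallel to a boundary curve of $S_1$, and I would try to show that this forces $S$ to be boundary-parallel or compressible directly.
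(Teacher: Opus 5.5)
\textbf{The gap.} Your final plan fails at the step ``Hence $S$ is incompressible if and only if $S_1$ is''. The two facts you list are the same implication: ``$S$ incompressible $\Rightarrow$ $S_1$ incompressible'' and its contrapositive ``a compression disc for $S_1$ yields one for $S$''.

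That implication is routine, and the worry in your last paragraph does not arise. Each copy of $E$ in $S_1$ meets $\partial S_1$, so $\partial D$ can be pushed off it. An innermost-curve swap then makes $D$ miss the rectangle $R = N(\alpha)\cap S$. Finally, a disc in $S$ bounded by $\partial D$ misses $\partial S$, hence misses $R$, and so lies in $S_1$.

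What your loop needs, in order to trust a final verdict ``$S_k$ is incompressible'', is the reverse implication $S_1$ incompressible $\Rightarrow$ $S$ incompressible. This is false. Let $M$ be a 3-ball and let $S$ be the annulus obtained by pushing an annulus $A' \subset \partial M$ slightly into the interior.
\begin{itemize}
\item The core of $S$ bounds a disc on the far side, so $S$ is compressible.
\item The product region between $S$ and $A'$ contains a boundary-compression disc $E$. Both $E$ and the compression disc are violating discs for the pattern inherited by $M \cut S$, so nothing in Theorem \ref{Thm:DeterminingBoundaryIncompressible} forces it to return the compression disc rather than $E$.
\item Boundary-compressing along $E$ gives a boundary-parallel disc. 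You discard it, and your procedure then declares $S$ incompressible.
\end{itemize}
So boundary-compression can turn a compressible surface into an incompressible one, and your loop can give a wrong answer. Your Case 1 banding argument and the $M^\circ$ reformulation are left unfinished and do not address this.

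\textbf{The missing idea.} You need to prevent boundary-compression discs from being produced at all, rather than trying to process them afterwards. The paper does this as follows.
\begin{itemize}
\item Replace each curve of the inherited pattern on $\partial(M \cut S)$ by two parallel curves, giving $P'$, and run Theorem \ref{Thm:AlgorithmEssentialPattern} on $(M \cut S, P')$.
\item The thin annuli between parallel curves separate the parts coming from $S$ from the parts coming from $\partial M$. Hence any violating disc $D$ meets $P'$ an even number of times.
\item If $D$ meets $P'$ twice, one arc of $\partial D \cut P'$ has both endpoints on the same boundary curve of one of these annuli, so it is inessential there and $\partial D$ can be isotoped off $P'$.
\end{itemize}
Thus every violating disc may be taken disjoint from $P'$, and there are two cases. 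Either it is a compression disc for $S$, or it lies in $\partial M \cut S$. In the second case, it is either a compression disc for $\partial M$, and you cut along it, which is harmless by Lemma \ref{Lem:CompressibilityPreserved}, and repeat. Or it bounds a disc in $\partial M$ meeting $\partial S$, and an innermost curve of $\partial S$ in that disc gives a compression disc for $S$. Conversely, if $P'$ is essential, then $S$ is incompressible directly from the definition of an essential pattern.
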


\begin{proof}
Again we may assume that $S$ has no disc components.
Give $M$ the empty boundary pattern, and let $P$ be the pattern that $M \cut S$ inherits. Now replace each curve of $P$ by two parallel curves, forming a new pattern $P'$. The algorithm given in Theorem \ref{Thm:AlgorithmEssentialPattern} is then applied to $(M \cut S, P')$.  If $P'$ is essential, then $S$ is incompressible and the algorithm terminates with this declaration. So suppose that $P'$ has a violating disc $D$. The boundary of $M \cut S$ consists of parts coming from $S$ and parts coming from $\partial M$, and between these lie annuli lying between parallel curves of $P'$. Hence, $\partial D$ intersects $P'$ an even number of times. Moreover, if it intersects $P'$ twice, then one arc of $\partial D \cut P'$ is boundary parallel in an annular component of $\partial (M \cut S) \cut P'$. In that case, we may isotope $\partial D$ to form a violating disc disjoint from $P'$. So we may assume that $D$ is disjoint from $P'$. It either forms a compression disc for $S$ or it is disjoint from $S$. In the former case, the algorithm terminates and provides this disc $D$. In the latter case, it either forms a compression disc for $\partial M$, in which case we compress along it and continue, or it can be used to find a compression disc for $S$ lying in $\partial M$.
\end{proof}

\section{The number of steps}
\label{Sec:NumberSteps}

There are various steps in the algorithm 
from Theorem \ref{Thm:AlgorithmEssentialPattern}
that are not described with enough precision to be able to estimate their running time. So, we will initially content ourselves with an analysis of the number of times that the algorithm goes through any of the steps in Section \ref{Sec:AlgorithmCompressible}. We do this in terms of two quantities:
\begin{enumerate}
\item the maximal length $L$ of any of the hierarchies; thus the variable $\ell$ will be at most $L$ throughout;
\item the maximal pattern-complexity $g$ of any decomposing surface.
\end{enumerate}
At present, we will not attempt to bound these two quantities. But we can bound the number of steps in the algorithm in terms of $L$ and $g$.

When the algorithm is in the hierarchy construction loop, namely Steps (2)-(8), it continues until a violating disc is found or until it reaches a manifold $M_{\ell+1}$ with $b_1(M_{\ell+1}) = 0$. The number of times it can iterate this loop is at most $L$. By this stage, we have produced a list of surfaces $S_1, \dots, S_\ell$ with pattern-complexity $\chi_p(S_1), \dots, \chi_p(S_\ell)$. It may be the case that the final surface $S_\ell$ has negative pattern-complexity, in which case it is a violating disc (because it is assumed to be connected and non-separating), and we do not include it in this list. The algorithm then proceeds through the surface simplification process, namely Steps (9)-(15). If it does not terminate here, then we pattern-compress or compress some $S_j$ and thereby reduce its pattern-complexity by at least one. We discard the later surfaces and return to Step (7) to create new ones.

In order to find an upper bound on the number of times we can leave the hierarchy construction loop, we define the {\sl $(g,L)$-complexity} of the hierarchy $S_1, \dots, S_\ell$ to be 
$$\sum_{i=1}^\ell \chi_p(S_i) (g+1)^{L-i}.$$
In other words, we extend the list $\chi_p(S_1), \dots, \chi_p(S_\ell)$ to one of length $L$ by adding zeroes, and then we view this $L$-tuple of integers as the digits of a number in base $g+1$. This number is the $(g,L)$-complexity. Importantly, it goes down by at least one between successive occasions where we leave the hierarchy construction loop. This is where we compress or pattern-compress a surface $S_j$ and we discard the later surfaces beyond $S_j$. When new ones are created to replace them, all we have is an upper bound of $g$ on their pattern-complexity. Thus, the $L$-tuple of pattern-complexities might change as follows:
$$(\chi_p(S_1), \dots, \chi_p(S_j), \chi_p(S_{j+1}), \dots, \chi_p(S_L)) \longrightarrow (\chi_p(S_1), \dots, \chi_p(S_j) -1, g, \dots, g).$$
But if we view these $L$-tuples as the digits of a number in base $g+1$, we see that this number has gone down by at least 1.
Since the $(g,L)$-complexity is never more than $(g+1)^L$, we deduce that the number of times we leave the hierarchy construction loop is at most $(g+1)^L$. Hence, the number of times that we perform any step of the algorithm is at most $L (g+1)^L$. The extra factor $L$ is because we may go round the hierarchy construction loop at most $L$ times before leaving it. We summarise this as follows.

\begin{proposition}[Number of iterations]
\label{Prop:UpperBoundRunningTime}
The number of times that the above algorithm can iterate through any step is at most $L (g+1)^L$, where $L$ is the maximal length of any of the hierarchies and $g$ is the maximum pattern-complexity of any of the surfaces.
\end{proposition}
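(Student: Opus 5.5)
The plan is to make the informal count preceding the statement rigorous, by attaching to each state of the algorithm a potential function that can be compared lexicographically. I view the run as a sequence of \emph{phases}. A phase is a maximal stretch spent inside the hierarchy construction loop, Steps (2)--(8), together with the surface simplification steps (9)--(15) that follow it. I then bound two things separately: the number of steps inside one phase, and the number of phases.

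\emph{Steps within a phase.} Each pass through Steps (2)--(8) either exits the loop or appends at least one surface to the stored partial hierarchy. Appending happens in Step (8), and also in Step (3) when squares are added. The variable $\ell$ never exceeds $L$ by the definition of $L$, so the loop runs at most $L$ times in a phase. After the loop is left, Steps (9)--(15) run once each, apart from the return from Step (14) to Step (12). Each such return replaces $D$ by a violating disc for an earlier $(M_j,P_j)$. This moves strictly back through the hierarchy, so it can repeat at most $\ell\le L$ times. Hence every step is executed $O(L)$ times per phase; I will absorb this constant into the factor $L$, treating a loop iteration together with its bookkeeping as one unit.

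\emph{Number of phases.} To each stored partial hierarchy $S_1,\dots,S_\ell$ I attach the $(g,L)$-complexity $\sum_{i=1}^{\ell}\chi_p(S_i)(g+1)^{L-i}$. I must first check that every digit lies in $\{0,\dots,g\}$. The upper bound $g$ holds by the definition of $g$. For non-negativity: a surface stored in the hierarchy has $\chi_p<0$ only if it is a sphere, or a disc meeting $P$ at most three times. A sphere cannot be non-separating in an irreducible manifold. A disc of that kind is caught in Step (7) as a violating disc and is never stored, or it is a trivial case that is discarded. Squares have $\chi_p=0$, which is fine.

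Now compare the hierarchy at the start of one phase with that at the start of the next. A phase that does not terminate ends at Step (15). There $S_j$ is compressed or pattern-compressed, and by Lemma~\ref{Lem:CompressionAndPatternComplexity} the new $\overline S_j$ has $\chi_p(\overline S_j)\le\chi_p(S_j)-1$. The surfaces $S_{j+1},\dots$ are discarded, while $S_1,\dots,S_{j-1}$ are untouched. In base $g+1$, lowering digit $j$ by one and then setting every later digit to at most $g$ strictly decreases the number. The complexity is a non-negative integer below $(g+1)^L$, so there are at most $(g+1)^L$ phases. Multiplying by the at most $L$ iterations per phase gives the bound $L(g+1)^L$.

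\emph{Main obstacle.} The delicate point is the decrease of digit $j$. Lemma~\ref{Lem:CompressionAndPatternComplexity} only guarantees that \emph{some} non-separating component has smaller $\chi_p$. Step (15) then performs further isotopies, compressions and pattern-compressions to normalise. I would invoke Lemma~\ref{Lem:NormalForm}: each such move either keeps $\chi_p$ fixed (isotopies) or lowers it (compressions and pattern-compressions, again by Lemma~\ref{Lem:CompressionAndPatternComplexity}). So the final $\overline S_j$ still has $\chi_p\le\chi_p(S_j)-1$.

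There is a second way Step (15) can end: $\overline S_j$ may be a violating disc. In that case $\chi_p(\overline S_j)<0$ and the digit convention breaks down. I would handle this by noting that the flow immediately returns to Step (7), then to Steps (12)--(15), acting on an earlier index. This strictly shortens the hierarchy and lowers an earlier digit, so it can be folded into the same phase at an additional cost of at most $L$ steps. This preserves the overall count up to the constant already absorbed.
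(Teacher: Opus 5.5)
Your proposal is correct and follows the paper's argument. Both encode the stored hierarchy as the base-$(g+1)$ number $\sum_i \chi_p(S_i)(g+1)^{L-i}$, observe that it strictly drops between successive exits from the hierarchy construction loop (digit $j$ falls by at least one and the later digits are refilled with values at most $g$), and multiply the resulting at most $(g+1)^L$ exits by the at most $L$ passes through the loop in between.

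One correction: the monotone comparison has to be made between the full stored hierarchies at successive exits from the loop, not at the starts of phases. Surfaces appended during a phase can raise the number before the next compression lowers it, possibly at an index beyond the prefix you started with. Your digit computation already makes the right comparison, so only the wording needs fixing.
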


\section{A refined algorithm using interior parallelity bundles}
\label{Sec:AlgorithmParallelity}

We saw in Proposition \ref{Prop:UpperBoundRunningTime} that the quantity $L$, which provides an upper bound to the length of any of the partial hierarchies that we construct, is key to controlling the running time of the algorithm. With our current algorithm, it is not so straightforward to give good estimates on $L$. This is because the proof at the end of Section \ref{Sec:DecomposingHS} that the hierarchies terminate uses the ordered pairs $(c_q(\mathcal{H}), m(\mathcal{H}))$. Although lexicographical ordering on these pairs is enough to establish termination, it does not establish how quickly this happens. Now under many circumstances $c_q(\mathcal{H})$ strictly decreases. The quantity $m(\mathcal{H})$ only becomes relevant when $\mathcal{H}$ has a non-separating vertical surface in its interior parallelity bundle. Therefore, in this section, we show how we may remove every component of the interior parallelity bundle that is not an $I$-bundle over a disc. This will lead to a somewhat more complicated algorithm. 
But it will mean that we can provide a linear bound on $L$ in terms of the initial q-complexity of $\mathcal{H}$. The first change that we make to the algorithm is as follows.

\medskip
\noindent \emph{First modification to the algorithm}: After Step (2), once the handle structure has been made admissible, we then do the following:
\begin{enumerate}
\item[$(2')$] Replace each component of the interior parallelity bundle that is an $I$-bundle over a disc by a single 2-handle.
\item[$(2'')$] Cut along the vertical boundary components of the remaining $I$-bundles and further decompose these using squares into 3-balls with essential boundary patterns.
\end{enumerate}

This ensures that the resulting 3-manifold has interior parallelity bundle that consists solely of 2-handles and hence cannot contain a non-separating vertical surface. Therefore, by Proposition \ref{Prop:ComplexityDecreases}, the q-complexity $c_q(\calH_i)$ of the handle structure $\calH_i$ strictly decreases when we decompose along a connected non-separating normal surface $S_i$.

However, this algorithm has an extra complication. In the previous algorithm described in Section \ref{Sec:AlgorithmCompressible}, if a non-trivial square was found to be pattern-compressible, this led to a violating disc. This is in turn was used to simplify an earlier surface in the hierarchy. Hence, pattern-compressed squares never survive in the hierarchy. But what about the annuli that are used in step $(2'')$? What if these are compressed or pattern-compressed? We now explain how to deal with this situation.

\begin{proposition}[{\sc (Pattern-)compressible vertical boundary}]
\label{Prop:CompressAnnuli}
Let $M$ be a compact orientable irreducible 3-manifold other than a 3-ball, and let $P$ be a boundary pattern for $M$. Let $\mathcal{H}$ be an admissible handle structure for $(M,P)$. Let $B$ be a component of the interior parallelity bundle $\calB$ that is not an $I$-bundle over a disc, and let $A$ be a vertical boundary component of $B$. Suppose that $A$ has a pattern-compression or compression disc $E$. Then there is an algorithm that, given $\calH$, $A$ and $E$, either creates a handle structure for $(M,P)$ with smaller q-complexity or produces a violating disc for $(M,P)$.
\end{proposition}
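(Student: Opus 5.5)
The plan is to turn the disc $E$ into either a violating disc for $(M,P)$ or an isotopy/simplification of the handle structure that lowers $c_q(\calH)$. The vertical boundary component $A$ is an annulus (an $I$-bundle over a circle of $\partial F$, where $B$ is the $I$-bundle over $F$). It meets $P$ only where $\partial A$ runs along $\partial_h B$ and $\partial M$. Since $B$ is not an $I$-bundle over a disc, $F$ is not a disc, so the core of $A$ is essential in $B$.

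\emph{Compression discs.} First suppose $E$ is a compression disc for $A$, so $\partial E$ is the core curve of $A$. I would examine the two curves of $\partial A$. Each lies in $\partial \calH^0$ near $\partial_h B \cap \partial M$, so pushing $E$ off $A$ to one side gives a disc $E'$ whose boundary is parallel to a component of $\partial A$.
\begin{enumerate}
\item If $E'$ lies in $\partial M$ away from $P$ and $\partial E'$ bounds no disc in $\partial M - P$, then $E'$ is a violating disc and we output it.
\item Otherwise $\partial E'$ bounds a disc $D'$ in $\partial M$ disjoint from $P$. By irreducibility, $E' \cup D'$ bounds a ball.
\end{enumerate}
This ball exhibits that the boundary curve of $F$ bounds a disc in the complementary region. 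I would then show that the component $B$, together with that ball, can be absorbed. Concretely, the 0- and 1-handles of $B$ lying over that region would be removed, or merged into a single handle, so that each 0-handle of $\calF$ adjacent to $A$ in a positive-index 0-handle loses components of $\calF^1$. The index of some positive-index 0-handle then strictly drops, and hence so does $c_q$. Here I would use that the index of a 0-handle is a sum of $\chi_p$ of the components of $\calF$ in it (Lemma~\ref{Lem:DecompIndex}), and that $c_q$ is the sum of squares of positive parts.

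\emph{Pattern-compression discs.} If $E$ is a pattern-compression disc, then $E \cap A$ is an essential arc of $A$, that is, a vertical arc, and $E \cap \partial M$ is an arc meeting $P$ at most once. Pattern-compressing $A$ along $E$ yields a disc $D_0$ meeting $P$ in at most $2+1+\ldots$ points; more precisely, at most three after accounting for the two new endpoints. If $D_0$ does not cut off an arc or tripod in $\partial M$, it is a violating disc and we output it, as in Lemma~\ref{Lem:PatternComp4Disc}. Otherwise $\partial D_0$ bounds a disc $D_1$ in $\partial M$ meeting $P$ in an arc or tripod, and $D_0 \cup D_1$ bounds a ball by irreducibility. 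Then $E$ together with this ball gives a product region showing that the $I$-bundle structure of $B$ extends across $E$. I would modify $\calH$ by incorporating the ball: remove handles inside it and reassign the pattern, as in the trivial-square moves of Proposition~\ref{Prop:CreateAdmissibleHS}. The effect is that the 0-handles of $\calF$ met by $\partial E$ near $\partial_v B$ merge, and the corresponding positive-index 0-handle of $\calH$ loses at least one component of $\calF^1$ or one point of $P$. Its index therefore drops by at least one.

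\emph{Main obstacle.} The hardest step is making the last claim precise: that the modification strictly lowers the index of some 0-handle with positive index and does not raise any other index. Handles adjacent to $A$ on the non-$B$ side have positive index, since $A$ is vertical boundary of the parallelity bundle in an admissible structure. My first attempt would be to put $E$ into normal position relative to $\calH \cut B$, choose $E$ to minimise its intersection with $\calH^1 \cup \calH^2$, and take an outermost piece of $E$ in a single 0-handle. That piece is a disc whose boundary runs along $A$ and a small portion of $\partial \calH^0$. Admissibility condition (2) then forces it to be a trivial configuration, whose removal reduces $|D \cap \calF^1| + |D\cap P|$ for some 0-handle $D$ of $\calF$. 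The remaining 0-handles keep non-negative index by Lemma~\ref{Lem:AdmissibleDecomposition}, so $c_q$ strictly decreases. If at any point a curve fails to bound the expected disc in $\partial M$, we obtain a violating disc instead.
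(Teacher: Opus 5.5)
There is a genuine gap. The step that is supposed to lower $c_q$ is never made legitimate, and the case analysis that actually drives the argument is missing.

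The regions you propose to remove or `incorporate' are the balls bounded by $E'\cup D'$ or by $D_0\cup D_1$. Each has a copy of $E$ in its boundary, and $E$ cuts arbitrarily through handles of $\calH$. Deleting such a region does not leave a handle structure, so there is no index bookkeeping to appeal to. The claim that `the $I$-bundle structure of $B$ extends across $E$' also has no basis: $E$ need not be vertical, nor lie on the non-$B$ side of $A$.

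Your fallback in the last paragraph does not help. An outermost piece of $E$ in a 0-handle has boundary running along $A$, and $A$ lies in $\calF$ (where handles of $B$ meet the adjacent handles). Admissibility condition (2) concerns only curves disjoint from $\calF$, so it says nothing about this piece. Lemma~\ref{Lem:AdmissibleDecomposition} is about cutting along a normal surface, not deleting regions. A (pattern-)compression disc for $A$ encodes global information about how $A$ sits in $M$, and no local move inside one 0-handle captures it.

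A smaller point: $A\cap P=\emptyset$, so the pattern-compressed disc meets $P$ at most twice, not three times. This is what later forces the relevant annulus in $\partial M$ to meet $P$ in $\emptyset$ or a core curve.

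The missing idea is to use $E$ only to extract the topology of $A$, and then modify $\calH$ along regions bounded by $A$ and subsurfaces of $\partial M$. Since $A$ is a union of faces between handles, such regions are unions of handles, so removing or replacing them (transferring any pattern) gives a genuine handle structure for a copy of $(M,P)$. The paper's case analysis runs as follows.

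\begin{itemize}
\item \emph{Pattern-compression.} If $D_0$ is not violating, it cobounds a ball with a disc in $\partial M$. According to which side of $\partial D_0$ the arc $E\cap\partial M$ lies on, either $A$ is parallel to an annulus $A'\subset\partial M$ meeting $P$ in $\emptyset$ or a core curve, or $A$ is compressible.
\item \emph{Compression.} You must use both discs $D_1,D_2$ obtained by compressing $A$. Either one is violating, or their boundaries bound discs $D'_1,D'_2\subset\partial M$ missing $P$. If these are disjoint, replace the ball bounded by $D'_1\cup A\cup D'_2$ (which lies on the non-$B$ side) by a single interior parallelity 0-handle. If they are nested, delete the region between $A$ and $D'_2\cut D'_1$, which contains $B$.
\item \emph{Parallel case, region contains $B$.} The decisive question, absent from your proposal, is which side of $A$ the solid torus between $A$ and $A'$ lies on. If it contains $B$, deleting it removes a positive-index 0-handle, so $c_q$ drops.
\item \emph{Parallel case, region avoids $B$.} Here one needs the base surface $F$ of $B$. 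If $F$ has one boundary component, a non-separating essential arc in $F$ gives a vertical square which, together with two meridian discs of the solid torus, is a violating disc. If $F$ has several boundary components, an arc from the image of $A$ to another boundary component gives, together with a meridian disc, a pattern-compression disc for another vertical annulus $A''$. Rerunning the argument on $A''$ yields a violating disc, a compression, or a parallelism region that does contain $B$.
\end{itemize}

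Without this side analysis there is no guarantee that your modification is valid or that $c_q$ strictly decreases.
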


\begin{proof}
Suppose first that $E$ is a pattern-compression disc. Pattern-compress $A$ along $E$ to form a properly embedded disc $D$. If this is a violating disc, then the algorithm produces this disc and stops. So, suppose that $D$ is not violating. Its boundary therefore bounds a disc $D'$ in $\partial M$ that intersects $P$ either in the empty set or a single arc. Since $M$ is irreducible, $D \cup D'$ bounds a 3-ball in $M$. If the arc $D \cap \partial M$ is disjoint from $D'$, then we deduce that $A$ is parallel to an annulus $A'$ in $\partial M$ that intersects $P$ either in the empty set or a single core curve. On the other hand, if $D \cap \partial M$ lies in $D'$, then we deduce that $A$ is compressible. Indeed one of the discs of $D' \cut \partial A$ forms a compression disc for $A$ when pushed a little into the interior of $M$. We will later examine the case where $A$ is compressible, and so we now suppose that $A$ is parallel to the annulus $A'$.

Consider first the case where the solid torus between $A$ and $A'$ contains $B$. Then we remove this solid torus from the handle structure, thereby removing this component of $\mathcal{B}$ and possibly others. If $A'$ contained a core curve of $P$, then we transfer this across to $A$. This has the effect of replacing some components of $\mathcal{F}^0(\mathcal{H})$ and $\mathcal{F}^1(\mathcal{H})$ by an arc of $P$, which does not increase the q-complexity of $\mathcal{H}$. Moreover, the solid torus cannot be an entire component of $B$, and so it must contain a 0-handle of $\mathcal{H}$ of positive index. So removing this solid torus strictly reduced $c_q(\mathcal{H})$.

So, suppose that the interior of $B$ is disjoint from this solid torus. If $B$ is an $I$-bundle over a disc, then this implies that $M$ is a 3-ball which is contrary to one of the hypotheses of the proposition. 
Suppose that $B$ is an $I$-bundle over a surface $F$ with a single boundary component other than a disc. This then contains a non-separating essential arc. The $I$-bundle over this arc, together with two meridian discs for the solid torus, forms a violating disc for $(M,P)$. Suppose now that $B$ is an $I$-bundle over a surface with more than one boundary component. Pick an arc in the base surface with exactly one endpoint on the boundary component that this the image of $A$. Let $A''$ be the vertical boundary component incident to the other endpoint of this arc. The $I$-bundle over this arc, together with a meridian disc for the solid torus, gives a pattern-compression disc for $A''$. If we pattern-compress along this disc and we create a violating disc, then the algorithm outputs this. If it does not create a violating disc, then again there are two cases. It might lead to a compression disc for $A''$, which we will consider later. If not, then $A''$ is parallel to an annulus in $\partial M$ intersecting $P$ either in the empty set or a single core curve. But now the region between $A''$ and this annulus contains $B$. Hence, we may remove this solid torus, thereby removing $B$ from the handle structure.

We now consider the case where $A$ has a compression disc $E$. This compresses $D$ to two discs $D_1$ and $D_2$. If one of $\partial D_1$ or $\partial D_2$ does not bound a disc in $\partial M$, then $D_1$ or $D_2$ is violating, and the algorithm outputs this disc. So we may suppose that $D_1$ and $D_2$ are parallel to discs $D'_1$ and $D'_2$ in $\partial M$. It may be that $D'_1$ and $D'_2$ are nested, in which case suppose that $D'_1 \subset D_2'$. It may also be the case that $D'_1$ contains boundary components of $\partial_h B$ other than $\partial D_1'$. But by changing our choice of $A$ if necessary, we may assume that this is not the case.
If one of $D'_1$ or $D'_2$ has non-empty intersection with the pattern, it forms a violating disc. So, suppose that $D'_1$ and $D'_2$ are disjoint from the pattern.

The first case we consider is where $D'_1$ and $D'_2$ are disjoint. Then $D'_1 \cup A \cup D'_2$ bounds a ball. This ball does not contain $B$ because $\partial_h B$ has no boundary components in $D'_1$, and $B$ is not an $I$-bundle over a disc. We may modify the handle structure in the ball by declaring that it is a single interior parallelity 0-handle. This reduces the q-complexity of the handle structure.

Suppose now that $D'_1$ and $D'_2$ are nested. Then $D'_1$ lies within $D'_2$, by our earlier assumption. The fact that $B$ is not an $I$-bundle over a disc imply that the interior of $\partial D'_1$ is disjoint from $B$. Hence, $B$ lies in the region between $A$ and $D'_2 \cut D'_1$. We remove this region from the handle structure. This reduces the q-complexity of the handle structure.
\end{proof}

Thus, the algorithm in Section \ref{Sec:AlgorithmCompressible} is also modified as follows. 

\medskip
\noindent \emph{Further modification to the algorithm}:
In Step (13), we again consider a violating disc $D$. This forms a pattern-compression disc for some surface $S_j$ in the hierarchy. If $S_j$ is a square, then this creates a new violating disc. If $S_j$ is the vertical boundary of the interior parallelity bundle for $\mathcal{H}_j$, we apply the algorithm in Proposition \ref{Prop:CompressAnnuli}. This either produces a violating disc or it simplifies $\mathcal{H}_j$. In all other cases, we apply Lemma \ref{Lem:CompressionAndPatternComplexity} to create a non-separating connected orientable surface $\overline{S_j}$ with smaller pattern complexity. We discard the rest of the hierarchy, and then proceed to Step (14).

\begin{proposition}[The refined algorithm works]
\label{Prop:RefinedAlgorithmTerminates}
Let $M$ be a compact orientable irreducible 3-manifold, and let $P$ be a boundary pattern.
The above algorithm terminates with the correct declaration either that $P$ is essential or that $P$ is inessential. 
Moreover, the length of any of the partial hierarchies produced by the above algorithm is at most $4c_q(\mathcal{H})$.
\end{proposition}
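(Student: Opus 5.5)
The plan has two parts. First I show that correctness and termination carry over from Theorem \ref{Thm:UnknotAlgorithm1}. Then I bound the length of the hierarchies by a potential function built from q-complexity.

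\emph{Correctness and termination.} Whenever the algorithm stops, it has either exhibited a violating disc for $(M,P)$ or an essential hierarchy. In the latter case Lemma \ref{Lem:EssentialPullsBack} pulls essentiality back through every decomposition, including the squares and the vertical annuli of Step $(2'')$. So the only thing to check is termination. I would repeat the proof of Theorem \ref{Thm:UnknotAlgorithm1} with one added case in Step (13): the violating disc may be a compression or pattern-compression disc for a vertical boundary annulus $A$ of some $\calB_j$. In that case Proposition \ref{Prop:CompressAnnuli} either produces a violating disc, which descends further, or it replaces $\calH_j$ by a handle structure with strictly smaller $c_q$.

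To make this a valid induction, I would order the stored partial hierarchies lexicographically by the tuple whose $j$th entry is $(c_q(\calH_j), \chi_p(S_j))$. Each exit from the hierarchy construction loop strictly reduces some entry and discards everything after it. Compressing a square yields a violating disc, so squares never stall the process. The pairs are well-ordered, and the length is bounded as shown next, so the tuples cannot decrease forever. Hence the outer loop runs only finitely often, exactly as in the earlier proof.

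\emph{Length bound.} After Steps $(2')$ and $(2'')$ the interior parallelity bundle consists only of 2-handles. It therefore has no non-separating vertical surface, and Proposition \ref{Prop:ComplexityDecreases} shows that each connected non-separating normal $S_i$ strictly decreases $c_q$. Since $c_q$ is a non-negative integer, there are at most $c_q(\calH)$ such decompositions. By Lemma \ref{Lem:ComplexitiesDecreaseInAdmissibleAlgorithm}, neither the square decompositions of Step (2) nor the manipulations of Steps $(2')$ and $(2'')$ increase $c_q$. The hierarchy records each batch of squares as one entry, and likewise each batch of vertical annuli and subsequent squares.

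So between two consecutive strict drops of $c_q$ there are at most a bounded number of entries: one batch of squares from Step (2), one batch of annuli, one batch of squares from $(2'')$, and then the surface $S_i$. That gives at most $4$ entries per unit of $c_q$, and hence length at most $4c_q(\calH)$. The same count applies after a surface $S_j$ is replaced, because the replacement surface still sits in $\calH_j$ and the later entries are rebuilt under the same rule.

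\emph{Main obstacle.} The delicate point is Step $(2'')$. I must check that cutting along the vertical boundary of the non-disc components of $\calB$, and then along squares into balls, does not raise $c_q$. I also need that each resulting ball is a component that is finished with, so it contributes no further entries. For the first claim I would argue as in Lemma \ref{Lem:CutAlongVertical}: the cut replaces rows of index-zero handles of $\calF$ by parallel arcs of pattern, so the index of every positive-index 0-handle is unchanged. For the second claim, the $I$-bundle pieces become products of polygons with the interval, and those carry essential patterns by Proposition \ref{Prop:EssentialBall}.

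I also need the batches to be counted as single entries of the hierarchy, so that the constant $4$ is valid. This rests on the convention already used in Step (3), where a whole batch of disjoint squares is added as one decomposition.
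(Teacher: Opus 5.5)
Your proposal is correct and is essentially the paper's own proof. The count of at most four entries per drop of $c_q$ (squares from Step (2), vertical annuli, vertical squares, then a connected non-separating surface) is exactly the paper's claim that $c_q(\mathcal{H}_{i+4})<c_q(\mathcal{H}_i)$. Your lexicographic ordering on the tuples $(c_q(\mathcal{H}_j),\chi_p(S_j))$ repackages the paper's argument that $S_1^i, S_2^i,\dots$ successively stabilise, with the length bound ruling out an infinite stable sequence.

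Your explicit check that Step $(2'')$ does not raise $c_q$ fills in a point the paper passes over. One caveat: your block count, like the paper's, strictly gives $\ell\le 4c_q(\mathcal{H})+3$ rather than $4c_q(\mathcal{H})$, because batches can occur before the first and after the last non-separating surface.
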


\begin{proof}
As in the proof of Theorem \ref{Thm:UnknotAlgorithm1}, it is clear that if the algorithm terminates, then its output is correct. 

We start by bounding the length of the partial hierarchies that the algorithm produces. Therefore consider one such partial hierarchy $S_1, \dots, S_\ell$, and let $\calH_i$ be the handle structure on for the manifold $(M_i, P_i)$ containing $S_i$. We will show that $c_q(\calH_{i+4}) < c_q(\calH_i)$ for each $i$. This will establish that $\ell \leq 4 c_q(\calH)$.

Certainly, $c_q(\calH_{i+1}) \leq c_q(\calH_i)$ by Proposition \ref{Prop:ComplexityDecreases} and Lemma \ref{Lem:ComplexitiesDecreaseInAdmissibleAlgorithm}. We must show that at least every 4 steps, this quantity strictly decreases. To establish this, we examine carefully how the algorithm works.

The first thing that the algorithm does is to make the initial handle structure admissible, using  {\sc Create admissible handle structure} in Proposition \ref{Prop:CreateAdmissibleHS}. It then modifies the handle structure to ensure that the interior parallelity bundle is just $I$-bundles over discs, or it finds a violating disc.

The algorithm then proceeds by iterating the hierarchy construction loop. At each stage through the loop, it cuts along a connected non-separating normal surface, then possibly cuts along non-trivial squares to make the handle structure admissible, then cuts along vertical boundary components of the interior parallelity bundle and then vertical squares. After this stage, the interior parallelity bundle again consists of $I$-bundles over discs.

If at any stage, the algorithm leaves the hierarchy construction loop and does not immediately terminate, it finds a compression disc or pattern-compression disc for some $S_j$. When this $S_j$ is a connected non-separating normal surface, a component of the resulting surface is non-separating and it is normalised, and we work instead with that surface. On the other hand, when $S_j$ is a square or the vertical boundary components of the interior parallelity bundle, then $S_j$ is either used to produce a further violating disc or it is used to simplify the handle structure; in both cases, it is discarded. Hence, in the list of surfaces $S_1, \dots, S_\ell$, either our given $S_i$ is connected, normal and non-separating, or this is true for some $S_j$ with $j \leq i + 3$. Therefore, $c_q(\calH_{i+4}) < c_q(\calH_i)$, as claimed.

We now show that the algorithm terminates.
At the $i$th stage, let
$(S_1^i, \dots, S_{\ell(i)}^i)$
be the partial hierarchy that is currently stored. 
As in the proof of Theorem \ref{Thm:UnknotAlgorithm1}, the surface $S_1^i$ must eventually stabilise at some surface $S_1$. This is because each time that $S_1^i$ is modified, either $\chi_p(S_1^i)$ decreases (when a pattern-compression or pattern compression is performed) or the q-complexity of the handle structure containing $S_1^i$ is reduced (as in {\sc (Pattern-)compressible vertical boundary}). We then consider the next surfaces $S_2^i$. Again they eventually stabilise at some surface $S_2$. This is for the same reason as above, plus the observation that when $S_2^i$ is used to create a violating disc, then either this leads to the algorithm terminating or it is used to reduce the pattern-complexity of a previous surface. However, the latter case does not arise because we are assuming that $S_1^i$ does not change any further. Repeating in this way, we create an infinite sequence of surfaces $S_1, S_2, \dots$. However, this is impossible because we have just established an upper bound on the length of any partial hierarchy created by the algorithm.
\end{proof}

\section{Handle structures of uniform type}
\label{Sec:UniformType}

Our algorithm starts with a handle structure $\mathcal{H}$. We then decompose this handle structure along a sequence of surfaces. Suppose that each of the 0-handles of the initial handle structure $\mathcal{H}$ falls into one of finitely many `types'. For example, if $\mathcal{H}$ had been dual to an ideal triangulation, then for each 0-handle $H$, its intersection with $\mathcal{F}$ is a thickened version of the complete graph on 4 vertices. If $\mathcal{H}$ had been dual to a triangulation, then for each 0-handle $H$, $H \cap \mathcal{F}$ is a thickened version of a subgraph of the complete graph on 4 vertices. We want to maintain this local finiteness as we proceed down the hierarchy. In this section, we show how to achieve this.

\begin{definition}
Let $H$ and $H'$ be 0-handles in handle structures $\mathcal{H}$ and $\mathcal{H}'$ for $(M,P)$ and $(M', P')$. We say that they have the same \emph{type} if there is a homeomorphism $H \rightarrow H'$ sending $H \cap \mathcal{F}^0(\mathcal{H})$ homeomorphically to $H' \cap \mathcal{F}^0(\mathcal{H}')$, sending homeomorphically $H \cap \mathcal{F}^1(\mathcal{H})$ to $H' \cap \mathcal{F}^1(\mathcal{H}')$, and sending $H \cap P$ homeomorphically  to $H' \cap P'$.
\end{definition}

\begin{definition}
\label{Def:UniformType}
We say that a handle structure $\mathcal{H}$ has \emph{uniform type} if there is a finite list of handle types such that any 0-handle of $\mathcal{H}$ is one of these types.
\end{definition}

Of course, this definition only really makes sense if the list of handle types is given in advance, and then the requirement that $\mathcal{H}$ is of uniform type forces its 0-handles to conform to this list of types. But this slightly imprecise wording is useful, because it will be convenient occasionally to enlarge our finite list of types. An example is given below in Theorem \ref{Thm:RemainUniformType}.

It will be important that we only use handle structures of uniform type. This is for many reasons. For example, if we wish to write down a normal surface, we will typically do so using its normal vector, and it is therefore important that we can list the different types of normal disc that can arise within a handle.

The following result ensures that the handle structures in our partial hierarchies have uniform type.

\begin{theorem}[Maintaining uniform type]
\label{Thm:RemainUniformType}
Let $\mathcal{H}$ be a handle structure of uniform type for a compact orientable irreducible 3-manifold $M$ with boundary pattern $P$. Let $\mathcal{H}'$ be 
the handle structure of a 3-manifold $M'$ with boundary pattern $P'$
obtained from $\mathcal{H}$ by a finite sequence of the following moves:
\begin{enumerate}
\item decomposition along a 
normal surface that is disjoint from the interior parallelity bundle, no component of which is a boundary-parallel disc, and applied to an admissible handle structure;
\item application of {\sc Create admissible handle structure} in Proposition \ref{Prop:CreateAdmissibleHS};
\item decomposition along vertical boundary components of the interior parallelity bundle;
\item application of {\sc (Pattern-)compressible vertical boundary} in Proposition \ref{Prop:CompressAnnuli}.
\end{enumerate}
Then $\mathcal{H}'$ is of uniform type, although possibly with a larger set of handle types. Moreover, when $\mathcal{H}'$ is admissible and its interior parallelity bundle consists of $I$-bundles over discs, then for each 0-handle $H$ of $\mathcal{H}$, there are only finitely many possibilities for the following subsets:
\begin{enumerate}
\item the intersection between $H$ and the 0-handles of $\mathcal{H}'$;
\item $H \cap \mathcal{F}^0(\mathcal{H}')$;
\item $H \cap \mathcal{F}^1(\mathcal{H}')$;
\item $H \cap P'$,
\end{enumerate}
up to the following operations:
\begin{enumerate}
\item isotopy of $H$ preserving $H \cap \mathcal{H}^1$, $H \cap \mathcal{H}^2$ and $H \cap P$;
\item removing a small collar neighbourhod of a subset of $\partial M'$;
\item re-embedding $\calH' \cap H$ in $H$ in a way that leaves $\calF^0(\calH')$, $\calF^1(\calH')$
and $P' \cap \partial \calF(\calH')$ unchanged.
\end{enumerate}
\end{theorem}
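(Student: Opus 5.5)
The plan is to prove everything by tracking how a single 0-handle $H$ of the original structure $\mathcal{H}$ is subdivided, and to reduce both assertions to a bounded-complexity statement about these pieces. For each move (1)--(4) we describe how $H \cap \mathcal{H}'$, $H\cap\mathcal{F}^0(\mathcal{H}')$, $H\cap\mathcal{F}^1(\mathcal{H}')$ and $H\cap P'$ change. The key observation is that every 0-handle $H'$ of $\mathcal{H}'$ lies inside a single 0-handle of $\mathcal{H}$. There $H'\cap\mathcal{F}(\mathcal{H}')$ is a union of pieces of $H\cap\mathcal{F}(\mathcal{H})$ cut along normal arcs, together with new pattern arcs coming from copies of decomposing surfaces. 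So it suffices to bound the combinatorial complexity of $H'$ by a function of the type of $H$ alone.

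\textbf{Step 1: bound the index of each piece.} By Lemma~\ref{Lem:DecompIndex}, cutting preserves the total index of each 0-handle of $\mathcal{F}$. Lemma~\ref{Lem:AdmissibleDecomposition}, together with the admissibility maintained by {\sc Create admissible handle structure}, ensures that every resulting 0-handle of $\mathcal{F}$ has non-negative index. Hence a 0-handle $D$ of $\mathcal{F}(\mathcal{H})$ breaks into pieces of positive index, numbering at most $I(D)$, together with index-zero pieces.

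\textbf{Step 2: show the index-zero pieces are harmless.} The index-zero pieces are squares or discs meeting $\mathcal{F}^1$ in two arcs, and they assemble into the interior parallelity bundle. Moves (1)--(4) never increase index. Move (3) and Proposition~\ref{Prop:CompressAnnuli} only remove parallelity regions, or replace them by pattern arcs or by single 2-handles (step $(2')$). Consequently, once the interior parallelity bundle consists of $I$-bundles over discs, each such index-zero strip collapses to a 2-handle. Within $H$, only boundedly many pieces of positive index then survive, each with $|D'\cap\mathcal{F}^1|$ and $|D'\cap P'|$ bounded in terms of $I(D)$. Since $\mathcal{H}$ has uniform type, $I(D)$ is bounded. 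This gives finitely many types for $\mathcal{H}'$.

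\textbf{Step 3: control the embedding in $H$.} Finiteness of the 0-handles of $\mathcal{H}'$ does not by itself bound how they sit inside $H$; the arcs of $S\cap\partial H$ could a priori wind arbitrarily. Here normality is used, specifically conditions (3)--(7) of Definition~\ref{Def:Normal}. Normal curves in $\partial H$ meet each 1-handle of $\mathcal{F}$ and each region of $\partial\mathcal{H}^0 - (\mathcal{F}\cup P)$ at most once, so there are finitely many normal disc types in $H$. Normal discs of the same type are parallel, and parallel families cut out index-zero product regions. These regions are precisely the parallelity regions absorbed in Step 2, and they are invisible modulo operations (2) and (3) in the statement. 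Condition (7) prevents a normal curve from cutting off a pattern-decorated region against $\partial\mathcal{F}^0$ that would otherwise create new non-product configurations. Thus, up to the allowed operations, $H\cap\mathcal{H}'$ is determined by a choice of at most boundedly many normal disc types at each stage, and by a bounded number of stages.

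\textbf{Main obstacle.} The hardest part is ensuring that the number of stages that genuinely change $H$ is bounded. The approach is to show that any decomposition meeting $H$ in a non-parallel normal disc either strictly splits a positive-index piece of $H$, or reduces $\sum(\max\{I,0\})^2$ over the pieces of $H$. Both quantities are bounded by the type of $H$. Decompositions meeting $H$ only in discs parallel to existing pattern or $\partial M'$ are then absorbed by the collar operation (2) and the re-embedding operation (3). Checking this for move (4), which deletes solid tori and 3-balls, also needs care: there one verifies that the deleted regions meet $H$ only in unions of pieces. Consequently the surviving pieces remain among the finitely many configurations already enumerated.
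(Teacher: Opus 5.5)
Your proposal has a genuine gap in exactly the case the paper's proof is built around.

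Suppose a normal disc $D$ cuts a positive-index 0-handle $H$ into a piece $H_0$ of index zero and a piece $H_1$ with $I(H_1)=I(H)$. Here $H_1\cap\mathcal{F}$ is a copy of $H\cap\mathcal{F}$, and the pattern meets $\partial\mathcal{F}$ in the same places.

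\emph{Your measures do not decrease.} No positive-index piece is split, and $\sum(\max\{I,0\})^2$ is unchanged, so the dichotomy in your last paragraph does not cover this case. Nothing prevents such a split from recurring every time a later surface passes through the surviving handle. Since the sequence of moves in the statement has arbitrary length, the number of stages that change $H$ is not bounded by the type of $H$.

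\emph{The equivalences do not absorb the split.} $H_0$ is the region between the outermost normal disc and $\partial H$, not a product region between parallel normal discs. It is also not a collar on $\partial M'$, since $\partial H_0\cap\partial H$ contains zero-index pieces of $\mathcal{F}$ and possibly vertices of $P$.

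\emph{The pattern graph changes.} Cutting off $H_0$ changes the intrinsic graph $P'\cap\partial H_1$: vertices of $P$ inside $H_0$ disappear, and a new trivalent vertex appears wherever $P$ crosses $\partial D$. The type of a 0-handle records this graph. Your Step 2 only bounds the number of pattern points on each 0-handle of $\mathcal{F}$, not the graph in $\partial H'-\mathcal{F}$, so it does not control this.

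The missing ingredient is the paper's vertex count. Let $\Gamma=H_0\cap P$, with $v_1$ and $v_3$ vertices of valence one and three.
\begin{enumerate}
\item Condition (4) of Definition~\ref{Def:Normal} rules out a component of $\Gamma$ that is a tree disjoint from $\mathcal{F}$.
\item Conditions (4)--(7) rule out a tree component meeting $\mathcal{F}$ in a single point.
\item Hence $v_1-v_3-|\mathcal{F}\cap\Gamma| = 2\chi(\Gamma)-|\mathcal{F}\cap\Gamma|\le 0$.
\item The number of vertices of $P'\cap H_1$ equals that of $P\cap H$ plus $|\Gamma\cap\partial D|-v_3 = v_1-|\mathcal{F}\cap\Gamma|-v_3$, so it does not increase.
\end{enumerate}
With $\mathcal{F}$ and the pattern on $\partial\mathcal{F}$ fixed, this yields finitely many types without any bound on the number of stages.

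The other cases also need more careful bookkeeping than your index measure. The paper inducts on the lexicographic triple (positive-index sum, $|H\cap\mathcal{F}^1|$, $|H\cap\mathcal{F}^0|$). This triple is what decreases under 1-handle/2-handle cancellations, under removal of isolated $\mathcal{F}$-discs, and under splits where a component of $\mathcal{F}^1$ falls wholly into $H_0$. The paper also uses the hypothesis that no component of the surface is a boundary-parallel disc to exclude $\partial D\cap\mathcal{F}=\emptyset$; your argument never invokes this hypothesis. Finally, your Step 2 relies on step $(2')$, collapsing disc $I$-bundles to single 2-handles, which is not among the moves (1)--(4) in the statement.
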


\begin{remark}
Note that we are only claiming fairly limited control over the location of the 2-handles of $\calH'$. We are specifying their attaching locus via $H \cap \mathcal{F}^0(\mathcal{H}')$ and $H \cap \mathcal{F}^1(\mathcal{H}')$, but we do not specify where the remainder of the 2-handles lie.

The re-embedding of $\calH' \cap H$ might require some further explanation. Suppose that we had applied a sequence of modifications to $\calH$, giving the handle structure $\calH'$, and then suppose that we made some further modifications to give another handle structure $\calH''$. Suppose that $\calH' \cap H$ and $\calH'' \cap H$ differ solely by a Dehn twist about a disc properly embedded in $M'$ and lying in $\calH' \cap H$. Then $\calH' \cap H$ and $\calH'' \cap H$ would differ by operation (3) above, and so, for the purposes of this proposition, would be viewed as equivalent.
\end{remark}

\begin{proof}
We define a temporary notion of complexity for a 0-handle $H$ of a handle structure, called \emph{t-complexity}. It will be a list of three integers. The first element of the triple is the sum of the indices of the components of $H \cap \mathcal{F}$ with positive index. The second element is $|H \cap \mathcal{F}^1|$. The third element is $|H \cap \mathcal{F}^0|$. We compare two such triples in the usual way, using lexicographical ordering. The set of triples is then well-ordered.

We will prove by induction on t-complexity that a 0-handle type $H$ gives rise to only finitely many handle types of $\mathcal{H}' \cap H$ under a sequence of the operations in (1)-(4) above. Moreover, we will show that the union of the 0-handles of $\mathcal{H}' \cap H$ with positive index embed in $H$ in one of only finitely many ways, up to the equivalence described in (1)-(3) at the end of the proposition. This will also establish the second part of the proposition, because when $\mathcal{H}'$ is admissible and its interior parallelity bundle consists of $I$-bundles over discs, then all its 0-handles have positive index.

The induction starts with $H$ having t-complexity $(0,0,0)$. Then $H$ is an isolated 0-handle and none of the procedures in (1)-(4) affect it.

We now prove the inductive step. We consider when one of procedures (1)-(4) is applied to a 0-handle $H$. We may assume that the result is not a single 0-handle $H'$ that is the same type as $H$ and where $H'$ is obtained from $H$ by removing a small collar on a subset of $\partial H$, as in this case there is nothing to prove.

In the procedure {\sc Create admissible handle structure}, there are three types of operation that are performed. One type is where we collapse a 1-handle and 2-handle pair, where the 1-handle has at most two arcs of $P$ running along it. The effect on $H \cap \mathcal{F}^0$ is possibly to remove some components of $H \cap \mathcal{F}^0$ with non-positive index. The effect on  $H \cap \mathcal{F}^1$ is to remove some components and replace them by at most two parallel arcs of $P \cap H$. This does not increase the index of any component of $H \cap \mathcal{F}$. If $H$ is modified, then it reduces at least one of $|H \cap \mathcal{F}^1|$ and $|H \cap \mathcal{F}^0|$, and it does not increase either of them. There are only finitely such modifications that can be applied to $H$. Hence, the induction is proved in this case.

The second type of operation in {\sc Create admissible handle structure} is the removal of isolated 0-handles of $\mathcal{F}$ with non-positive index. Each such operation does not affect the index of the other components of $\mathcal{F} \cap H$. It leaves $|H \cap \mathcal{F}^1|$ unchanged and it reduces $|H \cap \mathcal{F}^0|$. Hence, again the inductive step is proved.

The third type of operation in {\sc Create admissible handle structure} is when there is a simple closed curve $C$ in $\partial H \cut \calF$ that intersects the pattern at most three times. It is assumed that $C$ does not bound a disc in $\partial H$ but that it does bound a disc in $\partial M$ intersecting $P$ in the empty set, an arc or a tripod. When $\partial H \cut (\calF \cup P)$ is not a union of discs, then $C$ is chosen to be boundary parallel in a component of $\partial H \cut (\calF \cup P)$ that is not a disc. Hence, in that case, there are only finitely many possibilities for $C$. On the other hand, when $\partial H \cut (\calF \cup P)$ is a union of discs, then again there are only finitely many possibilities for $C$. Now $C$ bounds a properly embedded disc $D$ in $H$, and the operation that is performed is to remove the 3-ball separated off by $D$, and possibly to insert into $D$ an arc or tripod. The effect of this on $H$ is to remove some components of $\calF$ and to possibly insert some new pattern. This does not increase the index of the components of $H \cap \calF$, it does not increase $|H \cap \calF^1|$, and it strictly reduces $|H \cap \calF^0|$. Hence again the inductive step is proved.

The effect when we decompose along vertical boundary components of the interior parallelity bundle is similar to the case of {\sc Create admissible handle structure}. The effect is to replace some components of $H \cap \mathcal{F}^1$ by two parallel arcs of $H \cap P$. This leaves index unchanged, but reduces $|H \cap \mathcal{F}^1|$. It also removes some components of $H \cap \mathcal{F}^0$ that are disjoint from $P$ and that intersect $\mathcal{F}^1$ in two arcs, and replaces them by two arcs of $P$. Again, this leaves index unchanged, and reduces $|H \cap \mathcal{F}^0|$.

In {\sc (Pattern-)compressible vertical boundary}, submanifolds of the manifold are removed. The effect on $H$ is either to remove it entirely (in which case there is nothing to prove) or possibly to remove some components of $\mathcal{F}^1$ and replace them with an arc of $P$, and possibly to remove some components of $\mathcal{F}^0$ that intersect $\mathcal{F}^1$ in two arcs and are disjoint from $P$, and replace them with an arc of $P$. As argued above, this does not increase the index of any component of $\mathcal{F}$ or  $|H \cap \mathcal{F}^0|$, and it strictly reduces $|H \cap \mathcal{F}^1|$.

We now examine what happens when $H$ is decomposed along a normal surface. We are assuming that the handle structure is admissible in this case. Hence, every 0-handle of $H \cap \mathcal{F}^0$ has non-negative index. So, $H$ has non-negative index. Furthermore, if it has zero index, then it is an interior parallelity handle, and we are assuming that the normal surface is disjoint from the interior parallelity bundle. So, we may assume that the index of $H$ is positive.

The normal surface intersects $H$ in discs. We can decompose along these one at a time. So we focus on one such disc $D$. Note that there are only finitely normal disc types in $H$. This disc $D$ divides $H$ into two 0-handles $H_0$ and $H_1$, both of which have non-negative index, by Proposition \ref{Lem:AdmissibleDecomposition}. The sum of these indices is $I(H)$. Hence, if they are both positive, then $H_0$ and $H_1$ have smaller index, and so we are done by induction in this case. So suppose that one 0-handle $H_0$ has zero index and the other handle $H_1$ has positive index. Since $D$ is normal, each component of $\mathcal{F}^1 \cap H$ is either divided into two by $D$ or lies wholly in one of the handles $H_0$ or $H_1$. If one component of $\mathcal{F}^1 \cap H$ ends up wholly in $H_0$, then $H_1$ has smaller t-complexity than $H$, and we are again done by induction.

If $\partial D$ lies wholly in $\mathcal{F} \cap H$, then the component of $H_0 \cap \mathcal{F}$ incident to $\partial D$ has zero index and so must be an annulus disjoint from the pattern. Hence, in this case, $H_1$ is a copy of $H$ and, given $H$, there are only finitely many possible types for $H_0$. 

Suppose that $\partial D$ is disjoint from $\mathcal{F} \cap H$. It is therefore properly embedded in $M$. Since we are assuming that the decomposing surface has no component that is a boundary-parallel disc, $D$ must separate components of $\mathcal{F} \cap H$. Hence, in this case, $H_1$ has smaller index than $H$, which is contrary to assumption. 

So suppose that $\partial D$ intersects $\mathcal{F} \cap H$ in arcs. These must be boundary-parallel in $\mathcal{F}$.

Let $D'$ be the component of $\partial H \cut \partial D$ lying in $H_0$. This intersects $\mathcal{F}$ in $n$ components, say, each of which is a disc intersecting $P$ twice.
Let $\Gamma$ be the graph $H_0 \cap P$. This is a graph where each vertex has degree $1$ or $3$. The vertices with degree $1$ occur at the points of $P \cap \mathcal{F}$ in $H_0$, and also at the points $\partial D \cap P$. Let $v_1$ and $v_3$ be the number of vertices with degree $1$ and $3$ respectively. 

\begin{figure}[h]
\centering
\includegraphics[width=0.8\textwidth]{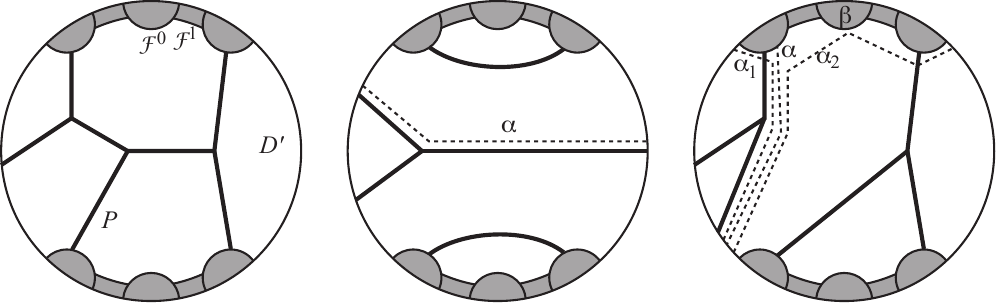}
\caption{Left: the disc $D'$ and its intersection with $P$ and $\mathcal{F}$. Middle: a tree component of $\Gamma$ disjoint from $\mathcal{F}$. Right: A tree component of $\Gamma$ intersecting $\mathcal{F}$ at a single point} \label{Fig:DecompNormalDisc}
\end{figure}

We claim that no component $\Gamma'$ of $\Gamma$ is a tree disjoint from $\mathcal{F}$ (see Figure \ref{Fig:DecompNormalDisc} middle).
A regular neighbourhood of $\Gamma'$ in $D'$ is a disc, and the part of the boundary of this disc not lying in $\partial D'$ is a collection of arcs. There must be at least one such arc $\alpha$ that is not outermost in $D'$, in the sense that it does not separate off a sub-disc of $D'$ disjoint from the other arcs and from $\mathcal{F}$. At its endpoints are two arcs of $\partial D \cut (P \cup \mathcal{F})$ lying in the same component of $\partial H \cut (P \cup \mathcal{F})$. By the normality of $D$, these are therefore the same arc of $\partial D \cut (P \cup \mathcal{F})$. Hence, $\alpha$ was outermost which is a contradiction. 

We now show that no component  of $\Gamma$ is a tree intersecting $\mathcal{F}$ in a single point (see Figure \ref{Fig:DecompNormalDisc} right).
Suppose that $\Gamma'$ is such a component. Let $\beta$ be the arc of $\partial \mathcal{F} \cut P$ incident to $\Gamma'$ and that is disjoint from $\partial D$.  Let $\alpha$ be an arc in $D'$ disjoint from $\Gamma$ running from $\beta$ to $\partial D$ that runs close to $\Gamma'$. We may extend $\alpha$ along $\partial \mathcal{F}$ in two ways to form arcs $\alpha_1$ and $\alpha_2$ properly embedded in $D'$, each intersecting $P$ once. Let $\alpha_1$ be the arc that intersects $\Gamma'$. The endpoints of $\alpha_1$ lie in components of $\partial H \cut (\mathcal{F} \cup P)$ that are incident along an arc of $\partial H \cap P$. So, by (6) in the definition of normality, $\partial D$ intersects this arc. By (4), $\partial D$ must run straight between these two endpoints of $\alpha_1$, intersecting $P$ and nothing else. We deduce that $\Gamma'$ is a single edge. A similar argument applied to $\alpha_2$ gives that it is also parallel to a sub-arc of $\partial D$. So, $D' \cap P$ consists of two arcs. These must emanate from the same 0-handle of $\calF$, as otherwise $\partial D$ violates (5) in the definition of normality. So, we deduce that $D$ contradicts (7) in the definition of normality.

We deduce that each component $\Gamma'$ of $\Gamma$ satisfies $2\chi(\Gamma') - |\mathcal{F} \cap \Gamma'| \leq 0$. So, $v_1 - v_3 - |\mathcal{F} \cap \Gamma| = 2 \chi(\Gamma) - |\mathcal{F} \cap \Gamma| \leq 0$. Now the number of vertices of $P \cap H_1$ is equal to the number of vertices of $P \cap H$, plus $|\Gamma \cap \partial D|$, minus $v_3$. But $|\Gamma \cap \partial D| = v_1 - |\mathcal{F} \cap \Gamma|$. We deduce that the number of vertices of the graph $P \cap H_1$ is at most the number of vertices of $P \cap H$. 

To summarise, we have shown that in each of the operations (1)-(4), the t-complexity of $H$ strictly goes down, except possibly in the case of (1). When the t-complexity does not go down, $H$ is decomposed along the normal disc $D$, the new handle $H_1$ has $H_1 \cap \mathcal{F}$ an exact copy of $H \cap \mathcal{F}$, and the new pattern $P'$ intersects $H_1 \cap \mathcal{F}$ in the same location that $P$ intersected $H \cap \mathcal{F}$. More precisely, there is a homeomorphism $H \rightarrow H_1$ taking $\mathcal{F}^0(H)$ homeomorphically to $\mathcal{F}^0(H_1)$, taking $\mathcal{F}^1(H)$ homeomorphically to $\mathcal{F}^1(H_1)$, and taking $P \cap \mathcal{F}(H)$ homeomorphically to $P' \cap \mathcal{F}(H_1)$. This homeomorphism need not take $P \cap H$ homeomorphically to $P' \cap H_1$, but we have shown that these graphs have the same number of vertices. Hence, after a sequence of steps (1)-(4), the t-complexity must go down at some stage, or the handles run through only finitely many types. In either case, the argument is complete.
\end{proof}

\begin{remark}
The above proposition is crucial to the efficient certification that a boundary pattern $P$ for a compact orientable 3-manifold $M$ is essential, as required by Theorem \ref{Thm:EssentialNPcoNP}. When $M$ is irreducible, this is achieved by giving an essential hierarchy for $(M, P)$. Now $(M,P)$ is given to us by means of a triangulation $\calT$,
and so Theorem \ref{Thm:RemainUniformType} guarantees that the resulting handle structures for all the manifolds in the hierarchy are of uniform type. This is important
at many points. For example, in Proposition \ref{Prop:ExpBoundExtendedWeight} below, a constant $k$ depends on the type of the handles. Hence, for all the handle structures that we consider, $k$ is in fact bounded above by a universal constant independent of $M$ and the hierarchy.

Another crucial conclusion from Theorem \ref{Thm:RemainUniformType} is that when the handle structure $\calH'$ for a manifold in our hierarchy is admissible and its interior parallelity bundle consists of $I$-bundles over discs, then the intersection between the 0-handles of $\calH'$ and any tetrahedron of $\calT$ is one of only finitely many types, where again this list of types is universal. In particular, the number of 0-handles of $\calH'$ is bounded above by a universal constant times the number of tetrahedra of $\calT$. This is important because many quantities depend on $|\calH'|$, for example again in Proposition \ref{Prop:ExpBoundExtendedWeight}. It is also central to the efficiency of our algorithm. For example, when a manifold in the hierarchy is created and all its boundary components are 2-spheres, then we need to examine whether the boundary pattern in these 2-sphere is essential. We can do so efficiently, since the handle structure on this manifold has uniform type and its number of handles is bounded above by a universal constant times $|\calT|$.

Theorem \ref{Thm:RemainUniformType} is an analogue of results known for sutured manifold hierarchies (see \cite[Proposition 6.3]{Lackenby:Exceptional} and \cite[Theorem 6.1]{Lackenby:EfficientCertification}).
\end{remark}

\section{Fundamental decomposing surfaces}
\label{Sec:Fundamental}

Our goal in this section is to control the extended weight of our decomposing surfaces.

\begin{proposition}[Extended weight bound]
\label{Prop:ExpBoundExtendedWeight}
Let $M$ be a compact orientable irreducible 3-manifold, other than a 3-ball, with non-empty boundary and
an essential boundary pattern $P$.
Let $\mathcal{H}$ be a handle structure of $(M,P)$ of uniform type. Then there is a constant $k$
(depending only on the handle types of $\mathcal{H}$) with the following property. There is 
a compact orientable connected non-separating normal surface $S$ properly embedded in $M$
with smallest pattern complexity among all such surfaces and with extended weight at most
$k^{|\mathcal H|}$.
\end{proposition}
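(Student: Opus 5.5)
Among all compact orientable connected non-separating normal surfaces of smallest pattern-complexity, I would choose $S$ to minimise extended weight. I would then show that $S$ is a fundamental normal surface, or at least a sum of boundedly many fundamental pieces. Normal surfaces in a handle structure of uniform type are encoded by vectors: one coordinate for each normal disc type in each 0-handle, subject to matching equations across the 1-handles and 2-handles. Because $\mathcal{H}$ has uniform type, the number of normal disc types in each 0-handle is bounded by a constant depending only on the handle types. Hence the number of variables is at most a constant times $|\mathcal{H}|$, and every coefficient of the matching equations is bounded. The standard Hass--Lagarias--Pippenger argument bounds the vertices of the projectivised solution cone, and hence fundamental solutions, by $k_0^{|\mathcal{H}|}$. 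It would therefore suffice to show that $S$ is fundamental, or a sum of at most a bounded number of fundamental surfaces. Extended weight is linear in the normal vector, so this would give the bound $k^{|\mathcal{H}|}$.

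The core step is an exchange argument. Suppose $S = S_1 + S_2$ is a nontrivial Haken sum of normal surfaces. I would first use Lemma \ref{Lem:NormalForm} and Corollary \ref{Cor:MinimalComplexityImpliesEssential}: because $S$ has least $\chi_p$, it is incompressible and pattern-incompressible. Since $P$ is essential and $M$ is irreducible, one can choose the summands so that the sum is in reduced form, with no exchange disc or patch that is a disc meeting $P$ at most once. This is the usual argument that removes trivial patches by isotopies and pattern-isotopies, and those moves lower extended weight, contradicting minimality.

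Next I would use additivity. Both $\chi$ and $|\cdot \cap P|$ are additive under Haken sum, so $\chi_p(S) = \chi_p(S_1) + \chi_p(S_2)$. Homology is additive too, so $[S] = [S_1] + [S_2]$ in $H_2(M,\partial M)$, and since $S$ is non-separating, some component $S'$ of $S_1$ or $S_2$ is non-separating, i.e.\ has nonzero dual class. Components with $\chi_p < 0$ are spheres, or discs meeting $P$ at most three times. Since $M$ is irreducible and $P$ is essential, such components are inessential, and by the reduced-form assumption they cannot occur as summands without producing a trivial patch. So every component has $\chi_p \ge 0$, which gives $\chi_p(S') \le \chi_p(S)$. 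The non-separating component $S'$ therefore also has minimal pattern-complexity, and strictly smaller extended weight, contradicting the choice of $S$. Hence $S$ is fundamental.

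The main obstacle is ruling out components of negative pattern-complexity in the summands. In particular, discs meeting $P$ two or three times are not excluded by $\chi$ alone, and one must show that such a disc summand forces a trivial patch. I would try the standard route for this. Because $P$ is essential, such a disc $E$ cobounds a ball with a disc in $\partial M$ meeting $P$ in at most an arc or a tripod. An innermost such ball then yields a patch of $S$ that can be removed by a pattern-isotopy or pattern-compression, lowering extended weight without raising $\chi_p$, against minimality. This follows the analogous argument for sutured manifold hierarchies in \cite{Lackenby:EfficientCertification}. If the reduction only gives a decomposition into boundedly many fundamental surfaces rather than fundamentality, the exponential bound still holds with a larger $k$.
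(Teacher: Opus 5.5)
There is a genuine gap at the step ``Homology is additive too, so $[S]=[S_1]+[S_2]$ in $H_2(M,\partial M)$.'' Haken summation is automatically additive only with $\mathbb{Z}/2$ coefficients. Integral additivity needs two things. First, $S_1$ and $S_2$ must be orientable. Second, their orientations must make every regular switch along $S_1\cap S_2$ orientation-compatible. Nothing in your setup gives either.

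Summands of an orientable normal surface can be one-sided. For example, if $F$ is one-sided, the vector $2(F)=(F)+(F)$ is realised by the orientable boundary of a regular neighbourhood of $F$. Summands can also be orientable but admit no orientations for which all the regular switches are compatible. Working mod~$2$, you only obtain a component $S'$ of $S_1$ or $S_2$ with non-zero $\mathbb{Z}/2$ class. Such an $S'$ may be one-sided, so it is not an admissible competitor among orientable surfaces, and your minimality contradiction does not go through.

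This is exactly the content the paper imports from Tollefson--Wang, in the form of Theorem \ref{Thm:TollefsonWangPattern}. If $S$ is lw-p-taut, then every surface $G$ carried by the face $C_S$ inherits an orientation that makes it lw-p-taut, and $[G+H]=[G]+[H]$. With that, a connected summand $G$ is automatically orientable and non-separating.

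Once this theorem is available, the obstacle you singled out disappears. The summands are p-taut, so they have no homologically trivial components. By irreducibility and essentiality of $P$, they therefore contain no spheres and no discs meeting $P$ at most three times. So every component has $\chi_p\ge 0$, with no patch analysis required. In other words, the step you treated as routine is the hard one, and the step you flagged as hard is free.

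The paper also runs the argument in a triangulation $\mathcal{T}$ built from $\mathcal{H}$, with $|\mathcal{T}|\le c_1|\mathcal{H}|$. There the Tollefson--Wang theorem and the Hass--Lagarias--Pippenger bound apply as stated. It then transfers the fundamental surface back to $\mathcal{H}$ with extended weight at most $c_2 w(S)$ and normalises it via Lemma \ref{Lem:NormalForm}. Working directly with normal coordinates in $\mathcal{H}$, as you propose, would require redeveloping that machinery for handle structures with pattern.
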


Our proof of this fact relies heavily on work of Tollefson and Wang \cite{TollefsonWang}. A fairly minor complication
is that they worked
with triangulations rather than handle structures. Starting from $\mathcal{H}$, it is straightforward
to build a triangulation $\mathcal{T}$ of $M$ in which $P$ is simplicial, satisfying the
following conditions:
\begin{enumerate}[(i)]
\item the number $|\mathcal{T}|$ of tetrahedra is at most $c_1|\mathcal{H}|$, where $c_1$ is a constant
depending only on the handle types of $\mathcal{H}$;
\item any incompressible, pattern-incompressible normal surface $S$ in $\mathcal{T}$
can be isotoped, leaving $P$ invariant, to form a standard surface in $\mathcal{H}$ with extended weight at most $c_2 \, w(S)$,
where $w(S)$ is the weight of $S$ in $\mathcal{T}$ and $c_2$ is a constant
depending only on the handle types of $\mathcal{H}$. 
\end{enumerate}
Hence, we now work with a triangulation $\mathcal{T}$ satisfying these conditions.

We now recall the projective normal solution space associated with $\mathcal{T}$. There are $7|\mathcal{T}|$ types of
normal triangles and squares in $\mathcal{T}$. We assign a variable $x_i$ to each type. When $S$ is a normal surface,
these variables count the number of triangles and squares of each type in $S$, giving a list of $7|\mathcal{T}|$ non-negative integers for
$S$ called its \emph{normal surface vector} and denoted $(S)$. Such vectors satisfy a system of linear constraints called the \emph{matching
equations}. There are three such equations for each face of $\mathcal{T}$ with tetrahedra on both sides. For each such face,
the triangles and squares in the tetrahedron on one side of the face must patch together correctly with the
triangles and squares in the tetrahedron on the other side. The triangles and squares on one side intersect the
face in normal arcs, which come in three different types. The matching equations assert that for each such normal
arc type, the number of triangles and squares on one side intersecting the face in arcs of that form is equal to the
number of triangles and squares on the other side also intersecting that face in arcs of that form.
The \emph{projective normal solution space} is the subset of $\mathbb{R}^{7|\mathcal{T}|}$ subject to the matching equations, the conditions
$x_i \geq 0$ for each $i$, and the equation $\sum x_i  = 1$. It is a convex compact polyhedron.
Each normal surface $S$ determines a point in the projective normal solution space by scaling all its co-ordinates $x_i$
so that $\sum x_i = 1$. The unique closed face of minimal dimension in the projective normal solution space containing this point
is called the face that \emph{carries} $S$ and is denoted $C_S$.

A normal surface is said to be the \emph{normal sum} of two properly embedded normal surfaces $G$ and $H$ if its normal vector is
$(G) + (H)$. We denote the surface by $G+H$. One cannot always form the normal sum of two normal surfaces, but this is possible if they are both carried by the same face of the projective normal solution space. A normal surface is said to be \emph{fundamental} if it cannot be written as the normal sum of two non-empty normal surfaces.

\begin{definition}
A compact oriented incompressible pattern-incompressible surface $S$ properly embedded in $(M,P)$ is said to be \emph{p-taut} if the following all hold:
\begin{enumerate}
\item $[S,\partial S]$ is non-trivial in $H_2(M, \partial M)$;
\item $S$ minimises pattern-complexity in its homology class;
\item there is no homologically trivial union of components of $S$.
\end{enumerate}
We say that $S$ is \emph{lw-p-taut} if it is p-taut and it has minimal weight among all p-taut surfaces representing $[S,\partial S]$.
\end{definition}

This is an analogue of a definition of Tollefson and Wang \cite[Section 1]{TollefsonWang}. Instead of pattern-complexity, their surfaces minimised $\chi_-$,
which is defined to be the sum, over all components $S'$ of $S$, of $\max \{ 0, -\chi(S') \}$. Their surfaces were known as lw-taut.

The following is an analogue of Theorem 3.3 of \cite{TollefsonWang}, but with their lw-taut surfaces replaced by lw-p-taut surfaces.
The proof is essentially the same, and is omitted.

\begin{theorem}
\label{Thm:TollefsonWangPattern}
Let $M$ be a compact orientable irreducible 3-manifold with $b_1(M) >0$ and let $P$ be an essential boundary pattern. 
Let $\mathcal{T}$ be a triangulation of $M$ in which $P$ is simplicial. 
Let $S$ be a compact oriented incompressible pattern-incompressible properly embedded surface that is lw-p-taut. Let $C_S$ be the face of the projective normal solution space that carries $S$.
Then every surface $G$ carried by $C_S$ inherits an orientation and so represents
an element $[G] \in H_2(M, \partial M)$. With this orientation, $G$ is lw-p-taut. Moreover, for any two
surfaces $G$ and $H$ carried by $C_S$, the normal sum $G+H$ satisfies $[G+H] = [G] + [H]$.
\end{theorem}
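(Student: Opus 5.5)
I would follow the strategy of Tollefson and Wang's proof of their Theorem 3.3, replacing $\chi_-$ everywhere by pattern-complexity $\chi_p$. The engine is the analysis of exchange annuli in a minimal-complexity normal sum. Suppose a surface $G$ is carried by $C_S$. After scaling, some multiple $nS$ can be written as a normal sum $nS = G + H$ with $H$ also carried by $C_S$. This holds because $(S)$ lies in the relative interior of $C_S$, so $n(S) - (G)$ is a non-negative integral solution of the matching equations for large $n$, and it satisfies the same quadrilateral constraints.

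The first step is to orient $G$ and $H$ compatibly with $S$. Each normal disc of $G$ or $H$ is parallel to normal discs of $nS$, and these inherit the transverse orientation of $S$. The point to check is that regular exchanges along the intersection curves of $G$ and $H$ respect these orientations. If some exchange were irregular with respect to the orientation, the following cut-and-paste argument would apply. Doing the exchange the other way gives a surface homologous to $nS$, with no larger pattern-complexity (since $\chi$ and $|\cdot \cap P|$ are additive under both kinds of exchange) and strictly smaller weight after normalisation. Here I would use Lemma \ref{Lem:NormalForm}, whose compressions and pattern-compressions do not increase $\chi_p$. This contradicts the lw-p-taut property of $S$, since $nS$ minimises weight and $\chi_p$ in its class; that in turn uses the fact that a taut-type minimiser in $n[S]$ can be taken to be $n$ copies of $S$. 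So $G$ and $H$ inherit orientations, and $[G]+[H] = n[S]$. The additivity statement $[G+H]=[G]+[H]$ for general $G$, $H$ in $C_S$ then follows, because homology classes are read off linearly from the normal coordinates together with the consistent orientation on each disc type.

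Next I would show that $G$ is p-taut. Pattern-complexity is additive under normal sum, $\chi_p(G+H) = \chi_p(G)+\chi_p(H)$, since both $\chi$ and the count of intersections with $P$ are linear in the normal coordinates. Suppose $G$ failed to minimise $\chi_p$ in $[G]$, or were compressible or pattern-compressible. Then there would be $G'$ with $[G']=[G]$ and $\chi_p(G') < \chi_p(G)$; for compressions and pattern-compressions this is Lemma \ref{Lem:CompressionAndPatternComplexity} and its proof. Also $H$ is homologous to some surface of $\chi_p \le \chi_p(H)$. Combining these gives a surface in $n[S]$ with $\chi_p$ smaller than $n\chi_p(S)$. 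This contradicts the fact that $S$ is p-taut, once we know that $n$ parallel copies of $S$ minimise $\chi_p$ in $n[S]$. That fact is the pattern analogue of Thurston norm linearity: a minimiser in $n[S]$ can be cut into $n$ surfaces, each representing $[S]$. The same contradiction rules out homologically trivial unions of components of $G$, because discarding them lowers $\chi_p$, as such components have $\chi_p \ge 0$ since the pattern is essential and $M$ irreducible. Non-triviality of $[G]$ follows from the orientation compatibility, because $G$ meets a transversal dual to the class positively. Finally, the weight part of lw-p-taut follows from additivity of weight, by the same exchange argument.

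I expect the main obstacle to be the step where an irregular exchange, or a lower-complexity replacement, is converted into a contradiction. This needs care with the pattern in two places. First, one must ensure that exchanges along arcs ending on $\partial M$ interact correctly with $P$, so that $|\cdot\cap P|$ stays additive. Second, one must handle discs and spheres with $\chi_p<0$ arising from the normalisation; these have to be shown to be inessential using that $P$ is essential, so that discarding them keeps the homology class. My plan is to handle both by invoking Lemma \ref{Lem:EssentialPushesForward}-style arguments on the trivial pieces, exactly mirroring Tollefson–Wang's treatment of discs and spheres.
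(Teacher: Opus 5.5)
Your overall route is the one the paper intends. The paper only says that Tollefson--Wang's proof of their Theorem 3.3 goes through with $\chi_-$ replaced by $\chi_p$. But that substitution is not formal, and your proposal goes wrong exactly where it fails.

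$\chi_-$ is non-negative on every component and vanishes on spheres and discs, so Tollefson and Wang can discard such pieces at no cost. By contrast, $\chi_p$ is $-8$ on a sphere and $-4+|D\cap P|<0$ on a disc $D$ meeting $P$ at most three times. Your assertion that homologically trivial components have $\chi_p\ge 0$ ``since the pattern is essential and $M$ irreducible'' is backwards. Those hypotheses show that components with $\chi_p<0$ \emph{are} homologically trivial, and such normal pieces certainly exist (a vertex-linking sphere, or the link of a boundary vertex).

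So two steps fail.
\begin{enumerate}
\item You have not excluded that $G$ or $H$ has such components. Discarding them raises $\chi_p$, so they give no contradiction with minimality. Ruling them out is exactly what condition (3) for $G$, and hence $[G]\neq 0$, requires.
\item Your plan in the last paragraph is to show that negative pieces produced by irregular exchanges or by normalisation are inessential and then discard them. This destroys the inequality $\chi_p(\text{competitor})\le n\,\chi_p(S)$ on which every one of your contradictions rests.
\end{enumerate}
Relatedly, ``minimises $\chi_p$ in its homology class'' only makes sense among surfaces with no components of negative $\chi_p$, since adding a sphere lowers $\chi_p$ by $8$.

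The missing ingredient is to show these pieces never arise, rather than to discard them. What is needed is a pattern version of the no-disc-patch lemma. In a reduced summation $nS=G+H$, with $S$ incompressible, pattern-incompressible and of least weight, no patch should be a disc disjoint from $\partial M$. Likewise, no outermost patch should be a disc meeting $\partial M$ in a single arc that crosses $P$ at most once. Theorem \ref{Thm:NoCleanDiscPatches} only covers patches disjoint from $P$. The one-point case has to use pattern-incompressibility of $S$ together with a product-region argument as in Proposition \ref{Prop:NoProduct}.

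Granted this lemma, consider a sphere, or a disc meeting $P$ at most three times, that is a component of a summand or of a surface obtained by regular or irregular exchanges. Either it is a component of $nS$, which p-tautness of $S$ excludes. Or, cut along the intersection curves, it has an innermost disc patch or an outermost patch meeting $P$ at most once, since two outermost patches share at most three points of $P$. Hence no surface in the argument has a component with $\chi_p<0$. On such surfaces $\chi_p$ behaves like $\chi_-$, and the Tollefson--Wang argument runs.

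Separately, your linearity step rests on a ``consistent orientation on each disc type'', i.e.\ that all normal discs of $S$ of a given type are co-oriented. Your exchange argument only checks compatibility across the curves of $G\cap H$. So this needs its own justification before the orientation of $G$ is canonical and $[G]$ is linear in the normal coordinates.
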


Hence, we have the following result.

\begin{theorem}[Fundamental surface]
\label{Thm:FundamentalTaut}
Let $M$ be a compact orientable irreducible 3-manifold with $b_1(M)>0$ and let $P$ be an essential boundary pattern. Let $\mathcal{T}$ be a triangulation of $M$ in which $P$ is simplicial. Then there is a fundamental normal surface $S$ properly embedded in $M$ satisfying the following:
\begin{enumerate}
\item $S$ is a compact connected orientable surface that is non-separating in $M$;
\item $S$ has smallest pattern-complexity among all surfaces satisfying (1).
\end{enumerate}
\end{theorem}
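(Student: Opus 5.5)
We take $S_0$ to be a compact connected orientable non-separating properly embedded surface of smallest pattern-complexity; one exists because $b_1(M)>0$. By Corollary \ref{Cor:MinimalComplexityImpliesEssential}, $S_0$ is incompressible and pattern-incompressible. Since it is non-separating, it is dual to a class with non-zero intersection with some closed loop, so $[S_0,\partial S_0]$ is non-trivial in $H_2(M,\partial M)$. The plan is to pass to a p-taut surface in this homology class, normalise it to an lw-p-taut surface, and then use Theorem \ref{Thm:TollefsonWangPattern} to decompose it into fundamental pieces. Among those pieces we look for a connected non-separating fundamental surface whose pattern-complexity is no larger than $\chi_p(S_0)$.

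\textbf{Step 1: $S_0$ is p-taut.} Condition (3) of p-tautness holds because $S_0$ is connected and non-trivial. For condition (2), let $S'$ be any surface homologous to $S_0$ that is incompressible and pattern-incompressible. Discard sphere components and homologically trivial unions of components; this does not increase $\chi_p$, since discs meeting $P$ at most three times have negative $\chi_p$ but are inessential. Some component $S'_1$ of what remains is non-separating, because the class is non-zero. We then want $\chi_p(S'_1)\le \chi_p(S')$, which requires every other component to have non-negative $\chi_p$. A component with negative $\chi_p$ is a disc meeting $P$ at most three times. Because $P$ is essential, such a disc is boundary-parallel and hence homologically trivial, so it was already discarded. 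Therefore $\chi_p(S_0)\le\chi_p(S'_1)\le\chi_p(S')$, and $S_0$ is p-taut. Among p-taut representatives of $[S_0]$, normal in $\calT$, we choose one $S$ of least weight; it is lw-p-taut. Normalisation via Lemma \ref{Lem:NormalForm} preserves the relevant properties, since compressions and pattern-compressions of a p-taut surface cannot reduce $\chi_p$ further. By the argument just given, any p-taut representative has the minimal value $\chi_p(S)=\chi_p(S_0)$. The same argument shows that one component of $S$ is non-separating with $\chi_p$ equal to $\chi_p(S_0)$, and every other component has $\chi_p\ge 0$.

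\textbf{Step 2: fundamental decomposition.} Write $S=\sum_i F_i$ as a normal sum of fundamental surfaces; all of them are carried by $C_S$. By Theorem \ref{Thm:TollefsonWangPattern}, each $F_i$ is lw-p-taut and $[S]=\sum_i [F_i]$, so some $F_i$ has non-zero class. Pattern-complexity is additive under normal sum: Euler characteristic is additive, and $|F\cap P|$ is additive because the intersection with simplicial $P$ is a linear function of the normal coordinates. By p-tautness each $F_i$ has $\chi_p(F_i)\ge 0$, using the same disc argument as in Step 1. Hence any $F_i$ with $[F_i]\neq 0$ satisfies $\chi_p(F_i)\le\chi_p(S)$. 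As in Step 1, some component of $F_i$ is connected and non-separating, and its $\chi_p$ is at most $\chi_p(F_i)\le\chi_p(S_0)$. By minimality of $S_0$ these are equalities. If $F_i$ is connected, it is the required surface.

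\textbf{Main obstacle.} The hard case is a disconnected fundamental $F_i$: a component of a fundamental surface need not itself be fundamental. The first attempt is to use that $F_i$ is lw-p-taut, so no union of its components is homologically trivial and every component is two-sided and orientable. If $F_i$ were disconnected, each component would be a normal surface with coordinates bounded by those of $F_i$. Then $F_i$ would equal the normal sum of one component and the union of the rest, both non-empty, contradicting fundamentality. Hence fundamental surfaces are connected, apart from the case of two parallel copies of a one-sided surface, which orientability of the lw-p-taut $F_i$ excludes. The remaining checks are that $F_i$ is non-separating, which follows from its non-zero homology class, and that $\chi_p(F_i)=\chi_p(S_0)$ is minimal, which follows from Step 2.
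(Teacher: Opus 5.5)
Your proof is correct and is essentially the paper's argument. The paper takes a least-weight lw-p-taut $S$ and supposes $S=G+H$ with $G$ connected. It then uses Theorem \ref{Thm:TollefsonWangPattern}, additivity of $\chi_p$ and $\chi_p\ge 0$ for the summands to get a non-separating $G$ with $\chi_p(G)\le\chi_p(S)$ and smaller weight, a contradiction. Your direct decomposition into fundamental summands is the same idea, and those summands are automatically connected because any disconnected normal surface is the normal sum of its components, so no one-sided exception is needed.

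There is one slip in Step 1: discarding a boundary-parallel disc or a sphere \emph{raises} $\chi_p$, not lowers it. So condition (2) of p-tautness must be read as minimising only over representatives satisfying (3); otherwise adding boundary-parallel discs would push $\chi_p$ arbitrarily low. Such representatives have no such components, so with that reading your inequality $\chi_p(S_0)\le\chi_p(S')$ holds and nothing needs to be discarded.
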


\begin{proof}
Pick a compact orientable surface $S$ satisfying (1) and (2) above. Such a surface exists because $b_1(M)>0$.
We may assume that it is incompressible and pattern-incompressible, by Corollary \ref{Cor:MinimalComplexityImpliesEssential}. Hence it can be isotoped, via an isotopy preserving the pattern, to a normal surface, also called $S$. We may further assume that $S$ has least weight among all such surfaces. So it is lw-p-taut. Let $C_{S}$ be the face that carries $S$. Suppose that $S$ is not fundamental. Then $S$ can be written as a non-trivial normal sum $G+ H$ of normal surfaces $G$ and $H$. We may assume that $G$ is connected, since if $G_1$ is any component of $G$, then $S = G_1 + ((G-G_1) + H)$. Both $G$ and $H$ lie in $C_S$. By Theorem \ref{Thm:TollefsonWangPattern}, $G$ and $H$ inherit orientations which make them lw-p-taut. In particular, $G$ is non-separating in $M$. Neither $G$ nor $H$ has negative pattern-complexity, since we are assuming that $P$ is essential. Note that pattern-complexity and weight are both additive under normal summation. Hence, the pattern-complexity of $G$ is at most that of $S$. But it has smaller weight, contrary to our assumption.
\end{proof}

\begin{proof}[Proof of Proposition \ref{Prop:ExpBoundExtendedWeight}]
We are given a handle structure $\mathcal{H}$ of $(M,P)$ of uniform type. From this, construct a triangulation $\mathcal{T}$ of $M$
in which $P$ is simplicial and satisfying (i) and (ii) above. By Theorem \ref{Thm:FundamentalTaut}, there is a fundamental normal surface $S$ properly embedded in $M$
satisfying (1) and (2) in the theorem. By a result of Hass, Lagarias and Pippenger \cite[Lemma 6.1]{HassLagariasPippenger}, the weight of $S$ in $\mathcal{T}$
is at most $|\mathcal{T}|^2 2^{7|\mathcal{T}|+7}$. Hence, $S$ is isotopic, leaving $P$ invariant, to a standard surface $S$ in $\mathcal{H}$
with extended weight at most $c_2 |\mathcal{T}|^2 2^{7|\mathcal{T}|+7}$. Then by Lemma \ref{Lem:NormalForm}, $S$ can be isotoped, leaving $P$ invariant, 
to be normal
in $\mathcal{H}$ and without increasing its extended weight. Since, $|\mathcal{T}| \leq c_1 |\mathcal{H}|$, we obtain our required constant $k$ and exponential bound
on the extended weight of $S$.
\end{proof}

\section{Certifying an inessential pattern}
\label{Sec:CertifyingInessential}

In this section, we establish one part of Theorem \ref{Thm:EssentialNPcoNP}, by showing that 
{\sc Essential boundary pattern} is in co-NP. In the case where the given 3-manifold is irreducible,
this is a consequence of the following result, which we will prove in this section.

\begin{theorem}[Fundamental violating disc]
\label{Thm:ViolatingFundamental}
Let $M$ be a compact orientable irreducible 3-manifold with a boundary pattern $P$. Let $\mathcal{T}$
be a triangulation for $M$ in which $P$ is simplicial. If $P$ is inessential, then it has a violating
disc that is normal and fundamental.
\end{theorem}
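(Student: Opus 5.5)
Among all violating discs I would choose one that is least complex, show it can be made normal, and then argue that any nontrivial normal sum decomposition would produce a simpler violating disc. The order of minimisation is: first the number of points of $\partial D \cap P$, then the weight in $\calT$. Since $P$ is inessential, violating discs exist, so such a $D$ exists.

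\emph{Step 1: normalising.} I would run the normalisation procedure of Lemma \ref{Lem:NormalForm}, in the triangulation setting where $P$ is simplicial, on the single disc $D$. Isotopies preserving $P$ keep $D$ violating and do not increase weight. A compression of a disc produces a disc together with a sphere, and by irreducibility the sphere can be discarded. A pattern-compression cuts $D$ into two discs $D_1$ and $D_2$. Their pattern intersections are shared out from $\partial D \cap P$, with at most two new points added. The argument in the proof of Lemma \ref{Lem:NormalForm} shows that one of $D_1$, $D_2$ is again violating: if both bounded discs in $\partial M$ meeting $P$ in the empty set, an arc or a tripod, then gluing these would show $D$ was not violating, exactly as in the proof of Lemma \ref{Lem:EssentialPullsBack}. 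The main check is that this violating piece meets $P$ at most as often as $D$ and is strictly simpler. I would use the fact that the arc $\beta$ of the simplifying disc meets $P$ at most once, so $|D_i \cap P| \le |D\cap P|$ on the relevant side, and compare weights when the counts are equal. Hence a minimal violating disc is normal.

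\emph{Step 2: fundamentality.} Suppose the minimal $D$ is not fundamental, say $D = G + H$ with $G,H$ nonempty normal surfaces. I would use the Euler characteristic and pattern-count additivity of normal sums: $\chi(G)+\chi(H) = 1$ and $|G\cap P|+|H\cap P| = |D\cap P| \le 3$. Thus some component $G_1$ of $G$ or $H$ has positive Euler characteristic, i.e. is a disc or sphere, and meets $P$ at most $3$ times. Spheres and projective planes can be removed using irreducibility and orientability, in the Jaco--Rubinstein/Jaco--Tollefson style. The main obstacle is then to show that some disc summand is itself violating. For this I would argue as in the standard ``disc summand'' arguments (Jaco--Oertel, Hass--Lagarias--Pippenger): if every disc component of $G$ and $H$ were nonviolating, its boundary would bound a disc in $\partial M$ meeting $P$ in the empty set, an arc or a tripod. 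Exchanging regular pieces along the exchange arcs/curves of $G \cap H$, with no disc-of-contact trivial patches thanks to the minimal-weight choice, would then exhibit $\partial D$ as bounding such a disc too, a contradiction. Alternatively, I would pick a disc component $G_1$ of the summands with at most $3$ pattern points. If $G_1$ is violating, it has smaller weight than $D$ and no more pattern points, which is a contradiction. If it is not violating, I would perform the standard cut-and-paste of $D$ along $G_1$ to produce a violating disc of smaller weight.

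The delicate part, which I expect to be the crux, is controlling the pattern count during the exchange. The boundary pattern may meet the trace curves, so the surgered disc must stay within three points of $P$. I would handle this by using the additivity of $|\,\cdot \cap P|$ together with the pattern-complexity counting of Lemma \ref{Lem:CompressionAndPatternComplexity}. The key observation is that $\chi_p$ is additive and negative exactly for discs meeting $P$ at most three times, so $\chi_p(D)<0$ forces some summand component with $\chi_p<0$. That summand is then the candidate simpler violating disc.
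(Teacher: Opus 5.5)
\textbf{There is a genuine gap in Step 2, at exactly the point you call the crux.}

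Additivity of $\chi$ and of $|\cdot\cap P|$ (equivalently of $\chi_p$) does give you a disc component $G_1$ of a summand with $|G_1\cap P|\le|D\cap P|$ and smaller weight. If $G_1$ is violating, you are done.

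But the whole content of the theorem is the case where $G_1$ is \emph{not} violating, i.e.\ $G_1$ is parallel to a disc in $\partial M$ meeting $P$ in the empty set, an arc or a tripod. For that case you only offer an unspecified ``standard cut-and-paste of $D$ along $G_1$'', or an exchange that would show $\partial D$ bounds a trivial disc. You never say what surface results, why it is a violating disc, why its pattern count stays at most $|D\cap P|$, or why its weight drops.

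Your counting cannot exclude this case. A boundary-parallel disc meeting $P$ in an arc ($\chi_p=-2$) plus an annulus or M\"obius band meeting $P$ once ($\chi_p=1$) is numerically consistent with a disc meeting $P$ three times. Your final paragraph on $\chi_p$ only re-locates the candidate $G_1$; it does not deal with $G_1$ being trivial.

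There is also a smaller point in Step 1. You should use only clean boundary-compressions, with $\beta\cap P=\emptyset$, which is possible because $P$ is simplicial. Otherwise, if $D\cap P=\emptyset$ and $\beta$ meets $P$ once, both resulting discs meet $P$ once and your first complexity increases.

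\textbf{What is missing is the machinery of the paper's argument.}

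The paper first disposes of the case where $\partial M\cut P$ is compressible, using Theorem~\ref{Thm:CompressionDiscFund}. Everything afterwards needs $\partial M\cut P$ incompressible and $D$ cleanly boundary-incompressible.

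It then takes $D=S_1+S_2$ in \emph{reduced form}. Matveev's Theorem 4.1.36 excludes summand components that are spheres, projective planes, or discs meeting $P$ at most once. With additivity, this forces $S_1$ to be a single disc meeting $P$ at least twice, and $S_2$ to consist of components of zero Euler characteristic meeting $P$ at most once in total.

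Assuming $S_1$ is trivial, the paper shows in turn:
\begin{enumerate}
\item $S_2$ has no closed components, since an innermost curve in $S_1$ would be a clean disc patch, excluded by Theorem~\ref{Thm:NoCleanDiscPatches};
\item every arc of $S_1\cap S_2$ is essential in $S_2$, by an outermost-arc argument;
\item hence $S_2\cut S_1$ consists of squares, and an outermost one inside the ball cut off by $S_1$ cobounds a product region with $S_1$.
\end{enumerate}

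Product regions are ruled out by Proposition~\ref{Prop:NoProduct}, which is the genuine cut-and-paste step. Re-switching across the region gives either a summation with fewer intersection curves, contradicting reducedness, or a pattern-isotopic copy of $D$ with an irregular switch, which then admits a weight-reducing isotopy. So $S_1\cap S_2=\emptyset$, contradicting connectedness of $D$.

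Without reduced form, the exclusion of discs meeting $P$ at most once, the no-clean-disc-patch lemma and the product-region/irregular-switch argument, your Step 2 does not go through.
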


The following result provides a violating disc in an important special case.

\begin{theorem}[Fundamental compression disc]
\label{Thm:CompressionDiscFund}
Let $M$ be a compact orientable irreducible 3-manifold with pattern $P$. Let $\mathcal{T}$ be
a triangulation for $M$ in which $P$ is simplicial. If $\partial M \cut P$ is 
compressible, then it has a compression disc that is normal and fundamental.
\end{theorem}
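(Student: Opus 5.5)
The plan is to follow the classical Jaco--Oertel / Jaco--Tollefson argument for fundamental compression discs, adapted to the pattern setting. Here $\partial M \cut P$ compressible means there is a properly embedded disc $D$ in $M$, disjoint from $P$, whose boundary does not bound a disc in $\partial M - P$. Among all such compression discs, choose one that is normal with respect to $\mathcal{T}$ and of minimal weight.

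\emph{Obtaining a normal compression disc.} Since $M$ is irreducible, the usual normalisation moves apply: isotopies across tetrahedra, surgery along innermost curves, and boundary-compressions along outermost arcs lying in $\partial M \cut P$. At each move, one of the resulting discs is still a compression disc for $\partial M \cut P$, and the weight does not increase. Boundary-compressions along arcs in faces of $\partial M$ that avoid $P$ are allowed because $P$ is simplicial, so every move can be made to avoid $P$. This yields a normal compression disc $D$ of least weight.

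\emph{Showing $D$ is fundamental.} Suppose not, and write $D = G + H$ as a non-trivial normal sum. Since $\chi$ is additive, $1 = \chi(G) + \chi(H)$. Neither summand is closed of positive Euler characteristic in a way that helps: 2-spheres and projective planes can be handled using irreducibility and orientability. So some component of $G$ or $H$, say $G_1$, is a disc. Its boundary lies in $\partial M$, and since $D \cap P = \emptyset$ and $P$ is simplicial, $G_1 \cap P = \emptyset$ as well, because normal coordinates near $\partial M$ add.

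\emph{Main obstacle: making the summand a compression disc.} We must show that some disc summand is itself a compression disc for $\partial M \cut P$. Assume every disc summand has inessential boundary in $\partial M - P$. Then we use the standard exchange and roundoff argument along the trace curves of $G \cap H$, after first arranging, by choosing the summands to minimise the number of trace curves, that no trace curve bounds a disc in both surfaces. This should produce either a compression disc of smaller weight or a disc summand with essential boundary.

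Concretely, I would follow Jaco--Oertel: if $\partial G_1$ bounds a disc $E$ in $\partial M - P$, then $G_1 \cup E$ is a sphere, which bounds a ball by irreducibility. An innermost or outermost exchange along $G_1$ then lets us replace $D$ by a disc of smaller weight whose boundary is isotopic to $\partial D$ in $\partial M - P$, contradicting minimality. I expect the delicate point to be controlling the boundary arcs of trace curves on $\partial M$ so that the exchange keeps the disc disjoint from $P$. Simpliciality of $P$ is what ensures this: all intersections of normal arcs with $\partial M$ lie in edges and faces, so regular exchange preserves disjointness from $P$.
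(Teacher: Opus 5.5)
Your setup is the right one: a least-weight normal compression disc $D$ disjoint from $P$, a non-trivial sum $D=G+H$ in reduced form, and additivity of $\chi$ and of $|\cdot\cap P|$. The paper gives no proof of its own here; it cites Matveev's Theorem 4.1.13, and your setup is the usual framework for that kind of result. But the step that carries the whole theorem is exactly the one you leave at ``this should produce either a compression disc of smaller weight or a disc summand with essential boundary''. So there is a genuine gap, and the two concrete claims you make around it do not work.

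First, irreducibility does not rule out $2$-sphere components of $G$ or $H$. It tells you a normal sphere bounds a ball, not that a sphere cannot be a summand, so $\chi(G)+\chi(H)=1$ does not yet give you a disc component. (Projective planes are excluded by irreducibility together with $\partial M\neq\emptyset$, not by orientability alone.)

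Second, the ball bounded by $G_1\cup E$ provides no weight-reducing move. $H$ and the rest of $G$ meet this ball, and replacing $G_1$ by $E\subset\partial M$ is not an operation on normal sums. Nothing forces a compression disc of smaller weight to appear. Nor is there any reason for its boundary to stay isotopic to $\partial D$, since changing the switch along a trace arc changes the boundary curve.

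The missing ingredient is a no-clean-disc-patch lemma for this situation. It should say: if $D$ is a least-weight normal compression disc for $\partial M\cut P$ and $D=G+H$ is reduced, then no component of $G\cut H$ or $H\cut G$ is a disc disjoint from $P$ that either misses $\partial M$ or meets it in a single arc. Proving this is where least weight and reducedness are actually used. One switches irregularly along the boundary of such a patch and regularly elsewhere, obtaining a non-normal surface of weight $w(D)$. One must then show, using that $D$ is incompressible, $M$ is irreducible and the sum is reduced, that this leads to a compression disc of weight less than $w(D)$.

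You cannot simply quote the paper's Theorem~\ref{Thm:NoCleanDiscPatches}, because it assumes $\partial M\cut P$ is incompressible, which is precisely what fails here. Where that hypothesis is used (a potential clean $\partial$-compression disc created by the irregular switch), you need a substitute. One such fact is that $\partial$-compressing a compression disc along an arc of $\partial M-P$ leaves at least one compression disc, since a band sum of two curves bounding discs in $\partial M-P$ again bounds a disc there.

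Once the lemma is available, the proof closes without deciding whether a disc summand is essential. Because $D$ is connected and the sum is non-trivial, every component of $G$ and $H$ meets the other summand. So a sphere or disc component would contain an innermost closed trace curve or an outermost trace arc cutting off a clean disc patch; discs automatically miss $P$, as $|D\cap P|=0$. Thus all components have $\chi\le 0$, contradicting $\chi(G)+\chi(H)=\chi(D)=1$.
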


This result was proved in \cite[Theorem 4.1.13]{Matveev}. Although that theorem does not
explicitly assert that the compression disc is fundamental, this is established in the proof.
So, in our proof of Theorem \ref{Thm:ViolatingFundamental}, we may assume that $\partial M \cut P$
is incompressible. In the terminology of Matveev \cite{Matveev}, this means that $(M,P)$ is 
\emph{boundary-irreducible}. 

We recall some of the standard terminology from normal surface theory. Let $S$ be a normal surface
that is the normal sum of two surfaces $S_1$ and $S_2$. We may pattern-isotope one of the surfaces, preserving
its normality throughout, so that afterwards $S_1$ and $S_2$ intersect transversely in a collection of simple closed curves and properly embedded
arcs. Then each component of $S_1 \cut S_2$ and $S_2 \cut S_1$ is a \emph{patch}. It is 
a \emph{clean disc patch} if it is a disc that is disjoint from the pattern and is either disjoint from $\partial M$
or intersects $\partial M$ in a single arc.

A summation $S = S_1 + S_2$ is in \emph{reduced form} if there do not exist normal surfaces $S'_1$ and $S'_2$
where each $S'_i$ is pattern-isotopic to $S_i$, and $S = S'_1 + S'_2$, with $|S'_1 \cap S'_2| < |S_1 \cap S_2|$.

Let $S$ be a compact surface properly embedded in $M$. A \emph{clean boundary-compression disc} for $S$
is a disc embedded in $M$ such that $\partial D$ is the concatenation of two arcs $\alpha$ and $\beta$, where
\begin{enumerate}
\item $\alpha = D \cap S$ does not separate off a disc from $S$ that is disjoint from $P$;
\item $\beta = D \cap \partial M$ is disjoint from $P$.
\end{enumerate}
When $S$ has no clean boundary-compression disc, it is said to be \emph{cleanly boundary-incompressible}.

Note that a violating disc that intersects $P$ as few times as possible is cleanly boundary-incompressible.

When $S$ is a normal sum of two normal surfaces $S_1$ and $S_2$ in general position, 
it is well known (see for example \cite[Section 3.2.2]{Matveev}) that $S$ is obtained from $S_1 \cup S_2$ by resolving the intersections $S_1 \cap S_2$
as follows. A regular neighbourhood of $S_1 \cap S_2$ is removed from $S_1$ and $S_2$. This
regular neighbourhood is a collection of discs, annuli and M\"obius bands. At each component of $S_1 \cap S_2$,
a pair of discs or annuli are inserted, lying in a regular neighbourhood of that component of $S_1 \cap S_2$. These
are known as \emph{trace} discs and annuli.
This procedure is called a \emph{switch}. There are two possible ways to perform a switch. Only one choice of
switch at each component of $S_1 \cap S_2$ leads to the normal surface $S$. These are called
\emph{regular switches}. The other type of switch is an \emph{irregular switch}. If a switch is made at each
component of $S_1 \cap S_2$ and at least one of these switches is irregular, then the resulting surface $S'$
admits a disc $D$ in some face of $\mathcal{T}$, such that $D \cap S'$ is an arc in $\partial D$, and the
remainder of $\partial D$ lies in an edge of $\mathcal{T}$. When $D$ is disjoint from $\partial M$,
this leads to an isotopy that reduces the weight of $S'$. When $D$ intersects $\partial M$ but not $P$,
then it is a potential clean boundary-compression disc for $S'$. If it is not a clean boundary-compression disc
then, assuming $\partial M \cut P$ is incompressible, there is an isotopy of $S'$ leaving $P$ fixed,
that reduces its weight. On the other hand, when $D$ intersects $P$, then a small
isotopy of $D$ pushing it off the face makes it a potential clean boundary-compression disc.
There are two possible directions in which to push $D$ off the face and if neither direction gives
a clean boundary-compression disc, then, again assuming $\partial M \cut P$ is incompressible, 
$S'$ must have been a disc parallel to a disc in $\partial M$ intersecting $P$ in a single arc.
This observation leads to the following result.

\begin{proposition} 
\label{Prop:NoProduct}
Let $M$ be a compact orientable irreducible 3-manifold with pattern $P$. Let $\mathcal{T}$ be
a triangulation for $M$ in which $P$ is simplicial. Suppose that $\partial M \cut P$ is incompressible.
Let $S$ be a compact connected orientable surface that is incompressible and cleanly boundary-incompressible
and that is not a disc parallel to a disc in $\partial M$ intersecting $P$ in a single arc  or the empty set.
Suppose that $S$ is the sum of two normal surfaces $S_1$ and $S_2$ in general position,
and that this summation is in reduced form. Assume also that $S$ has least weight, up to isotopy
that preserves the pattern. Then no component $X$ of $M \cut (S_1 \cup S_2)$ is a product region,
in the following sense: there is a homeomorphism from $X$ to the product of a surface and an interval,
the homeomorphism taking $P \cap X$, $N(S_1 \cap S_2)$ and $\partial M \cap X$ to unions of interval fibres.
\end{proposition}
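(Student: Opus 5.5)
We argue by contradiction, following the standard ``no product region'' argument from normal surface theory (as in Jaco--Oertel and \cite[Section 4.1]{Matveev}), adapted to the pattern. Suppose some component $X$ of $M \cut (S_1 \cup S_2)$ is a product region $F \times [0,1]$, with $P \cap X$, $N(S_1 \cap S_2)$ and $\partial M \cap X$ unions of interval fibres. Then the horizontal boundary $F \times \{0,1\}$ consists of two patches: one, $F_0$, lying in $S_1$, and the other, $F_1$, lying in $S_2$. Indeed, the vertical boundary runs along intersection curves and $\partial M$, so the two ends must come from opposite surfaces. Using the product structure, we perform an exchange. Replace $F_0 \subset S_1$ by a copy of $F_1$ pushed slightly, and $F_1 \subset S_2$ by a copy of $F_0$. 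This gives new surfaces $S_1'$ and $S_2'$, which are pattern-isotopic to $S_1$ and $S_2$ via the product. The isotopy is through fibres, and since $P \cap X$ is a union of fibres, the isotopy preserves $P$.

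The key steps are as follows.

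\begin{enumerate}
\item Check that the switches along $\partial_v X \cap N(S_1 \cap S_2)$ are consistent. Under the exchange, the curves of $S_1 \cap S_2$ on the boundary of $F$ can be removed, and $S = S_1' + S_2'$ still holds when the regular switches match up.
\item Conclude that $|S_1' \cap S_2'| < |S_1 \cap S_2|$. This contradicts reduced form.
\end{enumerate}

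Step 2 can fail only if the exchange does not preserve normality, or if along the vertical boundary the switch forced on $S$ is irregular relative to the new configuration. This is the main obstacle. In that case, the surface obtained from $S_1 \cup S_2$ by switching regularly everywhere except along $\partial F$, where we switch irregularly, is isotopic to $S$; the trace annuli and discs slide across the product. By the discussion preceding the proposition, an irregular switch produces a disc $D$ in a face of $\mathcal{T}$ meeting $S'$ in one arc, with the rest of $\partial D$ in an edge. We treat three cases according to how $D$ meets $\partial M$ and $P$.

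\begin{itemize}
\item If $D$ is disjoint from $\partial M$, there is an isotopy reducing weight. This contradicts the assumption that $S$ has least weight.
\item If $D$ meets $\partial M$ but not $P$, then $D$ is a potential clean boundary-compression disc. Since $S$ is cleanly boundary-incompressible, and $\partial M \cut P$ is incompressible, it again yields a weight-reducing pattern-preserving isotopy.
\item If $D$ meets $P$, we push it off the face in the two directions. Either we get a clean boundary-compression disc, which is excluded, or $S$ is a disc parallel to a disc in $\partial M$ meeting $P$ in a single arc or the empty set, which is also excluded by hypothesis.
\end{itemize}

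Incompressibility of $S$ is used to handle annular vertical boundary components. There, the exchange alters $S$ by compressing or by an annulus swap, and any compressing disc must bound a disc in $S$; irreducibility of $M$ then gives an isotopy reducing weight.

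To organise the argument cleanly, we would first treat the case where $F$ is a disc and $X$ is a ball. This is where the fibred structure of $P \cap X$ matters most: it guarantees that the pattern on both ends matches, so the exchange is a pattern-isotopy. We would then note that the general product case reduces to the same local analysis along each component of $\partial F$. The delicate part will be bookkeeping, to verify that the total weight after exchanging regular and irregular switches is unchanged, so that the least-weight and reduced-form hypotheses apply.
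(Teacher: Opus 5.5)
Your proposal follows essentially the same route as the paper. When the regular switches along the curves of $S_1 \cap S_2$ in $\partial X$ are the ones whose traces avoid $X$, the exchange across the product reduces $|S_1 \cap S_2|$ and contradicts reduced form; otherwise a surface switched irregularly there is pattern-isotopic to $S$ across $X$ and, by the discussion preceding the proposition, admits a weight-reducing isotopy, contradicting least weight. The paper also states explicitly a point you use only implicitly: $\partial X$ must contain a curve of $S_1 \cap S_2$, since otherwise $S$ would be disconnected, and this is what forces the two horizontal ends of $X$ to lie in different surfaces $S_1$ and $S_2$.
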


\begin{proof}
Note that $\partial X$ contains at least one component of $S_1 \cap S_2$, since otherwise $S$ is not connected.
Resolve the intersections between $S_1$ and $S_2$ that lie in $\partial X$ in such a way that none of the trace discs and annuli that 
are inserted are vertical in $X$.
If all of these switches
are regular switches, then the summation was not reduced, for the following reason. We can remove
$S_1 \cap X$ from $S_1$, and then insert $S_2 \cap X$, forming a surface $S'_1$. We can
also form $S'_2$ by removing $S_2 \cap X$ from $S_2$, and then inserting $S_1 \cap X$.
Then $S = S'_1 + S'_2$, but $|S'_1 \cap S'_2| < |S_1 \cap S_2|$. On the other hand, 
if not all the switches are regular switches, then we can perform these switches and
then perform regular switches at the remaining components of $S_1 \cap S_2$.
The resulting surface $S'$ is isotopic to $S$, via an isotopy preserving the pattern.
Since we are assuming that $S$ is incompressible and cleanly boundary-incompressible
and not a disc parallel to a disc in the boundary intersecting $P$ in arc, we
deduce that $S'$ admits an isotopy preserving the pattern that reduces its weight.
This contradicts our minimality assumption.
\end{proof}

The following is \cite[Lemma 4.1.8]{Matveev}.

\begin{theorem}
\label{Thm:NoCleanDiscPatches}
Let $M$ be a compact orientable irreducible 3-manifold with pattern $P$. Let $\mathcal{T}$ be
a triangulation for $M$ in which $P$ is simplicial. Suppose that $\partial M \cut P$ is incompressible.
Let $S$ be a compact orientable normal surface properly embedded in $M$, that is incompressible
and cleanly boundary-incompressible and that has no component that is a disc parallel to a
disc in $\partial M$ intersecting $P$ in arc or the empty set. Suppose that $S$ is a sum of
two normal surfaces $S_1$ and $S_2$ in general position and that this summation is reduced.
Suppose also that $S$ has minimal weight up to isotopy preserving the pattern. Then 
no patch of the summation is a clean disc patch.
\end{theorem}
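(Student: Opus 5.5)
The plan is to argue by contradiction: assume some patch $E$ of the reduced summation $S = S_1 + S_2$ is a clean disc patch. Say $E \subset S_1$, so $E$ is a component of $S_1 \cut S_2$. I will use $E$ to exhibit one of two forbidden configurations: either a compression or clean boundary-compression disc for $S$, or a pattern-preserving isotopy (or exchange) reducing the weight of $S$ or $|S_1 \cap S_2|$. We split into two cases.

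\emph{Case 1: $E$ is disjoint from $\partial M$.} Then $\partial E$ is a simple closed curve of $S_1 \cap S_2$. Look at the corresponding trace curve on $S$ obtained after the regular switches. A parallel copy of $E$, pushed slightly to the appropriate side, is a disc $E'$ with $E' \cap S = \partial E'$. Since $S$ is incompressible, $\partial E'$ bounds a disc $E''$ in $S$. Since $M$ is irreducible, $E' \cup E''$ bounds a ball. I would then exchange the portions of $S_1$ and $S_2$ inside this ball, as in the proof of Proposition \ref{Prop:NoProduct}. This either produces a summation $S = S_1' + S_2'$ with fewer intersection curves, contradicting reducedness, or, if irregular switches are forced, yields a surface isotopic to $S$ that admits a weight-reducing isotopy, contradicting minimal weight. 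If $\partial E$ is an orientation-reversing curve, the regular neighbourhood is a M\"obius band; this is impossible because $S$ is orientable and $E$ is a disc, so the trace is two-sided.

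\emph{Case 2: $E \cap \partial M$ is a single arc $\beta$ disjoint from $P$.} Here $\partial E = \alpha \cup \beta$ with $\alpha \subset S_1 \cap S_2$ an arc. Pushing $E$ off along the trace gives a disc $E'$ with $E' \cap S$ an arc $\alpha'$ and $E' \cap \partial M$ an arc disjoint from $P$. This is a potential clean boundary-compression disc. Since $S$ is cleanly boundary-incompressible, $\alpha'$ separates off a disc $E''$ of $S$ that is disjoint from $P$. Then $E' \cup E''$ is a properly embedded disc missing $P$. Because $\partial M \cut P$ is incompressible, its boundary bounds a disc in $\partial M \cut P$, and irreducibility gives a ball. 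I would then repeat the exchange argument of Case 1 inside this ball to contradict reducedness or minimal weight. The hypothesis that no component of $S$ is a boundary-parallel disc meeting $P$ in at most an arc rules out the degenerate outcome where $S$ itself is the disc $E''$.

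The main obstacle is the exchange step: showing that the ball region cut out by $E$ and $E''$ really lets us swap pieces of $S_1$ and $S_2$ while keeping both normal and keeping their normal sum equal to $S$. This means checking that the switches along $\partial E$ are regular on the correct side. When they are not, the argument of Proposition \ref{Prop:NoProduct} and the irregular-switch discussion preceding it supplies a weight-reducing disc in a face, using incompressibility of $\partial M \cut P$. I expect to follow Matveev's Lemma 4.1.8 closely here. The only change is to check that the discs $D$ arising from irregular switches avoid $P$, which holds because clean patches are disjoint from $P$.
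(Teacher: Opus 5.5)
The paper gives no argument for this statement; it just quotes \cite[Lemma 4.1.8]{Matveev}. Your outline points toward that kind of argument, and everything up to the construction of the ball is sound. Pushing the clean disc patch $E$ off to the side not used by the regular switch gives a compression disc for $S$ (Case 1) or a potential clean boundary-compression disc (Case 2). Then incompressibility, clean boundary-incompressibility, incompressibility of $\partial M \cut P$ and irreducibility give a ball $Q$ bounded by $E'$ and a disc $E''\subset S$ (together with a disc in $\partial M\cut P$ in Case 2).

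The gap is the next step, which you flag yourself and which carries all the content. The disc $E''$ lies in the normal sum $S$, not in $S_2$. In general it contains many trace curves and is assembled from patches of both $S_1$ and $S_2$, so $Q$ is not a product region in the sense of Proposition \ref{Prop:NoProduct}. Consequently ``exchanging the portions of $S_1$ and $S_2$ inside $Q$'' does not produce surfaces pattern-isotopic to $S_1$ and $S_2$:
\begin{itemize}
\item Interchanging $S_1\cap\partial Q$ with $S_2\cap\partial Q$ is an isotopy only when these are the two horizontal faces of a product; otherwise it changes the surfaces (it can amount to a compression).
\item At every trace curve of $E''$ the corner of $\partial Q$ is already a corner of $S$. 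So the interchange there uses the irregular switch, and the new summands are not normal.
\item The fallback that irregular switches give ``a surface isotopic to $S$'' is likewise only automatic for product regions. For instance, the irregular switch at $\partial E$ alone splits off a $2$-sphere parallel to $\partial Q$ (made of $E$ and $E''$) and attaches a tube to the rest of $S$. The result is not isotopic to $S$.
\end{itemize}
So as written, your Cases 1 and 2 stop at the ball and never reach a contradiction.

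What you need is an additional argument, for example an innermost or minimal-complexity choice among clean disc patches, that produces a genuine product region. This means a ball whose interior misses $S_1\cup S_2$ and which is cut off by $E$ and a \emph{single} disc patch $E_2$ of $S_2$ with $\partial E_2=\partial E$. In Case 2, $E_2$ should instead share the arc $\alpha$ with $E$ and cobound a disc in $\partial M\cut P$. Once such a region is in hand, the finish is quick and uses reducedness rather than weight:
\begin{itemize}
\item $E_2$ cannot lie on the side of $\partial E$ that the regular switch pairs with $E$. Otherwise $E\cup E_2$ would be a $2$-sphere component of $S$ bounding a ball (Case 1), or a boundary-parallel disc component missing $P$ (Case 2), and both are excluded by hypothesis.
\item So $E_2$ lies on the side paired with $S_1\setminus E$. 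Replacing $E$ by $E_2$ in $S_1$ and $E_2$ by $E$ in $S_2$ is then compatible with the regular switches.
\item This produces normal surfaces pattern-isotopic to $S_1$ and $S_2$, with the same sum $S$ and one fewer curve of intersection, contradicting reducedness.
\end{itemize}
Your sketch does not supply the reduction to this product situation, and without it the proof is incomplete.
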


\begin{proof}[Proof of Theorem \ref{Thm:ViolatingFundamental}]
In the case where $\partial M \cut P$ is compressible, then Theorem \ref{Thm:CompressionDiscFund} provides
a fundamental compression disc. So we may assume that $\partial M \cut P$ is
incompressible. 

Let $D$ be a violating disc that intersects the pattern as few times as possible. Then $D$ is
cleanly boundary-incompressible and it is automatically incompressible.
So, by \cite[Corollary 3.3.25]{Matveev}, $D$ can be isotoped,
preserving the pattern, into normal form without increasing its weight.
We may assume that $D$ has least weight among all normal violating discs that
intersect the pattern as few times as possible.

Suppose that $D$ is a normal sum $S_1 + S_2$. We may assume that
this summation is in reduced form. By \cite[Theorem 4.1.36]{Matveev},
neither $S_i$ has a component that is a sphere, projective plane, or a disc intersecting the pattern at most once.
Hence, each $S_i$ satisfies $-\chi(S_i) + |S_i \cap P| \geq 0$. Note that Euler
characteristic and the number of intersection points with the pattern are both additive
under normal summation.
Since $\chi(D) > 0$, then one of $S_1$ or $S_2$ has positive Euler characteristic, say $S_1$.
Then $S_1$ has a disc component. It must intersect the pattern at least twice. We may assume
that $S_1$ is connected, by adding any other components of $S_1$ to $S_2$ if necessary.
Now, $S_2$ must therefore have zero Euler characteristic and intersect the pattern at most once.
Since no component of $S_2$ is a disc intersecting the pattern at most once, we deduce that
every component of $S_2$ has zero Euler characteristic.

Our goal is to show that $S_1$ is a violating disc. Since $S_1$ has smaller weight than $D$,
this will contradict our earlier minimality assumption. Suppose that, on the contrary,
$S_1$ is a parallel to a disc in $\partial M$ that intersects $P$ in a single arc or a tripod.

Now $S_2$ cannot have any closed components, since the intersection between such a component
and $S_1$ would lead to a simple closed curve of $S_1 \cap S_2$ and an innermost
such curve in $S_1$ would create a clean disc patch. So, $S_2$ consists of annuli
and M\"obius bands.

We claim that each arc of $S_1 \cap S_2$ is essential in $S_2$. If not, there is an 
inessential arc that is outermost in $S_2$. It therefore separates off a disc $D_2$ in $S_2$.
This disc intersects $P$ at most once, since $S_2$ intersects $P$ at most once.
Also $D_2 \cap S_1$ separates off a sub-disc $D_1$ of $S_1$ intersecting $P$ at most once.
The union of these discs creates a properly embedded disc in $M$ intersecting
$P$ fewer times that $D$ did. Hence, this disc is parallel to a disc in $\partial M$
intersecting $P$ in the empty set or an arc. But then $D_2 \cup (S_1 \cut D_1)$ is
a violating disc that intersects $P$ at most as many times as $D$ does and that
has smaller weight than $D$, contradicting our minimality assumption
about $D$.

We claim that each arc of $S_1 \cap S_2$ is essential in $S_2$. If not, there is an 
inessential arc that is outermost in $S_2$. It therefore separates off a disc $D_2$ in $S_2$.
This disc intersects $P$ at most once, since $S_2$ intersects $P$ at most once.
Also $D_2 \cap S_1$ separates off a sub-disc $D_1$ of $S_1$ intersecting $P$ at most once.
The union of these discs creates a properly embedded disc in $M$ intersecting
$P$ fewer times that $D$ did. Hence, this disc is parallel to a disc in $\partial M$
intersecting $P$ in the empty set or an arc. But then $D_1$ and $D_2$ cobound
a product region as in Proposition \ref{Prop:NoProduct}, contradicting that proposition.

Hence, $S_2 \cut S_1$ consists of squares. Each square has boundary consisting of two
arcs in $S_1$ and two arcs in $\partial M$. At least one square is 
in the ball bounded by $S_1$ and every such square is boundary-parallel in the ball. 
Consider an outermost such square. It is  parallel to a square in $S_1$. But the resulting product region contradicts
Proposition \ref{Prop:NoProduct}. 

Thus, we have shown that $S_1$ and $S_2$ cannot intersect, which is impossible, because
$S$ is connected. So $S$ must have been fundamental.
\end{proof}

We now complete the proof of Theorem \ref{Thm:EssentialNPcoNP}. We are given 
a triangulation $\calT$ of a compact orientable 3-manifold $M$, with boundary pattern
$P$ that is simplicial. When $P$ is inessential, we need to provide a certificate
that confirms this and that can be verified in polynomial time as function of $t= |\calT|$,
the number of tetrahedra. When $M$ is irreducible, Theorem \ref{Thm:ViolatingFundamental}
shows that there is a violating disc $D$ that is fundamental. This is the required certificate.
The algorithm of Agol, Hass and Thurston \cite{AgolHassThurston} can be used to verify that $D$ is indeed
a disc intersecting the pattern at most 3 times. Also, by \cite[Theorem 9.4]{Lackenby:EfficientCertification},
one can check whether $\partial D$ bounds a disc in $\partial M$. If there is no such disc,
then $D$ is violating. So suppose that $\partial D$ does bound a disc $D'$ in $\partial M$.
It may be the case $\partial D$ bounds two discs, in which case we consider them both in 
turn. Again using \cite[Theorem 9.4]{Lackenby:EfficientCertification}, one can check whether 
each component of $D' \cut P$ is a disc intersecting $\partial D'$ in one arc and
with at most one vertex of $P$ in its boundary. Since $D$ is violating, at least one
component of $D' \cut P$ is not of this form, and the algorithm provided by \cite[Theorem 9.4]{Lackenby:EfficientCertification}
will confirm this.

We need to deal with the general case where $M$ is reducible. Our certificate for establishing that $P$ is essential is:
\begin{enumerate}
\item a vector $(S)$ of a (possibly empty) normal surface $S$ in $\mathcal{T}$ with weight at most $2^{740t^2}$; this will in fact be a collection of disjoint embedded spheres;
\item a triangulation $\mathcal{T}'$ for a 3-manifold $M'$ with at most $1000t$ tetrahedra; this will be $M \cut S$ with a 3-ball attached to each spherical boundary component;
\item a certicate that $M'$ has inessential boundary pattern.
\end{enumerate}

We now explain why this certificate exists.
It is proved in \cite[Lemma 4]{King} that there is a maximal collection of disjoint non-parallel normal 2-spheres $S$ in $\mathcal{T}$, none of which bounds a ball in $M$, 
with weight at most $2^{740t^2}$.
A triangulation for $M \cut S$ with at most $1000t$ tetrahedra is produced using \cite[Theorem 11.4]{Lackenby:EfficientCertification}. Then a coned 3-ball
is attached to each spherical boundary component, forming the triangulation $\mathcal{T}'$ for $M'$. 
The copy of $P$ in $M'$ is essential if and only if $P$ was essential in $M$, because
any violating disc for $P$ in $M'$ can be chosen to avoid the attached 3-balls, and so forms a violating disc in $M$, and conversely, any violating disc in $M$ can be chosen to avoid $S$ and so becomes a violating disc in $M'$. Since we are assuming that $P$ is inessential, there is a certificate for $(M',P)$ that establishes this.

The verification of the certificate is straightforward. One checks that $S$ is a collection of 2-spheres, using the
Agol-Hass-Thurston algorithm. Then $\calT'$ is produced using the algorithm in \cite[Theorem 11.4]{Lackenby:EfficientCertification}.
Finally the certificate showing that $P$ is inessential in $M'$ is checked in the way described above.

\section{Certifying an essential hierarchy}
\label{Sec:CertifyingEssentialHierarchy}

In this section, we complete the proof of Theorem \ref{Thm:EssentialNPcoNP}, by showing that 
{\sc Essential boundary pattern} is in NP. Thus, we are given a triangulation $\calT$ of a compact orientable 3-manifold
$M$ with essential pattern $P$ that is simplicial, and we must provide a certificate, verifiable in polynomial time, that
establishes that $P$ is indeed essential. We may assume that $\partial M$ is non-empty, as otherwise there is nothing to prove.
Using the techniques at the end of the previous section,
it is not hard to reduce to the case where $M$ is irreducible. We will describe this reduction in more detail 
at the end of this section, but for the moment, suppose that $M$ is irreducible. Then, by Theorem \ref{Thm:EssentialHierarchy}, 
we know that $(M,P)$ admits an essential hierarchy. Moreover, the existence of such a hierarchy
guarantees that $P$ is essential. So, our goal is to write down such a hierarchy and prove that it is essential. We know by 
Proposition \ref{Prop:RefinedAlgorithmTerminates} that there is such a hierarchy with length that is
linearly bounded in terms of the number of tetrahedra in $\calT$. Furthermore, by Theorem \ref{Thm:RemainUniformType},
the intermediate 3-manifolds have handle structures of uniform type. However, this only controls the local
complexity of each handle, not the total number of handles. Thus, it is not immediately clear how to 
encode the handle structures of these manifolds. Nevertheless, we will establish the following result.

\begin{theorem}[Efficient encoding of a hierarchy]
\label{Thm:VerifyEssHierarchy}
Let $M$ be a compact orientable irreducible 3-manifold with a boundary pattern $P$.
Let $\mathcal{H}$ be a handle structure for $(M,P)$ of uniform type. If $(M,P)$
admits an essential hierarchy, then there is a hierarchy for $(M,P)$ that may be
described by an amount of data that is at most a polynomial function of $|\mathcal{H}|$
and this hierarchy can be verified as essential by a polynomial-time algorithm.
Furthermore, for each decomposition 
$$(M_i,P_i) \xrightarrow{S_i} (M_{i+1}, P_{i+1})$$
in this hierarchy, one of the following holds:
\begin{enumerate}
\item $S_i$ is connected, orientable and non-separating and has smallest pattern-complexity
among all such surfaces, apart from when $M$ is closed and $i=1$, when $S_1$ may be separating;
\item $S_i$ is a union of non-trivial squares;
\item $S_i$ is a union of properly embedded incompressible pattern-incompressible annuli disjoint from the pattern.
\end{enumerate}
\end{theorem}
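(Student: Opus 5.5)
Following the refined algorithm of Section \ref{Sec:AlgorithmParallelity}, I will build the hierarchy as an idealised run of that algorithm in which no backtracking is needed. Since $(M,P)$ admits an essential hierarchy, Theorem \ref{Thm:EssentialHierarchy} gives that $M$ is irreducible and $P$ is essential. If $M$ is closed, I first cut along a Haken surface. I take it to be a fundamental normal surface of least complexity, using a Tollefson--Wang type argument, so its weight is exponential in $|\mathcal{H}|$; it may be separating, which is the allowed exception in (1). I then repeat the following three steps. First, apply {\sc Create admissible handle structure} (Proposition \ref{Prop:CreateAdmissibleHS}); because $P$ is essential, no violating disc arises, so the squares used are non-trivial squares, giving type (2). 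Second, cut along the vertical boundary annuli of those components of the interior parallelity bundle that are not $I$-bundles over discs, and then along vertical squares. The annuli are incompressible and pattern-incompressible, since otherwise Proposition \ref{Prop:CompressAnnuli} would produce a violating disc or a simpler handle structure, and we may choose the latter at the outset. This gives type (3), with type (2) for the squares. Third, cut along a connected non-separating surface of least pattern-complexity, which is type (1).

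By Corollary \ref{Cor:MinimalComplexityImpliesEssential}, together with Lemmas \ref{Lem:EssentialPushesForward} and \ref{Lem:IrreduciblePushesForward}, every manifold in the sequence stays irreducible with essential pattern. By Proposition \ref{Prop:ComplexityDecreases}, $c_q$ strictly drops every four steps, as in Proposition \ref{Prop:RefinedAlgorithmTerminates}, so the length is $O(|\mathcal{H}|)$.

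For the encoding, Theorem \ref{Thm:RemainUniformType} says each handle structure $\mathcal{H}_i$ has uniform type. It also says that, after the admissible and parallelity steps, each 0-handle of $\mathcal{H}$ meets $\mathcal{H}_i$ in one of finitely many configurations. Hence $|\mathcal{H}_i| \le C|\mathcal{H}|$ with $C$ universal, and $\mathcal{H}_i$ is described by polynomially many bits. Each type (1) surface is chosen via Proposition \ref{Prop:ExpBoundExtendedWeight}, so its extended weight is at most $k^{|\mathcal{H}_i|}$ and its normal vector has polynomially many bits. The squares and annuli are specified combinatorially: they lie in 0-handles or form vertical boundaries of parallelity bundles, and a polynomial description of the bundle suffices. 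The certificate therefore consists of the sequence of normal vectors, or combinatorial data for the squares and annuli, in the successive handle structures $\mathcal{H}_i$.

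Verification is done in polynomial time, step by step. Given $\mathcal{H}_i$ and a vector for $S_i$, I check normality and connectedness, and compute the cut handle structure $\mathcal{H}_{i+1}$ in time polynomial in the bit size, using Agol--Hass--Thurston style methods. Normal surfaces with exponential weight cut $\mathcal{H}_i$ into exponentially many parallel pieces, so these must be amalgamated into parallelity handles. This amalgamation is exactly the step where admissibility and the removal of parallelity bundles keep the size linear, so the verifier recomputes these operations. In the final balls, the verifier checks essentiality using Proposition \ref{Prop:EssentialBall}, through simplicity and 4-connectivity of the dual graph. Correctness then follows from Lemma \ref{Lem:EssentialIrredPullsBack}; this lemma needs no minimality, so the verifier need not check property (1).

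I expect the main obstacle to be the efficient computation of $\mathcal{H}_{i+1}$ from an exponentially-weighted normal surface, so that the verifier can carry it out. My plan is to encode the normal discs in each 0-handle as multiplicities of disc types and to treat runs of parallel copies as interior parallelity handles. The consolidation steps of the refined algorithm, namely replacing $I$-bundles over discs by 2-handles and cutting along vertical boundary, then collapse them to polynomial size. The verifier must also do this consolidation on compressed (multiplicity) data, for which I would adapt the orbit-counting techniques of Agol--Hass--Thurston.
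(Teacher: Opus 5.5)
Your architecture is the paper's: an idealised run of the refined algorithm using least-pattern-complexity surfaces of extended weight at most $k^{|\mathcal{H}_i|}$ (Proposition~\ref{Prop:ExpBoundExtendedWeight}), Theorem~\ref{Thm:RemainUniformType} to keep $|\mathcal{H}_i|$ linear, an Agol--Hass--Thurston cutting procedure (Theorem~\ref{Thm:CutHS} and Proposition~\ref{Prop:DecomposePolyTime}), and a combinatorial check on the final spheres.

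\textbf{The gap: compressible or pattern-compressible vertical boundary annuli.} You say that in this case Proposition~\ref{Prop:CompressAnnuli} gives a simpler handle structure and that `we may choose the latter at the outset'. But that replacement is not a decomposition, and it cannot be computed from $\mathcal{H}_i$ alone. It deletes a solid torus between $A$ and an annulus $A' \subset \partial M$, or collapses a ball to a single 0-handle, or deletes the region between $A$ and $D_2' \cut D_1'$. The only reason the result is still a handle structure for $(M_i,P_i)$ is the disc $E$.

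Your certificate records only normal vectors for the type (1) surfaces and combinatorial data for squares and annuli. Your verifier only recomputes cutting, admissibility and consolidation. So it has no way to check these steps. Accepting them unchecked would let a prover substitute an arbitrary handle structure (say, of a ball with essential pattern), and soundness fails.

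Nor can these steps be assumed away. Proposition~\ref{Prop:CompressAnnuli} exists precisely because nothing in the construction stops a vertical boundary annulus from being compressible or boundary-parallel when $M$ is irreducible and $P$ is essential. Cutting along such an annulus yields an inessential pattern; for a boundary-parallel one, the meridian disc of the resulting solid torus is violating.

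The paper closes this with condition (5) of its certificate. At each such step it includes the compression or pattern-compression disc $D_i$, whose extended weight is at most $k^{|\mathcal{H}_i|}$ because it may be taken fundamental in a triangulation refining $\mathcal{H}_i$ (Theorem~\ref{Thm:ViolatingFundamental}). The verifier removes the interior parallelity bundle and checks that $D_i$ is a (pattern-)compression disc for the annulus. It then runs {\sc (Pattern-)compressible vertical boundary} itself and compares the output with $\mathcal{H}_{i+1}$. You need this ingredient, including the size bound.

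\textbf{Smaller points.}
\begin{enumerate}
\item Runs of parallel normal discs of $S$ that meet $\partial M$ are not interior parallelity handles, since the copies of $\partial S$ become pattern. In Proposition~\ref{Prop:DecomposePolyTime}, those components of the parallelity bundle for $S$ are eliminated inside {\sc Create admissible handle structure}, by vertical squares and removal of trivial squares. Only the components away from $\partial M$ join the interior parallelity bundle.
\item Proposition~\ref{Prop:EssentialBall} presupposes that the final pieces are balls, which the verifier cannot test directly. It checks only that the boundary consists of spheres carrying an essential pattern. That the pieces are balls then follows, as in the paper, from Lemma~\ref{Lem:EssentialPullsBack}, Lemma~\ref{Lem:IrreduciblePushesForward} and the irreducibility of $M$.
\item For closed $M$ with $b_1(M)=0$, a Tollefson--Wang homology-class argument does not produce the first surface. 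You would need a Jaco--Oertel type result that some incompressible surface is a vertex, hence fundamental, normal surface.
\end{enumerate}
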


Recall that the hierarchy construction loop given in Sections \ref{Sec:AlgorithmCompressible} and \ref{Sec:AlgorithmParallelity} used the following steps iteratively (where the numbering is as
given there):
\begin{itemize}
\item[(2)] Apply {\sc Create admissible handle structure}.
\item[(2$'$)] Replace each component of the interior parallelity bundle that is an $I$-bundle over a disc by a single 2-handle.
\item[(2$''$)] Cut along the vertical boundary components of the remaining $I$-bundles and
further decompose these using squares into 3-balls with essential boundary
patterns.
\item[(6)] Let $S$ be any connected, non-separating, properly embedded, oriented normal surface in the resulting 3-manifold. 
\item[(8)] Assuming that $S$ is not a violating disc, then cut the handle structure along $S$.
\end{itemize}

We can view the iterations of this loop as running in the order (6), (8), (2), (2$'$), (2$''$),
after an initial application of (2), (2$'$), and (2$''$).
The handle structures that are obtained by repeating this loop can be constructed
using the following result.

\begin{proposition}
\label{Prop:DecomposePolyTime}
Let $\calH$ be a handle structure of uniform type for the compact orientable irreducible 3-manifold $M$ with essential boundary pattern $P$. 
Let $S$ be a connected properly embedded
normal surface in $M$. Then there is an algorithm that produces an admissible handle structure 
that is obtained by applying steps $(8)$, $(2)$, $(2')$, $(2'')$. This runs in polynomial time as a function of
the number of handles of $\calH$ and the logarithm of the extended weight of $S$.
\end{proposition}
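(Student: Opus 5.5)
The difficulty is that $S$ may have exponentially many normal discs, so we cannot cut along it disc by disc. Instead we exploit parallelism. For each 0-handle $H$ of $\calH$, the discs of $S \cap H$ fall into finitely many normal disc types, the number depending only on the uniform type of $\calH$. Discs of the same type are parallel, and consecutive parallel discs cobound a region that becomes an interior parallelity 0-handle of $\calH \cut S$. So I would first compute, in time polynomial in $|\calH|$ and $\log$ of the extended weight, the normal coordinates of $S$ in each handle. From these I would build $\calH \cut S$ in compressed form. Each non-parallelity piece of each handle of $\calH$ is recorded explicitly; there are boundedly many per handle of $\calH$, since within a 0-handle there are boundedly many disc types and hence boundedly many complementary regions not lying between two parallel copies of one type. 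Each maximal stack of parallel regions is recorded only by its type and its multiplicity, written in binary. The 1-handles and 2-handles are treated the same way, since $S$ meets them in product pieces.

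Step (8) is then the production of this compressed structure. Step (2) is {\sc Create admissible handle structure} in Proposition \ref{Prop:CreateAdmissibleHS}, which I would apply to the non-parallelity pieces only. Every negative-index or zero-index-with-pattern 0-handle of $\calF$ arises at a non-parallelity piece or at the ends of a stack. This is because Lemma \ref{Lem:AdmissibleDecomposition} forces all pieces to have non-negative index, and the interior pieces of a stack have index zero and meet no pattern. Theorem \ref{Thm:RemainUniformType} keeps the local types bounded, and each move strictly reduces the number of 1-handles among the explicitly stored handles. Hence only polynomially many moves occur, apart from moves that collapse an entire stack, and those can be carried out in one step by deleting the stack.

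For step (2$'$), I would identify the components of the interior parallelity bundle $\calB$. $\calB$ is an $I$-bundle over a surface $F$ whose handle structure has polynomially many ``explicit'' handles together with long bands of parallel handles recorded with multiplicities. Its components, and whether each is an $I$-bundle over a disc, can be read from the compressed data: collapsing each band to a single edge carrying a weight gives a polynomial-size graph, on which connectivity and Euler characteristic are computable. Each disc-bundle component is replaced by a single 2-handle, and this also eliminates the exponentially large stacks from the stored data.

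For (2$''$), I would cut along the vertical boundary annuli of the remaining components of $\calB$. Their pieces in handles of $\calH$ are again bounded in number per handle. I would then decompose the resulting $I$-bundles over arcs and discs by squares. This step is the main obstacle. A vertical boundary component of $\calB$ can run around a band of exponential length, so it must be handled through the compressed description. The resulting product pieces must be checked to be 3-balls with essential pattern, using Proposition \ref{Prop:EssentialBall}, and the result must be shown admissible without expanding the bands. I would argue that after removing $\calB$ every remaining 0-handle has positive index, so by Lemma \ref{Lem:DecompIndex} their number is at most the sum of the indices, which is bounded by a constant times $|\calH|$. The output handle structure therefore has polynomially many handles, and every step above runs in polynomial time.
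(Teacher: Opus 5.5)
There is a genuine gap where you determine the components of the parallelity bundle from your compressed data. Collapsing each stack to a single weighted edge does not give the right connectivity. In a 1-handle of $\calH$, the arcs of one normal arc type come from normal discs of several different types in the adjacent 0-handle, arranged in a fixed order. So a stack of parallelity regions in the 1-handle is glued to sub-intervals of several different 0-handle stacks, each by a translation with some offset. Conversely, a 0-handle stack feeds into several 1-handle stacks with different offsets.

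The components of the parallelity bundle $\calB$ for $S$ correspond to orbits of the resulting pseudogroup of interval translations. Regions of one stack can lie in different components, and a single component can pass through the same stack exponentially many times. So neither the components nor their individual Euler characteristics can be read off a polynomial-size weighted graph. In particular you cannot tell which components are $I$-bundles over discs, which step (2$'$) needs.

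The missing ingredient is an orbit-counting algorithm of Agol--Hass--Thurston type. The paper packages it as Theorem \ref{Thm:CutHS}. In time polynomial in $|\calH|$ and the logarithm of the extended weight of $S$, it outputs the handle structure on $M' \cut \calB$. For each component $B$ it also gives the genus and number of boundary components of its base surface, whether it is twisted, and how $\partial_v B$ meets $M' \cut \calB$.

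That data is also what resolves the step you flag as the main obstacle, and it corrects your treatment of step (2). It is not true that only the ends of stacks violate admissibility. Every parallelity 0-handle for $S$ that meets $\partial M$ meets $\calF(\calH')$ in lines whose end 0-handles have index zero and meet $P'$. So each component of $\calB$ incident to $\partial M$ is non-admissible throughout.

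Running {\sc Create admissible handle structure} on such a component does not simply delete it. It shrinks the component down to squares, and the non-trivial ones must be cut along, becoming part of the hierarchy. The paper handles such a $B$ without writing down its handles, as follows.
\begin{itemize}
\item For a component $A$ of $\partial_v B$ meeting $\partial M$, it takes a maximal collection of pairwise non-parallel vertical squares in $B$ with boundary in $A$, none parallel into $A \cap \partial M$.
\item It decomposes along the non-trivial squares and removes the balls cut off by the trivial ones.
\item This leaves a ball or a collar on $\partial_v B - A$, which is removed with its pattern transferred.
\end{itemize}
The remaining $I$-bundles are carried as unspecified pieces attached to the explicit handle structure along their vertical boundaries. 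Steps (2$'$) and (2$''$) then use only their topological type, so the exponentially many handles inside $\calB$ are never needed. Without an orbit-counting step of this kind, your compressed-stack description does not give a polynomial-time algorithm.
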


It is a consequence of Theorem \ref{Thm:RemainUniformType} that the handle structure obtained by decomposing $\calH$ along $S$
has uniform type. However, it might have a large number of handles, when $S$ has very high extended weight.
But most handles will be of the following form.

\begin{definition} 
Let $\calH$ be a handle structure for the compact 3-manifold $M$ with boundary pattern $P$.
Let $S$ be a normal surface properly embedded in $M$. Let $\calH'$ be the induced handle
structure on $M \cut S$. Then a handle of $\calH'$ is a \emph{parallelity handle for $S$}
if it lies between parallel normal discs of $S$.
\end{definition}

Each parallelity handle for $S$ is a copy of $D^2 \times I$ where $(D^2 \times I) \cap S = \partial D^2 \times I$.
We view this product structure as an $I$-bundle over $D^2$. The $I$-bundle structures can be chosen so
they agree on the intersection of any collection of parallelity handles. Hence, the
union of the parallelity handles is an $I$-bundle, called the \emph{parallelity bundle} $\calB$ for $S$.

The vertical boundary $\partial_v \calB$ of the parallelity bundle $\calB$ is a union of annuli. However, these annuli
need not be properly embedded. Instead the intersection with $\partial M$ is a union of fibres in
the bundle. We say that $\partial_v \calB \cut \partial M$ is the \emph{inner vertical boundary}.
It is a disjoint union of properly embedded annuli disjoint from the pattern, and squares in $M \cut S$.

The following result is analogous to \cite[Theorems 9.2 and 9.3]{Lackenby:EfficientCertification}. 
It has essentially the same proof, which is omitted. The proof relies heavily on the algorithm of Agol, Hass and Thurston \cite{AgolHassThurston}.

\newpage

\begin{theorem}[{\sc Cutting along a surface}]
\label{Thm:CutHS}
There is an algorithm that takes, as its input,
\begin{enumerate}
\item a handle structure $\mathcal{H}$ of uniform type, with $h$ handles, for a compact orientable 3-manifold $M$ with boundary pattern $P$;
\item a compact orientable properly embedded normal surface $S$;
\end{enumerate}
and provides as its output, the following data. If $(M',P')$ is the manifold with pattern obtained by decomposing along $S$, and $\mathcal{B}$ is its parallelity bundle for $S$, then the algorithm produces the handle structure that $M' \cut \mathcal{B}$ inherits, and for each component $B$ of $\mathcal{B}$, it determines:
\begin{enumerate}
\item the genus and number of boundary components of its base surface;
\item whether $B$ is a product or twisted $I$-bundle;
\item the way that $\partial_v B$ and $M' \cut \mathcal{B}$ intersect.
\end{enumerate}
The algorithm runs in time that is bounded by a polynomial in $h$ and the logarithm of the extended weight of $S$.
\end{theorem}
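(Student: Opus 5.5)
The plan follows the strategy of \cite[Theorems 9.2 and 9.3]{Lackenby:EfficientCertification}, with the Agol--Hass--Thurston algorithm \cite{AgolHassThurston} as the main engine. Since $\mathcal{H}$ has uniform type, $S$ is specified by a vector that records, in each handle, the number of normal discs of each of finitely many types. Each entry has $O(\log w)$ bits, where $w$ is the extended weight. Within a single handle $H$, the normal discs of a given type are stacked in parallel. Two consecutive discs are joined by a parallelity handle whenever the adjacent discs in the neighbouring handles are also consecutive of matching type. Most handles of $\mathcal{H}'$ are therefore parallelity handles, and only $O(h)$ of them are not. These exceptional handles are the ones lying next to an ``outermost'' disc of some type, or between discs of different types. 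The first step is to list these non-parallelity handles of $M' \cut \mathcal{B}$ explicitly. From the normal coordinates one locates, in each handle, the $O(1)$ gaps between distinct disc types and the outermost discs, and builds those pieces directly. This gives the handle structure of $M' \cut \mathcal{B}$ with polynomially many handles, and the construction uses only arithmetic on the coordinates.

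The second step is to understand $\mathcal{B}$ itself. Its horizontal boundary consists of normal discs of $S$, and its vertical boundary meets $M' \cut \mathcal{B}$ in a bounded number of pieces per handle of $\mathcal{H}$. Each component $B$ is the union of a long chain of parallelity handles, which is described implicitly. I would encode the base surfaces as orbits of a system of partial bijections, in the sense of Agol--Hass--Thurston. The points are the normal discs, counted with multiplicity along each coordinate, and the partial isometries are the maps induced by passing from a disc across a face of a handle to the adjacent disc. Each such map is a translation or reflection between integer intervals, and there are $O(h)$ of them. The AHT algorithm counts orbits and their sizes in time polynomial in $h$ and $\log w$. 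Applied to the pieces of $\partial_v \mathcal{B}$ and to the handles of the base, it computes the following data for each component $B$: the number of vertices, edges and faces of its base surface, and hence the Euler characteristic; the number of boundary components, obtained by running the same algorithm on the induced system on $\partial_v B$; and the way $\partial_v B$ is glued to the explicit handles of $M' \cut \mathcal{B}$. Together these give the genus of the base surface.

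To decide whether $B$ is a product or a twisted $I$-bundle, I would augment the partial bijections with a $\mathbb{Z}/2$ label recording whether each map preserves or swaps the two sides of the stacked discs, that is, the two components of $\partial_h$. Either a weighted variant of AHT, or a run of AHT on the orientation double cover (which doubles the point set), then detects whether some orbit loop has odd total label. The orientability of the base surface is detected the same way, and together these determine the bundle type.

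The hardest step is making the orbit-counting genuinely polynomial while extracting topological invariants rather than just cardinalities. In particular, the number of boundary components of each base surface, and the matching of each component of $\partial_v B$ with the correct explicit handle of $M' \cut \mathcal{B}$, require care to set up the pseudogroup so that each orbit corresponds to exactly one object. I would follow the construction of \cite{Lackenby:EfficientCertification} verbatim, only adding the pattern $P$ as extra marked points in the interval systems.
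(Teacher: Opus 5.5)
Your proposal is correct and takes essentially the same route as the paper, which omits the proof as being that of \cite[Theorems 9.2 and 9.3]{Lackenby:EfficientCertification}: write down the $O(h)$ non-parallelity handles of $M' \cut \mathcal{B}$ directly from the normal coordinates, then extract the topology of each component of $\mathcal{B}$ and its attaching data along $\partial_v \mathcal{B}$ by Agol--Hass--Thurston orbit counting \cite{AgolHassThurston}. The only slip is cosmetic: the points of your interval system should be the gaps between consecutive parallel discs of the same type (the parallelity regions), not the normal discs themselves, and such a gap is a parallelity handle regardless of what happens in the neighbouring handles.
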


\begin{proof}[Proof of Proposition \ref{Prop:DecomposePolyTime}]
Let $\calH'$ be the handle structure that $M \cut S$ inherits. This might have too many handles to be written down efficiently.
Suppose instead that we are provided by the output of Theorem \ref{Thm:CutHS}.
In Proposition \ref{Prop:DecomposePolyTime}, the next operation that is applied to $\calH'$ is {\sc Create admissible handle structure}. If $\calB$ has any components
that are incident to $\partial M$, then an implementation of {\sc Create admissible handle structure} will remove them, for the following reason.
Let $B$ be such a component of $\calB$ that is incident to $\partial M$. There must be a 0-handle $H$ of $\calH'$ in $\calB$ that
is incident to $\partial M $. Since $H$ is a parallelity handle for $S$ that is incident to $\partial M$, its intersection with $\calF(\calH')$ is a line of 0-handles and 1-handles. Suppose first that there is more than one
0-handle of $\calF(\calH') \cap H$. Then one such 0-handle $D$ intersects $P'$ in two points and intersects $\calF^1(\calH')$ in a single arc. 
Hence, $\calH'$ fails to be admissible. We could perform the operation described
in {\sc Create admissible handle structure}, which removes from $\calH'$ the 1-handle and 2-handle that are incident to $D$ and transfers arcs of $P'$ incident to these handle across them. This 1-handle and 2-handle were in $\calB$, and we view the new parallelity bundle for $S$ as obtained from $\calB$ by removing these two handles. In this way, we could continue to shrink $B$ until it contained no 2-handles. Then $B \cap \calF(\calH')$ would consist of squares. For each such square that is non-trivial, we could decompose along it. For each square $D$ that is trivial, it is parallel to a disc $D'$ in $\partial M'$ that intersects $P'$ as shown in Figure \ref{Fig:Trivial4Discs}. We can remove the 3-ball bounded by $D \cup D'$ and transfer the boundary pattern in $D'$ across to $D$. However, there is an alternative approach which is considerably faster to implement, and which has the same end result. Let $A$ be a component of $\partial_v B$ that is incident to $\partial M$. Pick a maximal collection of vertical squares in $B$ with vertical boundary lying in $A$, none of which is parallel to subset of $A \cap \partial M$, and no two of which are parallel. If any such square is non-trivial, we decompose along it. If any component $D$ is a trivial square, then we identify the disc $D'$ as in Figure \ref{Fig:Trivial4Discs} with the same boundary, remove the ball between these discs and then transfer the boundary pattern in $D'$ across to $D$. These operations decompose $B$ either to a 3-ball or to a collar on $\partial_v B - A$. In the latter case, we can remove this collar and transfer its boundary pattern to $\partial_v B - A$. This process can all be completed without knowing the precise handle structure on $B$, but instead just the information provided by Theorem \ref{Thm:CutHS}. We repeat this for each component of $\calB$ that is incident to $\partial M$. The resulting handle structure need not be admissible, in which case we perform further operations in {\sc Create admissible handle structure}. Initially, this does not affect the interior parallelity bundle. However, at some point, it may do so. This happens when the vertical boundary of a remaining component of $\calB$ fails to be properly embedded. In that case, we can again remove this component by decomposing along its interior vertical boundary. Each of these operations either removes a component of the parallelity bundle for $S$, or removes a handle of $M' \cut \calB$. Hence, it can be completed in polynomial time, as a function of the number of handles of $\calH$. The resulting object is a handle structure $\calH''$ to which various $I$-bundles are attached along their vertical boundary. The actual result of running {\sc Create admissible handle structure} is obtained from this by replacing the $I$-bundles by a union of parallelity handles in some way that we do not specify. The interior parallelity bundle for this handle structure includes these $I$-bundles but it may also include various other handles of $\calH''$. We can compute this interior parallelity bundle, and then the steps in $(2')$ and $(2'')$ can be completed in polynomial time. Thus we have the algorithm required by Proposition \ref{Prop:DecomposePolyTime}. \end{proof}

The certificate required by Theorem \ref{Thm:EssentialNPcoNP} is as follows:
\begin{enumerate}
\item a sequence of handle structures $\mathcal{H}_1, \dots, \mathcal{H}_{\ell+1}$, where $\mathcal{H}_i$ is a handle structure for a 3-manifold $M_i$ with boundary pattern $P_i$;
\item for some of the $i \leq \ell$, a surface $S_i$ or $D_i$ properly embedded in $M_i$ that is normal in $\mathcal{H}_i$.
\end{enumerate}
These handle structures and surfaces are required to have the following properties:
\begin{enumerate}
\item the initial handle structure $\calH_1$ is dual to the given triangulation $\calT$;
\item for each 0-handle $H$ of $\mathcal{H}_1$, the intersection between $H$ and the 0-handles of $\mathcal{H}_i$, $\mathcal{F}^0(\mathcal{H}_i)$, $\mathcal{F}^1(\mathcal{H}_i)$ and $P_i$ is one of the possibilities from Theorem \ref{Thm:RemainUniformType};
\item each $S_i$ and each $D_i$ is normal in $\mathcal{H}_i$ and has extended weight at most $k^{|\mathcal{H}_i|}$, where $k$ is the constant from Proposition \ref{Prop:ExpBoundExtendedWeight};
\item when $S_i$ is provided, then $\mathcal{H}_{i+1}$ is obtained from $\mathcal{H}_i$ using Proposition \ref{Prop:DecomposePolyTime};
\item when $D_i$ is provided, then it is a compression disc or pattern-compression disc for the vertical boundary of the interior parallelity bundle for $\mathcal{H}_i$, and then $\mathcal{H}_{i+1}$ is obtained from $\mathcal{H}_i$ by simplifying the handle structure according to the procedure in {\sc (Pattern-)compressible vertical boundary};
\item when no $D_i$ or $S_i$ is provided, then $\mathcal{H}_{i+1}$ is obtained from $\mathcal{H}_i$ by applying {\sc Create admissible handle structure}, and furthermore the interior parallelity bundle for $\mathcal{H}_{i+1}$ consists of 2-handles;
\item the length $\ell$ of this hierarchy is at most $4c_q(\mathcal{H}_1)$;
\item the final manifold is a collection of 3-balls with essential pattern and its handle structure is admissible with interior parallelity bundle consisting of 2-handles.
\end{enumerate}

The existence of the certificate when $M$ is irreducible and $P$ is essential is established as follows. We run the algorithm given in Section \ref{Sec:AlgorithmCompressible} with the refinements from Section \ref{Sec:AlgorithmParallelity}. In Step (6) of the algorithm in Section \ref{Sec:AlgorithmCompressible}, we are free to pick any connected non-separating properly embedded orientable normal surface in the manifold. We pick one with smallest pattern complexity. Then Proposition \ref{Prop:ExpBoundExtendedWeight} guarantees that there is such a surface with extended weight at most $k^{|\mathcal{H}_i|}$. Furthermore, since the surface has smallest pattern complexity, Corollary \ref{Cor:MinimalComplexityImpliesEssential} ensures that it is incompressible and pattern-incompressible. Hence, if the algorithm leaves the hierarchy construction loop and produces a violating disc, then this must form a compression or pattern-compression disc for some vertical boundary annulus for the interior parallelity bundle. This is provided in (5). Note that such a violating disc can be chosen to be fundamental in a triangulation refining $\mathcal{H}_i$ by Theorem \ref{Thm:ViolatingFundamental}, and hence have extended weight at most $k^{|\mathcal{H}_i|}$ in $\mathcal{H}_i$. Since $M$ is assumed to be irreducible and $P$ is essential, the algorithm terminates with an essential hierarchy, giving (8). Furthermore, the length of this hierarchy is at most $4c_q(\mathcal{H}_1)$ by Proposition \ref{Prop:RefinedAlgorithmTerminates}, which gives (7). Finally, condition (2) is a consequence of Theorem \ref{Thm:RemainUniformType}.

We now explain how to verify the certificate. There are two main aspects to this. Firstly, we must check that each $\mathcal{H}_{i+1}$ really is obtained from $\mathcal{H}_i$ in the ways described above. This guarantees that we really do have a partial hierarchy. Secondly, we must check that the final manifold $M_{\ell+1}$ is balls with essential boundary pattern $P_{\ell+1}$. Since the final handle structure is admissible with interior parallelity bundle consisting of 2-handles, Theorem  \ref{Thm:RemainUniformType} implies that its number of handles is at most a uniform constant times $|\mathcal{H}_1|$. Furthermore each of its 0-handles is of uniform type. Hence, it is straightforward to verify, in polynomial time, that $\partial M_{\ell+1}$ is a collection of spheres and that its pattern $P_{\ell+1}$ is essential.
This implies that each decomposing surface is incompressible by Lemma \ref{Lem:EssentialPullsBack}, and $M_{\ell+1}$ is irreducible by Lemma \ref{Lem:IrreduciblePushesForward}. So $M_{\ell+1}$ is a collection of balls.

So we now focus on verifying that each $\mathcal{H}_{i+1}$ is obtained from $\mathcal{H}_i$ as above. Suppose first that the normal surface $S_i$ is given. We simply apply Proposition  \ref{Prop:DecomposePolyTime} and check that the resulting handle structure is $\calH_{i+1}$. Note that $S_i$ has extended weight at most $k^{|\mathcal{H}_i|}$. Since $|\calH_i|$ is bounded above by a universal constant times $|\calH_1|$ by (2) above, we deduce that the algorithm in Proposition  \ref{Prop:DecomposePolyTime} completes in polynomial time.

Suppose now that the disc $D_i$ is given in $\mathcal{H}_i$. This is a compression disc or pattern-compression disc for a vertical boundary component $A_i$ of the interior parallelity bundle $\mathcal{B}_i$ for $\mathcal{H}_i$. We may remove $\mathcal{B}_i$ from $\mathcal{H}_i$, forming a new handle structure $\mathcal{H}'_i$ containing $D_i$. It is of uniform type and its number of 0-handles satisfies
$|\mathcal{H}'_i| \leq |\mathcal{H}_i|$. So we may confirm that $D_i$ is a compression disc or pattern-compression disc for $A_i$. Then the procedure in {\sc (Pattern-)compressible vertical boundary} may be implemented in polynomial time and the verifier may check that the resulting handle structure is $\mathcal{H}_{i+1}$. 

Finally, when neither $D_i$ nor $S_i$ is given, then the procedure {\sc Create admissible handle structure} may be implemented and the verifier can check that the resulting handle structure is $\mathcal{H}_{i+1}$.

Thus, the verifier can check that this does give an essential hierarchy for $(M,P)$. This completes the proof of Theorem \ref{Thm:EssentialNPcoNP} in the case where $M$ is irreducible and $P$ is essential.

We now complete the proof of Theorem \ref{Thm:EssentialNPcoNP}. We need to deal with the general case where $M$ is reducible. Our certificate for establishing that $P$ is essential is:
\begin{enumerate}
\item a vector $(S)$ of a normal surface $S$ in $\mathcal{T}$ with weight at most $2^{740t^2}$; this will in fact be a collection of disjoint embedded spheres;
\item a triangulation $\mathcal{T}'$ for a 3-manifold $M'$ with at most $1000t$ tetrahedra; this will be $M \cut S$ with a 3-ball attached to each spherical boundary component;
\item a certicate that $M'$ has essential boundary pattern.\end{enumerate}

Parts (1) and (2) of the certificate are verified in the same way as in Section \ref{Sec:CertifyingInessential}. One checks that $S$ is a collection of 2-spheres, using the
Agol-Hass-Thurston algorithm. A triangulation for $M \cut S$ is produced using \cite[Theorem 11.4]{Lackenby:EfficientCertification}, and then a coned 3-ball
is attached to each spherical boundary component, forming $\mathcal{T}'$. The copy of $P$ in $M'$ is essential if and only if $P$ was essential in $M$, because
any violating disc for $P$ in $M'$ can be chosen to avoid the attached 3-balls, and so forms a violating disc in $M$, and conversely, any violating disc in $M$ can be chosen to avoid $S$ and so becomes a violating disc in $M'$. Now $M'$ is irreducible. So we have a certificate, verifiable in polynomial time, that $(M',P)$ is essential.

\bibliography{alg-incompressible-biblio}
\bibliographystyle{plain}

\end{document}